\documentclass[letterpaper, 11pt]{amsart}
\usepackage{mathtools}
\usepackage{amsmath}
\usepackage[T1]{fontenc}
\usepackage{amssymb}
\usepackage{yhmath}
\usepackage{comment}
\usepackage{graphicx} 
\usepackage{ mathrsfs }
\usepackage{bbm} 
\usepackage{xcolor} 
\usepackage{tikz-cd}
\usepackage{tikz}
\usetikzlibrary{patterns}
\usepackage{hyperref}

\DeclareMathAlphabet{\mathpzc}{OT1}{pzc}{m}{it}

\usepackage{thmtools}
\usepackage{thm-restate}

\usepackage{caption}

\newtheorem{theorem}{Theorem}[section]

\newtheorem*{claim*}{Claim}

\newtheorem{lemma}[theorem]{Lemma}
\newtheorem{lem}[theorem]{Lemma}
\newtheorem{corollary}[theorem]{Corollary}

\newtheorem{cor}[theorem]{Corollary}

\newtheorem{problem}[theorem]{Problem}

\newtheorem{proposition}[theorem]{Proposition}

\newtheorem{fact}[theorem]{Fact}

\theoremstyle{definition}
\newtheorem{definition}[theorem]{Definition}

\newtheorem{example}[theorem]{Example}

\theoremstyle{remark}
\newtheorem{remark}[theorem]{Remark}

\numberwithin{equation}{section}

\DeclareMathOperator{\Isom}{\mathsf{Isom}}
\DeclareMathOperator{\Hom}{Hom}

\DeclareMathOperator{\diam}{diam}

\DeclareMathOperator{\rank}{rank}

\DeclareMathOperator{\dist}{d}

\DeclareMathOperator{\Dc}{\mathcal{D}}
\DeclareMathOperator{\Fc}{\mathcal{F}}

\DeclareMathOperator{\Hc}{\mathcal{H}}

\DeclareMathOperator{\Nc}{\mathcal{N}}

\DeclareMathOperator{\Rc}{\mathcal{R}}

\DeclareMathOperator{\Sc}{\mathcal{S}}

\DeclareMathOperator{\Hb}{\mathbb{H}}

\DeclareMathOperator{\Nb}{\mathbb{N}}
\DeclareMathOperator{\Pb}{\mathbb{P}}
\DeclareMathOperator{\Rb}{\mathbb{R}}

\DeclareMathOperator{\Zb}{\mathbb{Z}}

\newcommand{\N}{\mathbb{N}}
\newcommand{\R}{\mathbb{R}}
\newcommand{\Z}{\mathbb{Z}}

\DeclareMathOperator{\Gsf}{\mathsf{G}}
\DeclareMathOperator{\Psf}{\mathsf{P}}

\DeclareMathOperator{\PSL}{\mathsf{PSL}}

\DeclareMathOperator{\GL}{\mathsf{GL}}

\newcommand{\abs}[1]{\left|#1\right|}
\newcommand{\norm}[1]{\left\|#1\right\|}

\newcommand{\Ga}{\Gamma}
\newcommand{\ga}{\gamma}
\newcommand{\F}{\mathcal{F}}
\newcommand{\La}{\Lambda}
\newcommand{\la}{\lambda}

\newcommand{\Msf}{\mathsf{M}}
\newcommand{\Asf}{\mathsf{A}}

\newcommand{\Nsf}{\mathsf{N}}
\newcommand{\Ksf}{\mathsf{K}}

\newcommand{\Lsf}{\mathsf{L}}

\newcommand{\Usf}{\mathsf{U}}
\newcommand{\Ssf}{\mathsf{S}}

\newcommand{\fa}{\mathfrak{a}}

\newcommand{\Hsf}{\mathsf{H}}

\renewcommand{\L}{\mathcal{L}}

\newcommand{\ba}{\backslash}

\newcommand{\opp}{\operatorname{i}}

\newcommand{\Lie}{\operatorname{Lie}}

\newcommand{\CAT}{\operatorname{CAT}}

\newcommand{\Spec}{\operatorname{Spec}}

\newcommand{\Hor}{\mathcal{H}}
\newcommand{\E}{\mathcal{E}}
\newcommand{\BR}{\mathrm{BR}}

\newcommand{\inte}{\operatorname{int}}

\begin{document}

\title[Horospherical measures in higher rank: The Full Story]{Classification of horospherical invariant measures in higher rank: The Full Story}

\author{Inhyeok Choi}
\address{School of Mathematics, KIAS, Hoegi-ro 85, Dongdaemun-gu, Seoul 02455, South Korea 
\newline
\indent Simons Laufer Mathematical Sciences Institute, Berkeley, CA 94720}
\email{inhyeokchoi48@gmail.com}

\author{Dongryul M. Kim}
\address{Department of Mathematics, Yale University, New Haven, CT 06511
\newline
\indent Simons Laufer Mathematical Sciences Institute, Berkeley, CA 94720}
\email{dongryul.kim97@gmail.com}

\begin{abstract}
    In this paper, we classify horospherical invariant Radon measures for Anosov subgroups of arbitrary semisimple real algebraic groups. This generalizes the works of Burger and Roblin in rank one to higher ranks. At the same time, this extends the works of Furstenberg, Veech, and Dani, and a special case of Ratner's theorem for finite-volume homogeneous spaces to infinite-volume Anosov homogeneous spaces. 

    Especially, this resolves the open problems proposed by Landesberg--Lee--Lindenstrauss--Oh and by Oh. Our measure classification is in fact for a more general class of discrete subgroups, including relatively Anosov subgroups with respect to any parabolic subgroups, not necessarily minimal. We also obtain results for their normal subgroups. Our method is rather geometric, not relying on continuous flows or ergodic theorems.

    \bigskip

    \bigskip

    \bigskip

\begin{center}
{\bf {\large D}ECLARATION}
\end{center}

\bigskip

Prior to this paper, we wrote a paper \cite{CK_pradak} dealing with the case that the ambient Lie group is a product of rank one groups (in fact for products of $\CAT(-1)$-spaces), which is the setting originally asked by Landesberg--Lee--Lindenstrauss--Oh and Oh. As many parts of the argument in this paper are simplified in such a case, \cite{CK_pradak} might be more accessible to readers who are not familiar with homogeneous dynamics but have some geometry background.

In this regard, we decided to keep \cite{CK_pradak} public on arXiv and our  websites, but {\bf not to  publish \cite{CK_pradak} in any  journal.} The paper \cite{CK_pradak} can be considered as a teaser for this paper.

\end{abstract}

\maketitle

\tableofcontents

\section{Introduction}

Given a dynamical system, classifying invariant measures is a natural and important question with many applications, as also indicated by the celebrated theorem of Ratner \cite{Ratner_measure} for instance. We study this question for dynamical systems given by \emph{horospherical actions}.

Let $\Gsf$ be a connected semisimple real algebraic group and $\Psf < \Gsf$ its minimal parabolic subgroup with a fixed Langlands decomposition $\Psf = \Msf \Asf \Nsf$, where $\Asf$ is a maximal real split torus of $\Gsf$, $\Msf < \Psf$ is a maximal compact subgroup commuting with $\Asf$, and $\Nsf$ is the unipotent radical of $\Psf$.

Let $\Ga < \Gsf$ be a Zariski dense discrete subgroup. The right multiplication of $\Nsf \Msf$ or $\Nsf$  on $\Ga \ba \Gsf$ is called (maximal) \emph{horospherical action}. 
For a uniform lattice $\Ga < \Gsf$, the $\Nsf\Msf$-action on $\Ga \ba \Gsf$ is uniquely ergodic,\footnote{By unique ergodicity, we mean that there exists a unique invariant ergodic Radon measure up to a constant multiple.} with the Haar measure for $\Gsf$ as the ergodic measure. This was first shown for $\Gsf = \PSL(2, \R)$ by Furstenberg \cite{furstenberg1973the-unique}, and by Veech \cite{veech1977unique} in general. When $\Ga < \Gsf$ is a non-uniform lattice, Dani classified all $\Nsf \Msf$-invariant ergodic Radon measures on $\Ga \ba \Gsf$ (\cite{dani1978invariant}, \cite{dani1981invariant}). In fact, all results above hold for the $\Nsf$-action as well, and they are generalized further by Ratner \cite{Ratner_measure} to actions generated by unipotent elements, which is now called Ratner's theorem.

In contrast, the case that $\Ga < \Gsf$ is not a lattice, i.e., $\Ga$ has infinite covolume, has remained mysterious for general $\Gsf$. For a non-lattice Zariski dense discrete $\Ga < \Gsf$, a natural $\Nsf \Msf$-invariant subspace of $\Ga \ba \Gsf$ that has been considered is 
$$
\text{the unique $\Psf$-minimal set } \E_{\Ga} \subset \Ga \ba \Gsf,
$$
whose uniqueness for higher-rank $\Gsf$ is due to Benoist \cite{Benoist1997proprietes}. Indeed, $\E_{\Ga}$ consists of $[g] \in \Ga \ba \Gsf$ whose ``forward endpoint'' $g \Psf$ in the Furstenberg boundary $\Fc := \Gsf / \Psf$ belongs to the limit set $\La(\Ga) \subset \Fc$, the unique $\Ga$-minimal set. When $\Gsf$ is of rank one, 
it is easy to see this picture and that any $\Nsf \Msf$-orbit in $(\Ga \ba \Gsf) \smallsetminus \E_{\Ga}$ is closed.

The first classification of horospherical invariant measure in this infinite-volume setting is due to Burger \cite{Burger_horoc}, who considered the case that $\Gsf = \PSL(2, \R)$ and $\Ga < \Gsf$ is convex cocompact whose limit set $\La(\Ga)$ has Hausdorff dimension strictly bigger than $1/2$. Burger proved the unique ergodicity of the $\Nsf \Msf$-action on $\E_{\Ga}$ in this case.
For a general rank one group $\Gsf$ and $\Ga < \Gsf$ geometrically finite, Roblin classified all $\Nsf \Msf$-invariant ergodic Radon measures on $\E_{\Ga}$ \cite{Roblin2003ergodicite}, by showing that there exists an $\Nsf \Msf$-invariant Radon measure $\mu_{\Ga}$ on $\E_{\Ga}$ so that any $\Nsf \Msf$-invariant ergodic Radon measure on $\E_{\Ga}$ is either supported on a closed $\Nsf \Msf$-orbit, or a constant multiple of $\mu_{\Ga}$. The measure $\mu_{\Ga}$ is now called
\begin{center}
\emph{Burger--Roblin measure} of $\Ga$.
\end{center} 
Later, Winter showed that the Burger--Roblin measure is $\Nsf$-ergodic and extended the Roblin's classification to $\Nsf$-invariant Radon measures \cite{Winter_mixing}. 
For geometrically infinite cases, Babillot and Ledrappier first discovered that there may be continuous family of $\Nsf \Msf$-invariant ergodic Radon measures (\cite{Babillot_nilpotent}, \cite{BL_covers}); see also (\cite{Sarig_abelian}, \cite{Sarig_genus}, \cite{Ledrappier_invariant}, \cite{LS_periodic}, \cite{Winter_mixing}, \cite{OP_local}, \cite{LL_Radon}, \cite{Landesberg_horospherically}, \cite{LLLO_Horospherical})  for partial classification
results in the rank-one case.

We now move to the case that 
\begin{center}
$\Gsf$ is of \emph{higher rank}.
\end{center} Edwards--Lee--Oh extended the notion of Burger--Roblin measure to higher rank, introducing higher-rank Burger--Roblin measures \cite{Edwards2020anosov}. Their ergodicity with respect to horospherical actions was proved for Zariski dense Borel Anosov subgroups, higher-rank analogues of convex cocompact subgroups, by Lee--Oh \cite{LO_invariant}. For a larger class of discrete subgroups including relatively Anosov subgroups, higher-rank analogues of geometrically finite subgroups, the ergodicity was shown by the second author \cite{kim2024conformal}.

On the other hand, even for Zariski dense Borel Anosov subgroups, there was no single specific example that the horospherical invariant measures are classified, except our teaser paper \cite{CK_pradak}. A work related to this direction is by Landesberg--Lee--Lindenstrauss--Oh \cite{LLLO_Horospherical}. 

\subsection{Work of Landesberg--Lee--Lindenstrauss--Oh and Problems}

As we are answering an open problem proposed in \cite{LLLO_Horospherical} which has been motivational about the classification of horospherical invariant measures in higher rank, we briefly describe their work.
They considered a specific group 
$$ 
{ \Gsf := \prod_{i = 1}^r \Gsf_i}
$$
where $\Gsf_i$ is a simple real algebraic group of rank one. In this case, we have $r = \rank \Gsf$. (We will consider \emph{arbitrary $\Gsf$}, in contrast.)

Even for the case that $\Gsf$ is a product of rank one groups as above, the classification of horospherical invariant measure was not known before our teaser paper \cite{CK_pradak}. Instead, in \cite{LLLO_Horospherical}, they considered the directionally recurrent set in $ \Ga \ba \Gsf$ for each \emph{1-dimensional diagonal flow} (or, directional flow). More precisely, denote by $\fa := \Lie \Asf$ and fix a closed Weyl chamber $\fa^+ \subset \fa$ so that for each $n \in \Nsf$, the set $\{ a^{-1} n a : a \in \exp \fa^+\} \subset \Nsf$  is precompact. Then for each $v \in \inte \fa^+$, they considered  the set $\mathcal{R}_{\Ga, v} \subset \E_{\Ga}$ consisting of elements each of whose 1-dimensional $\exp (\R_{>0} v)$-orbit is recurrent to a compact subset. 

Their main result is about the horospherical action restricted on $\mathcal{R}_{\Ga, v} \subset \E_{\Ga}$, for $\Ga$ Zariski dense Borel Anosov. 
In the setting of product of rank-one Lie groups, $\Ga < \Gsf $ is Borel Anosov if 
the projection $\Ga \to \Gsf_i$ has finite kernel and convex cocompact image for all $1 \le i \le r$. Based on the ergodicity results of Lee--Oh (\cite{LO_invariant}, \cite{LO_ergodic}) and Burger--Landesberg--Lee--Oh \cite{BLLO}, the rigidity result of \cite{LLLO_Horospherical} is as follows:

\begin{theorem}[{\cite{LLLO_Horospherical}}] \label{thm:LLLO}
    Suppose that $\Gsf$ is a product of rank one groups. 
    Let $\Ga < \Gsf $ be a Zariski dense Borel Anosov subgroup and $v \in \inte \fa^+$. Let $\L_{\Ga} \subset \fa^+$ denote the limit cone\footnote{The limit cone of $\Ga$ is the asymptotic cone of the Cartan projections of $\Ga$ in $\fa$. We  will revisit this later.} of $\Ga$.
    \begin{enumerate}
        \item If $\rank \Gsf \le 3$ and $v \in \inte \L_{\Ga}$, then the $\Nsf$-action and the $\Nsf \Msf$-action on $\mathcal{R}_{\Ga, v} \subset \E_{\Ga}$ are uniquely ergodic.
        \item If $\rank \Gsf > 3$ or $v \notin \inte \L_{\Ga}$, then there exists no non-zero, $\Nsf$-invariant Radon measure supported on $\mathcal{R}_{\Ga, v} \subset \E_{\Ga}$.
    \end{enumerate}
\end{theorem}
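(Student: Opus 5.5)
The plan follows the strategy of \cite{LLLO_Horospherical}: reduce an $\Nsf$-invariant Radon measure on $\mathcal{R}_{\Ga, v}$ to a $\Ga$-conformal measure on the limit set, then use the ergodicity/dichotomy theory of \cite{BLLO} and \cite{LO_ergodic} for directional flows.

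First, take an $\Nsf$-invariant ergodic Radon measure $m$ on $\mathcal{R}_{\Ga, v}$. We lift it to $\Gsf$ and use the Bruhat decomposition $\Nsf^- \Msf \Asf \Nsf$ to write its $\Nsf$-invariance as a product structure. Disintegrating along $\Nsf$-orbits then produces a $\Ga$-quasi-invariant measure $\nu$ on $\Fc$, supported on $\La(\Ga)$. In fact $\nu$ is supported on the $v$-directionally conical limit points, since recurrence of $\exp(\R_{>0}v)$ is exactly conicality in direction $v$.

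The key step is to show that $\nu$ is $\Ga$-conformal for some linear form $\psi \in \fa^*$. To do this, we would decompose $m$ into $\Asf$-quasi-invariant pieces: the $\Asf$-action normalizes $\Nsf$ and sends $m$ to an $\Nsf$-invariant measure. We would then use recurrence along $\exp(\R_{>0} v)$ to show that $a_* m$ is proportional to $m$ for each $a \in \Asf$. This would follow a Ratner/Dani-style argument: compare $m$ with its translates near a recurrent compact set. Here we would exploit that the $\Nsf\Msf$-action is a ``foliation'' whose transverse measure is a conformal density. This rigidity step, namely upgrading $\Nsf$-invariance to $\Asf$-quasi-invariance, is the main obstacle. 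We would try to adapt Landesberg–Lindenstrauss \cite{LL_Radon} and \cite{Landesberg_horospherically}, which handle the rank-one analogue on the recurrent set.

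Once $\nu$ is a $(\Ga,\psi)$-conformal measure giving full mass to $v$-conical points, we would use that for Borel Anosov $\Ga$ the $\psi$-conformal measures are parametrized by tangent forms $\psi$ to the growth indicator, and that $\psi$ must satisfy $\psi(v)=\psi_\Ga(v)$. The dichotomy of \cite{BLLO} says that the $v$-conical set has full $\nu_v$-measure if and only if the directional flow on the Bowen–Margulis–Sullivan measure is ergodic and conservative. That happens exactly when $\rank \Gsf \le 3$ and $v \in \inte \L_\Ga$, since the directional random walk on $\R^{r-1}$ is recurrent only when $r-1\le 2$. Part (2) follows immediately: in that case every conformal measure gives zero mass to $v$-conical points, so $m=0$. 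For part (1), the conformal measure is unique (the one tangent at $v$). Hence $m$ is a multiple of the Burger–Roblin measure associated with $\nu_v$, whose $\Nsf$-ergodicity (by \cite{LO_invariant}) yields unique ergodicity for both $\Nsf$ and $\Nsf\Msf$.
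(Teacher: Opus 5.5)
The paper does not prove this theorem; it quotes it from \cite{LLLO_Horospherical} and summarizes its ingredients. Those ingredients have the same skeleton as your outline: $\Asf$-quasi-invariance of $\Nsf$-invariant Radon measures on $\Rc_{\Ga,v}$, then identification with Burger--Roblin measures via \cite{LO_invariant}, then the directional dichotomy of \cite{BLLO}.

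\textbf{Main gap: the quasi-invariance step.} This first step carries essentially all the difficulty, and your proposal does not prove it. You call it ``the main obstacle'' and say you would ``try to adapt'' \cite{LL_Radon}. The mechanism you name does not work as stated. Ratner/Dani-type arguments rely on finite invariant measures (Birkhoff averages, equidistribution), which are unavailable for an infinite Radon measure. What is missing is a ratio-type comparison of $m$ and $a_*m$ in the spirit of \cite{LL_Radon}. One compares them along the $\Nsf$-boxes obtained by conjugating a fixed box by $\exp(tv)$ at the recurrence times of $\exp(\R_{>0}v)$. Running this needs a Besicovitch-type covering property for that family of anisotropic boxes. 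According to the paper, this is exactly where \cite{LLLO_Horospherical} use that $\Gsf$ is a product of rank one groups. That hypothesis plays no role in your quasi-invariance step, which signals that the key idea is absent.

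Your Step 1 also does not produce anything usable yet. Before the product structure is known, disintegrating along $\Nsf$-orbits only gives a $\Ga$-invariant Radon measure on $\Gsf/\Nsf$. Its projection to $\Fc$ is not $\sigma$-finite in general; already for $\mu^{\BR}_{\nu}$ it is $\infty\cdot\nu$. So a conformal $\nu$ on $\Fc$ is an output of quasi-invariance (via \cite{aaronson2002invariant}), not an input.

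One way to repair the step is to use this paper's Theorem~\ref{thm:uniqueRadon} with $\theta=\Delta$ and Theorem~\ref{thm:hypertransverseNergodic}, since $\Rc_{\Ga,v}\subset\Rc_{\Ga,\Delta}$. These get quasi-invariance from guided limit sets and the approximation of Iwasawa cocycles by Jordan projections (Theorem~\ref{thm:trbytrlengthqi}), with no directional flow or covering lemma.

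\textbf{Further unjustified steps.} First, you need that the surviving Patterson--Sullivan measure is the one tangent at $v$. This requires $\mu^{\BR}_{\nu_u}(\Rc_{\Ga,v})=0$ for every $u\in\inte\L_{\Ga}$ not proportional to $v$. For part (2) with $v\notin\inte\L_{\Ga}$, it is needed for every $u$. The dichotomy you quote concerns only $\nu_v$ and $v$-conical points, so it does not give this. A separate argument is required, for instance the shadow lemma plus Borel--Cantelli. This uses that the linear form tangent to the growth indicator at $u$ touches it only along $\R_{>0}u$, so the corresponding Poincar\'e series restricted to a tube around $\R_{>0}v$ converges. Without this, uniqueness in (1) is not established.

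Second, $\Nsf$-ergodicity of the Burger--Roblin measure comes from \cite{LO_ergodic}; \cite{LO_invariant} gives $\Nsf\Msf$-ergodicity. It holds here only because $\E_{\Ga}$ is $\Psf^{\circ}$-minimal for products of rank one groups. In general, $\mu^{\BR}_{\nu}$ splits into $\#\Dc_{\Ga}$ $\Nsf$-ergodic components.

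Third, an $\Nsf$-invariant, $\Asf$-quasi-invariant measure yields a conformal measure on $\Ksf$ rather than on $\Fc$. You should first average over $\Msf$, as in the proof of Theorem~\ref{thm:hypertransverseNergodic}, before invoking the classification of Patterson--Sullivan measures.
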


The ergodic measures in (1) above are higher-rank Burger--Roblin measures, whose ergodicity was proved in \cite{LO_ergodic}, and being supported on the directionally recurrent set was proved in \cite{BLLO}. Delaying their definitions, we  note that in contrast to rank-one settings, they come as a family of mutually singular measures, because higher-rank Patterson--Sullivan measures do so. The reason for the rank dichotomy in Theorem \ref{thm:LLLO}(2) is that $\mathcal{R}_{\Ga, v}$ has zero Burger--Roblin measures when $r > 3$ \cite{BLLO}.

In view of Theorem \ref{thm:LLLO}, the following open problem was proposed by Landesberg--Lee--Lindenstrauss--Oh,  towards classifying horospherical invariant measures.

\begin{problem}[{\cite[Open problem 1.8]{LLLO_Horospherical}}] \label{ques:LLLO}
Let $\Ga < \Gsf$ be a Zariski dense Borel Anosov subgroup and suppose $\rank \Gsf \le 3$. Is any $\Nsf$-invariant ergodic Radon measure on $\E_{\Ga}$ supported on $\mathcal{R}_{\Ga, v}$ for some $v \in \inte \fa^+$?
\end{problem}

More generally, in a very recent preprint for the Proceedings of the ICM 2026, Oh asked for horospherical measure classification for Anosov subgroups without any assumption on $\rank \Gsf$.

\begin{problem}[{\cite[Section 8.2]{oh2025dynamics}}] \label{ques:Anosov}
Let $\Ga < \Gsf$ be a Zariski dense Borel Anosov subgroup. Is any $\Nsf$-invariant ergodic Radon measure on $\E_{\Ga}$ a Burger--Roblin measure?
\end{problem}

Main results of this paper are resolvents of Problem \ref{ques:LLLO} and Problem \ref{ques:Anosov}, open problems proposed by Landesberg--Lee--Lindenstrauss--Oh in \cite{LLLO_Horospherical} and by Oh in \cite{oh2025dynamics}. Indeed, we give a complete classification of horospherical invariant measures.

\begin{remark}
    Before stating our main results, we remark the following regarding our setup:
    \begin{itemize}
        \item While products of rank one groups are considered above, we deal with general semisimple real algebraic groups. Hence, in the rest of this paper,
        $\Gsf$ is an \emph{arbitrary semisimple real algebraic group}.
        \item We also consider Anosov and relatively Anosov subgroups with respect to \emph{any parabolic subgroup}, not necessarily Borel. In fact, we consider a more general class of discrete subgroups.
        \item Unless $\Gsf$ is a product of rank one groups, higher-rank Burger--Roblin measures are not $\Nsf$-ergodic in general, as shown by ergodic decomposition results (\cite{LO_ergodic}, \cite{kim2024conformal}). In this perspective, our classification tells that any $\Nsf \Msf$-invariant ergodic Radon measure on $\E_{\Ga}$ is a higher-rank Burger--Roblin measure and any $\Nsf$-invariant ergodic Radon measure is an ergodic component of a higher-rank Burger--Roblin measure, which is explicitly described in (\cite{LO_ergodic}, \cite{kim2024conformal}).
    \end{itemize}
\end{remark}

\subsection{Main results for Borel Anosov subgroups}

From now on,

\medskip

\begin{center}
    let $\Gsf$ be an {\bf arbitrary} connected semisimple real algebraic group.
\end{center}

\medskip

For simplicity, we first present our main results for Borel Anosov subgroups as special cases, which resolve the open problems proposed by Landesberg--Lee--Lindenstrauss--Oh (Problem \ref{ques:LLLO}) and by Oh (Problem \ref{ques:Anosov}).

An \emph{Anosov subgroup} is a higher-rank generalization of convex cocompact subgroups, introduced by Labourie \cite{Labourie2006anosov} for surface groups and generalized by Guichard--Wienhard \cite{Guichard2012anosov} for hyperbolic groups. Delaying some precise definitions, we first state our classification as follows:

\begin{theorem} \label{thm:mainAnosov}
    Let $\Ga < \Gsf$ be a Zariski dense Borel Anosov subgroup.
If $\mu$ is an $\Nsf \Msf$-invariant ergodic Radon measure on $\E_{\Ga}$, then 
\begin{center}
$\mu$ is a constant multiple of a Burger--Roblin measure of $\Ga$.
\end{center}
\end{theorem}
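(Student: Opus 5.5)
We reduce the classification to a statement about quasi-invariant measures on the limit set and then show that such measures are conformal. An $\Nsf\Msf$-invariant Radon measure $\mu$ on $\E_\Ga \subset \Ga\ba\Gsf$ lifts to a $\Ga$-invariant, $\Nsf\Msf$-invariant Radon measure $\tilde\mu$ on $\Gsf$, supported on $\{g : g\Psf \in \La(\Ga)\}$. Use the identification $\Gsf/\Nsf\Msf \cong \Fc \times \fa$ via the Hopf-type parametrization $g \mapsto (g\Psf, \beta_{g\Psf}(e,g))$, with $\beta$ the $\fa$-valued Busemann cocycle. Then $\tilde\mu$ descends to a $\Ga$-invariant Radon measure $\hat\mu$ on $\La(\Ga)\times\fa$, where $\Ga$ acts by $\ga(\xi,u) = (\ga\xi, u + \beta_\xi(\ga^{-1},e))$.

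The first step disintegrates $\hat\mu$ along the $\fa$-factor. The goal is to show that $\hat\mu = \nu \otimes e^{\psi(u)}du$ for some linear form $\psi \in \fa^*$ and some $(\Ga,\psi)$-conformal measure $\nu$ on $\La(\Ga)$. Conversely, any such product is exactly the Burger--Roblin measure $\mu_\nu$ associated to $\nu$. For Zariski dense Borel Anosov $\Ga$, the conformal measures on $\La(\Ga)$ are known to be exactly the Patterson--Sullivan measures $\nu_\psi$, one for each $\psi$ tangent to the growth indicator. The theorem therefore follows once the product structure is established. Ergodicity of $\mu$ then forces it to be a multiple of a single such $\mu_{\nu_\psi}$.

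The heart of the argument is showing that $\hat\mu$ is quasi-invariant under translations in $\fa$, with exponential Radon--Nikodym derivative. Ergodicity of $\mu$ alone does not give $\Asf$-quasi-invariance, since $\Asf$ normalizes $\Nsf\Msf$ but need not preserve $\mu$. The plan is to use the geometry of Anosov limit points: every $\xi \in \La(\Ga)$ is uniformly conical. Concretely, there are $\ga_n \in \Ga$ and a compact set $K \subset \Gsf$ with $\ga_n^{-1} g \exp(t_n v_n) \in K$ for $g\Psf = \xi$, where the directions $v_n \in \inte \fa^+$ stay uniformly regular.

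Fix a small translation $w \in \fa$. Comparing $\hat\mu$ with its translate by $w$ over shrinking shadows, the conicality lets us push each shadow back to a fixed compact region by $\ga_n^{-1}$. On that region the translate and the original differ only by a bounded distortion. Combining these estimates gives a Vitali/Lebesgue density argument showing that $\hat\mu$ and $w_*\hat\mu$ are mutually absolutely continuous. The Radon--Nikodym derivative $D_w$ is then a $\Ga$-invariant function that is measurable with respect to the $\Nsf\Msf$-invariant $\sigma$-algebra, so by ergodicity it is constant. The cocycle identity $D_{w+w'} = D_w D_{w'}$ forces $D_w = e^{\psi(w)}$ for some linear $\psi$. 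This yields the product structure: the $\fa$-marginal of $\hat\mu$ is a character measure, and the fiber measure $\nu$ on $\La(\Ga)$ is automatically $(\Ga,\psi)$-conformal.

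The main obstacle is the absolute continuity step in higher rank. The shadows are not round, and the cocycle $\beta$ has $\Msf$- and Weyl-chamber-dependent distortion in directions transverse to $v_n$. Controlling how $\hat\mu$ redistributes among the $\fa$-coordinates requires uniform regularity of the Cartan projections of Anosov elements, which should give Morse-lemma-type control. It also requires a doubling-type estimate for $\hat\mu$ on shadows. I would first try to prove a shadow lemma for $\hat\mu$ directly, with no conformality assumed, using only $\Ga$-invariance and the cocompactness of the $\Ga$-action on the space of pairs of distinct limit points, and then derive the density argument from it.
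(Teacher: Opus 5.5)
Your skeleton is the paper's: pass to a $\Ga$-invariant Radon measure on $\La(\Ga)\times\fa$, prove quasi-invariance under $\fa$-translations, show the Radon--Nikodym derivative is constant by ergodicity and hence an exponential character, and read off the product form with a conformal fiber measure. The gap is in the quasi-invariance step, which carries all the content, and the mechanism you describe does not deliver it.

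\textbf{Why your quasi-invariance step fails.} Pushing $O\times I$ back by $\ga_n^{-1}$ and using $\Ga$-invariance only shows that $\hat\mu(O\times I)$ equals the mass of a set whose $\fa$-coordinates are moved by $\sigma(\ga_n^{-1},\cdot)=-\kappa(\ga_n)+O(1)$. That additive $O(1)$ error is larger than the translation $w$ you want to detect. Nothing a priori makes $\hat\mu$ and $T_w^*\hat\mu$ comparable on a compact region, so asserting ``bounded distortion'' there assumes the conclusion. The proposed shadow lemma or doubling estimate for a non-conformal $\hat\mu$, derived from $\Ga$-invariance alone, is circular for the same reason: comparing boxes $I$ and $I+O(1)$ in $\fa$ is already a translation-comparison statement.

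\textbf{A counterexample to the mechanism.} Your argument never uses Zariski density; it appears only in the final appeal to Lee--Oh. Take the diagonal copy $\{(\ga,\ga)\}$ of a cocompact Fuchsian group in $\PSL(2,\Rb)\times\PSL(2,\Rb)$. It is Borel Anosov, and its Iwasawa cocycle takes values in the diagonal of $\fa=\Rb^2$, so $u_1-u_2$ is $\Ga$-invariant. Hence the rank-one Burger--Roblin measure times a Dirac mass in the $u_1-u_2$ coordinate is a $\Ga$-invariant ergodic Radon measure on $\La(\Ga)\times\fa$. It is not quasi-invariant under anti-diagonal translations. Uniform conicality, regular Cartan projections and bounded distortion all hold in this example, so your argument would prove a false statement.

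\textbf{The missing idea.} You need to realize specific translations by group elements with arbitrarily small error, not bounded error. The paper does this in four steps.
\begin{enumerate}
\item It uses conjugates $g\varphi g^{-1}$ of a fixed loxodromic $\varphi$. For limit points whose rays fellow-travel $g[z_0,\varphi^n z_0]$, the element $g\varphi g^{-1}$ maps a slight enlargement of the guided shadow $U_C(g;\varphi,n)$ into $U_C(g;\varphi,n)$. It shifts the $\fa_\theta$-coordinate by $\la_\theta(\varphi)$ up to $\epsilon$, uniformly in $g$, once $n\ge N(\varphi,\epsilon)$. This is Theorem~\ref{thm:localregularIwasawa}, resting on the Tits-representation Lipschitz bound, Proposition~\ref{prop:locLipIwasawa}.
\item To use this, it first shows that $\mu$ is carried by the guided limit set (Theorem~\ref{thm:radonCharge}). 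The proof uses the extension lemma, a uniformly finite-to-one map, and ergodicity.
\item A greedy disjoint selection of guided shadows then gives $\mu(K\times(I+\la_\theta(\varphi)))\ge\mu(K\times I)$. It exploits the nesting of guided shadows rather than any Besicovitch or Vitali covering.
\item This gives quasi-invariance only under the translations $\la_\theta(\varphi)$. Zariski density enters here, through Benoist's non-arithmeticity of the Jordan spectrum (Theorem~\ref{thm:Benoist_nonarithmetic}). Since the set of $u$ with $T_u^*\mu$ equivalent to $\mu$ is a closed subgroup, this upgrades to all of $\fa$.
\end{enumerate}
Your proposal contains neither the $\epsilon$-precise local translation mechanism nor the spectrum step.

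A minor point: once $\nu$ is conformal you do not need the Lee--Oh classification, since Burger--Roblin measures are defined for every Patterson--Sullivan measure.
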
 
Our proof in fact gives the measure classification result ({\`a} la Landesberg--Lindenstrauss \cite{LL_Radon} and of Landesberg \cite{Landesberg_horospherically}) for normal subgroups of a more general class of subgroups discussed later. See Theorem \ref{thm:trbytrlengthqi}.

From the classification of $\Nsf \Msf$-invariant ergodic Radon measures on $\E_{\Ga}$ above, one can deduce the classification of $\Nsf$-invariant ergodic Radon measures as well, using the $\Nsf$-ergodic decomposition of Burger--Roblin measures for Borel Anosov subgroups by Lee--Oh \cite{LO_ergodic}.
Let $\Psf^{\circ}$ denote the identity component of $\Psf$.
Denote by 
$$\Dc_{\Ga}$$ 
the (finite) collection of $\Psf^{\circ}$-minimal subsets of $\Ga \ba \Gsf$.

\begin{theorem} \label{thm:mainAnosovN}
    Let $\Ga < \Gsf$ be a Zariski dense Borel Anosov subgroup. If $\mu$ is an $\Nsf$-invariant ergodic Radon measure on $\E_{\Ga}$, then
    $$
    \mu \text{ is a constant multiple of } \mu^{\BR}|_{\E}
    $$
    for a Burger--Roblin measure $\mu^{\BR}$ of $\Ga$ and $\E \in \Dc_{\Ga}$.
    
    In particular, if the $\Psf^{\circ}$-action on $\E_{\Ga}$ is minimal, then $\mu$ is a constant multiple of $\mu^{\BR}$.
\end{theorem}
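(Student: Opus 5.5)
The plan is to deduce Theorem \ref{thm:mainAnosovN} from Theorem \ref{thm:mainAnosov} by averaging over the compact group $\Msf$, and then to use the $\Nsf$-ergodic decomposition of Burger--Roblin measures due to Lee--Oh \cite{LO_ergodic}.

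Let $\mu$ be an $\Nsf$-invariant ergodic Radon measure on $\E_{\Ga}$. Since $\Msf$ normalizes $\Nsf$, each translate $m_*\mu$ for $m \in \Msf$ is again $\Nsf$-invariant and ergodic. Form the average
$$\bar\mu := \int_{\Msf} m_*\mu \, dm$$
with respect to the Haar probability measure on $\Msf$. Because $\Msf$ is compact and $\mu$ is Radon, $\bar\mu$ is Radon; it is also $\Nsf\Msf$-invariant and supported on $\E_{\Ga}$.

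I claim that $\bar\mu$ is $\Nsf\Msf$-ergodic. Suppose $B$ is an $\Nsf\Msf$-invariant set. Then $B$ is $\Nsf$-invariant and $m$-invariant for every $m \in \Msf$. By $\Nsf$-ergodicity of $\mu$, either $\mu(B)=0$ or $\mu(B^c)=0$. Since $m_*\mu(B)=\mu(Bm)=\mu(B)$ for all $m$, the same alternative holds for every $m_*\mu$, and hence for $\bar\mu$.

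Theorem \ref{thm:mainAnosov} then gives $\bar\mu = c\,\mu^{\BR}$ for some Burger--Roblin measure $\mu^{\BR}$ of $\Ga$. Because $\mu \ll \bar\mu$ fails in general, I will instead use $m_*\mu \ll \bar\mu$ in the sense of ergodic decomposition. The map $m \mapsto m_*\mu$ is measurable, so $\bar\mu$ is an integral of the $\Nsf$-ergodic measures $m_*\mu$. This is therefore an $\Nsf$-ergodic decomposition of $c\,\mu^{\BR}$.

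Next I invoke Lee--Oh \cite{LO_ergodic}: the $\Nsf$-ergodic components of $\mu^{\BR}$ are exactly the restrictions $\mu^{\BR}|_{\E}$ for $\E \in \Dc_{\Ga}$. These are finitely many, mutually singular, and each is $\Nsf$-ergodic. By essential uniqueness of the ergodic decomposition of a $\sigma$-finite measure, for Haar-almost every $m$ the measure $m_*\mu$ is a constant multiple of some $\mu^{\BR}|_{\E}$. Choosing one such $m$ and pulling back by $m^{-1}$, we get that $\mu$ is a multiple of $(m^{-1})_*\mu^{\BR}|_{\E} = \mu^{\BR}|_{\E m^{-1}}$, using that $\mu^{\BR}$ is $\Msf$-invariant. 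Since $\Msf$ normalizes $\Psf^\circ$, the set $\E m^{-1}$ is again a $\Psf^\circ$-minimal set, so it lies in $\Dc_{\Ga}$. If the $\Psf^\circ$-action on $\E_{\Ga}$ is minimal, then $\Dc_{\Ga}=\{\E_{\Ga}\}$ and $\mu$ is a multiple of $\mu^{\BR}$.

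The main obstacle is the uniqueness of ergodic decomposition for infinite Radon measures. I would handle it by normalizing with a positive integrable function to reduce to the probability case, or more directly by arguing with the finitely many disjoint invariant pieces. Since the sets $\E$ are disjoint and $\Nsf$-invariant, ergodicity of $m_*\mu$ forces it to be concentrated on a single $\E$. Then $m_*\mu$ and $\mu^{\BR}|_{\E}$ are both ergodic, and the first is absolutely continuous with respect to the second: its integral over $m$ is dominated by $c\,\mu^{\BR}$, so this holds for a.e. $m$. Their proportionality then follows from the standard fact that an ergodic measure absolutely continuous with respect to another ergodic measure has an invariant density, which must be constant.
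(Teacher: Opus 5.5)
Your route is the paper's. The paper deduces this statement from Theorem~\ref{thm:hypertransverseNergodic}, whose proof averages $\mu$ over $\Msf$, checks $\Nsf\Msf$-ergodicity exactly as you do, and identifies the average with $c\,\mu^{\BR}$ via the $\Nsf\Msf$-classification. It then reads off $m_*\mu$ from the Lee--Oh decomposition $\mu^{\BR}=\sum_{\E\in\Dc_{\Ga}}\mu^{\BR}|_{\E}$. Your remark that $\E m^{-1}\in\Dc_{\Ga}$ because $\Msf$ normalizes $\Psf^{\circ}$ makes explicit something the paper leaves implicit.

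At the step you flag as the main obstacle, the paper simply appeals to the ergodic decomposition. The direct justification you offer there is not valid as written. From $\int_{\Msf} m_*\mu\,dm=c\,\mu^{\BR}$ you only get, for each fixed $\mu^{\BR}$-null set $B$, that $m_*\mu(B)=0$ for a.e.\ $m$, and the exceptional set of $m$ depends on $B$. Domination of the average therefore does not give $m_*\mu\ll\mu^{\BR}$ for a.e.\ $m$. For example, the point masses $\delta_t$ on a circle average over rotations to Lebesgue measure, yet none is absolutely continuous with respect to it. Your other suggestion, multiplying by a positive integrable density, destroys $\Nsf$-invariance. So it does not reduce matters to uniqueness of ergodic decompositions of invariant probability measures.

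What closes the gap is the $\Nsf$-ergodicity of each piece $\rho_{\E}:=c\,\mu^{\BR}|_{\E}$, used as an extremality property.
\begin{enumerate}
\item For a Borel set $A\subset\Msf$, the measure $\int_A (m_*\mu)|_{\E}\,dm$ is $\Nsf$-invariant and bounded by $\rho_{\E}$. Its density with respect to $\rho_{\E}$ is therefore an $\Nsf$-invariant function with values in $[0,1]$, hence a.e.\ constant. This gives $\int_A (m_*\mu)|_{\E}\,dm=c_A\,\rho_{\E}$.
\item The set function $A\mapsto c_A$ is a finite measure absolutely continuous with respect to Haar measure; call its density $h_{\E}$.
\item Compare both sides on a countable $\pi$-system of relatively compact Borel sets generating the Borel $\sigma$-algebra, on which all the measures are finite. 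This gives $(m_*\mu)|_{\E}=h_{\E}(m)\,\rho_{\E}$ for a.e.\ $m$, simultaneously for the finitely many $\E\in\Dc_{\Ga}$.
\item Since $\E_{\Ga}$ is the disjoint union of the $\E\in\Dc_{\Ga}$ and $m_*\mu$ is $\Nsf$-ergodic, $m_*\mu$ is carried by a single $\E$. Hence $m_*\mu=h_{\E}(m)\,\rho_{\E}$ with $h_{\E}(m)>0$ for a.e.\ $m$.
\end{enumerate}
With this in place, the rest of your argument goes through unchanged.
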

 We note the following regarding Theorem \ref{thm:mainAnosov} and Theorem \ref{thm:mainAnosovN}.
\begin{remark} 

    As mentioned earlier, the natural $\Nsf \Msf$-invariant subspace $\E_{\Ga} \subset \Ga \ba \Gsf$ is where interesting dynamics occur. Indeed, this space is dual to the limit set of $\Gamma$ in the Furstenberg boundary, which is the unique $\Gamma$-minimal set as shown by Benoist \cite{Benoist1997proprietes}.
    
    When $\Gsf$ is of rank one, it is straightforward that any $\Nsf \Msf$-orbit in the complement $(\Ga \ba \Gsf) \smallsetminus \E_{\Ga}$ is closed, and hence measures supported outside $\E_{\Ga}$ are trivial, for this straightforward reason. 
    This easily extends to the case where $\Gsf$ is a product of rank one groups as well \cite[Section 4.4]{CK_pradak}.
    
    On the other hand, this does not seem to hold for general higher-rank $\Gsf$ (see Remark \ref{rmk:bddIwasawa}). This is the reason why Problem \ref{ques:LLLO} and Problem \ref{ques:Anosov} were asked within $\E_{\Ga}$, besides that $\E_{\Ga}$ is the genuine region for the interesting dynamics.

\end{remark}

We now define Borel Anosov subgroups and Burger--Roblin measures.
A Zariski dense discrete subgroup $\Ga < \Gsf$ is called \emph{Borel Anosov} if
\begin{itemize}
    \item  $\Ga$ is a hyperbolic group, and 
    \item there exists a $\Ga$-equivariant embedding $f : \partial \Ga \to \Fc$ such that for any distinct $x, y \in \partial \Ga$, $f(x)$ and $f(y)$ are in \emph{general position}, i.e., the diagonal $\Gsf$-orbit $\Gsf \cdot (f(x), f(y))$ is open in $\Fc \times \Fc$, where $\Fc = \Gsf / \Psf$ is the Furstenberg boundary.
\end{itemize}

\begin{example} \label{ex:Hitchin}
    A famous example of Borel Anosov subgroup is the so-called \emph{Hitchin subgroup}, as shown by Labourie \cite{Labourie2006anosov}. For a closed connected orientable surface $S$ of genus at least two and $d \ge 3$, a representation 
    $$\pi_1(S) \to \PSL(d, \Rb)$$
     is called a Hitchin representation if it belongs to the same component as the composition $\pi_1(S) \to \PSL(2, \Rb)\to \PSL(d, \Rb)$ in the character variety $\Hom( \pi_1(S), \PSL(d, \Rb))/\sim$, for a cocompact $\pi_1(S) \to \PSL(2, \Rb)$ and irreducible $\PSL(2, \Rb) \to \PSL(d, \Rb)$. Hitchin subgroups are images of Hitchin representations.

\end{example}
 
To define Burger--Roblin measures, recall that $\fa^+$ is a fixed positive Weyl chamber in $\fa = \Lie \Asf$, and fix a maximal compact subgroup $\Ksf < \Gsf$ so that the Cartan decomposition $\Gsf = \Ksf (\exp \fa^+) \Ksf$ holds. Then we have the Furstenberg boundary
$$
\mathcal{F} = \Ksf/ \Msf = \Gsf / \Psf.
$$

Let $\Ga < \Gsf$ be a Zariski dense discrete subgroup.
For $\delta \ge 0$ and a linear form $\psi \in \fa^*$, a Borel probability measure $\nu$ on $\mathcal{F}$ is called a $\delta$-dimensional $\psi$-\emph{Patterson--Sullivan measure} of $\Ga$ if
\begin{equation} \label{eqn:PSmeasintro}
\frac{d g_* \nu}{d \nu}(x) = e^{-\delta \cdot \psi( \sigma(g^{-1}, x))} \quad \text{a.e.}
\end{equation}
where $\sigma$ is the Iwasawa cocycle, i.e., $\sigma(g^{-1}, x) \in \fa$ is the unique element satisfying $g^{-1}k \in \Ksf (\exp \sigma(g^{-1}, x)) \Nsf$, for $k \in \Ksf$ with $x = k \Msf$ in $\Fc$. This notion of Patterson--Sullivan measures was first introduced by Quint \cite{Quint2002Mesures}, generalizing the classical Patterson--Sullivan theory to higher rank. We simply call $\nu$ Patterson--Sullivan measure of $\Ga$ if it is $\delta$-dimensional $\psi$-Patterson--Sullivan measure for some $\delta \ge 0$ and $\psi \in \fa^*$.

In \cite{Edwards2020anosov}, Edwards--Lee--Oh extended the classical Burger--Roblin measure to higher rank. For a $\delta$-dimensional $\psi$-Patterson--Sullivan  measure $\nu$ of $\Ga$ on $\mathcal{F}$, the (higher-rank) \emph{Burger--Roblin measure} associated to $\nu$ is the Radon measure $\mu_{\nu}^{\BR}$ on $\Ga \ba \Gsf$ induced by the $\Ga$-invariant measure $\tilde \mu_{\nu}^{\BR}$ on $\Gsf$ defined as follows: for $g = k (\exp u) n \in \Ksf (\exp \fa) \Nsf$ in Iwasawa decomposition of $\Gsf$,
$$
d \tilde \mu_{\nu}^{\BR}(g) := e^{\delta \cdot \psi(u)} d \tilde \nu (k) du dn
$$
where $\tilde \nu$ is the $\Msf$-invariant lift of $\nu$ to $\Ksf$ and $du$ and $dn$ are Lebesgue measures on $\fa$  and $\Nsf$ respectively. The measure $\mu_{\nu}^{\BR}$ is $\Nsf \Msf$-invariant.

Recall that the limit set $\La(\Ga) \subset \Fc$ of $\Ga$ is the unique $\Ga$-minimal subset \cite{Benoist1997proprietes} and 
$$
\E_{\Ga} = \{ [g] \in \Ga \ba \Gsf : g \Psf \in \La(\Ga) \}.
$$
Hence, $\mu_{\nu}^{\BR}$ is supported on $\E_{\Ga}$ if and only if $\nu$ is supported on $\La(\Ga)$, and in this case, the $\Nsf \Msf$-ergodicity was proved by Lee--Oh \cite{LO_invariant}.
As a corollary of Theorem \ref{thm:mainAnosov} and Theorem \ref{thm:mainAnosovN}, we conclude that Burger--Roblin measures are all such ergodic measures.

\begin{corollary} \label{cor:mainAnosov}
    Let $\Ga < \Gsf$ be a Zariski dense Borel Anosov subgroup.
    Then we have the following classifications of $\Nsf\Msf$-ergodic measures and $\Nsf$-ergodic measures.
    \begin{enumerate}
        \item The following  sets are the same, up to constant multiples:
    \begin{itemize}
        \item $\left\{ \mu_{\nu}^{\BR} : \nu \text{ a Patterson--Sullivan measure of $\Ga$ on $\La(\Ga)$}\right\}$.
        \item the set of all $\Nsf \Msf$-invariant ergodic Radon measures on $\E_{\Ga}$.
    \end{itemize}

    \medskip
        \item The following sets are the same, up to constant multiples:
    \begin{itemize}
        \item $\left\{ \mu_{\nu}^{\BR}|_{\E} : \nu \text{ a Patterson--Sullivan measure of $\Ga$ on $\La(\Ga)$, } \E \in  \Dc_{\Ga}\right\}$.
        \item the set of all $\Nsf$-invariant ergodic Radon measures on $\E_{\Ga}$.
    \end{itemize}
\end{enumerate}
\end{corollary}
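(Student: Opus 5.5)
The corollary follows by combining Theorem \ref{thm:mainAnosov} and Theorem \ref{thm:mainAnosovN} with the known ergodicity results. Each part needs two inclusions.

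For (1), take an $\Nsf\Msf$-invariant ergodic Radon measure $\mu$ on $\E_{\Ga}$. By Theorem \ref{thm:mainAnosov}, $\mu$ is a constant multiple of some Burger--Roblin measure $\mu_\nu^{\BR}$, where $\nu$ is a Patterson--Sullivan measure of $\Ga$. Since $\mu_\nu^{\BR}$ is supported on $\E_{\Ga}$, we need $\nu$ to be supported on $\La(\Ga)$. This follows from the definition of $\tilde\mu_\nu^{\BR}$: in Iwasawa coordinates the $\Ksf$-component is distributed according to $\tilde\nu$, and $g\Psf = k\Msf$. Hence $\operatorname{supp}\mu_\nu^{\BR}\subset\E_{\Ga}$ forces $\nu(\Fc\smallsetminus\La(\Ga))=0$.

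Conversely, let $\nu$ be a Patterson--Sullivan measure on $\La(\Ga)$. Then $\mu_\nu^{\BR}$ is an $\Nsf\Msf$-invariant Radon measure supported on $\E_{\Ga}$, by the construction of Edwards--Lee--Oh \cite{Edwards2020anosov}. It is $\Nsf\Msf$-ergodic by Lee--Oh \cite{LO_invariant}, so it belongs to the second set. This gives equality up to constants.

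For (2), the inclusion from $\Nsf$-ergodic measures into the first set is exactly Theorem \ref{thm:mainAnosovN}, together with the support argument above, which shows $\nu$ lives on $\La(\Ga)$. For the reverse inclusion, fix $\nu$ on $\La(\Ga)$ and $\E\in\Dc_{\Ga}$. We must show that $\mu_\nu^{\BR}|_{\E}$ is a nonzero $\Nsf$-invariant ergodic Radon measure on $\E_{\Ga}$. Invariance is clear, because $\E$ is $\Psf^\circ$-invariant and hence $\Nsf$-invariant, as $\Nsf\subset\Psf^\circ$ is connected. Ergodicity and nonvanishing come from the $\Nsf$-ergodic decomposition of Burger--Roblin measures for Borel Anosov subgroups due to Lee--Oh \cite{LO_ergodic}. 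The decomposition states that the $\Nsf$-ergodic components of $\mu_\nu^{\BR}$ are precisely its restrictions to the finitely many $\Psf^\circ$-minimal subsets of $\E_{\Ga}$. Each such subset carries positive measure, since $\Msf$ permutes them transitively and $\mu_\nu^{\BR}$ is $\Msf$-invariant.

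The main point to verify is this identification. Lee--Oh phrase their decomposition in terms of $\Psf^\circ$-minimal subsets of $\E_{\Ga}$, and these coincide with the elements of $\Dc_{\Ga}$. Indeed, any $\Psf^\circ$-minimal subset of $\Ga\ba\Gsf$ lies in the unique $\Psf$-minimal set $\E_{\Ga}$, because $\Psf$ has finitely many $\Psf^\circ$-cosets and minimal sets for a finite-index subgroup sit inside minimal sets of the whole group. With this in hand, both inclusions in (2) hold up to constant multiples, completing the proof.
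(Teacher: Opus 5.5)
Your proposal is correct and follows the paper's route. Part (1) is Theorem \ref{thm:mainAnosov} combined with the $\Nsf\Msf$-ergodicity of \cite{LO_invariant}, and part (2) is Theorem \ref{thm:mainAnosovN} combined with the $\Nsf$-ergodic decomposition \eqref{eqn:Nergodicdecomp} of \cite{LO_ergodic}.

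One small repair is needed in (2). There your support argument does not by itself put $\nu$ on $\La(\Ga)$, because $\mu_\nu^{\BR}|_{\E}$ lives on $\E\subset\E_{\Ga}$ regardless of where $\nu$ lives. You can fix this in either of two ways:
\begin{itemize}
\item Replace $\nu$ by its normalized restriction to the $\Ga$-invariant set $\La(\Ga)$. This is again a Patterson--Sullivan measure with the same cocycle, and it gives the same $\mu_\nu^{\BR}|_{\E}$ up to a constant.
\item Note that the paper's proof of Theorem \ref{thm:mainAnosovN} (via Theorem \ref{thm:hypertransverseNergodic}) already produces a divergence-type $\nu$, which is therefore carried by $\La^{\rm con}(\Ga)$.
\end{itemize}
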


The sets of ergodic measures in Corollary \ref{cor:mainAnosov} can be described more explicitly. 
Denote by $\kappa : \Gsf \to \fa^+$ the Cartan projection, defined by the condition $g \in \Ksf (\exp \kappa(g)) \Ksf$ for all $g \in \Gsf$.
The \emph{limit cone} $\L_{\Ga} \subset \fa^+$ of $\Ga$ is the asymptotic cone of Cartan projections $\kappa(\Ga)$. Benoist showed that if $\Ga$ is Zariski dense, $\L_{\Ga}$ is convex and has non-empty interior \cite{Benoist1997proprietes}. For a Zariski dense Borel Anosov subgroup $\Ga < G$, Lee--Oh classified Patterson--Sullivan measures of $\Ga$ on $\La(\Ga)$ in \cite{LO_invariant}, and provided a natural homeomorphism
\begin{equation} \label{eqn:LObijection}
\Pb(\inte \L_{\Ga}) \longleftrightarrow  \{ \mu_{\nu}^{\BR} : \nu \text{ a Patterson--Sullivan measure of $\Ga$ on $\La(\Ga)$}\}
\end{equation}
constructed using tangencies of the growth indicator of $\Ga$,  introduced by Quint \cite{Quint2002divergence}.
Corollary \ref{cor:mainAnosov} is now rephrased as follows:

\begin{corollary}  \label{cor:mainAnosovrephrase}
    Let $\Ga < \Gsf$ be a Zariski dense Borel Anosov subgroup. Then the homeomorphism in Equation \eqref{eqn:LObijection} becomes a homeomorphism
    $$
    \begin{tikzcd}[column sep = huge]
    \inte \L_{\Ga} \arrow[r, <->] & \left\{
        \ \begin{matrix}
            \Nsf \Msf \text{-invariant ergodic}\\
            \text{non-zero Radon measures on } \E_{\Ga}
        \end{matrix} \ \right\}.
    \end{tikzcd}
    $$
    In particular, the above sets are homeomorphic to $\R^{\rank \Gsf}$.
    
    Moreover, the homeomorphism in Equation \eqref{eqn:LObijection} extends to a homeomorphism
    $$
    \begin{tikzcd}[column sep = huge]
    \left( \inte \L_{\Ga} \right) \times \Dc_{\Ga} \arrow[r, <->] & \left\{
        \ \begin{matrix}
            \Nsf \text{-invariant ergodic}\\
            \text{non-zero Radon measures on } \E_{\Ga}
        \end{matrix} \ \right\}.
    \end{tikzcd}
    $$
    In particular, the above sets are homeomorphic to $\R^{\rank \Gsf} \times \{ 1, \dots, \# \Dc_{\Ga} \}$.
\end{corollary}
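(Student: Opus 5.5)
The plan is to deduce Corollary \ref{cor:mainAnosovrephrase} from Corollary \ref{cor:mainAnosov}, the Lee--Oh homeomorphism \eqref{eqn:LObijection}, and the Lee--Oh $\Nsf$-ergodic decomposition of Burger--Roblin measures.

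For the first homeomorphism, Corollary \ref{cor:mainAnosov}(1) identifies the set of $\Nsf\Msf$-invariant ergodic Radon measures on $\E_{\Ga}$, up to scaling, with $\{\mu_\nu^{\BR}\}$. Composing with \eqref{eqn:LObijection} gives a bijection between $\Pb(\inte \L_{\Ga})$ and these measures modulo scaling. To pass from $\Pb(\inte\L_\Ga)$ to $\inte \L_\Ga$, I will use that a ray $\R_{>0}u\subset \inte\L_\Ga$ parametrizes the scalar multiples $t\,\mu^{\BR}_{\nu_u}$. Concretely, I fix a cross-section of $\inte\L_\Ga\to\Pb(\inte\L_\Ga)$, say $\{u:\|u\|=1\}$, and send $tu\mapsto t\,\mu^{\BR}_{\nu_u}$ for $t>0$. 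This map is a bijection because distinct Patterson--Sullivan measures give mutually singular Burger--Roblin measures, so no two points of $\Pb(\inte\L_\Ga)$ give proportional measures. Continuity in the weak-* (vague) topology on Radon measures follows from continuity of \eqref{eqn:LObijection} and of scalar multiplication. For continuity of the inverse, I will normalize by evaluating on a fixed compactly supported test function $f$ with $\mu^{\BR}_{\nu_u}(f)>0$ for all $u$; positivity holds since every $\mu^{\BR}_{\nu_u}$ has support exactly $\E_\Ga$. Continuity of $u\mapsto\mu^{\BR}_{\nu_u}(f)$ then gives continuity of the scale $t$. Finally, $\inte\L_\Ga$ is an open convex cone with nonempty interior (Benoist), hence homeomorphic to $\R^{\rank\Gsf}$.

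For the second homeomorphism, I will use Corollary \ref{cor:mainAnosov}(2). The point to verify is that the restrictions $\mu^{\BR}_\nu|_{\E}$ for distinct $(\nu,\E)$ are non-proportional and depend continuously on $(\nu,\E)$. Non-proportionality follows from mutual singularity in $\nu$ and from disjointness of the distinct $\Psf^\circ$-minimal sets $\E\in\Dc_\Ga$. Each $\E$ is closed and open in $\E_\Ga$ (Lee--Oh), since the sets in $\Dc_\Ga$ are finitely many, disjoint, closed, and cover $\E_\Ga$. Hence restriction to $\E$ is continuous in the vague topology, and the map $(tu,\E)\mapsto t\,\mu^{\BR}_{\nu_u}|_{\E}$ is a homeomorphism onto its image. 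Its inverse reads off $\E$ from the support and then $(t,u)$ as before, choosing $f$ supported in $\E$.

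I expect the main obstacle to be the topological bookkeeping, not the classification itself. One issue is fixing the topology on the measure space; I take the vague topology, with the normalization convention matching how \eqref{eqn:LObijection} is stated in \cite{LO_invariant}. The other is showing that each $\E$ is clopen in $\E_\Ga$, so that restriction is continuous. If clopenness is not directly available, I will instead use that the restrictions $\mu^{\BR}_\nu|_{\E}$ are $\Nsf$-ergodic components, given by the explicit formulas in \cite{LO_ergodic}, and check continuity from those formulas.
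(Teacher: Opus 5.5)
Your proposal is correct and follows the same route as the paper. The paper obtains this corollary directly by combining Corollary \ref{cor:mainAnosov} with the Lee--Oh homeomorphism \eqref{eqn:LObijection}, and with the Lee--Oh $\Nsf$-ergodic decomposition for the second part, the extra $\R_{>0}$-direction of $\inte\L_\Ga$ accounting for scalar multiples; your normalization by a test function and the clopenness of each $\E\in\Dc_\Ga$ in $\E_\Ga$ just fill in topological details the paper leaves implicit.
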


The role of $\Dc_{\Ga}$ above is to restrict the support of measures.
Combining with the work of Burger--Landesberg--Lee--Oh \cite{BLLO}, our measure classification implies the following.

\begin{corollary} \label{cor:suppondirectional}
    Suppose $\rank \Gsf \le 3$ and let $\Ga < \Gsf$ be a Zariski dense Borel Anosov subgroup. Then the following holds.
    \begin{enumerate}
    \item Any $\Nsf \Msf$-invariant ergodic Radon measure on $\E_{\Ga}$ is supported on $\Rc_{\Ga, v}$ for some $v \in \inte \L_{\Ga}$.
    \item Any $\Nsf$-invariant ergodic Radon measure on $\E_{\Ga}$ is supported on $\Rc_{\Ga, v}$ for some $v \in \inte \L_{\Ga}$.
    \end{enumerate} 
\end{corollary}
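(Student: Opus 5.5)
The corollary should follow by combining the classification in Theorem \ref{thm:mainAnosov} and Theorem \ref{thm:mainAnosovN} (in the form of Corollary \ref{cor:mainAnosov}) with the support result of Burger--Landesberg--Lee--Oh \cite{BLLO}. The argument is a reduction: first identify every ergodic measure as a (restricted) Burger--Roblin measure, then invoke the known fact that such measures live on a directionally recurrent set when $\rank \Gsf \le 3$.

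For (1), let $\mu$ be an $\Nsf\Msf$-invariant ergodic Radon measure on $\E_{\Ga}$. By Theorem \ref{thm:mainAnosov}, $\mu$ is a constant multiple of $\mu^{\BR}_{\nu}$ for some Patterson--Sullivan measure $\nu$ of $\Ga$ supported on $\La(\Ga)$. By the Lee--Oh parametrization \eqref{eqn:LObijection}, $\nu$ corresponds to a direction $v \in \inte \L_{\Ga}$. Concretely, $\nu$ is the unique $\psi_v$-Patterson--Sullivan measure on $\La(\Ga)$, where $\psi_v$ is the linear form tangent to Quint's growth indicator at $v$.

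Next I would cite \cite{BLLO}. For $\rank \Gsf \le 3$ and a Zariski dense Borel Anosov $\Ga$, they show that the Burger--Roblin measure $\mu^{\BR}_{\nu_v}$ is supported on $\Rc_{\Ga, v}$, meaning the complement is null. Their argument rests on recurrence of the $\exp(\R v)$-flow for the Bowen--Margulis--Sullivan measure, which holds because the relevant random walk on $\Z^{\rank \Gsf -1}$ is recurrent exactly when $\rank \Gsf - 1 \le 2$. They then transfer this to the $\Nsf$-direction via the product structure of the Burger--Roblin measure. This gives (1).

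For (2), take an $\Nsf$-invariant ergodic $\mu$. By Theorem \ref{thm:mainAnosovN}, $\mu$ is a constant multiple of $\mu^{\BR}_{\nu}|_{\E}$ for some $\E \in \Dc_{\Ga}$. Since $\mu$ is absolutely continuous with respect to $\mu^{\BR}_{\nu}$, and the latter gives zero mass to $(\Ga\ba\Gsf) \smallsetminus \Rc_{\Ga,v}$, the same holds for $\mu$.

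The main obstacle is bookkeeping: one must check that the statement in \cite{BLLO} is really formulated for arbitrary semisimple $\Gsf$ of rank at most $3$, and not only for products of rank one groups. One must also check that "supported on" means full measure rather than closed support. If \cite{BLLO} covers only products, I would instead rerun their argument. Its inputs are the local mixing/recurrence of the directional flow on the BMS measure, which is known for Anosov subgroups in general via Lee--Oh's reduction to a $\Z^{r-1}$-cocycle over a suspension flow, together with the Hopf-type transfer from BMS to BR measures along stable leaves. Neither input uses the product structure.
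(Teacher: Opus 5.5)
Your proposal is correct and follows the paper's route. The paper deduces this corollary in one line by combining Corollary \ref{cor:mainAnosov} with the theorem of \cite{BLLO}: the corollary says every $\Nsf\Msf$- or $\Nsf$-ergodic measure on $\E_{\Ga}$ is a multiple of $\mu_{\nu}^{\BR}$ or $\mu_{\nu}^{\BR}|_{\E}$, with $\nu$ attached to a direction $v \in \inte \L_{\Ga}$ via the Lee--Oh parametrization, and \cite{BLLO} gives that $\mu_{\nu}^{\BR}$ has full mass on $\Rc_{\Ga, v}$ when $\rank \Gsf \le 3$. The paper invokes \cite{BLLO} in exactly this generality (arbitrary semisimple $\Gsf$, not only products), so your fallback of re-running their argument is not needed.
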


Note that $\E_{\Ga}$ is $\Psf^{\circ}$-minimal when $\Gsf$ is a product of rank one groups. Then Corollary \ref{cor:suppondirectional} resolves Problem \ref{ques:LLLO}, the open problem proposed by Landesberg--Lee--Lindenstrauss--Oh \cite[Open problem 1.8]{LLLO_Horospherical}. Similarly, as Corollary \ref{cor:mainAnosov} and Corollary \ref{cor:mainAnosovrephrase} do not have any rank assumption, they resolve Problem \ref{ques:Anosov}, which was proposed by Oh in \cite[Section 8.2]{oh2025dynamics}.

\subsection{Relatively Borel Anosov subgroups}
 
Before discussing the most general version of our measure classification, we state the classification for relatively Borel Anosov subgroups, due to its specific feature distinguished from Anosov subgroups and the most general case.

Relatively Anosov subgroups are higher-rank generalization of geometrically finite subgroups in rank one. A discrete subgroup $\Ga < \Gsf$ is called \emph{relatively Borel Anosov} if
\begin{itemize}
    \item for any infinite sequence $\{g_n\}_{n \in \Nb} \subset \Ga$ and a simple root $\alpha$,
    $$
    \alpha(\kappa(g_n)) \to + \infty \quad \text{as } n \to + \infty,
    $$
    \item $\Ga$ is a relatively hyperbolic group with respect to some peripheral structure, and
    \item there exists a $\Ga$-equivariant embedding $f : \partial_B \Ga \to \Fc$ such that any distinct $x, y \in \partial_B \Ga$, $f(x)$ and $f(y)$ are in general position, where $\partial_B \Ga$ is the associated Bowditch boundary.
\end{itemize}

For Zariski dense relatively Borel Anosov subgroups, the $\Nsf \Msf$-ergodicity of Burger--Roblin measures was proved by the second author \cite{kim2024conformal}. As a special case of our result, we obtain that those are essentially all possible ergodic measures.

\begin{theorem} \label{thm:mainrelAnosov}
Let $\Ga < \Gsf$ be a Zariski dense relatively Borel Anosov subgroup. If $\mu$ is an $\Nsf \Msf$-invariant ergodic Radon measure on $\E_{\Ga}$, then either
\begin{enumerate}
    \item $\mu$ is a constant multiple of a Burger--Roblin measure of $\Ga$, or
    \item $\mu$ is supported on a closed $\Nsf \Msf$-orbit in $\E_{\Ga}$.
\end{enumerate}
\end{theorem}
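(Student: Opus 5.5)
Our plan is to lift $\mu$ to an $\Nsf\Msf$-invariant, left $\Ga$-invariant Radon measure $\tilde\mu$ on $\Gsf$ supported on $\{g : g\Psf \in \La(\Ga)\}$, and to disintegrate it along the fibration $\Gsf \to \Gsf/\Nsf\Msf \cong \Fc \times \fa$, using $g = k(\exp u)n$. Since $\tilde\mu$ is $\Nsf\Msf$-invariant, it is of the form $d\tilde\mu = d\lambda(k\Msf, u)\, dn$ for a Radon measure $\lambda$ on $\La(\Ga)\times\fa$ that is $\Ga$-quasi-invariant through the Iwasawa cocycle: $\ga$ acts by $(x,u)\mapsto(\ga x, u+\sigma(\ga,x))$. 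The measure classification is thus reduced to classifying $\Ga$-invariant Radon measures $\lambda$ on $\La(\Ga)\times\fa$ for this skew action, ergodic under $\Ga$. The goal is to show that either $\lambda$ is $\fa$-quasi-invariant of exponential type, $d\lambda = e^{\delta\psi(u)}d\nu\,du$ (which yields a Burger--Roblin measure with $\nu$ a Patterson--Sullivan measure, by \eqref{eqn:PSmeasintro}), or $\lambda$ is carried by a single $\Ga$-orbit of a point $(\xi,u)$ with $\xi$ a parabolic fixed point. The latter corresponds to a closed $\Nsf\Msf$-orbit, since the stabilizer of $\xi$ in $\Ga$ acts on the horosphere based at $\xi$ cocompactly with trivial translation in $\fa$.

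First, we dispose of atoms: if the $\La(\Ga)$-marginal of $\lambda$ charges a point $\xi$, then by ergodicity $\lambda$ lives on $\Ga\cdot(\{\xi\}\times\fa)$. Local finiteness of $\mu$ forces $\xi$ to have infinite stabilizer acting with bounded translation. Using the relatively hyperbolic structure and the Borel Anosov embedding of the Bowditch boundary, such $\xi$ must be bounded parabolic points, and the $\Nsf\Msf$-orbit is closed, which gives case (2). Otherwise, for $\lambda$-a.e.\ $x$, $x$ is a conical limit point. This uses that the non-conical points of a relatively Anosov group are exactly the countably many parabolic points.

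The main step is to show that $\lambda$ is quasi-invariant under translations by a finite-index lattice of $\fa$, and then under all of $\fa$, with an exponential Radon--Nikodym derivative. For a conical point $x$ one has $\ga_n\in\Ga$ with $\ga_n^{-1}x$ staying in a compact set of good position and $\sigma(\ga_n^{-1},x)$ tracking $-\kappa(\ga_n)$ up to bounded error. Using $\Ga$-invariance, we transport $\lambda$ near $x$ to a fixed compact window and compare the translates $\lambda(\cdot + v)$ with $\lambda$. Since $\Ga$ is Zariski dense, the Jordan projections generate a dense subgroup of $\fa$ (Benoist), so the set of $v$ for which translation-quasi-invariance holds is a closed subgroup containing a dense set, hence all of $\fa$. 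This is the obstacle I expect to be hardest: making the shadow/conical argument uniform so that limits of the rescaled measures exist, are Radon, and detect exact translation-invariance. I would attempt it with a geometric shadow lemma that controls $\lambda(O_R(\ga o)\times B(u,1))$ for shadows $O_R(\ga o)$ uniformly in $\ga$, rather than any ergodic theorem.

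Once we know $\lambda$ is $\fa$-quasi-invariant, the Radon--Nikodym cocycle $c(v)$ is a continuous homomorphism $\fa\to\R_{>0}$ by $\Ga$-ergodicity, hence $c(v)=e^{\phi(v)}$ for some $\phi\in\fa^*$. It follows that $\lambda = e^{\phi(u)}\nu\otimes du$, and the $\Ga$-invariance of $\lambda$ translates exactly into \eqref{eqn:PSmeasintro} for $\nu$ with $\delta\psi=\phi$. So $\mu=\mu^{\BR}_\nu$ up to a constant, which gives case (1).
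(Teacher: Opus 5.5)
Your outer architecture matches the paper's: dualize to the $\Ga$-action on $\Hor_{\Delta}=\Fc\times\fa$, separate conical from parabolic limit points, prove quasi-invariance under translation by Jordan projections, and conclude with Benoist's density theorem and the Aaronson--Nakada--Sarig--Solomyak basic lemma. The gap is that the step carrying all the content, namely $T_{\la(\varphi)}^*\lambda\sim\lambda$ for a loxodromic $\varphi\in\Ga$, is only announced, and the tool you propose cannot deliver it. Conicality and Quint-type shadow estimates give $\sigma(\ga_n^{-1},x)=-\kappa(\ga_n)+O(1)$, with an error that is bounded but not small. Likewise, a shadow lemma for $\lambda(O_R(\ga o)\times B(u,1))$ holds only up to multiplicative constants on unit boxes. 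Such information is blind to the structure of $\lambda$ along $\fa$ below the scale of the error, whereas $T_v^*\lambda\sim\lambda$ is a statement about all Borel sets. Transporting mass by the $\ga_n$ only gives coarse comparisons such as $\lambda(E)\lesssim\lambda(\Nc_{c}(E+v))$, and limits of rescaled measures say nothing about $\lambda$ itself. This coarseness (geodesics in $Z$ are only contracting, not squeezing) is exactly the obstacle the paper has to overcome. Note also that in your sketch the Jordan projections enter only through density, never through an actual translation of $\lambda$.

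The missing idea is to find elements of $\Ga$ that act on a prescribed piece of $\lambda$ as translation by $\la(\varphi)$ up to an \emph{arbitrarily small} error; these are the conjugates $g\varphi g^{-1}$. The paper does this in three steps.

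(i) Theorem \ref{thm:radonCharge}: any $\Ga$-ergodic Radon measure on $\Rc_{\Ga,\Delta}$ is carried by $\La^{\varphi,C}(\Ga)\times\fa$, the points whose rays follow translates $g[z_0,\varphi^n z_0]$ for every large $n$. This is proved using the extension lemma, an at most $M$-to-one map $\xi\mapsto g_\xi a_\xi\varphi^n a_\xi^{-1}g_\xi^{-1}\xi$ into a set of finite measure, and ergodicity.

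(ii) Theorem \ref{thm:localregularIwasawa} and Corollary \ref{cor:shadow_use}(2): if $x$ and $g\varphi g^{-1}x$ lie in $O_R(z_0,gz_0)\cap O_R(gz_0,g\varphi^nz_0)$ with $n\ge N(\epsilon)$, then $\norm{\sigma(g\varphi g^{-1},x)-\la(\varphi)}<\epsilon$, uniformly in $g$. Writing $\sigma(g\varphi g^{-1},x)=\sigma(g,\varphi g^{-1}x)+\sigma(\varphi,g^{-1}x)-\sigma(g,g^{-1}x)$, the two large terms cancel up to an error tending to $0$ uniformly in $g$. This uses the $g$-independent Lipschitz bound of Proposition \ref{prop:locLipIwasawa} (from Tits representations), since $g^{-1}x$ and $\varphi g^{-1}x$ both tend to $\varphi^+$.

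(iii) A Vitali-type disjoint selection from the neighborhood basis $U_C(g;\varphi,n)$ of guided points, applying $g_{i(l)}\varphi g_{i(l)}^{-1}$ on each piece, yields $\mu(O\times\Nc_{\epsilon}(I+\la(\varphi)))\ge\mu(K\times I)$. Hence $T_\varphi^*\mu\ge\mu$, and ergodicity makes $dT_\varphi^*\mu/d\mu$ constant.

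There are also two lesser points. First, your claim that local finiteness forces an atom $\xi$ to have infinite stabilizer is unproved. It is also unnecessary: the argument above applies to every ergodic measure carried by $\Rc_{\Ga,\Delta}$, atomic or not, so the right split is conical versus parabolic. Second, in higher rank, closedness of $\Ga\cdot(\xi,u)$ for parabolic $\xi$ does not follow from cocompactness of $\Ga_\xi$ and trivial translation alone (compare Remark \ref{rmk:bddIwasawa}). The paper proves it in Proposition \ref{prop:closedorbitsparabolic}, using proper discontinuity of $\Ga$ on $\Omega_{\Delta}(\Ga)$ (Theorem \ref{thm:KOWPD}) together with the compact-by-unipotent structure of $\Ga_\xi$.
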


The second author proved the $\Nsf$-ergodic decomposition for Burger--Roblin measures in \cite{kim2024conformal}. Using this, we deduce the classification of $\Nsf$-invariant ergodic measures.

\begin{theorem} \label{thm:mainrelAnosovN}
Let $\Ga < \Gsf$ be a Zariski dense relatively Borel Anosov subgroup. If $\mu$ is an $\Nsf$-invariant ergodic Radon measure on $\E_{\Ga}$, then either
\begin{enumerate}
    \item $\mu$ is a constant multiple of $\mu^{\BR}|_{\E}$ for a Burger--Roblin measure $\mu^{\BR}$ of $\Ga$ and $\E \in \Dc_{\Ga}$, or
    \item $\mu$ is supported on a closed $\Nsf \Msf$-orbit in $\E_{\Ga}$.
\end{enumerate}
In particular, when $\E_{\Ga}$ is $\Psf^{\circ}$-minimal, we have that $\mu$ is a constant multiple of $\mu^{\BR}$   in case (1) above.
\end{theorem}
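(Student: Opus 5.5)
The plan is to deduce this from Theorem \ref{thm:mainrelAnosov} together with the $\Nsf$-ergodic decomposition of Burger--Roblin measures from \cite{kim2024conformal}. Let $\mu$ be an $\Nsf$-invariant ergodic Radon measure on $\E_{\Ga}$. First I average over the compact group $\Msf$: since $\Msf$ normalizes $\Nsf$, the measure $\bar\mu := \int_{\Msf} m_*\mu \, dm$ is $\Nsf\Msf$-invariant. It is Radon because $\Msf$ is compact and $\mu$ is Radon. Each $m_*\mu$ is again $\Nsf$-ergodic.

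Next I show $\bar\mu$ is $\Nsf\Msf$-ergodic. Let $B$ be an $\Nsf\Msf$-invariant set. By $\Nsf$-ergodicity, $B$ is $\mu$-null or $\mu$-conull. Since $B$ is $\Msf$-invariant, $m_*\mu(B)=\mu(m^{-1}B)=\mu(B)$, and similarly for the complement. So the same dichotomy holds for every $m_*\mu$, and therefore for $\bar\mu$.

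Now Theorem \ref{thm:mainrelAnosov} gives two cases.

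In the first case, $\bar\mu$ is supported on a closed $\Nsf\Msf$-orbit. Then $\mu \le \bar\mu$ up to absolute continuity: an $\Msf$-invariant $\bar\mu$-null set is $\mu$-null, and the closed orbit $x\Nsf\Msf$ is $\Msf$-invariant with $\mu$-null complement. So $\mu$ is supported on that closed $\Nsf\Msf$-orbit, which is conclusion (2).

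In the second case, $\bar\mu = c\,\mu^{\BR}_\nu$ for some Patterson--Sullivan measure $\nu$ on $\La(\Ga)$. Here I invoke the $\Nsf$-ergodic decomposition from \cite{kim2024conformal}: $\mu^{\BR}_\nu=\sum_{\E\in\Dc_{\Ga}} \mu^{\BR}_\nu|_{\E}$, where each $\mu^{\BR}_\nu|_{\E}$ is $\Nsf$-ergodic and the finitely many $\E$ are permuted transitively by $\Msf$. Each $m_*\mu$ is absolutely continuous with respect to $\bar\mu$ ($\bar\mu$ is $\Msf$-invariant and $\mu\ll\bar\mu$ as above). Hence $\mu \ll \mu^{\BR}_\nu$, so $\mu(\E)>0$ for some $\E\in\Dc_\Ga$. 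Each $\E$ is $\Psf^\circ$-invariant, hence $\Nsf$-invariant, so ergodicity forces $\mu$ to be supported on a single $\E$. Then $\mu \ll \mu^{\BR}_\nu|_{\E}$, and the Radon–Nikodym derivative $d\mu/d\mu^{\BR}_\nu|_{\E}$ is $\Nsf$-invariant. By $\Nsf$-ergodicity of $\mu^{\BR}_\nu|_{\E}$ it is an a.e. constant, giving conclusion (1).

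The last assertion is immediate: if $\E_\Ga$ is $\Psf^\circ$-minimal, then $\Dc_\Ga=\{\E_\Ga\}$ and $\mu^{\BR}|_{\E}=\mu^{\BR}$.

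The main obstacle is precisely citing the decomposition from \cite{kim2024conformal} in the form needed: that $\mu^{\BR}_\nu|_{\E}$ is $\Nsf$-ergodic for every $\E\in\Dc_\Ga$, and that the Radon property survives the various restrictions. Everything else is soft measure theory.
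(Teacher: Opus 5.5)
Your architecture matches the paper's. The paper deduces this statement from Theorem~\ref{thm:hypertransverseNergodic}, whose proof is exactly $\Msf$-averaging, then the $\Nsf\Msf$-classification, then the $\Nsf$-ergodic decomposition $\mu^{\BR}_\nu=\sum_{\E\in\Dc_\Ga}\mu^{\BR}_\nu|_{\E}$. Going through Theorem~\ref{thm:mainrelAnosov} instead is actually better, because it also covers measures living over the parabolic limit set. Theorem~\ref{thm:hypertransverseNergodic}, stated only for measures on $\Rc_\Ga$, does not cover them. Your averaging, ergodicity and closed-orbit steps are correct, since there you only use $\Msf$-invariant sets.

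\textbf{The gap} is the claim ``$\mu\ll\bar\mu$ as above''. What you showed above is only that $\Msf$-\emph{invariant} $\bar\mu$-null sets are $\mu$-null. For a general Borel set $B$, $\bar\mu(B)=\int_{\Msf}\mu(Bm^{-1})\,dm=0$ gives $\mu(Bm^{-1})=0$ only for almost every $m$, not necessarily for $m=e$. Absolute continuity with respect to an $\Msf$-average is false in general: a Dirac mass on a circle is singular to its rotation average. So this step genuinely needs the $\Nsf$-ergodicity of the pieces of $\bar\mu$, i.e.\ uniqueness of the ergodic decomposition, which is what the paper appeals to at the corresponding step. Your conclusion $\mu\ll\mu^{\BR}_\nu|_{\E}$ and the Radon--Nikodym argument rest on this unproved claim.

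The gap can be closed directly.
\begin{enumerate}
\item The sets $\E\in\Dc_\Ga$ are disjoint, closed, $\Nsf$-invariant and cover $\E_\Ga$. So ergodicity alone puts $\mu$ on a single $\E_0$, with no absolute continuity needed.
\item Let $\Msf_0=\{m\in\Msf:\E_0m=\E_0\}$, an open subgroup. Right translation by elements of $\Msf$ permutes $\Dc_\Ga$, so $\lambda_0:=\bar\mu|_{\E_0}=\int_{\Msf_0}m_*\mu\,dm$. By the decomposition result, $\lambda_0$ is $\Nsf$-ergodic.
\item For each Borel $E\subset\Msf_0$, the measure $\int_E m_*\mu\,dm$ is $\Nsf$-invariant and bounded by $\lambda_0$. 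Its Radon--Nikodym derivative is therefore $\Nsf$-invariant, so the measure equals $t_E\lambda_0$ for some constant $t_E$.
\item Evaluating on $f,g\in C_c(\Ga\ba\Gsf)$ gives $\lambda_0(g)\,(m_*\mu)(f)=\lambda_0(f)\,(m_*\mu)(g)$ for almost every $m\in\Msf_0$. Since $m\mapsto(m_*\mu)(f)$ is continuous and Haar measure has full support on $\Msf_0$, this holds for every $m\in\Msf_0$.
\item Take $m=e$ and $g$ with $\lambda_0(g)>0$. This yields $\mu=c\,\lambda_0=c'\,\mu^{\BR}_\nu|_{\E_0}$, which is conclusion (1).
\end{enumerate}
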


As a relative version of Hitchin subgroups in Example \ref{ex:Hitchin}, there is a notion of cusped Hitchin subgroups, and they are relatively Borel Anosov subgroups of $\PSL(d, \Rb)$, $d \ge 3$. See the work of Canary--Zhang--Zimmer \cite{CZZ_cusped}, and Canary's article \cite{Canary_ICM} for the Proceedings of the ICM 2026 for more details and comprehensive expositions.

\subsection{Measure classification in the full generality}

In the rest of the introduction, we present the most general version of our horospherical invariant measure classification result.

\subsubsection{Horospherical foliations}
Let $\Delta$ be the set of all simple roots associated to $\fa^+$. For a non-empty subset $\theta \subset \Delta$, let $\Psf_{\theta} < \Gsf$ be the standard parabolic subgroup corresponding to $\theta$, i.e., $\Psf_{\theta}$ is generated by $\Msf \Asf$ and all root subgroups for all positive roots and any negative root which is a $\Zb$-linear combination of $\Delta \smallsetminus \theta$. Let $\Nsf_{\theta} < \Psf_{\theta}$ be the unipotent radical of $\Psf_{\theta}$ and let $\Lsf_{\theta} < \Psf_{\theta}$ be a Levi subgroup with the Levi decomposition $\Psf_{\theta} = \Lsf_{\theta} \Nsf_{\theta}$. Then $\Lsf_{\theta} = (\exp \fa_{\theta}) \Ssf_{\theta}$, where $\fa_{\theta} := \bigcap_{\alpha \in \Delta \smallsetminus \theta} \ker \alpha$ and $\Ssf_{\theta}$ is an almost direct product of a connected semisimple real algebraic subgroup and a compact subgroup. 

Using them, we define the \emph{$\theta$-horospherical foliation} of $\Gsf$ by
$$
\Hor_{\theta} := \Gsf / \Nsf_{\theta} \Ssf_{\theta}
$$ 
so that each element of $\Hor_{\theta}$ corresponds to a single $\Nsf_{\theta} \Ssf_{\theta}$-orbit in $\Gsf$. Then $\Gsf$ acts on $\Hor_{\theta}$ by left multiplication.

When $\theta = \Delta$, we have $\Psf_{\Delta} = \Psf$, $\Nsf_{\Delta} = \Nsf$, and $\Ssf_{\Delta} = \Msf$. In particular, in this case, the $\Ga$-action on $\Hor_{\Delta}$  for a discrete $\Ga < \Gsf$ is dual to the $\Nsf \Msf$-action on $\Ga \ba \Gsf$.  Hence, the $\theta$-horospherical foliation of $\Gsf$ generalizes the picture of the horospherical action on homogeneous spaces.

\subsubsection{Burger--Roblin measures}

We now define Burger--Roblin measures on $\Hor_{\theta}$, employing the following identification of $\Hor_{\theta}$. Let 
$$
\Fc_{\theta} := \Gsf / \Psf_{\theta}
$$
be the $\theta$-boundary of $\Gsf$.
Then there exists a homeomorphism
$$
\Hor_{\theta} \to \Fc_{\theta} \times \fa_{\theta}
$$
so that the $\Gsf$-action on $\Hor_{\theta}$ descends to the $\Gsf$-action on $\Fc_{\theta} \times \fa_{\theta}$ given by
$$
g \cdot (x, u) := (gx, u + \sigma_{\theta}(g, x)) \quad \text{for } g \in \Gsf, (x, u) \in \Fc_{\theta} \times \fa_{\theta},
$$
where $\sigma_{\theta}$ is the partial Iwasawa cocycle. See Section \ref{subsec:Iwasawa} and Section \ref{subsec:horosphericalfol} for details.

Let $\Ga < \Gsf$ be a discrete subgroup. For $\delta \ge 0$ and a linear form $\psi \in \fa_{\theta}^*$, a $\delta$-dimensional $\psi$-Patterson--Sullivan measure $\nu$ of $\Ga$ on $\Fc_{\theta}$ is defined similarly as in Equation \eqref{eqn:PSmeasintro} (see Eequation \eqref{eqn:PSdefbody} for the precise definition). Then the associated \emph{Burger--Roblin measure} on $\Hor_{\theta}$ is the $\Ga$-invariant measure defined by
$$
d \mu_{\nu}^{\BR} (x, u) : = e^{\delta \cdot \psi(u)} d \nu(x) du
$$
abusing notations, using the identification $\Hor_{\theta} = \Fc_{\theta} \times \fa_{\theta}$. Note also that $\Asf_{\theta}$ acts on $\Hor_{\theta} = \Gsf / \Nsf_{\theta} \Ssf_{\theta}$ by right multiplication. This descends to the translation action on the $\fa_{\theta}$-component of $\Fc_{\theta} \times \fa_{\theta}$, and the measure $\mu_{\nu}^{\BR}$ is quasi-invariant under this action.

In our discussion, Burger--Roblin measures associated to Patterson--Sullivan measure of divergence type are main objects.
The critical exponent of $\psi \in \fa_{\theta}^*$ is defined by
$\delta_{\psi}(\Ga) := \inf \{ s > 0 : \sum_{g \in \Ga} e^{-s \psi(\kappa(g))} < + \infty\}$
where we identify $\fa_{\theta}^*$ with a subspace of $\fa^*$, by precomposing the orthogonal projection $\fa \to \fa_{\theta}$. We call a $\delta$-dimensional $\psi$-Patterson--Sullivan measure $\nu$ on $\Fc_{\theta}$ \emph{divergence-type} if $\delta = \delta_{\psi}(\Ga)$ and $\sum_{g \in \Ga} e^{-\delta_{\psi}(\Ga) \psi(\kappa(g))} = + \infty$.

\subsubsection{Hypertransverse subgroups}

A class of discrete subgroups we consider in this paper is called hypertransverse subgroups, introduced in \cite{kim2024conformal}. This notion extends rank one discrete subgroups and includes Anosov subgroups, relatively Anosov subgroups, and their subgroups. In addition, hypertransverse subgroups are special types of transverse subgroups, which are introduced and studied by Canary--Zhang--Zimmer \cite{CZZ_transverse}.

We denote by $\opp : \Delta \to \Delta$ the opposition involution, i.e., $\alpha(\kappa(g^{-1})) = \opp(\alpha)(\kappa(g))$ for all $g \in \Gsf$ and $\alpha \in \Delta$. A Zariski dense discrete subgroup $\Ga < \Gsf$ is called \emph{$\Psf_{\theta}$-transverse} if
\begin{itemize}
    \item for any infinite sequence $\{g_n\}_{n \in \Nb} \subset \Ga$ and $\alpha \in \theta$,
    $$\alpha(\kappa(g_n)) \to + \infty \quad \text{as } n \to + \infty,$$
    and
    \item any distinct $x, y \in \La_{\theta \cup \opp(\theta)}(\Ga) \subset \Fc_{\theta \cup \opp(\theta)}$ are in general position, i.e., the diagonal $\Gsf$-orbit $\Gsf \cdot (x, y)$ is open in $\Fc_{\theta \cup \opp(\theta)} \times \Fc_{\theta \cup \opp(\theta)}$. 
\end{itemize}
A $\Psf_{\theta}$-transverse $\Ga < \Gsf$ is called \emph{$\Psf_{\theta}$-hypertranseverse} if we further have that
\begin{itemize}
    \item there exists a proper geodesic Gromov hyperbolic space $Z$ on which $\Ga$ acts properly discontinuously by isometries, and
    \item there exists a $\Ga$-equivariant homeomorphsim $\La_Z(\Ga) \to \La_{\theta}(\Ga)$ where $\La_Z(\Ga) \subset \partial Z$ is the limit set of $\Ga$.
\end{itemize}

When $\Ga$ is a hyperbolic group and $Z$ can be taken as the Cayley graph of $\Ga$, $\Ga$ is called \emph{$\Psf_{\theta}$-Anosov}. When $\Ga$ is a relatively hyperbolic group and $Z$ can be taken as the Gromov model of $\Ga$, $\Ga$ is called \emph{relatively $\Psf_{\theta}$-Anosov}.

An important property of $\Psf_{\theta}$-hypertransverse $\Ga$ is that the $\Ga$-action on $\La_{\theta}(\Ga)$ is a convergence action, which also holds for transeverse subgroups \cite{KLP_Anosov}. Hence, the notion of \emph{conical limit set} $\La_{\theta}^{\rm con}(\Ga) \subset \La_{\theta}(\Ga)$ can be defined in terms of the convergence action. If $\Ga$ is $\Psf_{\theta}$-Anosov, then $\La_{\theta}^{\rm con}(\Ga) = \La_{\theta}(\Ga)$. If $\Ga$ is relatively $\Psf_{\theta}$-Anosov, then $\La_{\theta}(\Ga)$ is a disjoint union of $\La_{\theta}^{\rm con}(\Ga)$ and its \emph{parabolic limit set}, which we denote by $\La_{\theta}^{\rm p}(\Ga) \subset \La_{\theta}(\Ga)$.

\subsubsection{Measure classification}

For a Zariski dense $\Psf_{\theta}$-hypertransverse $\Ga < \Gsf$, we define the \emph{recurrence locus} by 
$$
\Rc_{\Ga, \theta} := \left\{ [g] \in \Hor_{\theta} : 
\begin{matrix}
    \exists \text{ infinite sequences } \{g_n\}_{n \in \Nb} \subset \Ga, \{u_n\}_{n \in \Nb} \subset \fa^+ \cap \fa_{\theta} \\
     \text{s.t. the sequence } g_n [g] \exp u_n \in \Hor_{\theta} \text{ is precompact}.
\end{matrix}
\right\}
$$
Under the identification $\Hor_{\theta} = \Fc_{\theta} \times \fa_{\theta}$, we have
$$
\Rc_{\Ga, \theta} = \La_{\theta}^{\rm con}(\Ga) \times \fa_{\theta}
$$
(see \cite[Lemma 5.5]{KOW_SF}), and when $\theta = \Delta$, the recurrence locus $\Rc_{\Ga, \Delta}$  corresponds to the set of elements in $\Ga \ba \Gsf$ whose $\exp \fa^+$-orbits are recurrent to compact subsets, which is also a subset of $\E_{\Ga}$. Note that in this sense, $\Rc_{\Ga, \Delta}$  contains all directional recurrent set $\Rc_{\Ga, v}$, $v \in \inte \fa^+$, considered in \cite{LLLO_Horospherical}.

We now state the most general version of our measure classification result.
By the higher-rank Hopf--Tsuji--Sullivan dichotomy for transverse subgroups (\cite{CZZ_transverse}, \cite{KOW_PD}), the Burger--Roblin measure $\mu_{\nu}^{\BR}$ is supported on $\mathcal{R}_{\Ga, \theta}$ for a divergence-type Patterson--Sullivan measure $\nu$ of $\Ga$ on $\Fc_{\theta}$. Moreover, in this case, the second author showed the $\Ga$-ergodicity of $\mu_{\nu}^{\BR}$ in \cite{kim2024conformal}. It turns out that they are the only ergodic measures.

\begin{theorem} \label{thm:maintransverse}
    Let $\Ga < \Gsf$ be a Zariski dense $\Psf_{\theta}$-hypertransverse subgroup. Then the following sets are the same, up to constant multiples:
    \begin{enumerate}
        \item $\{ \mu_{\nu}^{\BR} : \nu \text{ a divergence-type Patterson--Sullivan measure of $\Ga$ on $\Fc_{\theta}$}\}$.
        \item the set of all $\Ga$-invariant ergodic Radon measures on $\mathcal{R}_{\Ga, \theta}$.
    \end{enumerate}
\end{theorem}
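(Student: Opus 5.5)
The inclusion of (1) in (2) is already available. For a divergence-type Patterson--Sullivan measure $\nu$ on $\Fc_{\theta}$, the higher-rank Hopf--Tsuji--Sullivan dichotomy (\cite{CZZ_transverse}, \cite{KOW_PD}) gives $\nu(\La_{\theta}^{\rm con}(\Ga)) = 1$. Hence $\mu_{\nu}^{\BR}$ is supported on $\La_{\theta}^{\rm con}(\Ga) \times \fa_{\theta} = \Rc_{\Ga,\theta}$. Its $\Ga$-ergodicity is \cite{kim2024conformal}. So the work is the reverse inclusion: every $\Ga$-invariant ergodic Radon measure $\mu$ on $\Rc_{\Ga,\theta}$ is a multiple of some $\mu_{\nu}^{\BR}$.

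The plan is to show that $\mu$ is \emph{quasi-invariant} under the translation action of $\fa_{\theta}$ on $\Hor_{\theta} = \Fc_{\theta}\times\fa_{\theta}$, with an exponential cocycle.
\begin{enumerate}
\item For each $u\in\fa_{\theta}$, the translate $(\tau_u)_*\mu$ is again $\Ga$-invariant, ergodic and Radon on $\Rc_{\Ga,\theta}$, since the $\fa_{\theta}$-action commutes with $\Ga$ and preserves $\Rc_{\Ga,\theta}$. So either $(\tau_u)_*\mu = c(u)\mu$ or the two measures are mutually singular.
\item The central claim is that singularity cannot occur. I will show that $(\tau_u)_*\mu \ll \mu$ for all $u$, with a Radon--Nikodym derivative that is $\Ga$-invariant and hence constant by ergodicity.
\item Granting this, $c:\fa_{\theta}\to\R_{>0}$ is a measurable homomorphism, hence continuous, so $c(u) = e^{-\phi(u)}$ for a linear form $\phi$.
\item Disintegrate $\mu$ along the $\fa_{\theta}$-fibers. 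Translation quasi-invariance forces $d\mu(x,u) = e^{\phi(u)}\,d\nu(x)\,du$ for a Radon measure $\nu$ on $\La_{\theta}^{\rm con}(\Ga)$.
\item $\Ga$-invariance of $\mu$, using the action $g\cdot(x,u) = (gx, u+\sigma_{\theta}(g,x))$, turns into the conformality relation $\frac{dg_*\nu}{d\nu}(x) = e^{-\phi(\sigma_{\theta}(g^{-1},x))}$. Thus $\nu$ is a Patterson--Sullivan measure with $\delta\psi = \phi$. Since $\nu$ gives full mass to the conical limit set, the dichotomy forces $\delta = \delta_{\psi}(\Ga)$ and divergence type, so $\mu = \mu_{\nu}^{\BR}$.
\end{enumerate}

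The main obstacle is step 2, the quasi-invariance. I would prove it geometrically, using conicality and the hypertransverse structure, as the abstract suggests; I would not use ergodic theorems. The idea is a shadow-lemma-type comparison. For $\mu$-typical $(x,u)$ with $x$ conical, there are $g_n\in\Ga$ with $g_n(x,u)\exp u_n$ staying in a compact set $Q\subset\Hor_{\theta}$. Transport a small neighbourhood (a product of a shadow $O_n\subset\Fc_{\theta}$ and an interval in $\fa_{\theta}$) by $g_n$ to a bounded region near $Q$. On that region, $g_n$ distorts the fiber coordinate by $\sigma_{\theta}(g_n,\cdot)$, and this is nearly constant on $O_n$: the convergence action on $\La_{\theta}(\Ga)$, the contraction $\alpha(\kappa(g_n))\to\infty$ for $\alpha\in\theta$, and transversality give uniformly bounded variation of the cocycle on shadows.

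Next comes a density-point argument, with the shadows shrinking. On these sets the ratios $\mu(\tau_u B)/\mu(B)$ are controlled by their values on the fixed compact region $Q$. The aim there is to compare $\mu(Q\cap\cdot)$ with $\mu(\tau_{u}Q\cap\cdot)$ using Radon-ness and $\Ga$-invariance. This should yield $\mu(\tau_uB)\le C\,\mu(B')$ for slightly enlarged $B'$, hence absolute continuity.

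I expect the delicate point to be making the constant uniform. The recurrent direction $u_n$ varies in $\fa^+\cap\fa_{\theta}$, not along a single ray. The trick I would try first is a Vitali-type covering by shadows in $\La_{\theta}^{\rm con}(\Ga)$, using the hyperbolic space $Z$ to get bounded-overlap properties, combined with the Hopf-type argument that the non-absolutely-continuous part of $(\tau_u)_*\mu$ would have to be concentrated off the conical set.
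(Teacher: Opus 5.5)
Your outer reduction agrees with the paper: the inclusion (1) $\subset$ (2), the ``proportional or mutually singular'' dichotomy for translates of an ergodic $\mu$, and steps 3--5 (Basic Lemma, then the Hopf--Tsuji--Sullivan dichotomy to get divergence type). \textbf{The gap is step 2.} It is not just a matter of making a constant uniform. Nothing in your mechanism uses Zariski density: not the conical recurrence, the bounded variation of $\sigma_{\theta}$ on shadows, Radon-ness, the covering through $Z$, or the Hopf-type argument. Without Zariski density, the conclusion of step 2 is false.

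Here is a counterexample. Take a convex cocompact $\Ga_1<\PSL(2,\Rb)$ and $\Ga=\{(g,g):g\in\Ga_1\}<\PSL(2,\Rb)\times\PSL(2,\Rb)$. This $\Ga$ is Borel Anosov, hence $\Psf_{\Delta}$-hypertransverse. Its limit points are of the form $(x,x)$, and $\sigma_{\Delta}((g,g),(x,x))=(b,b)$, where $b$ is the rank-one Iwasawa (Busemann) cocycle. So $u_1-u_2$ is a $\Ga$-invariant function on $\Hor_{\Delta}$. Place the rank-one Burger--Roblin measure on $\{u_1=u_2\}$. The result is a $\Ga$-invariant ergodic Radon measure on $\Rc_{\Ga,\Delta}$ that is singular to all its anti-diagonal translates, and those translates sit entirely on the conical set.

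Every ingredient of your step 2 is available in this example, so those ingredients cannot produce absolute continuity. The structural reason is this. Pulling back by $g_n$ only moves the comparison between $\mu$ and $T_u^*\mu$ into the compact region $Q$. There, Radon-ness says both measures are finite, but nothing relates them. The $\Ga$-action can only relate $\mu$ to translates by the amounts $\sigma_{\theta}(h,\cdot)$ it actually produces. Bounded-variation estimates such as Lemma \ref{lem:cartanIwasawa} determine these amounts only up to $O(1)$, which cannot detect a fixed translation $u$.

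The missing idea is to prove quasi-invariance only for translations the action realizes, namely $u=\la_{\theta}(\varphi)$ for loxodromic $\varphi\in\Ga$. The paper's key estimate is Theorem \ref{thm:localregularIwasawa}, which rests on the Tits-representation Lipschitz bound of Proposition \ref{prop:locLipIwasawa}. It gives $\norm{\sigma_{\theta}(g\varphi g^{-1},x)-\la_{\theta}(\varphi)}<\epsilon$, with a small error uniform in $g$. This holds whenever $x$ and $g\varphi g^{-1}x$ lie in $\Sc_{\epsilon}(g)\cap g\cdot\Sc_{\epsilon}(\varphi^{n})$ and $n\ge N$.

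To use this at $\mu$-almost every point and at arbitrarily small scales, conicality is not enough. Theorem \ref{thm:radonCharge} shows that $\mu$ is carried by the guided set $\La_{\theta}^{\varphi,C}(\Ga)\times\fa_{\theta}$. These are points approached by translates $g[z_0,\varphi^n z_0]$ of an axis of $\varphi$, and the proof uses the extension lemma plus a finite-to-one map and ergodicity.

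The argument then has two further steps.
\begin{enumerate}
\item Select a disjoint subcollection among the sets $U_C(g;\varphi,n)$ and apply the maps $\xi\mapsto g\varphi g^{-1}\xi$. This gives $\mu(K\times(I+\la_{\theta}(\varphi)))\ge\mu(K\times I)$, and hence quasi-invariance under $T_{\la_{\theta}(\varphi)}$.
\item The set of $u$ with $T_u^*\mu\sim\mu$ is a closed subgroup of $\fa_{\theta}$ containing $\Spec_{\theta}(\Ga)$. Benoist's non-arithmeticity, which is exactly where Zariski density enters, forces this subgroup to be all of $\fa_{\theta}$.
\end{enumerate}
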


See Theorem \ref{thm:uniqueRadon} for the version without Zariski density, but with the non-arithmeticity of the Jordan spectrum, which is due to the work of Benoist \cite{Benoist2000proprietes}. As a byproduct of our method, we also obtain a strengthened version of the Hopf--Tsuji--Sullivan dichotomy (Corollary \ref{cor:pattersonSqueeze}). See also Theorem \ref{thm:trbytrlengthqi} for normal subgroups.

The subset of $\Hor_{\theta}$ corresponding to the $\Psf$-minimal set $\E_{\Ga}\subset \Ga \ba \Gsf$ defined above is
$$
\E_{\Ga, \theta} := \{ [g] \in \Hor_{\theta} : g \Psf_{\theta} \in \La_{\theta}(\Ga) \}.
$$
We finally state a corollary of  Theorem \ref{thm:maintransverse} for relatively $\Psf_{\theta}$-Anosov subgroups that we classify all horospherical invariant measures $\E_{\Ga, \theta}$.

\begin{corollary} \label{cor:mainrelAnosov}
    Let $\Ga < \Gsf$ be a Zariski dense relatively $\Psf_{\theta}$-Anosov subgroup. If $\mu$ is a $\Ga$-invariant ergodic Radon measure on $\E_{\Ga, \theta} \subset \Hor_{\theta}$, then either 
    \begin{enumerate}
        \item $\mu$ is supported on $\mathcal{R}_{\Ga, \theta}$ and is a constant multiple of a Burger--Roblin measure, or 
        \item $\mu$ is supported on a closed $\Ga$-orbit $\Ga \cdot \xi$ in $\Hor_{\theta}$, for some $\xi \in \E_{\Ga, \theta} \smallsetminus \mathcal{R}_{\Ga, \theta}$.
    \end{enumerate}
\end{corollary}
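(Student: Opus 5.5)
The plan is to use the ergodic decomposition of $\E_{\Ga,\theta}$ along the boundary. Via $\Hor_{\theta} = \Fc_{\theta} \times \fa_{\theta}$ we have $\E_{\Ga,\theta} = \La_{\theta}(\Ga) \times \fa_{\theta}$. For a relatively $\Psf_{\theta}$-Anosov subgroup, $\La_{\theta}(\Ga)$ is the disjoint union of the conical limit set $\La_{\theta}^{\rm con}(\Ga)$ and the parabolic limit set $\La_{\theta}^{\rm p}(\Ga)$. Both $\Rc_{\Ga,\theta} = \La_{\theta}^{\rm con}(\Ga) \times \fa_{\theta}$ and its complement $\La_{\theta}^{\rm p}(\Ga) \times \fa_{\theta}$ in $\E_{\Ga,\theta}$ are $\Ga$-invariant Borel sets. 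So an ergodic $\mu$ gives full measure to exactly one of them.

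\emph{Conical case.} If $\mu(\E_{\Ga,\theta} \smallsetminus \Rc_{\Ga,\theta}) = 0$, then $\mu$ is a $\Ga$-invariant ergodic Radon measure on $\Rc_{\Ga,\theta}$. Since relatively $\Psf_{\theta}$-Anosov subgroups are $\Psf_{\theta}$-hypertransverse (take $Z$ to be the Gromov model) and $\Ga$ is Zariski dense, Theorem \ref{thm:maintransverse} applies. It shows that $\mu$ is a constant multiple of $\mu_{\nu}^{\BR}$ for a divergence-type Patterson--Sullivan measure $\nu$, which gives (1).

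\emph{Parabolic case.} Suppose $\mu$ is carried by $\La_{\theta}^{\rm p}(\Ga) \times \fa_{\theta}$. There are finitely many $\Ga$-orbits of bounded parabolic points, hence countably many parabolic points. So $\La_{\theta}^{\rm p}(\Ga)$ is countable, and some $\Ga$-orbit $\Ga \xi_0$ of a parabolic point $\xi_0$ carries positive, hence (by ergodicity and invariance) full, mass. Using $\Ga$-invariance and $\Ga \xi_0 \cong \Ga/\Ga_{\xi_0}$, the measure $\mu$ is determined by a $\Ga_{\xi_0}$-invariant Radon measure $\mu_0$ on the fiber $\{\xi_0\} \times \fa_{\theta}$. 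Here $\Ga_{\xi_0}$ is the stabilizer, and it acts on the fiber by the translations $u \mapsto u + \sigma_{\theta}(h, \xi_0)$.

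The key claim is that $\sigma_{\theta}(h,\xi_0)$ is bounded for $h \in \Ga_{\xi_0}$, i.e.\ the cocycle is bounded on the stabilizer of a parabolic point. This should follow because $\xi_0 \in \Fc_{\theta}$ is fixed by $h$, so $\sigma_{\theta}(h,\xi_0)$ is a Jordan-type projection of $h$ in the parabolic $g\Psf_{\theta}g^{-1}$ stabilizing $\xi_0$. Peripheral subgroups of relatively Anosov groups are virtually nilpotent and have zero $\theta$-Jordan projection (they contain no $\theta$-loxodromic elements), and one needs uniform boundedness, not just for individual elements; this is the step I expect to be the main obstacle. If the bound fails, I would instead invoke the description of peripheral subgroups as conjugate into $\Ksf\Nsf$-type subgroups modulo compact factors.

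Given boundedness, the homomorphism $h \mapsto \sigma_{\theta}(h,\xi_0)$ has bounded image in $\fa_{\theta}$, hence trivial image. Therefore $\Ga_{\xi_0}$ acts trivially on the fiber. Ergodicity then forces $\mu_0$ to be a point mass at some $u_0$, so $\mu$ is the counting measure on $\Ga\cdot(\xi_0,u_0)$. This orbit is closed in $\Hor_{\theta}$ because $\Ga\xi_0$ is discrete away from accumulation in a properness sense: points $g(\xi_0,u_0)$ with $g \to \infty$ in $\Ga/\Ga_{\xi_0}$ have $\fa_{\theta}$-coordinate $u_0 + \sigma_{\theta}(g,\xi_0) \to \infty$, by the standard estimate $\sigma_{\theta}(g,\xi_0) \approx \kappa_{\theta}(g)$ for good coset representatives. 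The orbit $\Ga\cdot(\xi_0,u_0)$ lies outside $\Rc_{\Ga,\theta}$, which gives (2).
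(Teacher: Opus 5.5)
Your argument is correct in structure. In the parabolic case it follows a different route from the paper.

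\textbf{How the routes differ.} The paper obtains the corollary from Theorem~\ref{thm:uniqueRadon} together with Proposition~\ref{prop:closedorbitsparabolic}, which proves that \emph{every} orbit $\Ga \cdot (x,u)$ with $x \in \La_{\theta}^{\rm p}(\Ga)$ is closed in $\Hor_{\theta}$. That proof uses three inputs: the proper discontinuity of the $\Ga$-action on $\Omega_{\theta}(\Ga)$ (Theorem~\ref{thm:KOWPD}), the cocompactness of the $\Ga_x$-action on the limit set with the fixed point removed, and the vanishing of $\sigma_{\theta}(\cdot, x)$ on $\Ga_x$.

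You instead combine countability of $\La_{\theta}^{\rm p}(\Ga)$ with the triviality of the $\Ga_{\xi_0}$-action on the fiber $\{\xi_0\} \times \fa_{\theta}$. This shows directly that $\mu$ is a multiple of counting measure on one orbit. Your route avoids Theorem~\ref{thm:KOWPD} and the cocompactness input entirely. It also makes explicit the step from ``these orbits are closed'' to ``an ergodic measure lives on a single one,'' which the paper leaves implicit. What the paper's route gives in addition is that every parabolic orbit is closed, so alternative (2) is actually realized for each of them.

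\textbf{First point to fix: the cocycle on $\Ga_{\xi_0}$.} The step you call the main obstacle needs no uniform bound. It does, however, need the same structural input the paper imports.
\begin{itemize}
\item If $h \in \Ga_{\xi_0}$, then $\Phi_{\alpha}(\xi_0)$ is an eigenline of $\rho_{\alpha}(h)$. So by Lemma~\ref{lem:Iwasawa_log}, $\chi_{\alpha}(\sigma_{\theta}(h,\xi_0)) = \log |c_{\alpha}|$ for each $\alpha \in \theta$, where $c_{\alpha}$ is the corresponding eigenvalue.
\item By \cite[Theorem 4.4, Section 4.4]{CZZ_relative}, as used in the proof of Proposition~\ref{prop:closedorbitsparabolic}, $\Ga_{\xi_0}$ is a cocompact lattice in a compact-by-unipotent group $\Lsf \ltimes \Usf$.
\item Passing to a power of $h$ lying in $\Hsf^{\circ} = \Lsf^{\circ} \times \Usf^{\circ}$, every eigenvalue of $\rho_{\alpha}(h)$ has modulus one.
\item Hence $\sigma_{\theta}(h, \xi_0) = 0$ exactly, for each $h$ separately.
\end{itemize}
Your heuristic that the absence of $\theta$-loxodromic elements forces zero $\theta$-Jordan projection is not justified as written. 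Rely on this structure theorem (weak unipotence of peripheral elements) instead.

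\textbf{Second point: closedness of the orbit.} Your closing estimate, comparing $\sigma_{\theta}(g,\xi_0)$ with the Cartan projection of good coset representatives, is unproved but also unnecessary. Once $\mu$ is a positive multiple of counting measure on the countable set $\Ga \cdot (\xi_0, u_0)$ and $\mu$ is Radon, each point of $\Hor_{\theta}$ has a neighborhood meeting only finitely many orbit points. So the orbit is discrete and closed automatically.
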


\begin{remark}
We remark that the ergodic decomposition in \cite{kim2024conformal} is in fact for $\Psf_{\Delta}$-hypertransverse subgroups. Hence, in this case, we also have the same classification of $\Nsf$-invariant ergodic Radon measures on the subset of $\E_{\Ga}$ corresponding to $\Rc_{\Ga, \Delta} \subset \Hc_{\Delta}$, as in Theorem \ref{thm:mainAnosovN} and Theorem \ref{thm:mainrelAnosovN}. See Theorem \ref{thm:hypertransverseNergodic}.
\end{remark}

\subsection{On the proof}

The main part of the proof in this paper is to show that any $\Ga$-invariant ergodic Radon measure on $\Rc_{\Ga, \theta} \subset \Hor_{\theta}$ is quasi-invariant under the $\exp \fa_{\theta}$-action. It is now well-known that such a quasi-invariance implies that the measure should be of the same form as Burger--Roblin measures (\cite[0.1 Basic Lemma]{aaronson2002invariant}, \cite[Lemma 1]{Sarig_abelian}, \cite[Proposition 10.25]{LO_invariant}).

Our proof is rather geometric, and does not make use of any continuous flow on $\Ga \ba G$, such as one-dimensional diagonal flows given by $v \in \inte \fa^+$ or multi-dimensional action of $\exp \fa^+$, not relying on ergodic theorems. This can be compared with the work of Landesberg--Lee--Lindenstrauss--Oh \cite{LLLO_Horospherical} where the open problem in Problem \ref{ques:LLLO} was proposed, and the dynamics of one-dimensional diagonal flow $\exp (\Rb_{> 0} v)$ played a role, and hence the measure under the consideration is restricted to the smaller subset $\Rc_{\Ga, v}$. This is one difference that enables us to classify horospherical invariant measures without restricting the supports of measures to smaller subsets.

We also do not rely on the existence of Besicovitch-type covering, which was used in \cite{LLLO_Horospherical} (and other previous literatures) based on their assumption that $\Gsf$ is a product of rank one Lie groups. This is another difference which allows us to handle arbitrary semisimple real algebraic group $\Gsf$.

Our proof of the measure classification is in a similar spirit to our recent work \cite{CK_ML}, where we considered the action of subgroups of the mapping class group on the space of measured laminations on a surface. On the other hand, there are essential difficulties in higher-rank homogeneous settings, due to their geometric differences from Teichm\"uller spaces of surfaces, and to cocycles being vector-valued.

In both settings, the proof of the quasi-invariance goes through showing that the measure is quasi-invariant under the translations by Jordan projections of loxodromic elements in this paper (Theorem \ref{thm:trbytrlengthqi}), and by Teichm\"uller translation lengths of pseudo-Anosov elements in \cite{CK_ML}. These are eventually done by carefully approximating cocycles by Jordan projections and translation lengths. Major difficulties arise at this step.

In \cite{CK_ML}, it was crucial that the axis of a pseudo-Anosov mapping class in the Teichm\"uller space possesses a $\CAT(-1)$-property, which we called \emph{squeezing}. More precisely, this means that the farther the projections of any two points in the Teichm\"uller space to the axis, the closer the geodesic segment between the two points to the axis. The squeezing property can be regarded as an effective/quantitative version of so-called contracting property, and it was deduced from Minsky's contraction theorem \cite{minsky1996quasi-projections} in \cite{CK_ML}. Such an effective/quantitative property enabled us to do appropriate approximations of cocycles by Teichm\"uller translation lengths.

In contrast, having such squeezing properties in higher-rank homogeneous settings may be impossible in general, due to the presence of higher-rank flats. Indeed, in the teaser paper \cite{CK_pradak} where $\Gsf$ is a product of rank one groups, the fact that each component is $\CAT(-1)$ was helpful, while this is only possible for such a restricted case.

What we have in our setting is only Gromov hyperbolicity associated to $\Ga$, which is much weaker than $\CAT(-1)$. The Gromov hyperbolicity only gives contracting property of geodesics, an ineffective/coarse version of squeezing. That is, it only says that if two points have sufficiently far projection to a geodesic, then a geodesic segment connecting those two points intersects some neighborhood of the geodesic. See Figure \ref{figure.contractingsqueezing} for the comparison.

\begin{figure}[h]
\begin{tikzpicture}[scale=0.45]
    \draw[thick] (-4, 0) -- (4, 0);
    \filldraw (-4, 0) circle(2pt);
    \filldraw (4, 0) circle(2pt);


    \draw[dashed, teal, thick] (-6, 2) .. controls (-5, 2) and (-3.5, 1) .. (-3.5, 0);
    \draw[dashed, teal, thick] (6.5, 3) .. controls (5, 3) and (3.5, 1) .. (3.5, 0);

    \filldraw[teal] (-3.5, 0) circle(2pt);
    \filldraw[teal] (3.5, 0) circle(2pt);

    \draw[red, thick] (-6, 2) .. controls (-3.5, 2)  and (-3.3, 0.3) .. (-2.7, 0.3) .. controls (-2.5, 0.2) and (2.5, 0.2) .. (2.8, 0.5) .. controls (3.3, 0.7) and (3.5, 3) .. (6.5, 3);

    \filldraw (-6, 2) circle(2pt);
    \filldraw (6.5, 3) circle(2pt); 

    \filldraw (0, 0) circle(2pt);
    \filldraw[color=blue!80, opacity=0.3] (0, 0) circle(0.5);

    \draw[blue] (0, 0.8) node {$\epsilon$};

    \draw[<->, teal, thick] (-3.5, -0.25) -- (0, -0.25);
        \draw[<->, teal, thick] (3.5, -0.25) -- (0, -0.25);

    \draw (2, -0.1) node[below] {\color{teal} \tiny $\ge K(\epsilon)$};

    \draw (4, 0) node[right] {\tiny $\ga$}; 

\end{tikzpicture}
\qquad
\begin{tikzpicture}[scale=0.45]
    \draw[thick] (-4, 0) -- (4, 0);
    \filldraw (-4, 0) circle(2pt);
    \filldraw (4, 0) circle(2pt);


    \draw[dashed, teal, thick] (-6, 2) .. controls (-5, 2) and (-3.5, 1) .. (-3.5, 0);
    \draw[dashed, teal, thick] (6.5, 3) .. controls (5, 3) and (3.5, 1) .. (3.5, 0);

    \filldraw[teal] (-3.5, 0) circle(2pt);
    \filldraw[teal] (3.5, 0) circle(2pt);

    \filldraw[teal] (-1, 0) circle(2pt);
    \filldraw[teal] (1, 0) circle(2pt);

    \draw[red, thick] (-6, 2) .. controls (-5, 2)  and (-3.5, 1.5) .. (-3.3, 0.9) .. controls (-3.1, 0.3) and (-2.8, 0.9) .. (-2.5, 0.9) .. controls (-2.4, 0.9) and (-2.2, 0.7) .. (-2.1, 0.6) .. controls (-2, 0.5) and (-1.8, 0.5) .. (-1.7, 0.8) .. controls (-1.6, 1.1) and (-1.5, 1.1) .. (-1.4, 0.8) .. controls (-1.3, 0.5) and (-1, 0.5) .. (-0.9, 0.8) .. controls (-0.8, 1.0) and (-0.4, 1.0) .. (-0.1, 0.7) .. controls (0.1, 0.6) and (0.6, 0.6) .. (1, 0.8) .. controls (1.4, 1) and (2, 1) .. (2.4, 0.6) .. controls (2.6, 0.4) and (3, 0.4) .. (3.2, 0.8) .. controls (4.3, 3) and (6, 3) .. (6.5, 3);

    \filldraw (-6, 2) circle(2pt);
    \filldraw (6.5, 3) circle(2pt);

    \filldraw[color=blue!80, opacity=0.3] (-1, 1) arc (90:270:1) -- (1, -1) arc(-90:90:1) -- (-1, 1);

    \draw[<->, teal, thick] (-3.5, -0.25) -- (-1, -0.25);
    \draw[<->, teal, thick] (3.5, -0.25) -- (1, -0.25);

    \draw (2.55, -0.1) node[below] {\color{teal} \tiny $\ge K$};

    \draw (4, 0) node[right] {\tiny $\ga$};

\end{tikzpicture}
\caption{A squeezing geodesic $\ga$ (left) and a contracting geodesic $\ga$ (right)} \label{figure.contractingsqueezing}
\end{figure}
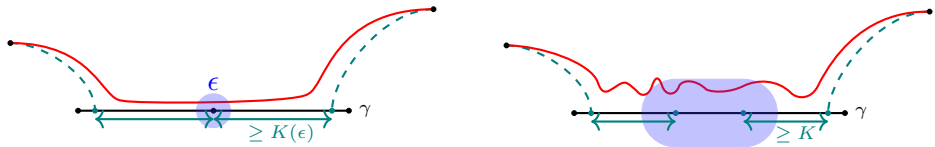

The coarseness of the contracting property does not allow similar approximations of cocycles as in (\cite{CK_ML}, \cite{CK_pradak}) to work. To overcome this obstruction, we establish in this paper a robust connection between alignments of several geodesics in the Gromov hyperbolic space on which $\Ga$ acts and the projective spaces obtained by Tits' representations \cite{Tits_representations}. While this machinery does not produce any squeezing-like property, we could proceed an appropriate approximation of cocycles even for the coarse contracting geodesics, as desired.

The approximation argument above is also based on our observation that almost every point on the limit set is accumulated by synchronically aligned geodesic segments, along translates of an axis of a loxodromic element. We call the set of such points \emph{guided limit set}, which is a subset of the conical limit set. As a byproduct, a part of our proof gives a strengthened version of the Hopf--Tsuji--Sullivan dichotomy that divergence-type Patterson--Sullivan measures are supported on guided limit sets (Corollary \ref{cor:pattersonSqueeze}).

\subsection{Organization}

In Section \ref{section:GH}, we review basic geometry of a Gromov hyperbolic space, and discuss specific properties of alignment of geodesics. Section \ref{section:guided} is devoted to guided limit sets, which are key players in the proof of our measure classification. Horospherical foliations and Iwasawa cocycles are discussed in Section \ref{section:Hor}. In Section \ref{section:Discrete}, we review some dynamical properties of transverse subgroups, and prove an approximation result for Iwasawa cocycles and Jordan projections, with a certain uniform control. Our main measure classification is proved in Section \ref{section:MC}. Statements in the introduction are deduced at the end of Section \ref{section:MC}.

\subsection{Notation}
For $a, b, c \in \Rb$, we write the condition $|a-b| \le c$ by $a=_{c} b$.

\subsection*{Acknowledgements}
The authors would like to thank Dick Canary and Hee Oh for helpful conversations, and Hee Oh for telling us that Theorem \ref{thm:mainAnosovN} follows from Theorem \ref{thm:mainAnosov} when combined with \cite{LO_ergodic}. We also thank Or Landesberg for asking us about normal subgroup version as in Theorem \ref{thm:trbytrlengthqi}. 
This project was the last project started during Kim's Ph.D. study. Kim expresses his special gratitude to his Ph.D. advisor Hee Oh for her encouragement and guidance.

This material is based upon work supported by the National Science Foundation under Grant No. DMS-2424139, while the authors were in residence at the Simons Laufer Mathematical Sciences Institute in Berkeley, California, during the Spring 2026 semester.

Kim thanks KIAS for the hospitality during his visit in 2025 December. Choi was supported by the Mid-Career Researcher Program (RS-2023-00278510) through the National Research Foundation funded by the government of Korea, and by the KIAS individual grant (MG091901) at KIAS.

\section{Geometry of  Gromov hyperbolic spaces}\label{section:GH}

In this section, we review some facts about Gromov hyperbolic spaces. We refer the readers to classical references (\cite{gromov1987hyperbolic}, \cite{coornaert1990geometrie}, \cite{ghys1990bord}, \cite{Bridson1999metric}, etc.) for more comprehensive expositions.

Throughout this section, we fix a proper geodesic metric space $(X, d)$ that  is Gromov hyperbolic. This means that the following holds.
\begin{enumerate}
\item[(Proper)] For any $x \in X$ and $R>0$, the metric ball $\Nc_{R}(x) := \{ z \in X : d(x, z) < R\}$ is precompact.
\item[(Geodesic)] For every pair of points $x, y \in X$, there exists a geodesic denoted by $[x, y]$ connecting $x$ to $y$.
\item[($\delta$-hyperbolic)] There exists $\delta>0$ such that every geodesic triangle in $X$ is $\delta$-thin, i.e., for each $x, y, z \in X$ we have \[
[x, y] \subset \mathcal{N}_{\delta} \big( [x, z] \cup [z, y]\big).
\]
\end{enumerate}
For $w, z \in [x, y]$, we intrinsically assume $[w, z]$ to be a segment of $[x, y]$. Every parametrization of a geodesic is of unit speed.

Let $\partial X$ be the Gromov boundary of $X$. Instead of giving a precise definition, we note that there is a natural topology on $X \sqcup \partial X$ so that it  is a compact metrizable space where $X$ is an open dense subset and $\partial X$ is compact. For any distinct $x, y \in X \cup \partial X$, there exists a geodesic in $X$ with endpoints $x$ and $y$. We also denote by $[x, y] \subset X$ an arbitrarily chosen geodesic connecting $x$ to $y$, and continue the same convention as for geodesics connecting points in $X$.
By enlarging $\delta > 0$ if necessary, every geodesic triangle in $X$ with vertices in $X \cup \partial X$ is also $\delta$-thin.

Given three points $x, y, z \in X$, we define the Gromov product \[
(x | y)_{z} := \frac{1}{2} \left( d(x, z) + d(z, y) - d(x, y) \right).
\]
This definition extends to the case that $x, y \in X \cup \partial X$, by setting
$$
(x | y)_{z} := \sup \liminf_{i, j \to + \infty} (x_i | y_j)_{z}
$$
where the supremum is taken over all sequences $\{x_i\}_{i \in \Nb}, \{ y_j \}_{j \in \Nb} \subset X$ converging to $x, y$, respectively.
The Gromov product $(x | y)_z$ measures the distance from $z$ to $[x, y]$ up to a uniform error, i.e.,
$
\abs{ (x | y)_z - d(z, [x, y])}
$
is bounded by a constant depending only on $\delta$.

By enlarging $\delta>0$ if necessary, we can also guarantee the following Gromov's 4-point inequality: for every $x, y, z, w \in X \cup \partial X$ we have \[
(x | y)_{w} \ge \min \big( (x | z)_{w}, (z | y)_{w} \big) - \delta.
\]

\subsection{Contracting}

Due to the Gromov hyperbolicity of $X$, the nearest-point projection map in $X$ has some usefule geometric properties.
 We note that the discussion below works for a general metric space which is not necessarily Gromov hyperbolic.

For a closed subset $A \subset X$, we denote by $\pi_{A}(\cdot) : X \rightarrow 2^{A}$ the \emph{nearest-point projection}, i.e., 
\[
\pi_{A}(x) := \left\{ a \in A : d(x, a) = \inf_{z \in A} d(x, z)\right\}.
\]

\begin{definition}\label{dfn:contracting}
Let $C \ge 0$. We say that a closed subset $A\subset X$ is \emph{$C$-contracting} if for every geodesic $\gamma \subset X$ with $d(\gamma, A) \ge C$ we have $\diam \pi_{A}(\gamma)  \le C$. We say that $A$ is \emph{(strongly) contracting} if it is $C$-contracting for some $C\ge0$.
\end{definition}

By definition, if $A \subset X$ is $C$-contracting, then $\diam \pi_A(x) \le 2C$ for all $x \in X$. It is also easy to see that if $A \subset X$ is $C$-contracting and $x, y \in X$ are in the $C$-neighborhood of $A$, then $[x, y]$ is contained in the $2.5C$-neighborhood of $A$. Contracting property is relevant to our setting  as every geodesic in the Gromov hyperbolic space $X$ is contracting.

\begin{fact}[{\cite[Proposition 10.2.1]{coornaert1990geometrie}}]\label{fact:deltaHypContracting}
There exists $C = C(\delta) > 0$ such that every geodesic\footnote{More generally, every quasi-convex set is contracting  \cite[Lemma 3.1]{sisto2018contracting}.} in $X$ is $C$-contracting. 
\end{fact}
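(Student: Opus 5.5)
We need to prove that in a $\delta$-hyperbolic proper geodesic space, every geodesic $\gamma$ (finite, ray or bi-infinite) is $C$-contracting for some $C = C(\delta)$. The plan is the standard thin-triangle argument: we show that if a geodesic $\eta = [x,y]$ stays at distance $\ge C$ from $\gamma$, then the projections $p \in \pi_\gamma(x)$ and $q \in \pi_\gamma(y)$ are within a uniform distance of each other. Applying this to every pair of points of $\eta$ bounds $\diam \pi_\gamma(\eta)$.

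The first step is a quasi-geodesic estimate for paths through projections. For $x \in X$ and $p \in \pi_\gamma(x)$, and any point $z \in \gamma$, we claim that $[x,p]\cup[p,z]$ is uniformly close to $[x,z]$. More precisely, we aim for
$$(x\,|\,z)_p \le 2\delta.$$
To see this, consider the geodesic triangle with vertices $x, p, z$, where the side $[p,z]$ is taken inside $\gamma$. By $\delta$-thinness, the point $w$ on $[p,z]$ at distance $(x|z)_p$ from $p$ lies within about $2\delta$ of $[x,p]$, say near a point $w'$. Minimality of $p$ gives $d(x,w) \ge d(x,p)$. Writing $d(x,w) \le d(x,w') + d(w',w)$ and $d(x,p) = d(x,w') + d(w',p)$, we get $d(w',p) \le 2\delta$ up to constants. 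Hence $(x|z)_p \lesssim 4\delta$.

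The second step handles two points $x, y$ with projections $p, q$ satisfying $d(p,q)$ large. Apply the first step twice, with $z = q$ for $x$ and $z = p$ for $y$. This shows that the concatenation $[x,p]\cup[p,q]\cup[q,y]$ is a path whose Gromov products at the corners $p$ and $q$ are bounded by a constant depending only on $\delta$. By the 4-point inequality applied to the quadrilateral $x, p, q, y$ (i.e. thin quadrilaterals, which are $2\delta$-thin), once $d(p,q) > c_1(\delta)$ the geodesic $[x,y]$ passes within $c_2(\delta)$ of the midpoint of $[p,q]$, and in particular within $c_2(\delta)$ of $\gamma$. The point is that the bounded corner products prevent the sides $[x,p]$ and $[q,y]$ from fellow-travelling each other and absorbing $[p,q]$.

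Finally, set $C := \max(c_1, c_2) + 1$. If $\eta$ is a geodesic with $d(\eta,\gamma) \ge C$, then for any $x,y \in \eta$ the subgeodesic $[x,y] \subset \eta$ also avoids the $C$-neighborhood of $\gamma$. By the contrapositive of the second step, $d(p,q) \le c_1 \le C$ for all choices of projections $p, q$. For the case $x = y$, the first step applied to two projections $p, p'$ of $x$ shows $d(p,p') \le C$ as well. Therefore $\diam\pi_\gamma(\eta) \le C$. Ideal endpoints cause no trouble, since the thinness assumption was already extended to triangles with vertices in $X \cup \partial X$.

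The main obstacle is the second step: extracting the quantitative thin-quadrilateral bound cleanly from the 4-point inequality while tracking the constants. If this becomes messy, the fallback is to split the quadrilateral $x, p, q, y$ into two triangles via the diagonal $[x,q]$ and apply $\delta$-thinness twice.
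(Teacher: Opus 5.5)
Your proposal is correct. The paper gives no proof of this fact; it only cites \cite[Proposition 10.2.1]{coornaert1990geometrie}, and \cite{sisto2018contracting} for the quasi-convex generalization. What you give is the standard self-contained thin-triangle argument behind that reference, so the only difference is that you carry out the work the paper outsources.

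Two places can be tightened.

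\textbf{First step.} You do not need the internal-point (tripod) lemma. Let $t=(x|z)_p$ and let $w\in[p,z]\subset\gamma$ be the point at distance $t$ from $p$. Slimness puts $w$ within $\delta$ of $[x,p]$ or of $[x,z]$. In either case, the triangle inequality together with $d(x,w)\ge d(x,p)$ gives $t\le 2\delta$ directly. That is the bound you announced; your derivation via the tripod only produced about $4\delta$.

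\textbf{Second step.} The step you flag as the main obstacle closes in one line. The four-point inequality at base point $p$ gives $\min\{(x|y)_p,(y|q)_p\}\le (x|q)_p+\delta\le 3\delta$. The corner bound at $q$ gives $(y|q)_p\ge d(p,q)-2\delta$. So $d(p,q)>5\delta$ forces $(x|y)_p\le 3\delta$. Hence $p$ itself, not just the midpoint of $[p,q]$, lies within a $\delta$-dependent distance of every geodesic from $x$ to $y$.

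Your fallback also works, and it does not even need the first step. In the $2\delta$-slim quadrilateral $x,p,q,y$ with $[p,q]\subset\gamma$, minimality of $p$ and $q$ prevents the midpoint of $[p,q]$ from being $2\delta$-close to $[x,p]$ or to $[q,y]$ once $d(p,q)>8\delta$. So the midpoint is $2\delta$-close to $[x,y]$. Either way, $C$ comes out as an explicit multiple of $\delta$.
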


We now describe some geometric features of contracting geodesics. 
While the constants below can be made uniform depending only on $\delta$ as we will see, we present as follows, to emphasize the power of contracting geodesics in arbitrary metric spaces. We use the  version in \cite{chawla2023genericity} due to its conciseness, but we note that similar results were already observed in (\cite[Proposition 10.2.1]{coornaert1990geometrie}, \cite[Lemma 2.4, 2.5]{sisto2013projections}, \cite[Proposition 2.9, Lemma 2.10, Lemma 2.11]{arzhantseva2015growth}, \cite[Proposition 3.1]{yang2014growth}).

We say that two geodesics $\gamma_1, \gamma_2 \subset X$ are \emph{$K$-equivalent} for $K \ge 0$ if their Hausdorff distance is at most $K$ and if their beginning/ending points are pairwise $K$-close.

\begin{lem}[{\cite[Lemma 2.2]{chawla2023genericity}}]\label{lem:BGIPFellow}
Let $\ga \subset X$ be a $C$-contracting geodesic and $x, y \in X$. Suppose that $\diam \pi_{\ga}([x, y]) > C$. Then there exist points $p, q \in [x, y]$, with $p$ closer to $x$ than $q$, such that
\begin{itemize}
\item  $\pi_{\ga}([x, y])$ and $[p, q]$ are $4C$-equivalent,
\item $\diam(\pi_{\ga}([x, p]) \cup \{p\}) \le 2C$,
\item $\diam(\pi_{\ga}([q, y]) \cup \{q\}) \le 2C$, and
\item for all $x' \in \pi_{\ga}(x)$ and $y' \in \pi_{\ga}(y)$, $[x', y']$  and $[p, q]$ are $10C$-equivalent. 
\end{itemize} 
\end{lem}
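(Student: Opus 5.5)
The plan is to parametrize $[x,y]$ by arc length and read off $p$ and $q$ as the first and last moments at which the geodesic comes within distance $C$ of $\ga$. Throughout, the only input is the $C$-contracting property: a geodesic subsegment staying at distance $\ge C$ from $\ga$ has projection of diameter $\le C$, and in particular $\diam \pi_\ga(z)\le 2C$ for every point $z$.

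\textbf{Step 1: locating $p$ and $q$.} Since $\diam \pi_\ga([x,y])>C$, the segment $[x,y]$ cannot stay at distance $\ge C$ from $\ga$. So the closed set $\{t : d([x,y](t),\ga)\le C\}$ is nonempty. Let $p$ and $q$ be the points of $[x,y]$ at its minimal and maximal parameters.

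\textbf{Step 2: the outer pieces.} The open subsegment $[x,p)$ lies at distance $>C$ from $\ga$ (if $p=x$ it is empty). Applying contraction to closed subsegments exhausting it gives $\diam \pi_\ga([x,p)) \le C$. Since $d(p,\ga)=C$, pick $p'\in \pi_\ga(p)$ with $d(p,p')=C$. By continuity, points of $[x,p)$ near $p$ have projections within roughly $C$ of $p'$. Together these give $\diam(\pi_\ga([x,p])\cup\{p\})\le 2C$, up to bookkeeping with the $2C$ bound on point projections. The symmetric argument handles $q$.

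\textbf{Step 3: equivalence of $[p,q]$ with $\pi_\ga([x,y])$.} Both $p$ and $q$ lie in the $C$-neighborhood of $\ga$, so $[p,q]\subset \mathcal N_{2.5C}(\ga)$ by the remark after Definition \ref{dfn:contracting}.

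For one direction, each $z\in[p,q]$ has a projection point within $2.5C$. Hence $[p,q]$ lies in the $2.5C$-neighborhood of $\pi_\ga([p,q])\subset\pi_\ga([x,y])$.

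For the other direction, we must show that every point of $\pi_\ga([x,y])$ is near $[p,q]$. By Step 2, the projections of the outer pieces lie within $2C$ of $p$ or of $q$. For the middle piece, let $w\in \pi_\ga(z)$ with $z\in[p,q]$. Then $d(z,w)\le 2.5C$, so $w$ is $2.5C$-close to $[p,q]$. The endpoint conditions hold because $\pi_\ga(p)$ is within $C$ of $p$.

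Here one has to be careful about what "the geodesic" $\pi_\ga([x,y])$ means. It should be read as the subsegment of $\ga$ spanned by the projection, which is the convex hull of its extreme points. That subsegment lies between the points $p',q'\in\ga$, which are $C$-close to $p$ and $q$. Its Hausdorff distance to $[p,q]$ is then controlled by the fellow-travelling of $[p',q']$ with $[p,q]$. This follows from $[p,q]\subset\mathcal N_{2.5C}(\ga)$ together with monotonicity of the projection along $[p,q]$ up to $O(C)$. The monotonicity in turn comes from applying contraction to short segments.

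\textbf{Step 4: the fourth bullet.} For $x'\in\pi_\ga(x)$ and $y'\in\pi_\ga(y)$, Step 2 gives $d(x',p)\le 2C+C$ and similarly for $y'$ and $q$. Then $[x',y']\subset\ga$ and $[p',q']$ differ in their endpoints by at most about $4C$. Combined with Step 3, the triangle inequality on Hausdorff distances yields the stated $10C$ bound.

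\textbf{Main obstacle.} The hard part is the Hausdorff estimate in Step 3, namely that the portion of $\ga$ between $p'$ and $q'$ really stays close to $[p,q]$ and not merely the reverse inclusion. The approach is coarse continuity of $\pi_\ga$ along $[p,q]$: consecutive points at distance $\le C$ have projections within $\le 5C$ of each other. Hence the projection of $[p,q]$ is $5C$-dense in $[p',q']$, and this gives the needed inclusion with room to spare inside the $4C$ budget after tightening the constants.
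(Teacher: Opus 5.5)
The paper gives no proof of this lemma; it quotes it from \cite{chawla2023genericity}. Your overall plan is the standard argument: take $p,q$ to be the first and last points of $[x,y]$ in the closed $C$-neighbourhood of $\ga$, use contraction on the outer pieces, and show $[p,q]\subset\mathcal{N}_{2.5C}(\ga)$. Your set-level argument for the first bullet is correct. It bounds the Hausdorff distance between the \emph{set} $\pi_\ga([x,y])$ and $[p,q]$ by $2.5C$, and this is the only sense in which the paper later uses that bullet (``within Hausdorff distance $4C$'' in the proof of Theorem \ref{thm:trbytrlengthqi}).

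Two small repairs are needed. In Step 2, apply contraction to the closed segment $[x,p]$ directly. If $p\neq x$ then $d(p,\ga)=C$, so $d([x,p],\ga)\ge C$ and $\diam\pi_\ga([x,p])\le C$, which gives the $2C$ bound at once. Your open-segment-plus-continuity route only bounds distances between $\pi_\ga(p)$ and $\pi_\ga([x,p))$ by $3C$, because projections are not continuous. Second, the convex-hull reading of the first bullet is unnecessary. Under that reading your own numbers (half-gap $2C$ plus $2.5C$) do not fit inside $4C$, and the hull need not lie between $p'$ and $q'$.

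\textbf{The real gap is in the fourth bullet.} You need both $[x',y']\subset\mathcal{N}_{O(C)}([p,q])$ and $[p,q]\subset\mathcal{N}_{O(C)}([x',y'])$.

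The first inclusion follows, as you say, from coarse continuity of the projection along $[p,q]$ together with $d(x',\pi_\ga(p)),\,d(y',\pi_\ga(q))\le 2C$ and a coarse intermediate value argument. The jumps are at most $4C$ in the limit. Derive this directly by rerunning your Step 2 argument on a short segment, rather than quoting Corollary \ref{cor:BGIPFellow}(1), which the paper deduces from this very lemma.

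You treat the second inclusion as the easy direction, but $[p,q]\subset\mathcal{N}_{2.5C}(\ga)$ only places $[p,q]$ near $\ga$, not near the subsegment $[x',y']$. A coarsely continuous projection is perfectly compatible with $\pi_\ga(z)$ running far past $y'$ and coming back. ``Contraction on short segments'' cannot exclude this, since the contracting hypothesis is vacuous for segments already within $2.5C$ of $\ga$.

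The missing ingredient is a length comparison using that both $[p,q]$ and $\ga$ are geodesics. Take $z\in[p,q]$ and $w\in\pi_\ga(z)$, so that $d(z,w)\le 2.5C$ and $d(p,x'),\,d(q,y')\le 2C$. Then
\[
d(x',w)+d(w,y') \le d(p,z)+d(z,q)+9C = d(p,q)+9C \le d(x',y')+13C .
\]
Since $x',w,y'$ lie on the geodesic $\ga$, the left side equals $d(x',y')+2\,d(w,[x',y'])$. Hence $d(w,[x',y'])\le 6.5C$, and so $d(z,[x',y'])\le 9C$. With this step inserted, the fourth bullet follows with all constants within $10C$.
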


The following is immediate from Lemma \ref{lem:BGIPFellow}.

\begin{cor}\label{cor:BGIPFellow}
For a $C$-contracting geodesic $\gamma \subset X$, the following holds.
\begin{enumerate}
\item The map $\pi_{\gamma}(\cdot)$ is $(1, 4C)$-Lipschitz: for each $x, y \in X$,
\[
\diam \pi_{\gamma} (\{x, y\}) \le d(x, y) + 4C.
\]
\item Let $x \in X$ and $\gamma(t) \in \pi_{\gamma}(x)$. Then for every $s \in \R$, we have 
\begin{equation}\label{eqn:approxDist}
d(x, \gamma(s)) =_{4C} d(x, \gamma(t)) + |t-s|.
\end{equation}
\end{enumerate}
\end{cor}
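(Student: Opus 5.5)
Both items follow from Lemma \ref{lem:BGIPFellow}, splitting according to whether the projection of the relevant geodesic segment to $\gamma$ is large or small.

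For (1), let $x, y \in X$ and consider $[x,y]$. If $\diam \pi_\gamma([x,y]) \le C$, then $\diam \pi_\gamma(\{x,y\}) \le C \le d(x,y) + 4C$ and we are done. Otherwise Lemma \ref{lem:BGIPFellow} applies and gives $p, q \in [x,y]$, and the argument runs as follows.
\begin{itemize}
\item Since $\diam(\pi_\gamma([x,p]) \cup \{p\}) \le 2C$, any $x' \in \pi_\gamma(x)$ lies within $2C$ of $p$.
\item Similarly, any $y' \in \pi_\gamma(y)$ lies within $2C$ of $q$.
\item Hence $d(x',y') \le d(p,q) + 4C \le d(x,y) + 4C$, because $p, q \in [x,y]$.
\item Since this holds for all choices of $x', y'$, and the projection sets $\pi_\gamma(x)$, $\pi_\gamma(y)$ themselves have diameter controlled in the same way, the bound on $\diam \pi_\gamma(\{x,y\})$ follows.
\end{itemize}

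For (2), fix $x$ with $\gamma(t) \in \pi_\gamma(x)$ and let $s \in \R$. The upper bound $d(x,\gamma(s)) \le d(x,\gamma(t)) + |t-s|$ is just the triangle inequality. For the lower bound, apply the lemma to $[x, \gamma(s)]$, noting that $\pi_\gamma(\gamma(s)) = \{\gamma(s)\}$.
\begin{itemize}
\item \emph{Small projection.} If $\diam \pi_\gamma([x,\gamma(s)]) \le C$, then $|t-s| \le C$. The lower bound then follows from $d(x,\gamma(s)) \ge d(x,\gamma(t))$, which holds because $\gamma(t)$ is a nearest point.
\item \emph{Large projection.} Otherwise the lemma gives $p, q \in [x,\gamma(s)]$ with $d(p,\gamma(t)) \le 2C$ and $d(q,\gamma(s)) \le 2C$.
\end{itemize}

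In the large-projection case, the main step is bounding $d(x,p)$ from below by $d(x,\gamma(t)) - 2C$. This holds because $d(x,\gamma(t)) \le d(x,p) + 2C$, and more precisely because $d(x,p) \ge d(x,\gamma) - \text{dist}(p,\gamma) \ge d(x,\gamma(t)) - 2C$. Also $d(p,q) \ge |t-s| - 4C$, because $\gamma$ is a geodesic and $p$, $q$ are $2C$-close to $\gamma(t)$, $\gamma(s)$. Since $p$ and $q$ lie in order on the geodesic $[x,\gamma(s)]$, we get
\[
d(x,\gamma(s)) \ge d(x,p) + d(p,q) \ge d(x,\gamma(t)) + |t-s| - 6C.
\]

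The naive estimate above gives $6C$, not the claimed $4C$. Getting $4C$ is the only delicate point, and this constant tracking is the step I expect to need the most care. To sharpen it, I would use $d(x,p) + d(p,\gamma(t)) \ge d(x,\gamma(t))$ together with $d(p,\gamma(s)) \ge d(\gamma(t),\gamma(s)) - d(p,\gamma(t))$, so that only the single error $2\,d(p,\gamma(t)) \le 4C$ enters. Concretely, $d(x,\gamma(s)) = d(x,p) + d(p,\gamma(s))$ since $p$ lies on the geodesic $[x,\gamma(s)]$, and then
\[
d(x,p) + d(p,\gamma(s)) \ge \bigl(d(x,\gamma(t)) - d(p,\gamma(t))\bigr) + \bigl(|t-s| - d(p,\gamma(t))\bigr) \ge d(x,\gamma(t)) + |t-s| - 4C.
\]
This gives the claimed constant $4C$.
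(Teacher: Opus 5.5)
Your proof is correct and matches the paper's route: the paper says the corollary is immediate from Lemma \ref{lem:BGIPFellow}, and your case split on $\diam \pi_{\gamma}$ of the relevant segment, using only the $2C$-bounds near $p$ (and $q$), is exactly that deduction. In (2), the sharpened estimate $d(x,\gamma(s)) = d(x,p) + d(p,\gamma(s)) \ge d(x,\gamma(t)) + |t-s| - 2d(p,\gamma(t))$ is the right one, so you can drop the preliminary $6C$ estimate entirely.
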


The $\delta$-thinness of geodesic triangles also implies the following:

\begin{fact}\label{fact:CAT(-1)Thin}
Let $M>0$ and let $x, y, x', y' \in X$ be such that $d(x, x'), d(y, y') \le M$. Then the geodesics $[x, y]$ and $[x', y']$ are $(2\delta + M)$-equivalent. The same is true when $x, y$ or $x', y'$ are the same point on $\partial X$.
\end{fact}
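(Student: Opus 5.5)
The statement to prove is Fact \ref{fact:CAT(-1)Thin}: if $d(x,x'), d(y,y') \le M$, then $[x,y]$ and $[x',y']$ are $(2\delta+M)$-equivalent. The endpoint condition holds by hypothesis, since $M \le 2\delta + M$. So the plan reduces to bounding the Hausdorff distance. I would do this by passing through the intermediate geodesic $[x', y]$ and using $\delta$-thinness of two geodesic triangles.

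\textbf{First triangle.} Consider the triangle with sides $[x,y]$, $[x,x']$, $[x',y]$, and let $p \in [x,y]$. By $\delta$-thinness, $p$ lies within $\delta$ of $[x,x'] \cup [x',y]$.
\begin{itemize}
\item If $p$ is $\delta$-close to a point $q \in [x',y]$, we are done for this step.
\item If $p$ is $\delta$-close to a point $q \in [x,x']$, then $d(q,x') \le M$, so $d(p, x') \le \delta + M$, and $x' \in [x',y]$.
\end{itemize}
Hence $[x,y] \subset \Nc_{\delta+M}([x',y])$.

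\textbf{Second triangle.} Applying the same argument to the triangle with sides $[x',y]$, $[y,y']$, $[x',y']$ gives $[x',y] \subset \Nc_{\delta+M}([x',y'])$. Chaining the two inclusions naively yields only $2\delta + 2M$, which is too weak. To reach $2\delta + M$, I would instead argue point by point.

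\textbf{Sharpening to $2\delta+M$.} Fix $p \in [x,y]$ and first treat the configuration where neither of the short sides $[x,x']$, $[y,y']$ is used.
\begin{itemize}
\item By the first triangle, $p$ is $\delta$-close to some $q \in [x',y]$ (or $p$ is $(\delta+M)$-close to $x' \in [x',y']$, which is already fine).
\item By the second triangle, $q$ is $\delta$-close to a point of $[x',y']$, giving a total of $2\delta$, or $q$ is $\delta$-close to a point of $[y,y']$.
\end{itemize}
In the last case, $q$ is within $\delta + M$ of $y'$, so $p$ is within $2\delta + M$ of $y' \in [x',y']$. In every case $d(p, [x',y']) \le 2\delta + M$. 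The reverse inclusion follows by symmetry, swapping the roles of $(x,y)$ and $(x',y')$.

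\textbf{Ideal endpoints.} When $x = x' \in \partial X$ (and similarly for $y$), the side $[x,x']$ degenerates. Here $[x,y]$ and $[x,y']$ are two sides of an ideal triangle with third side $[y,y']$. The thinness convention stated for ideal triangles applies, and the same case analysis goes through. Endpoint closeness at the ideal point is understood as equality of the endpoints.

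\textbf{Main obstacle.} The only delicate point is getting the constant $2\delta + M$ rather than $2\delta + 2M$. This is handled by the observation above: reaching a point through a short side $[x,x']$ or $[y,y']$ can occur at most once along the chain, because passing through a short side lands directly at an endpoint of $[x',y']$.
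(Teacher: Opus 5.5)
Your argument for finite endpoints is correct. You route a point $p\in[x,y]$ through the intermediate geodesic $[x',y]$ and apply thinness of the triangles $(x,x',y)$ and $(x',y,y')$ point by point. This gives exactly $2\delta+M$, because each use of a short side lands you at an endpoint $x'$ or $y'$ of $[x',y']$. This is the standard thin-triangle argument the paper has in mind; the paper states the Fact without proof. Your treatment of a single ideal endpoint by one ideal triangle (for example $x=x'\in\partial X$, $y,y'\in X$) is also fine.

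\textbf{Gap: both endpoints ideal.} The case $x=x'\in\partial X$ and $y=y'\in\partial X$ is not covered by your sketch. It is exactly the case the paper uses, when comparing an axis $\gamma$ of a loxodromic element with its translate $g^k\gamma$ in the proof of Lemma \ref{lem:BGIPHeredi}(4).
\begin{itemize}
\item There is no geodesic $[y,y']$ from a boundary point to itself, so your ``ideal triangle with third side $[y,y']$'' does not exist.
\item Your two-triangle scheme through an intermediate geodesic joining $x$ to $y$ is circular here. That intermediate geodesic is just another bi-infinite geodesic with the same two endpoints.
\end{itemize}

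\textbf{The missing step is short.} Choose any $w\in[x',y']$. Apply $\delta$-thinness of ideal triangles to the triangle with vertices $x,y,w$, taking the sides $[x,w]$ and $[w,y]$ to be the two subrays of $[x',y']$ emanating from $w$. Thinness then gives $[x,y]\subset\mathcal{N}_\delta([x',y'])$ directly. By symmetry the Hausdorff distance is at most $\delta\le 2\delta+M$.
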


\subsection{Alignment}
We now define alignment between geodesics and points. 

\begin{definition}[Alignment]
Let $w, x, y, z \in X$. For a geodesic $[x, y] \subset X$ and $K \ge 0$, we say that the sequence $(w, [x, y])$ is \emph{$K$-aligned} if \[
\diam \pi_{[x, y]}(w) \cup  \{x\}  < K.
\]
We say that the sequence $([x, y], z)$ is \emph{$K$-aligned} if $(z, [y, x])$ is $K$-aligned.

We say that the sequence $(w, [x, y], z)$ is \emph{$K$-aligned} if both sequences $(w, [x, y])$ and $([x, y], z)$ are $K$-aligned. See Figure \ref{fig:alignment}.

\end{definition}

\begin{figure}[h]

\begin{tikzpicture}[scale=0.85]

\draw[very thick] (-2.6, 0) -- (2.6, 0);

\draw[dashed, thick] (-3.5, 3) -- (-1.9, 0.2);
\draw[dashed, thick] (-3.5, 3) -- (-2.4, 0.2);
\draw[thick] (-2.55, 0.3) -- (-2.15, 0) -- (-1.75, 0.3);

\begin{scope}[xscale=-1]

\draw[dashed, thick] (-3.5, 3) -- (-1.9, 0.2);
\draw[dashed, thick] (-3.5, 3) -- (-2.4, 0.2);
\draw[thick] (-2.55, 0.3) -- (-2.15, 0) -- (-1.75, 0.3);
\end{scope}

\draw(-2.9, 0) node {$x$};
\draw(2.9, 0) node {$y$};
\draw (-3.5, 3.3) node {$w$};
\draw (3.5, 3.3) node {$z$};

\fill[opacity=0.1] (-2.6, 0) circle (1);
\fill[opacity=0.1] (2.6, 0) circle (1);
\draw[<->] (-2.6, -1.1) -- (-1.6, -1.1);
\draw (-2.1, -1.3) node {$K$};

\begin{scope}[xscale=-1]
\draw[<->] (-2.6, -1.1) -- (-1.6, -1.1);
\draw (-2.1, -1.3) node {$K$};
\end{scope}
\end{tikzpicture}
\caption{Alignment of geodesics and points.}
\label{fig:alignment}
\end{figure}
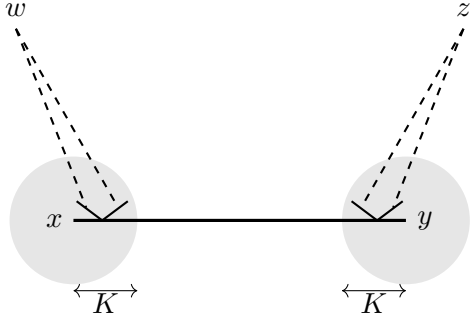

 The following is immediate.
 
\begin{lem}\label{lem:alignDich} 
    Let $\ga \subset X$ be a geodesic of length $L \ge 0$, let $0 \le D \le L$ and let $x \in X$. Then $(\gamma, x)$ is not $D$-aligned or $(x, \gamma)$ is not $(L-D)$-aligned.
\end{lem}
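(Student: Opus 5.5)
Write $\gamma = [a, b]$ with $d(a, b) = L$. I will argue by contradiction: assume both alignments hold and show the projection set $\pi_{\gamma}(x)$ would have to be near both endpoints at once.

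By definition, $(x, \gamma)$ being $(L-D)$-aligned means
\[
\diam\bigl(\pi_{\gamma}(x) \cup \{a\}\bigr) < L - D .
\]
Similarly, $(\gamma, x)$ being $D$-aligned means that $(x, [b, a])$ is $D$-aligned, that is,
\[
\diam\bigl(\pi_{\gamma}(x) \cup \{b\}\bigr) < D ,
\]
since $[b,a]$ is the same geodesic with reversed orientation and its nearest-point projection set is unchanged.

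Next I use that $\pi_{\gamma}(x)$ is nonempty. This holds because $\gamma$ is compact and $X$ is proper. Pick any $p \in \pi_{\gamma}(x)$. The two displays give $d(a, p) < L - D$ and $d(p, b) < D$. The triangle inequality then yields
\[
L = d(a, b) \le d(a, p) + d(p, b) < (L - D) + D = L ,
\]
which is a contradiction. Hence at least one of the two alignments fails.

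The degenerate cases $D = 0$ and $D = L$ need no separate treatment. Each is covered by the strict inequality, because a diameter can never be $< 0$. There is no real obstacle here; the only point requiring care is unwinding the reversed-orientation convention, i.e. that $(\gamma,x)$ aligned refers to the endpoint $b$ while $(x,\gamma)$ refers to $a$.
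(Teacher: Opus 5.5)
Your proof is correct and is exactly the triangle-inequality argument the paper has in mind; the paper calls this lemma immediate and gives no proof. Any $p \in \pi_{\gamma}(x)$ would satisfy $d(a,p) < L-D$ and $d(p,b) < D$, which contradicts $d(a,b) = L$; as a minor point, nonemptiness of $\pi_{\gamma}(x)$ needs only that the segment $\gamma$ is compact, not that $X$ is proper.
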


\begin{definition}\label{dfn:alignHoro}

    Let $x \in \partial X$ and $\ga \subset X$ be a compact geodesic. For $K \ge 0$, we say that $(x, \ga)$ is \emph{$K$-aligned} if for every sequence $\{z_{i}\}_{i \in \N} \subset X$ converging to $x$, $(z_{i}, \gamma)$ is $K$-aligned eventually (i.e., for all large $i \in \N$). We define the alignment for $(\ga, x')$ and $(x, \ga, x')$ similarly for $x' \in X \cup \partial X$.
    
\end{definition}

\subsection{Shadows and alignment}

We make a useful elementary observation that the alignment can be interpreted in terms of shadows.

\begin{definition}
    For $x, y \in X$ and $R > 0$, we define the \emph{shadow} $O_R(x, y)$ of a ball of radius $R$ centered at $y$ viewed from $x$, as follows:
    $$
    O_R(x, y) := \{ w \in X \cup \partial X : d([x, w], y) < R \}.
    $$
    In other words, $w \in O_R(x, y)$ if there exists a geodesic $[x, w]$ intersecting the $R$-neighborhood of $y$.
\end{definition}

We now interpret the alignment using shadows. First, note that  one can imagine that if $x, y, z, w \in X$ satisfy
$$
w \in O_R(x, y) \cap O_R(y, z),
$$
then $y$ comes earlier than $z$ along $[x, w]$. Let us make this more precise.

\begin{lemma} \label{lem:shadowandalignment}
    Let $R > 0$ and $x, y, z, w \in X$. Let $C > 0$ be such that $[y, z]$ is $C$-contracting.
\begin{enumerate}
    \item If  $w \in O_R(x, y) \cap O_R(y, z)$, then
    $$
    (x, [y, z], w) \quad \text{is $(2R + 4C)$-aligned.}
    $$

    \item If $
    (x, [y, z], w)$ is $R$-aligned and $d(y, z) > 2R + C$, then 
    $$w \in O_{R + 2C}(x, y)\cap O_{R + 2C}(y, z).$$
\end{enumerate}
\end{lemma}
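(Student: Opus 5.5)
Both parts are local statements about the $C$-contracting geodesic $\ga := [y,z]$, so the plan is to work only with Corollary \ref{cor:BGIPFellow} and Lemma \ref{lem:BGIPFellow}. The hyperbolicity constant $\delta$ never enters.

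For (1), by symmetry it suffices to show that $(x, \ga)$ is $(2R+4C)$-aligned, i.e.\ $\diam(\pi_\ga(x)\cup\{y\}) < 2R+4C$. The other half, $(\ga, w)$, uses $w \in O_R(y,z)$ in the same way with the roles of $y$ and $z$ swapped.

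Pick $p \in [x,w]$ with $d(p,y) < R$ and $q \in [y,w]$ with $d(q,z) < R$. We may assume $\diam \pi_\ga(x) \cup \{y\}$ is large, say larger than $2R+4C$, and derive a contradiction. The idea is that the projection of $x$ lies deep inside $\ga$, while $[x,w]$ passes near $y$, the beginning of $\ga$.

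Concretely, let $x' \in \pi_\ga(x)$, say $x' = \ga(t)$ with $t$ measured from $y$. By \eqref{eqn:approxDist},
$$d(x,y) =_{4C} d(x,x') + t.$$
Since $p$ lies on $[x,w]$ within $R$ of $y$, we get $d(x,w) \ge d(x,y) - R + d(y,w) - R$. I would then use $q$ and \eqref{eqn:approxDist} for $w$ to control $d(y,w)$ via $d(z,w)$, and compare with the upper bound
$$d(x,w) \le d(x,x') + d(x',w), \qquad d(x',w) \le (|\ga| - t) + d(z,w) + (\text{error}).$$
Here the error comes from the projection of $w$ lying near $z$, which again follows from $q \in [y,w]$ being near $z$ together with the $(1,4C)$-Lipschitz property of Corollary \ref{cor:BGIPFellow}(1). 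Comparing the two bounds gives $2t \lesssim 2R + O(C)$, which yields the claimed $2R+4C$ after careful bookkeeping of constants.

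A cleaner route, which I would try first, applies \eqref{eqn:approxDist} directly to $p$: since $d(p,y) < R$, the projections of $p$ lie within $R + 4C$ of $y$. One then shows that the projection of $x$ cannot move far beyond that of $p$, since $p$ separates $x$ from $w$ and the projection of $w$ is near $z$. Monotonicity of projections along $[x,w]$, via Lemma \ref{lem:BGIPFellow}, provides exactly this.

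For (2), assume $(x,\ga,w)$ is $R$-aligned and $d(y,z) > 2R+C$. Then $\pi_\ga(x)$ lies within $R$ of $y$ and $\pi_\ga(w)$ lies within $R$ of $z$, so
$$\diam \pi_\ga([x,w]) \ge d(y,z) - 2R > C.$$
Lemma \ref{lem:BGIPFellow} therefore applies. It gives $p,q \in [x,w]$ such that $[p,q]$ is $10C$-equivalent to $[x',w']$, where $x' \in \pi_\ga(x)$ and $w' \in \pi_\ga(w)$. Better, by the second and third bullets, $\diam(\pi_\ga([x,p]) \cup \{p\}) \le 2C$, so $d(p,x') \le 2C$ and hence
$$d(p,y) \le d(p,x') + d(x',y) < 2C + R.$$
Thus $[x,w]$ meets $\Nc_{R+2C}(y)$, i.e.\ $w \in O_{R+2C}(x,y)$.

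For $O_{R+2C}(y,z)$ we need a geodesic $[y,w]$ passing near $z$. Apply Lemma \ref{lem:BGIPFellow} to $[y,w]$: the projection of $y$ is $y$ itself and that of $w$ is within $R$ of $z$, so the diameter exceeds $C$. The point $q \in [y,w]$ then satisfies $d(q,w') \le 2C$, giving $d(q,z) < R+2C$.

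The main obstacle is the constant bookkeeping in (1), namely getting exactly $2R+4C$ rather than a worse constant. The crux is the "monotonicity" step: the projection of $x$ does not overshoot the projection of a point $p \in [x,w]$ that is near $y$. I would handle it by applying Lemma \ref{lem:BGIPFellow} to $[x,w]$, splitting into the cases where $\diam \pi_\ga([x,w]) \le C$ (which is immediate) and where it is larger. In the latter case I would use the $2C$-control on $\pi_\ga([x,p'])$ for the entry point $p'$ given by the lemma.
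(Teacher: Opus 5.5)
Your first route for (1) is essentially the paper's argument: you lower-bound $d(x,w)$ using the two shadow conditions, upper-bound it through a point of $\pi_\ga(x)$ and $z$, and invoke \eqref{eqn:approxDist}. Your proof of (2), applying the second and third bullets of Lemma~\ref{lem:BGIPFellow} to $[x,w]$ and to $[y,w]$, is exactly the paper's.

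A few corrections to (1). The bound $d(x',w)\le d(x',z)+d(z,w)=(|\ga|-t)+d(z,w)$ is a plain triangle inequality with no error term. The comparison therefore gives $2t<4R+4C$, not $2t\lesssim 2R$, i.e.\ $t<2R+2C$, which already proves the claim. The ``monotonicity'' detour is unnecessary, and its supposedly immediate case $\diam\pi_\ga([x,w])\le C$ only yields a bound of about $R+5C$, which exceeds $2R+4C$ when $R<C$.
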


\begin{proof} 

Let us prove (1). Since $w \in O_R(y, z)$,
$$
d(y, w) \ge d(y, z) + d(z, w) - 2R.
$$
By Corollary \ref{cor:BGIPFellow}(2), if $p \in \pi_{[y, z]}(w)$, then 
$$
\begin{aligned}
    d(y, w) & \le d(y, p) + d(p, w) + 4 C, \\
    d(z, w) & \ge d(z, p) + d(p, w) - 4C.
\end{aligned}
$$
Hence,
$$
d(y, z) + d(z, p) - 2R - 4C \le d(y, p) + 4C,
$$
which implies
$$
d(z, p) \le R + 4C.
$$
Therefore, $([y, z], w)$ is $(R + 4C)$-aligned.

Similarly, since $w \in O_R(x, y)$, we have
$$
d(x, w) \ge d(x, y) + d(y, w) - 2R.
$$
Since $d(x, w) \le d(x, z) + d(z, w)$ and $w \in O_R(y, z)$, we have
$$
d(x, z)  \ge d(x, y) + d(y, z)  - 4R.
$$
If $q \in \pi_{[y, z]}(x)$, then it follows from Corollary \ref{cor:BGIPFellow}(2) that 
$$
\begin{aligned}
d(x, z) \le d(x, q) + d(q, z) + 4C, \\
d(x, y) \ge d(x, q) + d(q, y) - 4C.
\end{aligned}
$$
Hence,
$$
d(q, y) + d(y, z) - 4R - 4C \le d(q, z) + 4C.
$$
This implies
$$
d(q, y) \le 2 R + 4C,
$$
and therefore $(x, [y, z])$ is $(2R + 4C)$-aligned.

We now prove (2). By the assumption,  $\diam \pi_{[y, z]} \left( \{x, w\}\right) > C$. It then follows from Lemma \ref{lem:BGIPFellow} that $d(y, [x, w]) < R + 2C$, and hence $w \in O_{R + 2C}(x, y)$. Since $d(y, z) > R + 2C$, we also have $\diam \pi_{[y, z]} ( \{y, w\} ) > C$. By Lemma \ref{lem:BGIPFellow} again, $d(z, [y, w]) < R + 2C$, and therefore $w \in O_{R + 2C}(y, z)$ as well.
\end{proof}

\subsection{Isometries}
We now turn to isometries of $X$.  The isometries can be classified in terms of their fixed points in $X \cup \partial X$.
A non-trivial isometry $g \in \Isom(X)$ is either \emph{elliptic} (i.e., fixes a point in $X$), \emph{parabolic} (i.e., has a unique fixed point in $\partial X$), or \emph{loxodromic} (i.e., has a unique pair of two fixed points in $\partial X$). If $g \in \Isom(X)$ is of infinite order, it is either parabolic or loxodromic. 

For a loxodromic $g \in \Isom(X)$, its \emph{stable translation length} is defined by
$$
\tau_g := \lim_{n \to +\infty} \frac{d(x, g^n x)}{n} > 0 \quad \text{for any } x \in X.
$$
One can observe that
$$
\tau_{g^k} = |k| \tau_g \le d(x, g^k x) \quad \text{for all } k \in \Zb \text{ and } x \in X.
$$

Let $\ga : \Rb \to X$ be a geodesic such that $\ga(t) \in X$ converges to the attracting and repelling fixed points of $g$ as $t \to + \infty$ and $t \to - \infty$ respectively. Since $g$ fixes its attracting and repelling fixed points, $g \cdot \ga$ is again a geodesic connecting those points. In particular, $\ga$ and $g \cdot \ga$ have Hausdorff distance $2\delta$.

We call $\ga$ an \emph{axis} of $g$, which might not be unique in general. Note that for any $t \in \Rb$,
\begin{equation} \label{eqn:translationaxis}
d( g \cdot \ga(t), \ga(t + \tau_g)) \le c
\end{equation}
for some constant $c = c(\delta) > 0$ that depends only on $\delta$, not on the choice of~$g$.

On the other hand, we often consider points outside its axis. Regarding this, we record useful properties of $\ga$ in the following lemma, whose version for squeezing isometries is proved in \cite[Lemma 5.9]{CK_ML}.

\begin{lem}\label{lem:BGIPHeredi}
Let $g \in \Isom(X)$ be a loxodromic isometry, $\gamma: \R \rightarrow X$ its axis, and $x_{0} \in X$. Then there exists $C=C(g, \gamma, x_{0})>0$ such that the following holds. 
\begin{enumerate}
\item  Any geodesic in $X$ is $C$-contracting. 
\item $d\left(g^{k} x_{0}, \gamma(\tau_{g} k)\right) < C$ for all $k \in \Z$.
\item Let $k \in \N$,  let $x \in X$, and let $K \ge C$. Then 
$$
\left(x, [x_{0}, g^{k} x_{0}]\right) \text{ is not $K$-aligned} \quad \Longrightarrow \quad \pi_{\gamma}(x) \subset \gamma \left( [K-C, +\infty) \right).
$$
\item Let $k\in \N$, let $x \in X$, and let $0 \le K \le \tau_{g} k - C$. Then
$$
\left(x, [x_{0}, g^{k} x_{0}]\right) \text{ is $K$-aligned} \quad \Longrightarrow \quad \pi_{\gamma}(x) \subset \gamma \left( (-\infty,  K + C]\right).
$$
\end{enumerate}
 
Moreover, $C$ can be chosen so that $C(g, \ga, x_0) = C(g^{k}, \ga, x_0)$ for all $k \in \N$ and $C(g^{-1}, \widehat{\ga}, x_0) = C(g, \ga, x_0)$ where $\widehat{\ga}$ is the inversion of $\ga$.
\end{lem}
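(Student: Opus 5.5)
We will produce a single constant $C$ by taking the maximum of finitely many constants, each depending only on $\delta$, on $d(x_0,\ga)$, and on the constant $c=c(\delta)$ from \eqref{eqn:translationaxis}.

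\emph{Items (1) and (2).} For (1), we take $C \ge C(\delta)$ from Fact \ref{fact:deltaHypContracting}. For (2), pick $t_0$ with $\ga(t_0) \in \pi_\ga(x_0)$ and set $D := d(x_0, \ga(t_0))$. Iterating \eqref{eqn:translationaxis} naively would accumulate an error of $kc$, so we argue differently. The translates $g^k\ga$ are geodesics with the same endpoints as $\ga$, hence lie within Hausdorff distance $2\delta$ of $\ga$. Moreover $g^k x_0$ is at distance $D$ from $g^k\ga(t_0)$, and $g^k\ga(t_0)$ lies within $2\delta$ of some point $\ga(s_k)$. We need $s_k = t_0 + k\tau_g + O(1)$ uniformly in $k$. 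To see this, note that $g$ acts on $\ga$ up to bounded error as a translation by some $\tau$. By uniqueness of the coarse translation along a quasi-line together with the definition of stable length, the shift is exactly $k\tau_g$ up to a bounded error depending only on $\delta$. This is the standard fact that $g^k\ga(t)$ is within $c'(\delta)$ of $\ga(t+k\tau_g)$ for all $k$, obtained by applying \eqref{eqn:translationaxis} to $g^k$ with $\tau_{g^k}=k\tau_g$; the constant there depends only on $\delta$, not on the isometry. Hence $d(g^kx_0,\ga(k\tau_g)) \le D + |t_0| + c$, and we take $C$ at least this. Replacing $x_0$ by a parametrization so that $t_0$ is bounded is unnecessary, since $|t_0|$ is just one more constant.

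\emph{Items (3) and (4).} These compare projections to the segment $\eta:=[x_0,g^kx_0]$ with projections to $\ga$. By (2), $\eta$ is $(C+2\delta)$-equivalent to $\ga([0,k\tau_g])$ by Fact \ref{fact:CAT(-1)Thin}. Let $\ga(t)\in\pi_\ga(x)$.

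For (3), suppose $t < K-C$. If $t\le 0$, then Corollary \ref{cor:BGIPFellow}(2) and the equivalence above show that the projection of $x$ to $\eta$ lies within a bounded distance of $x_0$, so $(x,\eta)$ is aligned for a bounded constant, contradicting non-alignment once $C$ is large. If $0<t<K-C$, the projection of $x$ to $\eta$ lies near $\ga(t)$, hence within $t+O(1)<K$ of $x_0$, so $(x,\eta)$ is $K$-aligned, again a contradiction. To make "near" precise we use the distance estimate \eqref{eqn:approxDist}: for $s\in[0,k\tau_g]$ we have $d(x,\ga(s))=_{4C}d(x,\ga(t))+|t-s|$, and transferring this to $\eta$ costs $O(C)$. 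So minimizers on $\eta$ sit within $O(C)$ of $\ga(\max(t,0))$, or near the endpoint when $t>k\tau_g$.

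For (4), the same estimate shows that if $t > K + C$ (and $K\le k\tau_g - C$ guarantees that $\ga(K+C)$ still corresponds to an interior portion of $\eta$), then the projection of $x$ to $\eta$ is at distance greater than $K$ from $x_0$, contradicting $K$-alignment.

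\emph{Uniformity and expected obstacle.} The "moreover" clause follows because every constant used depends only on $\delta$, on $D$, and on $|t_0|$. These quantities are unchanged when $g$ is replaced by $g^k$ (same axis $\ga$), or when $g$ is replaced by $g^{-1}$ with $\widehat\ga$, since reversing the parametrization only changes the sign of $t_0$.

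The main obstacle is the careful bookkeeping in (3)–(4), namely transferring projections from $\ga$ to the finite segment $\eta$ with errors independent of $k$ and $K$. The boundary regimes are delicate: $t<0$, $t>k\tau_g$, and the case where $K$ is close to $k\tau_g$, which is why the hypothesis $K\le\tau_gk-C$ appears in (4).
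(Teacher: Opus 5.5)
Your proof is correct in substance. For items (1)--(3) it is essentially the paper's argument. The paper also gets (2) by applying \eqref{eqn:translationaxis} to $g^k$, using $\gamma(0)$ in place of a projection point $\gamma(t_0)$. It proves (3) by the same ``clamping'' estimate you describe: Corollary \ref{cor:BGIPFellow}(2) combined with the equivalence of $[x_0,g^kx_0]$ and $\gamma([0,\tau_g k])$. If some $\gamma(t)\in\pi_\gamma(x)$ had $t$ too small, a point of $\gamma$ near $\eta$ would be strictly closer to $x$ than the far nearest point $p$.

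The real difference is item (4). You prove it directly with the same estimate. If $t>K+C$, every nearest point $p$ of $x$ on $\eta$ lies within $O(1)$ of some $\gamma(s)$ with $s\ge \min(t,\tau_g k)-O(1)\ge K+C-O(1)$, so $d(x_0,p)>K$. The hypothesis $K\le\tau_g k-C$ covers the case $t>\tau_g k$.

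The paper instead reduces (4) to (3) by symmetry:
\begin{itemize}
\item Lemma \ref{lem:alignDich} turns $K$-alignment of $(x,\eta)$ into non-$(d(x_0,g^kx_0)-K)$-alignment of $(\eta,x)$.
\item It then applies (3) to $g^{-1}$ with the reversed axis at the point $g^{-k}x$.
\item Finally it transfers the resulting projection from $g^k\gamma$ back to $\gamma$, using their $2\delta$-Hausdorff closeness and \eqref{eqn:translationaxis}.
\end{itemize}
Your route is more uniform, since one estimate handles both (3) and (4), and it avoids the detour through $g^k\gamma$. The paper's route reuses (3) and makes the role of $K\le\tau_g k-C$ transparent: that hypothesis is exactly what makes the reversed alignment constant at least $C$, so that (3) applies.

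One point needs tightening. As written, you use the final constant $C$ both as the contracting constant in \eqref{eqn:approxDist} and as the equivalence constant between $\eta$ and $\gamma([0,k\tau_g])$. You then need errors ``$O(C)$'' to be smaller than $C$; for instance, $t<K-C$ must force the nearest points to lie within distance $<K$ of $x_0$. That is circular. Express every error term in the base constants $C_0(\delta)$, $d(x_0,\gamma(t_0))+|t_0|$, $c$ and $\delta$, and only then take $C$ to be a large multiple of them. This is exactly the paper's choice $D=10(C_0+d(x_0,\gamma(0))+\delta+c)$ and $C=100D$.
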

We often write $C(g) = C(g, \ga, x_0)$ by implicitly choosing its axis $\ga$.

\begin{proof}

Let $C_0  > 0$ be the constant depending only on the Gromov hyperbolicity of $X$ so that any geodesic in $X$ is $C_0$-contracting. Then Item (1) follow when $C \ge C_0$.
For the constant $c > 0$ in Equation \eqref{eqn:translationaxis} which also depends only on the Gromov hyperbolicity, Item (2) holds for $C \ge d(x_0, \ga (0)) + c$.

Now let
$$
D = 10\Big(C_{0} + d\big(x_0, \gamma(0)\big) + \delta + c \Big).$$
We will see that $C = 100 D$ plays the desired role.

\medskip
 We first show Item (3). Let $k \in \N$ and  $K \ge 100D$ and suppose that $(x, [x_{0}, g^{k} x_{0}])$ is not $K$-aligned. Recall that $[x_{0}, g^{k} x_{0}]$ and $\gamma([0, \tau_{g} k])$ are $D$-equivalent. Hence there exist $p \in \pi_{[x_0, g^k x_0]}(x)$ and $t_p \in [0, \tau_g k]$ such that
 $$
d(x_0, p) > K \quad \text{and} \quad d\left(p, \ga(t_p)\right) \le D.
 $$
 Note also that $d(x_0, \ga(0)) \le D$. We then have
 $$
t_p \ge K - 2D.
 $$
 If there exists $t \in (-\infty, K - 6D]$ such that $\ga(t) \in \pi_{\ga}(x)$, then 
 $$
 \begin{aligned}
d(x, \ga(t_p)) & \ge d(x, \ga(K - 6D)) + t_p - (K - 6D) - D \\
& \ge d(x, \ga(K - 6D)) +  3D
 \end{aligned}
 $$
 by  Corollary \ref{cor:BGIPFellow}(2). Since $0 \le K - 6D \le t_p \le \tau_g k$, and since $[x_0, g^k x_0]$ and $\ga([0, \tau_g k])$ are $D$-equivalent, we have
 $$
d(x, p) \ge d(x, \ga(t_p)) - D \ge d(x, \ga(K - 6D)) + 2D \ge d(x, [x_0, g^k x_0]) + D,
 $$
 which contradicts $p \in \pi_{[x_0, g^k x_0]}(x)$. Therefore,
 $$
\pi_{\ga}(x) \subset \ga([K - 6D, + \infty))
 $$
 and hence Item (3) holds for $C = 100 D$.

\medskip

For Item (4), let $k \in \N$ and $0 \le K \le \tau_{g} k - 100D$, and suppose that $(x, [x_{0}, g^{k} x_{0}])$ is $K$-aligned. Then $([x_0, g^k x_0], x)$ is not $(d(x_0, g^k x_0) - K)$-aligned by Lemma \ref{lem:alignDich}. Note that \[
d(x_0, g^{k} x_0) - K \ge \tau_{g} k - K \ge 100D.
\]
As $(g^{-k} x, [x_0, g^{-k} x_0])$ is not $(d(x_0, g^k x_0) - K)$-aligned, we apply Item (3) by reversing the orientation of $\ga$ and get
$$\begin{aligned}
\pi_{\ga}(g^{-k} x) & \subset \ga ( (-\infty, - (d(x_0, g^k x_0) - K) + 6D]) \\
& \subset  \ga ( (-\infty, - \tau_g k + K + 6D]) 
\end{aligned}.
$$
Hence,
$$
\pi_{g^k \ga}( x) \subset  g^k \ga ( (-\infty, - \tau_g k + K + 6D]).
$$
Since $\ga$ and $g^k \ga$ have Hausdorff distance at most $2 \delta$ by Fact \ref{fact:CAT(-1)Thin}, if $y \in \pi_{\ga}(x)$, then there exists $z \in g^k \ga$ such that $d(y, z) \le 2 \delta$.
By Corollary \ref{cor:BGIPFellow}(2),
$$
d(x, z) \ge d(x, \pi_{g^k \ga}(x)) + d(\pi_{g^k \ga}(x), z) - 4C_0,
$$
and hence
$$
d(x, y) + 2 \delta \ge d(x, y) - 2 \delta + d(\pi_{g^k \ga}(x), z) - 4C_0.
$$
We then have 
$$
d(\pi_{g^k \ga}(x), z) \le 4 \delta + 4 C_0,
$$
which implies
$$
z \in  g^k \ga ( (-\infty, - \tau_g k + K + 6D + 4 \delta + 4 C_0]).
$$
Since $d(y, z) \le 2 \delta$,
$$
y \in \ga (( - \infty, K + 6D + 6 \delta + 4 C_0 + c]).
$$

Therefore, Item (4) holds for $C = 100 D$ as well.

\medskip

The ``Moreover'' part is straightforward.
\end{proof}

\subsection{Non-elementary subgroups of isometries}

We denote by $\Isom(X)$ the isometry group of $(X, d)$.
We call a subgroup $\Ga < \Isom(X)$ \emph{discrete} if it acts properly on $X$. The class of subgroups of $\Isom(X)$ we are interested in is as follows:

\begin{definition} \label{def:noneltsubgp}
    A discrete subgroup $\Ga < \Isom(X)$ is called  \emph{non-elementary}~if
    \begin{itemize}
        \item $\Ga$ is not virtually cyclic, and
        \item $\Ga$ contains a loxodromic isometry.
    \end{itemize}
\end{definition}

We can characterize non-elementary subgroups in terms of their limit sets:

\begin{definition}
    Let $\Ga < \Isom(X)$ be a discrete subgroup. Its \emph{limit set} $\La(\Ga) \subset \partial X$ is the set of all accumulation points of $\Ga x \subset X$ on $\partial X$, for any fixed $x \in X$. 
\end{definition}

One can see that $\La(\Ga)$ is compact and $\Ga$-invariant. The $\Ga$-action on $X \cup \partial X$ is a convergence action, and the limit set $\La(\Ga)$ is also the limit set as a convergence group. It is a fact that a discrete subgroup $\Ga < \Isom(X)$ is non-elementary if and only if $\# \La(\Ga) \ge 3$, and in this case the $\Ga$-action on $\La(\Ga)$ is minimal.

We call two loxodromic isometries \emph{independent} if they have disjoint sets of fixed points in $\partial X$. One can see that there are plenty of pairs of independent loxodromic isometries on a non-elementary subgroup.

\begin{lem}\label{lem:indep}
Let $\Gamma < \Isom(X)$ be a non-elementary subgroup. For a loxodromic isometry $g \in \Ga$, there exists $h \in \Ga$ such that $hgh^{-1}$ and $g$ are independent. Moreover,  there are infinitely many pairwise independent loxodromic isometries in $\Ga$.
\end{lem}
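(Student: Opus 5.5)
The plan is to use the convergence dynamics of $\Ga$ on $\La(\Ga)$ together with the standard fact that a loxodromic $g$ acts with north--south dynamics on $X \cup \partial X$. Let $g^{\pm} \in \partial X$ denote the attracting and repelling fixed points of $g$. For $h \in \Ga$, the conjugate $hgh^{-1}$ is loxodromic with fixed points $h g^{+}$ and $h g^{-}$. Hence it suffices to find $h \in \Ga$ with
$$
\{h g^+, h g^-\} \cap \{g^+, g^-\} = \emptyset .
$$

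For the first statement, I first note that since $\Ga$ is discrete and not virtually cyclic, the set $\{g^+, g^-\}$ is not $\Ga$-invariant. Indeed, the stabilizer of $\{g^+, g^-\}$ in a discrete group acting properly is virtually cyclic, being commensurable with $\langle g \rangle$: it acts properly and coboundedly on a neighborhood of an axis. Since $\# \La(\Ga) \ge 3$ and the action on $\La(\Ga)$ is minimal, there is a point $\xi \in \La(\Ga) \smallsetminus \{g^\pm\}$ and an element $f \in \Ga$ with $f g^{+} \notin \{g^+, g^-\}$. Otherwise every element would preserve $\{g^\pm\}$, because orbits of $g^+$ would be confined to a two-point set, contradicting minimality on an infinite set; similarly for $g^-$.

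I then improve this by composing with powers of $g$. Set $h_n := g^{n} f g^{-n}$ or $h_n := f' g^n$ suitably. The cleanest choice is $h = g^{n} f$ for large $n$ after arranging $f g^\pm \ne g^-$. Under north--south dynamics, $g^n$ pushes every point other than $g^-$ toward $g^+$ while fixing $g^{\pm}$, so this must be arranged more carefully. Instead, take $k \in \Ga$ with $k g^+, k g^- \notin \{g^\pm\}$. Such a $k$ is obtained by choosing $f_1$ with $f_1 g^+ \notin\{g^\pm\}$ and then $k = g^{m} f_1$ or $f_1$ adjusted so that $k g^-$ also avoids $\{g^\pm\}$. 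Since $\La(\Ga)$ is infinite, only finitely many bad coincidences must be avoided. Using the family $f_1 g^{m}$, $m \in \Zb$, we get $f_1 g^m g^{\pm} = f_1 g^{\pm}$, which does not help. Using $g^{m} f_1$ instead, we get $g^m f_1 g^{+} \ne g^{\pm}$ for all $m$ since $f_1 g^+ \notin \{g^\pm\}$. If $f_1 g^- \in \{g^\pm\}$, we pick a second element $f_2$ with $f_2 g^- \notin \{g^\pm\}$ and consider $f_2 g^{m} f_1$. As $m \to \infty$, $g^m f_1 g^{+} \to g^+$ and $g^m f_1 g^{-}$ is either $g^{\pm}$ or tends to $g^+$. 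Applying $f_2$ then keeps both images near $f_2 g^{+}$ or at $f_2 g^{\pm}$. I expect this case analysis to be the main obstacle: the fiddly part is ensuring that neither image of $g^\pm$ lands back in $\{g^\pm\}$. I would handle it by choosing $f_2$ so that $f_2 g^\pm \notin \{g^\pm\}$, using that the stabilizer of each single point $g^\pm$ is also virtually cyclic, so infinitely many elements move both points, together with continuity.

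For the ``moreover'' statement, I argue inductively. Given pairwise independent loxodromics $g_1, \dots, g_k$, their fixed point set $F$ is finite. Since $\Ga g^+$ is infinite by minimality on the infinite (indeed perfect) set $\La(\Ga)$, and the same holds for the pair $\{g^+, g^-\}$, I can choose $h$ with $h\{g^\pm\} \cap F = \emptyset$ by the same ping-pong and north--south argument. This makes $g_{k+1} := h g h^{-1}$ independent of all the previous ones.
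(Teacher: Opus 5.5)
You have the right reduction (find $h\in\Ga$ with $h\{g^{+},g^{-}\}\cap\{g^{+},g^{-}\}=\emptyset$), the right existence claim for some $f$ with $fg^{+}\notin\{g^{\pm}\}$, and the right inductive frame for the second assertion. The paper states this lemma as standard without proof, so what matters is whether your argument closes. It does not: there is a genuine gap at exactly the step you flag as the main obstacle.

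\textbf{The gap.} Your resolution, choosing $f_2$ with $f_2g^{+},f_2g^{-}\notin\{g^{\pm}\}$, is circular, since such an $f_2$ is already the desired $h$. Your justification does not produce it either. Virtual cyclicity of the stabilizers gives many $f$ with $fg^{+}\neq g^{+}$ and $fg^{-}\neq g^{-}$, but such $f$ may still satisfy $fg^{+}=g^{-}$ or $fg^{-}=g^{+}$.

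Concretely, suppose $f_1g^{-}=g^{-}$ and $m\to+\infty$. Then $k=f_2g^{m}f_1$ has $kg^{-}=f_2g^{-}$ and $kg^{+}\to f_2g^{+}$, so you need control of both $f_2g^{\pm}$, which is the original problem.

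The phrase ``only finitely many bad coincidences'' is unproved as stated. Likewise, ``the orbit of the pair is infinite'' does not give an image of the pair disjoint from a finite set $F$: infinitely many translates could each meet $F$ in one point. The ``moreover'' part relies on ``the same argument'' and inherits the gap. Also, for $F\supsetneq\{g^{\pm}\}$ the powers $g^{m}$ move the points of $F\smallsetminus\{g^{\pm}\}$, so the argument does not transfer verbatim.

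\textbf{The missing idea.} Choose the first factor so that neither $g^{+}$ nor $g^{-}$ is sent to the repelling point $g^{-}$.

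Take $f\in\Ga$ with $fg^{-}\notin\{g^{\pm}\}$; this exists because $\Ga g^{-}$ is dense in the infinite set $\La(\Ga)$. Then $f^{-1}g^{+}\neq g^{-}$ and $f^{-1}g^{-}\neq g^{-}$. By north--south dynamics, $g^{m}f^{-1}g^{\pm}\to g^{+}$ as $m\to+\infty$.

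Now let $F\subset\partial X$ be any finite set, and pick $f'\in\Ga$ with $f'g^{+}\notin F$. Set $h_m:=f'g^{m}f^{-1}$. Then $h_mg^{\pm}\to f'g^{+}\notin F$. Since $F$ is closed, $h_m\{g^{\pm}\}\cap F=\emptyset$ for all large $m$.

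Taking $F=\{g^{\pm}\}$ gives the first assertion. Taking $F$ to be the fixed points of $g_1,\dots,g_k$ gives your inductive step with $g_{k+1}:=h_mgh_m^{-1}$.

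Alternatively, the set of bad $h$ is a finite union of left cosets of the stabilizers of $g^{+}$ and $g^{-}$. These stabilizers are virtually cyclic for a proper action, and $\Ga$ is not virtually cyclic. By Neumann's lemma, finitely many such cosets cannot cover $\Ga$.
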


The following is a variant of the so-called \emph{extension lemma} of Yang, which can be regarded as the coarse-geometric version of the Anosov closing lemma (cf. \cite[Lemma 3.8]{bowen2008equilibrium}).

\begin{lem}[Extension lemma {\cite[Lemma 1.13]{yang2019statistically}}]\label{lem:extension} \label{lem:extensionHoro}
Let $\Gamma < \Isom(X)$ be a non-elementary subgroup. Then for each loxodromic isometry $\varphi \in \Ga$, there exist $a_{1}, a_{2}, a_{3}\in \Gamma$ and $\beta = \beta(\varphi)>0$  such that  for each $x, y \in X \cup \partial X$, there exists $a\in \{a_{1}, a_{2}, a_{3}\}$ that  makes 
$$
(x, a \cdot [x_{0}, \varphi^{n}x_{0}], a \varphi^{n} a^{-1} \cdot y) \quad \text{$\beta$-aligned for all } n \in \N.
$$
Moreover, $\beta$ can be chosen so that $\beta(\varphi^k) = \beta(\varphi)$ for all $k \in \Z$.
\end{lem}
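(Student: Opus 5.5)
The plan is to use three conjugates of $\varphi$ whose axes have pairwise disjoint endpoints, and to show that for any pair $(x,y)$ at least one conjugate axis has both endpoints "far" from both $x$ and $y$. That conjugate then serves as a bridge. Concretely, by Lemma \ref{lem:indep} I will pick $h_1, h_2, h_3 \in \Ga$ such that the loxodromics $\varphi_i := h_i \varphi h_i^{-1}$ are pairwise independent. Set $a_i := h_i$. Then $a_i \cdot [x_0, \varphi^n x_0]$ is a segment fellow-travelling the axis $h_i \ga$ of $\varphi_i$ from near $h_i \ga(0)$ toward the attracting point $\varphi_i^+ = h_i \varphi^+$. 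Moreover $a_i \varphi^n a_i^{-1} y = \varphi_i^n y$. The six fixed points $\varphi_i^{\pm}$ are distinct, so there is $\epsilon>0$ with pairwise visual distances (equivalently, Gromov products based at $x_0$ bounded by some $M$) separated among them.

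Step one is a pigeonhole argument. For a given $x$, the Gromov product $(x|\varphi_i^{+})_{h_i x_0}$ can be large for at most one index $i$, up to a constant determined by $M$ and $\delta$. This follows from Gromov's 4-point inequality, since two distinct fixed points cannot both be close to $x$. The same holds for the repelling points. I also need $y$ to avoid the repelling points $\varphi_i^-$, since $\varphi_i^n y$ must land past the end of the segment. So $x$ rules out at most one index and $y$ rules out at most one index, and some $i \in \{1,2,3\}$ survives. For that $i$ we get $(x \mid \varphi_i^+)_{h_ix_0} \le M'$ and $(y \mid \varphi_i^-)_{h_ix_0} \le M'$, where $M'$ depends only on $\varphi$ and the $h_i$.

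Step two translates these Gromov product bounds into alignment. The bound on $x$ says that the projection of $x$ to the axis $h_i\ga$ lies in $h_i\ga((-\infty, M'+C])$. By Lemma \ref{lem:BGIPHeredi}(3), applied after translating by $h_i^{-1}$ and using the contrapositive, $(x, h_i[x_0,\varphi^n x_0])$ is then $(M'+2C)$-aligned for all $n$. For the other end, the bound on $y$ gives that $\pi_{h_i\ga}(y)$ lies in $h_i\ga([-M'-C,\infty))$. Applying $\varphi_i^n$ shifts this projection by $\tau_\varphi n$ up to the constant $c$ from \eqref{eqn:translationaxis}. So the projection of $\varphi_i^n y$ lies beyond $\tau_\varphi n - M'-C-c$, which is near the endpoint $h_i\varphi^n x_0$ by Lemma \ref{lem:BGIPHeredi}(2). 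The reversed-orientation version of Lemma \ref{lem:BGIPHeredi}(3) then gives the alignment $(h_i[x_0,\varphi^nx_0], \varphi_i^n y)$. Boundary points are handled by Definition \ref{dfn:alignHoro} together with the fact that the estimates are stable under limits.

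The main obstacle is obtaining uniformity: $\beta$ must be independent of $n$, and the "Moreover" clause requires $\beta(\varphi^k)=\beta(\varphi)$. For positive $k$ this should be automatic. The powers $\varphi^k$ share the axis and the fixed points, the same $h_i$ work, and Lemma \ref{lem:BGIPHeredi} is stated with $C(\varphi^k)=C(\varphi)$. I therefore expect to define $\beta$ via $M'$ and $C(\varphi,\ga,x_0)$ only. For negative $k$, I would choose the $h_i$ so that the reversed axes also work, that is, require all six endpoints to be distinct and run the pigeonhole argument with attracting and repelling points swapped. Then I take $\beta$ to be the maximum of the two constants, using $C(\varphi^{-1},\widehat\ga,x_0)=C(\varphi,\ga,x_0)$.
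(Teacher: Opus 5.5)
Your proposal is correct. It is essentially the standard argument behind the extension lemma that the paper cites without proof (\cite[Lemma 1.13]{yang2019statistically}; see also \cite[Lemmas 5.12, 5.15]{CK_ML}): take three conjugates $h_i\varphi h_i^{-1}$ with pairwise distinct fixed points, note that each of $x$ and $y$ disqualifies at most one index, and use the surviving index to get alignment uniform in $n$ via Lemma \ref{lem:BGIPHeredi}(3) and its reversed-axis version, with the same $h_i$ handling all powers $\varphi^k$.

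Two small touch-ups. First, Lemma \ref{lem:indep} as stated supplies only one independent conjugate, so producing a third whose fixed points avoid the first four needs a routine north--south dynamics step. Second, the $y$-end is cleaner if you translate by $(h_i\varphi^n)^{-1}$: this turns it into the alignment of $(\varphi^{-n}[x_0,\varphi^n x_0], h_i^{-1}y)$, so the reversed version of Lemma \ref{lem:BGIPHeredi}(3) applies to $h_i^{-1}y$ directly. That avoids shifting projections from $h_i\varphi^n\gamma$ to $h_i\gamma$, a step where you also omitted the bounded error from changing the axis.
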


See also \cite[Lemma 5.12, Lemma 5.15]{CK_ML}. Note that while we stated with $a \varphi^n a$ instead of $a \varphi^n a^{-1}$ in \cite{CK_ML}, the same proof works for $a \varphi^n a^{-1}$.

\subsection{Conical limit sets} 
We define conical limit sets using shadows. Fix a basepoint $x_0 \in X$, while the conical limit sets do not depend on the choice of the basepoint.

\begin{definition} \label{def:conical}
    Let $\Ga < \Isom(X)$ be a discrete subgroup.  A point $x \in \partial X$ is called a \emph{conical limit point} of $\Ga$ if there exist $R > 0$ and an infinite sequence $ \{g_n \}_{n \in \N} \subset \Ga $ such that
    $$
    x \in O_R(x_0, g_n x_0)   \quad \text{for all } n \in \N.
    $$
    We denote the \emph{conical limit set} by $\La^{\rm con}(\Ga) \subset \partial X$.
\end{definition}

The conical limit set $\La^{\rm con}(\Ga)$ is $\Ga$-invariant.

\section{Guided limit sets} \label{section:guided}

We continue the setting of the previous section that $(X, d)$ is a proper geodesic Gromov hyperbolic space, with the same Gromov hyperbolicity constant $\delta > 0$.
In \cite{CK_ML}, we introduced the notion of guided limit sets on the horofunction boundary, which are variants of 
Coulon's contracting limit sets \cite{coulon2024patterson-sullivan} and Yang's $(L, \mathscr{F})$-limit sets \cite{yang2024conformal}.

In this section, we consider the guided limit sets defined on the Gromov boundary and study its properties analogous to the ones we studied in \cite{CK_ML}. They will be a key player in our proof of the main measure classification theorem.

\begin{definition} \label{def:guidedlimitset}
    Let $\Ga < \Isom(X)$ be a non-elementary subgroup. Let $\varphi \in \Ga$ be  a loxodromic isometry and let $C(\varphi) > 0$ be as in Lemma \ref{lem:BGIPHeredi} and fix $K\ge C(\varphi)$. We say that $x \in \partial X$ is a \emph{$(\varphi, K)$-guided limit point} of $\Ga$ 
if for each sufficiently large $n \in \N$, there exists $h \in \Gamma$ such that 
$$
(x_{0}, h[x_{0}, \varphi^{n} x_{0}], x) \quad \text{is $K$-aligned.}
$$
We call the set of $(\varphi, K)$-guided limit points of $\Ga$ the \emph{$(\varphi, K)$-guided limit set} of $\Gamma$. We denote it by $\La^{\varphi, K}(\Ga)$.

\end{definition}

By Lemma \ref{lem:shadowandalignment}, guided limit sets are subsets of conical limit sets.
The role of $K$ in the definition of $(\varphi, K)$-guided limit set is quite flexible. The following was proved in \cite[Lemma 6.4]{CK_ML} in a slightly different setting. We present the proof for completeness.

\begin{lem}\label{lem:squeezedInv}
    Let $\Ga < \Isom(X)$ be a non-elementary subgroup. Let $\varphi \in \Ga$ be  a loxodromic isometry and let $C = C(\varphi) > 0$ be as in Lemma \ref{lem:BGIPHeredi}.  Then for each $K>C$, 
    $$
    \La^{\varphi, K}(\Ga) = \La^{\varphi, C}(\Ga).
    $$
     Moreover, $\La^{\varphi, C}(\Ga)$ is $\Gamma$-invariant.
\end{lem}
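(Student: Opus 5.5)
Since $K$-alignment implies $K'$-alignment for $K \le K'$, we have $\La^{\varphi, C}(\Ga) \subset \La^{\varphi, K}(\Ga)$ for $K > C$. The work is the reverse inclusion: given $x \in \La^{\varphi, K}(\Ga)$, produce for every large $n$ some $h' \in \Ga$ with $(x_0, h'[x_0, \varphi^{n} x_0], x)$ being $C$-aligned. The plan is to use a longer power of $\varphi$ and trim off its ends. Fix $m \in \N$ with $\tau_\varphi m \ge K + 10C$, say. For large $n$, apply guidedness at level $n + 2m$ to get $h$ with $(x_0, h[x_0, \varphi^{n+2m} x_0], x)$ $K$-aligned. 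Then set $h' := h\varphi^{m}$, so $h'[x_0, \varphi^n x_0] = h[\varphi^m x_0, \varphi^{m+n} x_0]$ is the middle subsegment. This is a subsegment of $h[x_0, \varphi^{n+2m} x_0]$ only up to $D$-equivalence, via Lemma \ref{lem:BGIPHeredi}(2) and Fact \ref{fact:CAT(-1)Thin}.

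To show $(x_0, h'[x_0,\varphi^n x_0])$ is $C$-aligned, translate by $h^{-1}$. We know $(h^{-1}x_0, [x_0, \varphi^{n+2m}x_0])$ is $K$-aligned with $K \le \tau_\varphi(n+2m) - C$. Lemma \ref{lem:BGIPHeredi}(4) then gives $\pi_\ga(h^{-1}x_0) \subset \ga((-\infty, K + C])$. Since $K + C$ lies well before $\tau_\varphi m \approx$ the position of $\varphi^m x_0$, Corollary \ref{cor:BGIPFellow}(2) shows the projection of $h^{-1}x_0$ onto the segment $[\varphi^m x_0, \varphi^{m+n} x_0]$ lies near $\varphi^m x_0$. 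Quantitatively, we compare distances along $\ga$ and use that $\ga$ fellow-travels the segment.

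The symmetric statement for $x$ on the other end follows from the ``Moreover'' part of Lemma \ref{lem:BGIPHeredi}: reverse the orientation, using $\varphi^{-1}$ and the inverted axis, which has the same constant. For the boundary point $x$, we apply this to a sequence $z_i \to x$, as in Definition \ref{dfn:alignHoro}.

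The main obstacle is the constant bookkeeping. The conclusion must be alignment with exactly the constant $C = C(\varphi)$, but the chain (projection to $\ga$ via Lemma \ref{lem:BGIPHeredi}(4), transfer from $\ga$ to the segment $[\varphi^m x_0,\varphi^{m+n}x_0]$, and the shift by $\varphi^m$) introduces additive errors of size comparable to $C$. Here I would use that $C = 100D$ was chosen generously in the proof of Lemma \ref{lem:BGIPHeredi}, with all transfer errors bounded by small multiples of $D$. This leaves the distance from the projection to $\varphi^m x_0$ at most roughly $10D < C$, once $m$ is large enough that the projection lands before $\tau_\varphi m - 20D$. If the bookkeeping fails with exactly $C$, I would redo Lemma \ref{lem:BGIPHeredi} with a slightly larger universal multiple.

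$\Ga$-invariance: let $g \in \Ga$ and $x \in \La^{\varphi,C}(\Ga)$. By the first part it suffices to show $gx \in \La^{\varphi,K}(\Ga)$ for some large $K$. Given $(x_0, h[x_0,\varphi^n x_0], x)$ $C$-aligned, consider $gh$. The pair $(gh[\ldots], gx)$ remains $C$-aligned, since the configuration is translated by $g$. For the pair $(x_0, gh[x_0, \varphi^n x_0])$ we only know $(gx_0, gh[\ldots])$ is $C$-aligned, and $d(x_0, gx_0)$ is fixed. By Corollary \ref{cor:BGIPFellow}(1), projections move by at most $d(x_0,gx_0)+4C$, so this pair is $(C + d(x_0,gx_0) + 4C)$-aligned. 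Hence $gx \in \La^{\varphi, K}(\Ga) = \La^{\varphi,C}(\Ga)$ with $K = 5C + d(x_0, gx_0)$.
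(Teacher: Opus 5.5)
Your proof is correct, and its skeleton is the paper's. The paper also passes to $\varphi^{n+2N}$ with $N\tau_{\varphi} > K + 100C + 100\delta + 100c$ and takes $h\varphi^{N}$ as the new guide. It proves $\Ga$-invariance exactly as you do, via Corollary~\ref{cor:BGIPFellow}(1) with the constant $5C + d(x_0, gx_0)$.

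The difference is in how the trimmed segment is shown to be $C$-aligned. The paper argues by contradiction, using Lemma~\ref{lem:BGIPHeredi}(3) at the threshold $K = C$. Suppose $(x_0, h\varphi^N[x_0, \varphi^n x_0])$ were not $C$-aligned. Then $\pi_{h\varphi^N\ga}(x_0) \subset h\varphi^N\ga([0,+\infty))$. Passing from the axis $h\varphi^N\ga$ to $h\ga$ costs at most $6\delta + 4C$ for the projections and $c$ for the translation. This gives $\pi_{h\ga}(x_0) \subset h\ga([N\tau_{\varphi} - c - 6\delta - 4C, +\infty))$, which is disjoint from the set $h\ga((-\infty, K+C])$ supplied by item (4). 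The end at $x$ is handled symmetrically. This is why the output constant is exactly $C$ with no bookkeeping; the argument uses only the stated conclusions (1), (3), (4) of Lemma~\ref{lem:BGIPHeredi}, together with $\delta$ and $c$.

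Your direct estimate also closes, so the fallback of enlarging the constant is unnecessary. Write $\sigma = [\varphi^m x_0, \varphi^{m+n}x_0]$ and $y = h^{-1}x_0$, and pick $\ga(t_0) \in \pi_{\ga}(y)$, so $t_0 \le K + C < m\tau_{\varphi}$. A point $p \in \sigma$ lies near some $\ga(s')$ with $s' \ge m\tau_{\varphi}$. Compare $d(y,p)$ with $d(y, \varphi^m x_0)$ using Corollary~\ref{cor:BGIPFellow}(2). This forces every point of $\pi_{\sigma}(y)$ to lie within roughly $3D$ of $\varphi^m x_0$, far below $C = 100D$.

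However, this computation needs the internal bounds from the proof of Lemma~\ref{lem:BGIPHeredi}, namely $d(\varphi^k x_0, \ga(k\tau_{\varphi})) \le d(x_0, \ga(0)) + c$ and $C_0 \le D/10$. From the statement alone, item (2) only gives endpoint errors of size up to $C$, and the same estimate then produces an error of several multiples of $C$ and fails. So your route is tied to the specific choice $C = 100D$, whereas the paper's contrapositive use of item (3) works for any constant satisfying the lemma's conclusions.
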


\begin{proof} 
Let $\gamma : \R \rightarrow X$ be an axis of $\varphi$ chosen for the constant $C = C(\varphi)$ in Lemma \ref{lem:BGIPHeredi}. Fix $K > C$. We then set \[
N = \lceil (K + 100C + 100 \delta + 100 c)/\tau_{\varphi} \rceil
\]
where  $c > 0$ is the constant given in Equation \eqref{eqn:translationaxis}. 

Now pick an arbitrary $x \in \La^{\varphi, K}(\Ga)$ and let $\{ z_i \}_{i \in \N} \subset X$ be a sequence converging to $x$. Since $x$ is $(\varphi, K)$-guided, for each large enough $n \in \N$ there exists $h \in \Gamma$ such that $(x_0, h[x_{0}, \varphi^{n+2N} x_{0}], z_{i})$ is $K$-aligned for all large $i \in \N$
Since $(n+2N) \tau_\varphi > K +C$, Lemma \ref{lem:BGIPHeredi}(4) tells us that 
$$
\begin{aligned}
\pi_{h \gamma}(x_{0}) &\subset h \gamma \big( (-\infty, K+C] \big) \quad \text{and} \\
\pi_{h \varphi^{n + 2N} \ga}( z_i) & \subset h \varphi^{n + 2N} \ga ([-K - C, + \infty))
\end{aligned}
$$
As in the proof of Lemma \ref{lem:BGIPHeredi}(4), $\pi_{h \ga}(z_i)$ and $\pi_{h \varphi^{n + 2N} \ga}(z_i)$ have Hausdorff distance at most $6 \delta + 4C$.
Hence,
$$
\pi_{h  \ga}( z_i) \subset h  \ga ([(n + 2N) \tau_{\varphi} - c - 6 \delta - 4 C -K - C, + \infty)).
$$

We now show that
$$
(x_0, h \varphi^N [ x_0, \varphi^n x_0], x) \quad \text{is } C\text{-aligned.}
$$
Suppose to the contrary that $(x_{0}, h \varphi^{N} [x_{0}, \varphi^{n} x_{0}])$ is not $C$-aligned. Then by Lemma \ref{lem:BGIPHeredi}(3), we have
$$
\pi_{h \varphi^N \ga}(x_0) \subset h \varphi^N \ga([0, + \infty)).
$$
As above, this implies
$$
\pi_{h \ga}(x_0) \subset h \ga([N \tau_{\varphi} - c - 6 \delta - 4C, + \infty)).
$$
On the other hand, since
$$
K + C < N \tau_{\varphi} - c - 6 \delta - 4C,
$$
this is a contradiction. Therefore, $(x_{0}, h \varphi^{N} [x_{0}, \varphi^{n} x_{0}])$ is $C$-aligned.

Similarly, if $(h \varphi^N [x_0, \varphi^n x_0], z_i)$ is not $C$-aligned, then 
$$
\pi_{h \varphi^{n + N} \ga}(z_i) \subset h \varphi^{n + N} \ga ((-\infty, 0]).
$$
Hence as above,
$$
\pi_{h \ga}(z_i) \subset h \ga ((-\infty, (n + N) \tau_{\varphi} + c + 6\delta + 4C]).
$$
Since 
$$
(n + N) \tau_{\varphi} + c + 6\delta + 4C < (n + 2N) \tau_{\varphi} - c - 6 \delta - 4 C -K - C,
$$
this yields a contradiction, and therefore $(h \varphi^N [x_0, \varphi^n x_0], z_i)$ is  $C$-aligned.
Since this is the case for arbitrary sequence $\{z_{i}\}_{i\in \N} \subset X$ convering to $x$, $(x_{0}, h\varphi^N [x_{0}, \varphi^{n} x_{0}] , x)$ is $C$-aligned. We conclude $x \in \La^{\varphi, C}(\Ga)$, proving the first statement.

\medskip
We now show that $\La^{\varphi, C}(\Ga)$ is  $\Ga$-invariant. Fix  $x \in \La^{\varphi, C}(\Ga)$  and $g \in \Ga$. Then for each sufficiently large $n \in \N$, there exists $h \in \Ga$ such that $(x_0, h [x_0, \varphi^n x_0], x)$ is $C$-aligned. In other words, $(g x_0, g h [x_0, \varphi^n x_0], g x)$ is $C$-aligned. By Corollary \ref{cor:BGIPFellow}(1), this implies that $(x_0, g h [x_0, \varphi^n x_0], g x)$ is $(5C + d(x_0, g x_0))$-aligned. Hence, $g x$ is a $(\varphi, 5C + d(x_0, g x_0))$-guided limit point of $\Ga$, and therefore $g x \in \La^{\varphi, C}(\Ga)$ by the first statement. This shows the desired $\Ga$-invariance.
\end{proof}

 For a non-elementary subgroup $\Ga < \Isom(X)$, isometries $g, \varphi \in \Ga$, constants $C > 0$, and $n \in \N$, we set
$$
U_C ( g; \varphi, n) := \left\{ x \in \La(\Ga) : \textrm{$(x_{0}, g[x_{0}, \varphi^{n} x_{0}], x)$ is $C$-aligned}\right\}.
$$
In \cite[Lemma 7.9]{CK_ML}, we observed that they form a basis for the  topology on the guided limit set, which is defined on the horofunction boundary for a general metric space. We prove the following version of it for the guided limit set defined on the Gromov boundary of $X$.

\begin{lem} \label{lem:nbdBasis}
    Let $\Ga < \Isom(X)$ be a non-elementary subgroup. Let $\varphi \in \Ga$ be loxodromic and  let $C=C(\varphi) > 0$ be as in Lemma \ref{lem:BGIPHeredi}. Then for each  $x \in \La^{\varphi, C}(\Ga)$, for each open set $O \subset \partial X$ with $x \in O$ and for each $N \in \N$, there exist $g \in \Ga$ and  $n > N$ such that
$$
x \in  U_{C}(g; \varphi, n) \subset O.
$$
\end{lem}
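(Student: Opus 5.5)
Membership in $U_C(g;\varphi,n)$ is immediate from guidedness; the work is to force $U_C(g;\varphi,n)$ into $O$ by choosing $n$ large. Since $x \in \La^{\varphi, C}(\Ga)$, for every sufficiently large $n$ there is $h_n \in \Ga$ with $(x_0, h_n[x_0, \varphi^n x_0], x)$ $C$-aligned. Taking $g = h_n$ gives $x \in U_C(h_n;\varphi,n)$ for every large $n$, with $x \in \La(\Ga)$ since guided points are conical limit points. It therefore suffices to show
\[
\diam U_C(h_n;\varphi,n) \to 0
\]
in a visual metric on $\partial X$ based at $x_0$ as $n \to +\infty$. Equivalently, $(y|x)_{x_0} \to +\infty$ uniformly over $y \in U_C(h_n;\varphi,n)$. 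Then, since $O$ is open, a small visual ball around $x$ lies in $O$, and we choose $n > N$ large enough.

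\textbf{Step 1: alignment puts the points in a common shadow.} Let $y \in U_C(h_n;\varphi,n)$, so $(x_0, h_n[x_0,\varphi^n x_0], y)$ is $C$-aligned. We have $d(x_0,\varphi^n x_0) \ge \tau_\varphi n > 2C + C$ once $n$ is large. Lemma \ref{lem:shadowandalignment}(2), passed to boundary points through Definition \ref{dfn:alignHoro} and Fact \ref{fact:CAT(-1)Thin}, then gives
\[
y \in O_{3C}(x_0, h_n x_0)\cap O_{3C}(h_n x_0, h_n\varphi^n x_0),
\]
and the same holds for $x$. In particular, any geodesic $[x_0, y]$ passes within $3C$ (up to a $\delta$-dependent error) of the point $p_n := h_n \varphi^n x_0$. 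The same is true for $[x_0, x]$.

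\textbf{Step 2: a lower bound on the Gromov product.} Since both rays pass near $p_n$, thinness of triangles gives
\[
(y|x)_{x_0} \ge d(x_0, p_n) - 6C - O(\delta).
\]
We also have $d(x_0, p_n) \ge d(h_n x_0, h_n\varphi^n x_0) - d(x_0, h_n x_0)$, but this is the wrong direction. Instead, use the alignment $(x_0, h_n[x_0,\varphi^n x_0])$ together with Corollary \ref{cor:BGIPFellow}(2): the projection of $x_0$ onto $h_n[x_0,\varphi^n x_0]$ lies within $C$ of $h_n x_0$, so
\[
d(x_0, p_n) \ge d(h_n x_0, p_n) - O(C) \ge \tau_\varphi n - O(C).
\]
This lower bound tends to $+\infty$ uniformly in $y$, which proves the claim.

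\textbf{Main obstacle.} The delicate step is passing from alignment with boundary points, which Definition \ref{dfn:alignHoro} phrases through sequences $z_i \to y$, to the Gromov-product estimate. One must check that the constants stay uniform in $y$ and in $n$. I would do this by working with $z_i$ far out along $[x_0,y]$, applying Lemma \ref{lem:BGIPFellow} to $[x_0,z_i]$ for each $i$, and taking $\liminf$ in the definition of $(y|x)_{x_0}$; the $4$-point inequality absorbs the resulting $\delta$-errors. The choice of visual metric is only used to translate "large Gromov product" into "small neighborhood," and that translation is standard.
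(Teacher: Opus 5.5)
Your proposal is correct and follows the paper's route. The paper also deduces the lemma from a uniform lower bound on $(x|y)_{x_0}$ over $U_C(g;\varphi,n)$: it places $U_C(g;\varphi,n)$ inside $O_R(x_0, gx_0)\cap O_R(gx_0, g\varphi^n x_0)$ via Lemma~\ref{lem:shadowandalignment}(2), and then shows that the relevant distance from $x_0$ grows with $n$, uniformly in $g$.

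The only difference is cosmetic. You obtain $d(x_0, g\varphi^n x_0) \ge \tau_\varphi n - 5C$ directly from the alignment of $x_0$ and Corollary~\ref{cor:BGIPFellow}(2), which makes the paper's case split on $\max\left(d(x_0, gx_0), d(x_0, g\varphi^n x_0)\right)$ unnecessary. Your claim that $[x_0,y]$ passes ``within $3C$ of $p_n$'' is better justified via Lemma~\ref{lem:BGIPFellow} than by combining the two shadows, but any uniform constant suffices.
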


As the collection of open sets
$$
\{ y \in \partial X : (x | y)_{x_0} > D \}
$$
for $x \in \partial X$ and $D > 0$ forms a basis of the topology on $\partial X$, Lemma \ref{lem:nbdBasis} is a consequence of the following.

\begin{lemma}
Let $\Ga < \Isom(X)$ be a non-elementary subgroup. Let $\varphi \in \Ga$ be loxodromic and let $C=C(\varphi) > 0$ be as in Lemma \ref{lem:BGIPHeredi}. 
Then for any $ D  > 0$, there exists $N > 0$ such that 
$$
( x | y)_{x_0} \ge D \quad \text{for all } n > N, g \in \Ga, \text{ and } x, y \in U_C(g; \varphi, n).
$$
\end{lemma}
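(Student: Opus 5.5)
The plan is to show that any two points $x, y \in U_C(g; \varphi, n)$ both have geodesics from $x_0$ passing near a common far-away point, namely the terminal point $g\varphi^n x_0$ of the aligned segment. The key observation is that $d(x_0, g\varphi^n x_0)$ is large whenever $n$ is large, uniformly in $g$. This holds because $(x_0, g[x_0,\varphi^n x_0])$ is $C$-aligned, so $x_0$ projects to $g[x_0,\varphi^n x_0]$ within $C$ of $gx_0$. Corollary \ref{cor:BGIPFellow}(2) then gives
$$d(x_0, g\varphi^n x_0) \ge d(x_0, g x_0) + d(x_0,\varphi^n x_0) - 2C - 4C \ge \tau_\varphi n - 6C.$$

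The steps are as follows. Fix $x \in U_C(g;\varphi,n)$ and a sequence $z_i \to x$. For large $i$, the triple $(x_0, g[x_0,\varphi^n x_0], z_i)$ is $C$-aligned. Once $\tau_\varphi n$ is large compared to $C$, so that $d(gx_0, g\varphi^n x_0) > 2C + C$, Lemma \ref{lem:shadowandalignment}(2) applies with $R = C$. It gives $z_i \in O_{3C}(x_0, g\varphi^n x_0)$, i.e.
$$d\big(g\varphi^n x_0, [x_0,z_i]\big) < 3C.$$
Since the distance from a point to a geodesic agrees with the Gromov product up to a $\delta$-dependent constant, this yields
$$(z_i \mid g\varphi^n x_0)_{x_0} \ge d(x_0, g\varphi^n x_0) - 3C - c_\delta.$$
Apply the same reasoning to a sequence $w_j \to y$. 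Gromov's 4-point inequality then gives
$$(z_i \mid w_j)_{x_0} \ge \tau_\varphi n - 9C - c_\delta - \delta.$$

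To pass to boundary points, take $\liminf_{i,j}$ along any chosen sequences. By the definition of the Gromov product on $X\cup\partial X$ as a supremum of such liminfs, this yields $(x\mid y)_{x_0} \ge \tau_\varphi n - 9C - c_\delta - \delta$. Choosing $N$ with $\tau_\varphi N \ge D + 9C + c_\delta + \delta + 3C$ then gives the claim for all $n > N$ and all $g\in\Ga$. The only mild subtlety is the boundary case: the alignment with points $z_i$ holds only eventually, for each sequence separately. However, we need only one pair of sequences to bound the supremum from below, so this causes no real difficulty.

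I expect the main obstacle to be keeping all the constants independent of $g$. This works because alignment is invariant under translating everything by $g$: the translated geodesic $g[x_0,\varphi^n x_0]$ is still $C$-contracting by Lemma \ref{lem:BGIPHeredi}(1), and its length $d(x_0,\varphi^n x_0) \ge \tau_\varphi n$ does not depend on $g$. As an alternative to Lemma \ref{lem:shadowandalignment}, one could use Lemma \ref{lem:BGIPFellow} directly to find a point of $[x_0,z_i]$ within $10C$ of $g\varphi^n x_0$.
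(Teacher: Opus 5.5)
Your argument is correct and is essentially the paper's: you put $U_C(g;\varphi,n)$ into shadows via Lemma \ref{lem:shadowandalignment}(2), turn shadow membership into a lower bound on $(x|y)_{x_0}$, and bound the relevant distance from $x_0$ below uniformly in $g$; your direct estimate $d(x_0, g\varphi^n x_0) \ge d(x_0,gx_0) + d(x_0,\varphi^n x_0) - 6C$ via Corollary \ref{cor:BGIPFellow}(2) simply replaces the paper's case analysis on $\max\bigl(d(x_0,gx_0), d(x_0,g\varphi^n x_0)\bigr)$. One small citation slip: Lemma \ref{lem:shadowandalignment}(2) as stated gives $z_i \in O_{3C}(x_0, gx_0) \cap O_{3C}(gx_0, g\varphi^n x_0)$ rather than $z_i \in O_{3C}(x_0, g\varphi^n x_0)$, but the latter follows from the symmetric half of that lemma's proof (equivalently, from item (3) of Lemma \ref{lem:BGIPFellow}), so nothing breaks.
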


\begin{proof}
    Let $D_0 > 0$ be a variable which will be determined later. Let $N \in \Nb$ be such that $d(x_0, \varphi^n x_0) > 2D_0$ for all $n > N$.
    
    Fix $g \in \Ga$ and $n > N$. Taking $D_0$ large enough, we can apply Lemma \ref{lem:shadowandalignment}, and have some $R  > 0$ depending on $C$ such that 
    $$
    U_C(g; \varphi, n) \subset O_R(x_0, g x_0) \cap O_R( g x_0, g \varphi^{n} x_0).
    $$
    Hence there exists $D' > 0$ depending on $R$ so that for any $x, y \in U_C(g; \varphi, n)$,
    $$
    (x | y)_{x_0} \ge \max \left( d(x_0, g x_0), d(x_0, g \varphi^{n} x_0) \right) - D'.
    $$

    If $d(x_0, g x_0) \ge d(x_0, g \varphi^{n} x_0)$, then 
    $$
    d(x_0, g x_0) \ge d(x_0, \varphi^n x_0) - d(x_0, g x_0) > 2 D_0  - d(x_0, g x_0),$$
     and hence $d(x_0, g x_0) \ge D_0$. This implies
    $$
(x | y)_{x_0} \ge D_0 - D'.
    $$
    Otherwise, 
    $$
    (x | y)_{x_0} \ge  d(x_0, g \varphi^{n} x_0)  - D'.
    $$
    It follows from $O_R(x_0, g x_0) \cap O_R( g x_0, g \varphi^{n} x_0) \neq \emptyset$ that for some $D'' > 0$ depending on $R > 0$, we have
    $$
    d(x_0, g \varphi^{n} x_0) \ge d(x_0, g x_0) + d(x_0, \varphi^{n}x_0) - D'' \ge D_0 - D''.
    $$
    Hence,
    $$
    (x | y)_{x_0} \ge D_0 - D'' - D'.
    $$
    In any case, we can take $D_0 = D + D'' + D'$, and the claim follows.
\end{proof}

\section{Horospherical foliations of Lie groups} \label{section:Hor}

Let $\Gsf$ be a connected semisimple real algebraic group. In this section, we define horospherical foliations of $\Gsf$ and Burger--Roblin measures.

Let $\Psf < \Gsf$ be a minimal parabolic subgroup with a fixed Langlands decomposition $\Psf = \Msf \Asf \Nsf$ where $\Asf$ is a maximal real split torus of $\Gsf$, $\Msf < \Psf$ is the maximal compact subgroup commuting with $\Asf$, and $\Nsf$ is the unipotent radical of $\Psf$. 

Let $\fa := \Lie \Asf$. Fix a positive closed Weyl chamber $\fa^+ \subset \fa$ and set $\Asf^+ := \exp \fa^+$ so that for each $n \in \Nsf$, the set $\{a^{-1} n a 
: a \in \Asf^+ \} \subset \Nsf$  is precompact. Let $\Ksf < \Gsf$ be a maximal compact subgroup such that the Cartan decomposition $\Gsf = \Ksf \Asf^+ \Ksf$ holds.
The \emph{Cartan projection} is the map $\kappa : \Gsf \to \fa^+$ satisfying
$$
g \in \Ksf (\exp \kappa(g)) \Ksf \quad \text{for all } g \in \Gsf.
$$
The bi-$\Ksf$-invariant norm $\norm{\cdot}$ on $\fa$ induced by the Killing form gives a left $\Gsf$-invariant Riemannian metric $d(\cdot, \cdot)$ on $\Gsf$: for $g, h \in \Gsf$, $d(g, h) := \norm{\kappa(g^{-1} h)}$.  This is right $\Ksf$-invariant as well, and hence induces a left $\Gsf$-invariant Riemannian metric on the associated symmetric space $\Gsf / \Ksf$.

Fix an element $w_0 \in \Ksf$ which normalizes $\Asf$ and represents the longest Weyl element so that $\operatorname{Ad}_{w_0} \fa^+ = - \fa^+$. This induces the map
$$
\opp := - \operatorname{Ad}_{w_0} : \fa \to \fa
$$
which is an involution preserving $\fa^+$, called the opposition involution. Then we have $\kappa(g^{-1}) = \opp(\kappa(g))$ for all $g \in \Gsf$.

Let $\Delta$ denote the set of all simple roots associated to $\fa^+$. The opposition involution on $\fa$ induces an involution $\Delta \to \Delta$, $\alpha \mapsto \alpha \circ \opp$. We also denote this involution by $\opp : \Delta \to \Delta$.

For a non-empty subset $\theta \subset \Delta$, we set
$$
\fa_{\theta} := \bigcap_{\alpha \in \Delta \smallsetminus \theta} \ker \alpha \quad \text{and} \quad \fa_{\theta}^+ := \fa_{\theta} \cap \fa^+.
$$
We denote by $p_{\theta} : \fa \to \fa_{\theta}$ the projection invariant under all Weyl elements fixing $\fa_{\theta}$ pointwise. Let $\fa^*$ and $\fa_{\theta}^*$ be spaces of all $\Rb$-linear forms on $\fa$ and $\fa_{\theta}$ respectively. Via $p_{\theta}$, $\fa_{\theta}^*$ can be regarded as the subspace of $\fa^*$ invariant under the precomposition of $p_{\theta}$.

For $g \in \Gsf$, its \emph{Jordan projection} is defined as
$$
\la(g) := \lim_{n \to + \infty} \frac{ \kappa(g^n)}{n} \in \fa^+
$$
and is invariant under conjugations.
We also set
$$
\la_{\theta}(g) := p_{\theta}(\la(g)) \in \fa_{\theta}^+.
$$
For Zariski dense subsemigroups, Benoist proved the non-arithmeticity of Jordan projections.
\begin{theorem}[{\cite{Benoist2000proprietes}}] \label{thm:Benoist_nonarithmetic}
Let $\Hsf \subset \Gsf$ be a Zariski dense subsemigroup. Then $\la(\Hsf) \subset \fa^+$ generates a dense additive subgroup of $\fa$. 
\end{theorem}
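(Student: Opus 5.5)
The plan is to reduce Benoist's theorem to two facts about Zariski dense subsemigroups: the abundance of $\Rb$-regular elements, and the approximate additivity of Jordan projections on products of such elements in generic position.

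\textbf{Step 1: Loxodromic elements and generic position.} Since $\Hsf$ is Zariski dense, the Abels--Margulis--Soifer lemma provides a finite set $F \subset \Hsf$ with the following property. For every $g \in \Hsf$ there is $f \in F$ such that $gf$ is loxodromic, i.e.\ $\Rb$-regular, and satisfies
$$
\norm{\la(gf) - \kappa(g)} \le c \quad \text{for a uniform constant } c.
$$
Zariski density also lets us choose loxodromic elements $h_1, \dots, h_m \in \Hsf$ whose attracting and repelling flags in $\Fc$ are in general position with respect to one another. Let $\Gamma_0$ denote the closed subgroup of $\fa$ generated by $\la(\Hsf)$. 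The goal is to show $\Gamma_0 = \fa$.

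\textbf{Step 2: The key estimate.} For loxodromic $g, h$ with transverse attracting and repelling flags, I would show that
$$
\la(g^n h^m) - n\la(g) - m\la(h) \longrightarrow v(g,h) \quad \text{as } n, m \to \infty,
$$
where $v(g,h) \in \fa$ is an explicit cross-ratio-type vector. The proof works in Tits' proximal representations $\rho_\alpha$ for $\alpha \in \Delta$. In each such representation, $\alpha$-weighted components of $\la$ appear as logarithms of top eigenvalues. A ping-pong and contraction argument in projective space shows that the top eigenvalue of $\rho_\alpha(g^n h^m)$ is asymptotic to $\lambda_1(\rho_\alpha g)^n \lambda_1(\rho_\alpha h)^m$ times a product of pairings of eigenvectors with eigen-hyperplanes. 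Hence $v(g,h) \in \Gamma_0$ modulo the lattice generated by $\la(g)$ and $\la(h)$. Consequently $\Gamma_0$ contains all such limits $v(g,h)$, and also the differences $v(g,h) - v(g',h)$.

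\textbf{Step 3: Density.} As we vary $g$ over conjugates $kgk^{-1}$ with $k \in \Hsf$, the vector $v(kgk^{-1}, h)$ varies as a nonconstant analytic function of the flags of $kgk^{-1}$. Zariski density makes these flags dense in a Zariski dense subset of $\Fc$. I would argue that the map sending flags to cross ratios has differential of full rank $\dim \fa$ at some point. Since the image of $\Hsf$ is Zariski dense, the image of this map restricted to the attracting flags of elements of $\Hsf$ spans $\fa$ and contains arbitrarily small vectors in every direction. A closed subgroup of $\fa$ containing arbitrarily small vectors spanning every direction is all of $\fa$, so this would finish the proof.

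I expect Step 3 to be the main obstacle. The issue is ensuring that the cross-ratio vectors are not confined to a proper closed subgroup, say a subspace plus a lattice. This requires a genuine Zariski density argument: if $\Gamma_0 \neq \fa$, some nonzero character $\chi$ of $\fa$ would take values in a discrete subgroup $\Zb \subset \Rb$ on all of the $v$'s. One would then derive an algebraic relation between the eigenvalue moduli that is violated on a Zariski dense set. I would first try to handle this case by case over each simple root $\alpha$, reducing to the rank one case inside $\rho_\alpha$.
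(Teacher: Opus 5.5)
The paper does not prove this statement; it quotes it from Benoist. Your argument therefore has to stand on its own. Steps 1 and 2 are standard, though the Abels--Margulis--Soifer part of Step 1 is never used. Step 3, which carries the whole content of the theorem, does not go through.

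\textbf{The local-diffeomorphism argument in Step 3 does not work.} The attracting flags of elements of $\Hsf$ accumulate only on the limit set $\La_{\Hsf}\subset\Fc$. In general this set has empty interior: it is a Cantor set for Schottky semigroups and a circle for Hitchin groups. So a full-rank differential of the cross-ratio map at some point of $\Fc$ says nothing about the image of $\La_{\Hsf}$. Zariski density gives no local control either. A Zariski dense set can sit inside a transcendental real-analytic hypersurface; for example, every infinite compact subset of the graph of $t\mapsto e^t$ is Zariski dense in $\Rb^2$. Nothing in your argument prevents $\La_{\Hsf}$ from lying in the level sets $\{\chi\circ v\in\Zb\}$ of some linear form $\chi\neq 0$, and that is exactly what $\Gamma_0\neq\fa$ amounts to. Separately, $kgk^{-1}\notin\Hsf$ in general, since $\Hsf$ is only a semigroup. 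You would need products such as $kg^Nk'$, or continuity combined with density of attracting/repelling flag pairs.

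\textbf{Your fallback handles only half the problem.} Write $\chi=\sum_{\alpha\in\Delta}t_\alpha\chi_\alpha$. First you need one more step: $\chi\circ v$, extended by continuity to tuples of limit points in general position, is continuous and $\Zb$-valued, hence locally constant. Integrality on a Zariski dense set is not by itself a contradiction; for instance $\log|x|/\log 2\in\Zb$ on the Zariski dense set $\{2^n\}\subset\Rb$.
\begin{itemize}
\item If all $t_\alpha\in\mathbb{Q}$, then $e^{N\chi\circ v}$ is a product of integer powers of cross ratios in the $\rho_\alpha$. The level set is then real algebraic, and Zariski density of $\La_{\Hsf}\cap U$ (which follows from minimality) gives the contradiction you have in mind.
\item If the $t_\alpha$ are $\mathbb{Q}$-linearly independent, the level sets are transcendental and there is no algebraic relation to violate.
\end{itemize}
The irrational case is exactly what separates density in $\fa$ from density of each $\chi_\alpha(\Gamma_0)$ in $\Rb$. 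The latter is all that a rank-one reduction inside the $\rho_\alpha$ can give, and it is strictly weaker: $\Zb(1,\sqrt2)+\Zb(\sqrt2,1)$ is a lattice in $\Rb^2$ whose coordinate projections are both dense.

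\textbf{What is missing} is a way to turn the analytic constraint into a polynomial one using the dynamics. Here is one route.
\begin{enumerate}
\item Fix all but one variable, and let $f$ be the resulting real-analytic function on $\Fc$. It is constant on $\La_{\Hsf}$ near the attracting flag $\ga^+$ of a loxodromic $\ga\in\Hsf$.
\item In unipotent coordinates centered at $\ga^+$, the element $\ga$ acts linearly, as a diagonal contraction composed with a rotation in $\Msf$. Since $\ga^n\La_{\Hsf}\subset\La_{\Hsf}$ accumulates at $\ga^+$, rescaling by the leading contraction rate (along a subsequence where the $\Msf$-part tends to $e$) shows that the lowest weighted-homogeneous part of the Taylor series of $f$ at $\ga^+$ vanishes on $\La_{\Hsf}\cap(\text{open cell})$.
\item That part is a nonzero polynomial because $f$ is nonconstant: it has logarithmic singularities along the Schubert divisors attached to each $\alpha$ with $t_\alpha\neq 0$. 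This contradicts the Zariski density of $\La_{\Hsf}\cap(\text{open cell})$.
\end{enumerate}
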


\subsection{Furstenberg boundary and $\theta$-boundary}

The \emph{Furstenberg boundary} of $\Gsf$ is defined as the quotient
$$
\Fc := \Gsf / \Psf = \Ksf / \Msf
$$
where the last equality is due to the Iwasawa decomposition $\Gsf = \Ksf \Psf = \Ksf \Asf \Nsf$.

In a similar spirit, each non-empty subset of $\Delta$ gives rise to a boundary of $\Gsf$.
For a non-empty subset $\theta \subset \Delta$, let $\Psf_{\theta} < \Gsf$ be the standard parabolic subgroup corresponding to $\theta$. That is, $\Psf_{\theta}$ is generated by $\Msf \Asf$ and all root subgroups for all positive roots and any negative root which is a $\Zb$-linear combination of $\Delta \smallsetminus \theta$. Then we have $\Psf_{\Delta} = \Psf$ and for $\theta \subset \theta'$, $\Psf_{\theta'} < \Psf_{\theta}$. This is the reason why $\Psf$ is a minimal parabolic subgroup.

Then \emph{$\theta$-boundary} is defined as the quotient
$$
\Fc_{\theta} := \Gsf / \Psf_{\theta}
$$
and the natural projection $\pi_{\theta} : \Fc \to \Fc_{\theta}$, $g \Psf \mapsto g \Psf_{\theta}$, has compact fibers.
Note that $w_0 \Psf_{\opp(\theta)} w_0^{-1}$ is a parabolic subgroup opposite to $\Psf_{\theta}$. Two points $x \in \Fc_{\theta}$ and $y \in \Fc_{\opp(\theta)}$ are said to be  \emph{in general position} if there exists $g \in \Gsf$ such that
$$
x = g \Psf_{\theta} \quad \text{and} \quad y = g w_0 \Psf_{\opp(\theta)}.
$$
Equivalently, a pair $(x, y) \in \Fc_{\theta} \times \Fc_{\opp(\theta)}$ is in general position if its $\Gsf$-orbit $\Gsf \cdot (x, y) \subset \Fc_{\theta} \times \Fc_{\opp(\theta)}$ under the diagonal action is open, which turns out to be a unique open $\Gsf$-orbit.

The convergence of a sequence in $\Gsf$ to $\Fc_{\theta}$ is defined as follows: given a sequence $\{g_n\}_{n \in \Nb} \subset \Gsf$, let $\{k_n\}_{n \in \Nb} \subset \Ksf$ be a sequence such that $g_n \in k_n \Asf^+ \Ksf$ in Cartan decomposition, for each $n \in \Nb$. Then we have the convergence $g_n \to x \in \Fc_{\theta}$ if, as $n \to + \infty$,
\begin{equation} \label{eqn:convergence_G}
k_n \Psf_{\theta} \to x \text{ in } \Fc_{\theta} \quad \text{and} \quad \alpha(\kappa(g_n)) \to +\infty \text{ for each } \alpha \in \theta.
\end{equation}
While $k_n \in \Ksf$ may not be well-defined, the point $k_n \Psf_{\theta} \in \Fc_{\theta}$ is well-defined, independent of the choice of $k_n \in \Ksf$, for each $n \in \Nb$.

Moreover, if
$
g_n = k_n (\exp \kappa(g_n)) \ell_n \in \Ksf \Asf^+ \Ksf
$
in Cartan decomposition for $n \in \Nb$, then
$$
g_n^{-1} = (\ell_n^{-1} w_0) (\exp \opp(\kappa(g_n))) (w_0^{-1} k_n^{-1}) \in \Ksf \Asf^+ \Ksf
$$
is a Cartan decomposition as well. Hence, if $\alpha(\kappa(g_n)) \to + \infty$ for each $\alpha \in \theta$ and $\ell_n \to \ell \in \Ksf$ as $n \to + \infty$, then the sequence $\{ g_n^{-1} \}_{n \in \Nb}$ converges to $\ell^{-1} w_0 \Psf_{\opp(\theta)} \in \Fc_{\opp(\theta)}$.

For $\alpha \in \Delta$, we simply write $\Psf_{\alpha} := \Psf_{\{\alpha\}}$ and $\Fc_{\alpha} := \Fc_{\{\alpha\}}$, and similarly to other places.

\subsection{Iwasawa cocycles} \label{subsec:Iwasawa}
The Iwasawa decomposition is the diffeomorphism
$$
\begin{aligned}
\Ksf \times \Asf \times \Nsf & \to \Gsf \\
(k, a, n) & \mapsto kan.
\end{aligned}
$$
The \emph{Iwasawa cocycle} $\sigma : \Gsf \times \Fc \to \fa$ is defined as follows: for $g \in \Gsf$ and $x \in \Fc$, let $\sigma(g, x) \in \fa$ be the unique element such that
$$
gk \in \Ksf ( \exp \sigma(g, x)) \Nsf
$$
where $k \in \Ksf$ be such that $x = k \Msf \in \Fc$.

For  non-empty $\theta \subset \Delta$, the \emph{partial Iwasawa cocycle} $\sigma_{\theta} : \Gsf \times \Fc_{\theta} \to \fa_{\theta}$ is defined as
$$
\sigma_{\theta}(g, x) := p_{\theta} ( \sigma(g, \tilde x))
$$
for some (any) $\tilde x \in \pi_{\theta}^{-1}(x) \in \Fc$. This does not depend on the choice of $\tilde x$ \cite[Lemma 6.1]{Quint2002Mesures}. Then for $g, h \in \Gsf$ and $x \in \Fc_{\theta}$, the cocycle relation holds:
\begin{equation} \label{eqn:Iwasawa_cocycle}
\sigma_{\theta}(g h, x) = \sigma_{\theta}(g, hx) + \sigma_{\theta}(h, x).
\end{equation}
See \cite[Lemma 6.2]{Quint2002Mesures}. In particular, 
\begin{equation}\label{eqn:Iwasawa_inv}
    \sigma_{\theta}(g^{-1}, x) = - \sigma_{\theta}(g, g^{-1} x).
\end{equation}

\subsection{Horospherical foliations} \label{subsec:horosphericalfol}
We now define the horospherical foliation of $\Gsf$ associated to $\theta \subset \Delta$. Let $\Lsf_{\theta}$ be the centralizer of $\Asf_{\theta} := \exp \fa_{\theta}$. Then $\Lsf_{\theta}$ is a Levi subgroup of $\Psf_{\theta}$ with the Levi decomposition $\Psf_{\theta} = \Lsf_{\theta} \Nsf_{\theta}$, where $\Nsf_{\theta}$ is the unipotent radical of $\Psf_{\theta}$. We also have $\Lsf_{\theta} = \Asf_{\theta} \Ssf_{\theta}$ where $\Ssf_{\theta}$ is an almost direct product of a connected semisimple real algebraic subgroup and a compact subgroup. Then $\Msf_{\theta} := \Ksf \cap \Psf_{\theta}$ is a maximal compact subgroup of $\Ssf_{\theta}$. Note that $\Msf_{\Delta} = \Msf = \Ssf_{\Delta}$ and $\Nsf_{\Delta} = \Nsf$.

The \emph{$\theta$-horospherical foliation} of $\Gsf$ is the foliation whose leaves are right $\Nsf_{\theta}\Ssf_{\theta}$-orbits. Hence, it can be represented as the quotient space $\Gsf / \Nsf_{\theta}\Ssf_{\theta}$, on which $\Gsf$ acts by left multiplications.

Set
$$
\Hor_{\theta} := \Fc_{\theta} \times \fa_{\theta},
$$
and define the $\Gsf$-action on $\Hor_{\theta}$ by
$$
g \cdot (x, u) := (gx, u + \sigma_{\theta}(g, x))
$$
for $g \in \Gsf$ and $(x, u) \in \Hor_{\theta}$.
Then the map
$$
\begin{tikzcd}[%
    ,row sep = 0ex
    ]
\Gsf / \Nsf_{\theta}\Ssf_{\theta} \arrow[r] & \Hor_{\theta} = \Fc_{\theta} \times \fa_{\theta} \\
g \Nsf_{\theta} \Ssf_{\theta} \arrow[r, mapsto] & (g \Psf_{\theta}, \sigma_{\theta}(g, \Psf_{\theta}))
\end{tikzcd}
$$
is a $\Gsf$-equivariant homeomorphism. Therefore, we identify
$$
\Hor_{\theta} = \Fc_{\theta} \times \fa_{\theta} = \Gsf / \Nsf_{\theta} \Ssf_{\theta}
$$
and regard $\Hor_{\theta}$ as the $\theta$-horospherical foliation of $\Gsf$ as well.

Since $\Asf_{\theta}$ normalizes $\Nsf_{\theta} \Ssf_{\theta}$, we have the $\Asf_{\theta}$-action on $\Gsf / \Nsf_{\theta} \Ssf_{\theta}$ given by the right multiplication. This $\Asf_{\theta}$-action commutes with the left $\Gsf$-action, and induces an action on $\Hor_{\theta}$ given by the translation on the $\fa_{\theta}$-component: for $a \in \Asf_{\theta}$ and $(x, u) \in \Hor_{\theta}$,
$$
(x, u) \cdot a := (x, u + \log a).
$$

\begin{remark} \label{rmk:bddIwasawa}
When $\rank \Gsf = 1$, if $\{ g_n \}_{n \in \Nb} \subset \Gsf$ is a sequence such that $\kappa(g_n)$ diverges while $\sigma(g_n, x)$ is bounded for some $x \in \Fc$, then $g_n^{-1} \to x$ in the sense of Equation \eqref{eqn:convergence_G}. This can be easily seen especially when $\Gsf = \PSL(2, \Rb) = \Isom^+ \Hb^2$, by interpreting $\kappa( \cdot)$ as the displacement and $\sigma( \cdot, \cdot)$ as the Busemann function in $\Hb^2$. In particular, if $\{ g_n \}_{n \in \Nb} \subset \Gsf$ is a sequence such that $\kappa(g_n)$ diverges and $g_n^{-1}$ does not converge to $x \in \Fc$, then for any $u \in \fa$, the sequence $g_n \cdot (x, u)$ diverges in $\Hor_{\Delta}$.

On the other hand, this does not hold in higher-rank $\Gsf$ in general, unless $\Gsf$ is a product of rank one groups. For example, consider the case that $\Gsf = \PSL(3, \Rb)$ where $\Psf < \Gsf$ is the subgroup of upper triangular matrices, so that $\Fc$ is the full flag variety $\{ (V_1, V_2) : 0 < V_1 < V_2 < \Rb^3, \dim V_i = i \}$. Then set  $x = \Psf \in \Fc$ which correponds to the flag $( (\Rb, 0, 0), (\Rb, \Rb, 0))$ and let $\{ g_n \}_{n \in \Nb} \subset \Gsf$ be the sequence given by 
$$
g_n := \begin{pmatrix}
1 & n & 0 \\
0 & 1 & 0 \\
0 & 0 & 1
\end{pmatrix} \in \Nsf \quad \text{for all }n \in \Nb.
$$
One can see that $\sigma_{\Delta}(g_n^{\pm}, x) = 0$ for all $n \in \Nb$ and $\alpha(\kappa(g_n^{\pm})) \to + \infty$ for all $\alpha \in \Delta$. 

On the other hand, $\{ g_n \}_{n \in \Nb}$ or $\{ g_n^{-1} \}_{n \in \Nb}$ cannot converge to $x$. Indeed, if both sequences converge to $x$, then for any flag $y \in \Fc$ transverse to $x$, $g_n^{\pm} y \to x$ as $n \to + \infty$. Now taking $y$ to be the flag $((0, 0, \Rb), (0, \Rb, \Rb))$ yields a contradiction.

If one seeks a sequence of loxodromic elements, the sequence
$$
g_n := \begin{pmatrix}
1 & n & 0 \\
0 & 1 & 0 \\
0 & 0 & 1
\end{pmatrix}
\begin{pmatrix}
2 & 0 & 0 \\
0 & 1 & 0 \\
0 & 0 & 1/2
\end{pmatrix}
\begin{pmatrix}
1 & n & 0 \\
0 & 1 & 0 \\
0 & 0 & 1
\end{pmatrix}^{-1}
$$
also works.
\end{remark}

\subsection{Patterson--Sullivan measures and Burger--Roblin measures}
In \cite{Quint2002Mesures}, Quint introduced higher-rank Patterson--Sullivan measures on $\Fc_{\theta}$  using the partial Iwasawa cocycle. For $\delta \ge 0$, $\psi \in \fa_{\theta}^*$, and $\Ga < \Gsf$, a Borel probability measure $\nu$ on $\Fc_{\theta}$ is called a \emph{$\delta$-dimensional $\psi$-Patterson--Sullivan measure} of $\Ga$ if for any $g \in \Ga$,
\begin{equation} \label{eqn:PSdefbody}
\frac{d g_* \nu}{d \nu}(x) = e^{-\delta \cdot \psi(\sigma_{\theta}(g^{-1}, x))} \quad \text{for } \nu\text{-a.e. } x \in \Fc_{\theta}.
\end{equation}

For a $\delta$-dimensional $\psi$-Patterson--Sullivan measure $\nu$ of $\Ga$, the associated \emph{Burger--Roblin measure} $\mu_{\nu}^{\BR}$ of $\Ga$ on $\Hor_{\theta}$ is defined by
$$
d \mu_{\nu}^{\BR}(x, u) := e^{\delta \cdot \psi(u)} d \nu(x) du
$$
where $du$ is the Lebesgue measure on $\fa_{\theta}$. The measure $\mu_{\nu}^{\BR}$ is a $\Ga$-invariant Radon measure on $\Hor_{\theta}$.

In a special case that $\theta = \Delta$, we have $\Nsf_{\Delta} = \Nsf$ and $\Ssf_{\Delta} = \Msf$, and the Burger--Roblin measure of $\Ga$ associated to $\nu$ is also defined on $\Ga \ba \Gsf$. Recalling that $\Fc = \Ksf / \Msf$, we denote by $\widehat{\nu}$ the $\Msf$-invariant lift of $\nu$ to $\Ksf$. Using the Iwasawa decomposition $\Gsf = \Ksf \Asf \Nsf$, we define the measure $\tilde \mu_{\nu}^{\BR}$ on $\Gsf$ as follows: for $g = k (\exp u) n \in \Ksf \Asf \Nsf$,
$$
d \tilde \mu_{\nu}^{\BR}(g) := e^{\delta \cdot \psi(u)} d \widehat{\nu}(k) du dn
$$
where $dn$ is the Haar measure on $\Nsf$. Then the Radon measure $\tilde \mu_{\nu}^{\BR}$ is left $\Ga$-invariant and right $\Nsf \Msf$-invariant, and therefore it induces the $\Nsf\Msf$-invariant Radon measure
\begin{equation} \label{def:BRonhomogeneous}
\mu_{\nu}^{\BR} \quad \text{on} \quad \Ga \ba \Gsf,
\end{equation}
abusing notations, which is also called the \emph{Burger--Roblin measure} of $\Ga$ associated to $\nu$.

\subsection{Tits representations and shadows} \label{section:Tits}

To deduce a property of Iwasawa cocycles, which is an important player in this paper, we employ Tits representations and associated shadows. We refer to (\cite{Quint2002Mesures}, \cite[Section 6]{benoist2016random}) for a comprehensive discussion on this topic.

For each $\alpha \in \Delta$, let $\omega_{\alpha} \in \fa^*$ be the (restricted) fundamental weight associated to $\alpha$. Then for a non-empty subset $\theta \subset \Delta$, $\{ \omega_{\alpha} : \alpha \in \theta \}$ is a basis of $\fa_{\theta}^*$.
The following is a theorem of Tits \cite{Tits_representations}, and representations given below are referred to as \emph{Tits representations}.

\begin{theorem}[{\cite[Theorem 7.2]{Tits_representations}}]
  For each $\alpha\in \Delta$, there exists an irreducible $\Rb$-representation $\rho_{\alpha} : \Gsf \to \GL ( V_{\alpha}) $ whose  highest (restricted) weight 
    $\chi_{\alpha}$ is equal to $k_\alpha \omega_\alpha$
    for some $k_{\alpha} \in \Nb$ and whose highest weight space is one-dimensional.
\end{theorem}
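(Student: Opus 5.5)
This is Tits' theorem, cited from \cite[Theorem 7.2]{Tits_representations}. I would prove it in the classical way: first over $\mathbb{C}$ by highest weight theory, then descend to $\Rb$ using the Galois action, which is where the integer $k_\alpha$ comes from.

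The first step is to pass to a complexification. Let $\mathfrak{g} = \Lie \Gsf$ and fix a Cartan subalgebra $\mathfrak{h} \supset \fa$ of $\mathfrak{g}$, compatible with the choice of $\fa^+$. Take the absolute root system of $(\mathfrak{g}_\Cb, \mathfrak{h}_\Cb)$ with an ordering compatible with $\fa^+$. Restricting to $\fa$ gives the restricted roots, and every restricted fundamental weight $\omega_\alpha$ is a positive rational (indeed integral) combination of restrictions of absolute fundamental weights. Concretely, I would take the absolute dominant weight
$$\varpi = \sum_{\beta} \varpi_\beta,$$
summed over the absolute simple roots $\beta$ whose restriction to $\fa$ is $\alpha$. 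These $\beta$ are exactly the nodes of the Satake diagram lying over $\alpha$, together with their orbits under the $*$-action. Then $\varpi|_{\fa}$ is a positive multiple of $\omega_\alpha$.

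Next, the $*$-action (Galois twist) permutes these $\beta$, so $\varpi$ is invariant under the $\Gal(\Cb/\Rb)$-action on weights. Let $W$ be the irreducible complex representation of highest weight $\varpi$; passing to a finite cover or a multiple if $\Gsf$ is not simply connected, it is a representation of the algebraic group.

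The second step is descent. Because $\varpi$ is Galois-invariant, the conjugate representation $\overline{W}$ is isomorphic to $W$. The obstruction to defining $W$ over $\Rb$ is a Brauer class of order at most $2$. Replacing $\varpi$ by $2\varpi$, or by $k\varpi$ for suitable $k$, kills this obstruction: one can take the Cartan component of $W \otimes \overline{W}$, or view $W$ as a real representation of doubled dimension. This gives a real irreducible representation $V_\alpha$ whose restricted highest weight is $\chi_\alpha = k_\alpha \omega_\alpha$ with $k_\alpha \in \Nb$.

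The third step, which I expect to be the main obstacle, is showing that the highest restricted weight space of $V_\alpha$ is one-dimensional. The $\fa$-weight space for $\chi_\alpha$ is the sum of the $\mathfrak{h}$-weight spaces for the absolute weights $\mu$ with $\mu|_{\fa} = \chi_\alpha$. Every weight of $W$ has the form $\varpi - \sum n_\gamma \gamma$ with $n_\gamma \ge 0$, so its restriction equals $\chi_\alpha$ only if each $\gamma$ with $n_\gamma > 0$ restricts to zero, i.e.\ is a compact (black) simple root of the Satake diagram. The chosen $\varpi$ is orthogonal to all black simple roots, since it is supported on nodes restricting to $\alpha \neq 0$. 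Therefore the reflections in black roots fix $\varpi$, and $\varpi - \gamma$ is not a weight for any such $\gamma$. Iterating this shows the only weight restricting to $\chi_\alpha$ is $\varpi$ itself, whose weight space is one-dimensional.

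Finally, one checks that the descent step does not double this space. Choosing $k_\alpha$ through the Cartan-component construction rather than through the doubled real structure keeps the multiplicity equal to one, and irreducibility over $\Rb$ follows. I would carry out these steps in order and expect the care to lie in the Satake-diagram bookkeeping of the third step and in this compatibility check.
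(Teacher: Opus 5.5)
Your argument is correct; the paper gives no proof of its own and simply quotes Tits, and what you describe is the standard argument behind that citation, specialized to $\Rb$: take a $*$-invariant dominant weight orthogonal to the black nodes of the Satake diagram, pass to a multiple to remove the Brauer (Tits) obstruction to descent, and get multiplicity one of the top $\fa$-weight space from that orthogonality. Two points need tightening: since the representation must be of $\Gsf$ itself, take a multiple of $\varpi$ lying in the character lattice of a maximal torus of the ambient algebraic group rather than passing to a finite cover; and the reality of the Cartan component of $W \otimes \overline{W}$ needs no Brauer-group input, because the $\Gsf$-equivariant antilinear involution $w \otimes \overline{w'} \mapsto w' \otimes \overline{w}$ of $W \otimes \overline{W}$ preserves the multiplicity-one summand of highest weight $2\varpi$ (as $(2\varpi)^* = 2\varpi$), and its fixed points give $V_\alpha$.
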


For each $\alpha \in \Delta$, we fix a Tits representation $\rho_{\alpha} : \Gsf \to \GL(V_{\alpha})$ throughout the paper. We denote by $V_{\alpha}^+$ the highest weight space of $\rho_{\alpha}$ which is one-dimensional, and by $V_{\alpha}^{<}$ its unique complementary $\rho_{\alpha}(\Asf)$-invariant subspace in $V_{\alpha}$.
Let $\langle \cdot, \cdot \rangle_{\alpha}$ be a $\rho_{\alpha}(\Ksf)$-invariant inner product on $V_{\alpha}$ with respect to which $\rho_{\alpha}(\Asf)$ is symmetric, so that $V_{\alpha}^+$ is perpendicular to $V_{\alpha}^{<}$. We denote by $\norm{\cdot}_{\alpha}$ the norm on $V_{\alpha}$ induced by $\langle \cdot, \cdot \rangle_{\alpha}$, and use the same notation $\norm{\cdot}_{\alpha}$ for the operator norm of elements of $\GL(V_{\alpha})$. Then we have
$$
\log \norm{\rho_{\alpha}(g)}_{\alpha} = \chi_{\alpha}(\kappa(g)) \quad \text{for all } g \in \Gsf.
$$
The norm $\norm{\cdot}_{\alpha}$ induces a metric on the projective space $\Pb(V_{\alpha})$: for $v, w \in V_{\alpha}$ with $\norm{v}_{\alpha} = \norm{w}_{\alpha} = 1$,
$$
d_{\Pb(V_{\alpha})}([v], [w]) := \min \left\{ \norm{v - w}_{\alpha},  \norm{v + w}_{\alpha} \right\}.
$$

Through $\rho_{\alpha}$, the stabilizers of $V_{\alpha}^+, V_{\alpha}^{<} \subset V_{\alpha}$ in $\Gsf$ are parabolic subgroups $\Psf_{\alpha}, w_0 \Psf_{\opp(\alpha)} w_0^{-1} < \Gsf$, respectively. Hence, denoting by $V_{\alpha}^*$ the dual of $V_{\alpha}$ (i.e., $\Pb(V_{\alpha}^*)$ consists of hyperplanes in $V_{\alpha}$),  maps $\Gsf \to \Pb(V_{\alpha})$, $g \mapsto \rho_{\alpha}(g) V_{\alpha}^+$, and $\Gsf \to \Pb(V_{\alpha}^*)$, $g \mapsto \rho_{\alpha}(g) V_{\alpha}^<$, induce $\rho_{\alpha}$-equivariant embeddings:
$$
\begin{tikzcd}[%
    ,row sep = 0ex
    ]
\Phi_{\alpha} : \Fc_{\alpha} \arrow[r] & \Pb(V_{\alpha}) & [-2em] \text{and} & [-2em] \Phi_{\alpha}^* :  \Fc_{\opp(\alpha)} \arrow[r] & \Pb(V_{\alpha}^*)\\
\quad \quad g \Psf_{\alpha} \arrow[r, mapsto] &  \rho_{\alpha}(g) V_{\alpha}^+ & & \quad \quad g w_0 \Psf_{\opp(\alpha)} \arrow[r, mapsto] &  \rho_{\alpha}(g) V_{\alpha}^<
\end{tikzcd}
$$
The induced maps have the property that $x \in \Fc_{\alpha}$ and $y \in \Fc_{\opp(\alpha)}$ are in general position if and only if the line $\Phi_{\alpha}(x)$ and hyperplane $\Phi_{\alpha}^*(y)$ are transverse as subspaces of $V_{\alpha}$, i.e.,
$$
\Phi_{\alpha}(x) \oplus \Phi_{\alpha}^*(y) = V_{\alpha}.
$$
See \cite[Section 3.2]{GGKW2017} for details.

Now more generally, fix a non-empty subset $\theta \subset \Delta$. Similar to the above, the map $\Gsf \to \prod_{\alpha \in \theta} \Pb(V_{\alpha})$, $g \mapsto (\rho_{\alpha}(g)V_{\alpha}^+)_{\alpha \in \theta}$, factors through a $(\rho_{\alpha})_{\alpha \in \theta}$-equivariant embedding
$$
\begin{tikzcd}
\Phi_{\theta} : \F_{\theta} \arrow[r] & \prod_{\alpha \in \theta} \Pb(V_{\alpha}).
\end{tikzcd}
$$
Then the metric on $\prod_{\alpha \in \theta} \Pb(V_{\alpha})$ induced by $d_{\Pb(V_{\alpha})}$'s equips $\Fc_{\theta}$ with a $\Ksf$-invariant metric via the above map.
Note that the following diagram commutes for each $\alpha \in \theta$:
$$
\begin{tikzcd}
\Fc_{\theta} \arrow[r, "\Phi_{\theta}"] \arrow[d] &  \prod\limits_{\alpha' \in \theta} \Pb(V_{\alpha'}) \arrow[d]\\
\Fc_{\alpha} \arrow[r, "\Phi_{\alpha}"'] & \Pb(V_{\alpha})
\end{tikzcd}
$$
where the vertical maps are canonical projections. For this reason, we also denote by $\Phi_{\alpha}(x)$ the $\alpha$-component of $\Phi_{\theta}(x)$ for $x \in \Fc_{\theta}$.

Tits representations are useful to study Iwasawa cocycles, as in the following lemma due to Quint.

\begin{lemma}[{\cite[Lemma 6.4]{Quint2002Mesures}}] \label{lem:Iwasawa_log}
    Let $\alpha \in \theta$. For any $g \in \Gsf$ and $x \in \Fc_{\theta}$, if $v \in \Phi_{\alpha}(x) \smallsetminus \{0\}$, then
    $$
    \chi_{\alpha}(\sigma_{\theta}(g, x)) = \log \frac{ \norm{\rho_{\alpha}(g) v}_{\alpha}}{{\norm{v}_{\alpha}}}.
    $$
\end{lemma}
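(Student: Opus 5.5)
The plan is to reduce the statement to the definition of the Iwasawa cocycle and the behavior of the $\Ksf$-invariant inner product under $\rho_{\alpha}(\Asf)$ and $\rho_{\alpha}(\Nsf)$.

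First, reduce to the full flag variety. By definition $\sigma_{\theta}(g,x) = p_{\theta}(\sigma(g,\tilde x))$ for any lift $\tilde x\in\Fc$. The weight $\chi_{\alpha} = k_{\alpha}\omega_{\alpha}$ with $\alpha\in\theta$ lies in $\fa_{\theta}^*$, which is the subspace of $\fa^*$ invariant under precomposition by $p_{\theta}$. Hence $\chi_{\alpha}(\sigma_{\theta}(g,x)) = \chi_{\alpha}(\sigma(g,\tilde x))$. Moreover, $\Phi_{\alpha}(x)$ is the image of $\tilde x$ under $\Fc\to\Fc_{\alpha}\to\Pb(V_{\alpha})$. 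Therefore it suffices to prove the identity for $x = k\Msf\in\Fc$, where $\Phi_{\alpha}(x) = \rho_{\alpha}(k)V_{\alpha}^+$.

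Next, observe that the right-hand side depends only on the line $[v]$, so we may take $v = \rho_{\alpha}(k)v^+$ for a unit vector $v^+\in V_{\alpha}^+$. Write $gk = k'(\exp u)n$ with $u = \sigma(g,x)$, $k'\in\Ksf$, $n\in\Nsf$. We then record two facts.
\begin{enumerate}
\item $\rho_{\alpha}(\Nsf)$ fixes $V_{\alpha}^+$ pointwise. The Lie algebra of $\Nsf$ is spanned by positive root spaces, and these shift weights upward. Since $\chi_{\alpha}$ is the highest weight, they kill $V_{\alpha}^+$; as $\Nsf$ is unipotent and connected, $\rho_{\alpha}(n)v^+ = v^+$.
\item $\rho_{\alpha}(\exp u)$ acts on $V_{\alpha}^+$ by the scalar $e^{\chi_{\alpha}(u)}$.
\end{enumerate}

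Combining these,
$$\rho_{\alpha}(g)v = \rho_{\alpha}(k')\rho_{\alpha}(\exp u)\rho_{\alpha}(n)v^+ = e^{\chi_{\alpha}(u)}\rho_{\alpha}(k')v^+.$$
Since $\norm{\cdot}_{\alpha}$ is $\rho_{\alpha}(\Ksf)$-invariant, we get $\norm{\rho_{\alpha}(g)v}_{\alpha} = e^{\chi_{\alpha}(u)}$, while $\norm{v}_{\alpha} = 1$. Taking logarithms gives the claim.

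The only step needing care is (1). The point is that $\Nsf$ is contained in $\Psf_{\alpha}$, the stabilizer of the line $V_{\alpha}^+$, so it acts on that line by a character. Because $\Nsf$ is unipotent, that character is trivial. This is the expected obstacle, and the weight-shifting argument handles it directly.
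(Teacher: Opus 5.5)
Your proposal is correct. The paper gives no proof of its own here, only the citation to Quint, and your argument is the standard one behind that reference: reduce to $\Fc$ using $\chi_{\alpha} \circ p_{\theta} = \chi_{\alpha}$, write $gk = k'(\exp u) n$, note that $\rho_{\alpha}(\Nsf)$ fixes $V_{\alpha}^+$ pointwise, that $\rho_{\alpha}(\exp u)$ acts on $V_{\alpha}^+$ by $e^{\chi_{\alpha}(u)}$, and conclude by the $\rho_{\alpha}(\Ksf)$-invariance of $\norm{\cdot}_{\alpha}$. Your step (1) is justified either by the weight-shifting argument or, more cleanly, by observing that a unipotent operator preserving a line acts on it trivially.
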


Quint defined shadows in $\Fc_{\theta}$ using Tits representations as follows: for $g \in \Gsf$, write its Cartan decomposition
$$
g = k_g (\exp \kappa(g)) \ell_g \in \Ksf \Asf^+ \Ksf.
$$
Note that while $k_g, \ell_g \in \Ksf$ may not be well-defined, the corresponding points $k_g \Psf_{\theta} \in \Fc_{\theta}$ and $\ell_{g}^{-1} w_0 \Psf_{\opp(\theta)} \in \Fc_{\opp(\theta)}$ are well-defined. For $0 < \epsilon \le 1$, let
$$
B_{\theta, g}^{\epsilon} := \left\{ x \in \Fc_{\theta} : d_{\Pb(V_{\alpha})} \left( \Phi_{\alpha}(x), \Pb \left(\rho_{\alpha} \left(\ell_g^{-1} \right) V_{\alpha}^{<} \right) \right) \ge \epsilon \quad \text{for all } \alpha \in \theta \right\}.
$$
In other words, $B_{\theta, g}^{\epsilon}$ consists of points $x \in \Fc_{\theta}$ such that the line $\Phi_{\alpha}(x) \subset V_{\alpha}$ has  uniform transversality to the hyperplane $\Phi_{\alpha}^* \left(\ell_g^{-1} w_0 \Psf_{\opp(\alpha)} \right) \subset V_{\alpha}$ for all $\alpha \in \theta$.

For $\epsilon > 0$ and $ g \in \Gsf$,  the \emph{$\epsilon$-shadow of $g$} is defined as the set
$$
g \cdot B_{\theta, g}^{\epsilon} \subset \Fc_{\theta}.
$$
Within this shadow, Iwasawa cocycles behave nicely due to the following estimate.

\begin{lemma}[{\cite[Lemma 4.2]{Quint2002Mesures}}] \label{lem:norminshadow}
    Let $0 < \epsilon \le 1$ and $g \in \Gsf$.  For any $x \in B_{\theta, g}^{\epsilon}$ and $\alpha \in \theta$, if $v \in \Phi_{\alpha}(x) \smallsetminus \{0\}$, then
    $$
    \frac{ \norm{\rho_{\alpha}(g)v}_{\alpha}}{\norm{v}_{\alpha}} \ge \epsilon \cdot \norm{\rho_{\alpha}(g)}_{\alpha}.
    $$
\end{lemma}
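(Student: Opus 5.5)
The plan is to reduce to the Cartan decomposition and then estimate the resulting vector in $V_{\alpha}$ from below by its highest weight component. Write $g = k_g (\exp \kappa(g)) \ell_g$ and set $a := \exp \kappa(g)$. Since $\rho_{\alpha}(k_g)$ is an isometry for $\norm{\cdot}_{\alpha}$, we have
$$
\norm{\rho_{\alpha}(g) v}_{\alpha} = \norm{\rho_{\alpha}(a)\rho_{\alpha}(\ell_g) v}_{\alpha}, \qquad \norm{\rho_{\alpha}(g)}_{\alpha} = \norm{\rho_{\alpha}(a)}_{\alpha} = e^{\chi_{\alpha}(\kappa(g))}.
$$
Put $w := \rho_{\alpha}(\ell_g) v$, which satisfies $\norm{w}_{\alpha} = \norm{v}_{\alpha}$, and decompose $w = w^+ + w^<$ with $w^+ \in V_{\alpha}^+$ and $w^< \in V_{\alpha}^<$. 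This splitting is orthogonal.

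The next step is to control $\norm{w^+}_{\alpha}$ using the assumption $x \in B_{\theta, g}^{\epsilon}$. That assumption gives $d_{\Pb(V_{\alpha})}(\Phi_{\alpha}(x), \Pb(\rho_{\alpha}(\ell_g^{-1}) V_{\alpha}^<)) \ge \epsilon$. Applying the isometry $\rho_{\alpha}(\ell_g)$ of $\Pb(V_{\alpha})$, this becomes $d_{\Pb(V_{\alpha})}([w], \Pb(V_{\alpha}^<)) \ge \epsilon$. Now normalize so that $\norm{w}_{\alpha} = 1$. The closest point of $\Pb(V_{\alpha}^<)$ to $[w]$ is at least as close as the normalized orthogonal projection $w^</\norm{w^<}_{\alpha}$, and an elementary computation for unit vectors gives
$$
\norm{w - w^</\norm{w^<}_{\alpha}}_{\alpha}^2 = \norm{w^+}_{\alpha}^2 + (1 - \norm{w^<}_{\alpha})^2 \le 2\norm{w^+}_{\alpha}^2 .
$$
The inequality uses $1 - \norm{w^<}_{\alpha} \le 1 - \norm{w^<}_{\alpha}^2 = \norm{w^+}_{\alpha}^2 \le \norm{w^+}_{\alpha}$. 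It is here, with the $\sqrt{2}$ and the degenerate case $w^< = 0$ (where the bound is trivial), that I expect the only real care in constant tracking.

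Finally, $\rho_{\alpha}(a)$ acts on $V_{\alpha}^+$ by the scalar $e^{\chi_{\alpha}(\kappa(g))}$ and preserves $V_{\alpha}^<$. So by orthogonality,
$$
\norm{\rho_{\alpha}(a) w}_{\alpha} \ge \norm{\rho_{\alpha}(a) w^+}_{\alpha} = e^{\chi_{\alpha}(\kappa(g))} \norm{w^+}_{\alpha} \ge c\,\epsilon \,\norm{\rho_{\alpha}(g)}_{\alpha}\norm{v}_{\alpha},
$$
with $c$ an absolute constant. This gives the desired bound up to that constant.

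To obtain exactly $\epsilon$ rather than $\epsilon/\sqrt{2}$, I would instead compute the distance to the closest point precisely. The minimizing unit vector in $V_{\alpha}^<$ is $w^</\norm{w^<}_{\alpha}$, and the chordal distance $d$ satisfies $\norm{w^+}_{\alpha} = \sin\phi$ with $d = 2\sin(\phi/2)$. Since $\sin\phi = d\sqrt{1 - d^2/4}$, we get $\norm{w^+}_{\alpha} \ge \epsilon\sqrt{1-\epsilon^2/4} \ge (\sqrt{3}/2)\,\epsilon$ for $\epsilon \le 1$. If the exact constant $1$ is genuinely needed, I would note that the paper only uses this bound up to a multiplicative constant, or absorb the constant into the definition of the metric. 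This normalization issue is the main obstacle, and it is cosmetic.
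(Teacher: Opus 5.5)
Your argument is the standard proof of this estimate; the paper gives no proof of its own and only cites Quint. The steps are all correct:
\begin{itemize}
\item the Cartan decomposition reduces the claim to $a=\exp\kappa(g)$ acting on the unit vector $w=\rho_{\alpha}(\ell_g)v$;
\item orthogonality of $V_{\alpha}^+$ and $V_{\alpha}^<$ gives $\norm{\rho_{\alpha}(a)w}_{\alpha}\ge e^{\chi_{\alpha}(\kappa(g))}\norm{w^+}_{\alpha}$;
\item your identity $\norm{w^+}_{\alpha}=d\sqrt{1-d^2/4}$ holds, where $d=d_{\Pb(V_{\alpha})}([w],\Pb(V_{\alpha}^<))$.
\end{itemize}

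Where I disagree is your closing assessment that the remaining issue is a cosmetic normalization you could fix. The factor $\sqrt{1-\epsilon^2/4}$ cannot be removed by a sharper computation. With the chordal metric $d_{\Pb(V_{\alpha})}$ defined in the paper, the inequality with constant exactly $\epsilon$ is false. Here is a counterexample.
\begin{itemize}
\item Take $\Gsf=\mathrm{SL}(2,\mathbb{R})$, $\theta=\Delta=\{\alpha\}$, and $\rho_{\alpha}$ the standard representation on $\mathbb{R}^2$. This is a valid Tits representation, and $\Phi_{\alpha}$ identifies $\Fc$ with $\Pb(\mathbb{R}^2)$.
\item Take $g=\mathrm{diag}(e^{t},e^{-t})$ with $t>0$. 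Then $k_g$ and $\ell_g$ can be taken to be the identity, and the relevant hyperplane is $\mathbb{R}e_2$.
\item Take $v=(\sin\phi,\cos\phi)$ with $2\sin(\phi/2)=\epsilon$, which is possible with $\phi\in(0,\pi/3]$ since $\epsilon\le 1$.
\end{itemize}
Then $d_{\Pb(V_{\alpha})}([v],[e_2])=\norm{v-e_2}_{\alpha}=\epsilon$, so $[v]\in B^{\epsilon}_{\theta,g}$. However,
\[
\frac{\norm{\rho_{\alpha}(g)v}_{\alpha}^{2}}{\norm{\rho_{\alpha}(g)}_{\alpha}^{2}\,\norm{v}_{\alpha}^{2}}
= \sin^{2}\phi + e^{-4t}\cos^{2}\phi
\;\longrightarrow\; \epsilon^{2}\Bigl(1-\frac{\epsilon^{2}}{4}\Bigr) < \epsilon^{2}
\qquad (t\to+\infty).
\]
So what you actually proved is the correct, sharp statement for this metric: the bound $\epsilon\sqrt{1-\epsilon^2/4}\ (\ge \tfrac{\sqrt3}{2}\epsilon)$. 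Your own identity shows why no better constant is possible: the ratio approaches $\norm{w^+}_{\alpha}$ once the lower weights become negligible.

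The constant $\epsilon$ becomes exact if you measure distance in $\Pb(V_{\alpha})$ by the sine of the angle between lines. Equivalently, measure transversality of $\mathbb{R}v$ to a hyperplane $\ker f$ by $|f(v)|/(\norm{f}\norm{v})$, which is the normalization under which this estimate is usually stated. With that metric, the distance from $[w]$ to $\Pb(V_{\alpha}^<)$ is exactly $\norm{w^+}_{\alpha}$ for unit $w$, and your last display gives the claim verbatim.

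For the paper, the discrepancy is harmless. It only rescales the $\log\epsilon$ in Lemma~\ref{lem:cartanIwasawa} and the $1/\epsilon$ in Proposition~\ref{prop:locLipIwasawa} by bounded factors. All later uses of these estimates are qualitative, so nothing downstream is affected.
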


Indeed, together with Lemma \ref{lem:Iwasawa_log}, this implies the compatibility of Cartan projections and Iwasawa cocycles in shadows.

\begin{lemma}[{\cite[Lemma 6.5]{Quint2002Mesures}}] \label{lem:cartanIwasawa}
    For any $0 < \epsilon \le 1$ and $g \in \Gsf$, 
    if $x \in  B_{\theta, g}^{\epsilon}$, then
    $$
    \chi_{\alpha}(\kappa(g)) + \log \epsilon \le \chi_{\alpha}(\sigma_{\theta}(g, x)) \le \chi_{\alpha}(\kappa(g)) \quad \text{for all } \alpha \in \theta
    $$
\end{lemma}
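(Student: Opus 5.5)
The lower bound follows by combining Lemma \ref{lem:Iwasawa_log} with Lemma \ref{lem:norminshadow}, together with the identity $\log \norm{\rho_{\alpha}(g)}_{\alpha} = \chi_{\alpha}(\kappa(g))$. The upper bound follows from Lemma \ref{lem:Iwasawa_log} and the definition of the operator norm alone.

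Fix $\alpha \in \theta$, $x \in B_{\theta, g}^{\epsilon}$, and a nonzero vector $v \in \Phi_{\alpha}(x)$.

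By Lemma \ref{lem:Iwasawa_log},
$$
\chi_{\alpha}(\sigma_{\theta}(g, x)) = \log \frac{\norm{\rho_{\alpha}(g) v}_{\alpha}}{\norm{v}_{\alpha}}.
$$

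For the upper bound, the right-hand side is at most $\log \norm{\rho_{\alpha}(g)}_{\alpha}$ by definition of the operator norm. This quantity equals $\chi_{\alpha}(\kappa(g))$ by the norm identity recorded before Lemma \ref{lem:Iwasawa_log}.

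For the lower bound, Lemma \ref{lem:norminshadow} gives
$$
\frac{\norm{\rho_{\alpha}(g) v}_{\alpha}}{\norm{v}_{\alpha}} \ge \epsilon \norm{\rho_{\alpha}(g)}_{\alpha}.
$$
Taking logarithms yields $\chi_{\alpha}(\sigma_{\theta}(g, x)) \ge \chi_{\alpha}(\kappa(g)) + \log \epsilon$.

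The only point needing care is the norm identity $\log \norm{\rho_{\alpha}(g)}_{\alpha} = \chi_{\alpha}(\kappa(g))$. It holds because $\norm{\cdot}_{\alpha}$ is $\rho_{\alpha}(\Ksf)$-invariant, so $\norm{\rho_\alpha(g)}_\alpha = \norm{\rho_\alpha(\exp \kappa(g))}_\alpha$. Moreover, $\rho_{\alpha}(\exp \kappa(g))$ is symmetric with largest eigenvalue $e^{\chi_{\alpha}(\kappa(g))}$, since $\kappa(g) \in \fa^+$ and $\chi_\alpha$ is the highest weight.

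There is no real obstacle; the statement is a direct combination of the quoted lemmas of Quint, applied separately for each $\alpha \in \theta$.
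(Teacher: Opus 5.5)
Your proof is correct and is essentially the argument the paper indicates (it only remarks that Lemma \ref{lem:norminshadow} together with Lemma \ref{lem:Iwasawa_log} gives the statement): the upper bound comes from Lemma \ref{lem:Iwasawa_log}, the operator-norm bound, and $\log \norm{\rho_{\alpha}(g)}_{\alpha} = \chi_{\alpha}(\kappa(g))$, and the lower bound additionally uses Lemma \ref{lem:norminshadow}. Your justification of the norm identity is also sound: it uses $\rho_{\alpha}(\Ksf)$-invariance of $\norm{\cdot}_{\alpha}$ and the fact that the highest weight is the maximal weight on $\fa^+$.
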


\subsection{Local regularity of Iwasawa cocycles}

We deduce locally Lipschitz property of Iwasawa cocycles. We emphasize that the Lipschitz constant below is independent of the choice of a group element.

\begin{proposition} \label{prop:locLipIwasawa}
    Let $0 < \epsilon \le 1$ and $g \in \Gsf$. Then for any $x, y \in g \cdot B_{\theta, g}^{\epsilon}$ and $\alpha \in \theta$,
    $$
    \abs{\chi_{\alpha} \left( \sigma_{\theta}(g^{-1}, x) - \sigma_{\theta}(g^{-1}, y) \right)} \le \frac{1}{\epsilon} \cdot d_{\Pb(V_{\alpha})} \left( \Phi_{\alpha} \left(g^{-1} x \right), \Phi_{\alpha} \left(g^{-1} y \right)\right).
    $$
\end{proposition}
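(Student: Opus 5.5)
Set $x' := g^{-1}x$ and $y' := g^{-1}y$, so that $x', y' \in B_{\theta, g}^{\epsilon}$. By the inversion identity \eqref{eqn:Iwasawa_inv}, $\sigma_{\theta}(g^{-1}, x) = -\sigma_{\theta}(g, x')$, and likewise for $y$. It therefore suffices to show, for each $\alpha \in \theta$,
$$
\abs{\chi_{\alpha}(\sigma_{\theta}(g, x')) - \chi_{\alpha}(\sigma_{\theta}(g, y'))} \le \frac{1}{\epsilon} \, d_{\Pb(V_{\alpha})}(\Phi_{\alpha}(x'), \Phi_{\alpha}(y')).
$$

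Choose unit vectors $v \in \Phi_{\alpha}(x')$ and $w \in \Phi_{\alpha}(y')$. Replace $w$ by $-w$ if needed so that $\norm{v - w}_{\alpha} = d_{\Pb(V_\alpha)}(\Phi_\alpha(x'), \Phi_\alpha(y'))$; this does not change the line. By Lemma \ref{lem:Iwasawa_log}, the quantity to bound is
$$
\abs{\log \norm{\rho_{\alpha}(g) v}_{\alpha} - \log \norm{\rho_{\alpha}(g) w}_{\alpha}}.
$$
Write $a = \norm{\rho_\alpha(g)v}_\alpha$ and $b = \norm{\rho_\alpha(g)w}_\alpha$, and assume without loss of generality that $a \ge b$. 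Using $\log t \le t - 1$, we get
$$
\log(a/b) \le \frac{a - b}{b} \le \frac{\norm{\rho_\alpha(g)(v - w)}_\alpha}{b} \le \frac{\norm{\rho_\alpha(g)}_\alpha \norm{v-w}_\alpha}{b}.
$$
Since $y' \in B^{\epsilon}_{\theta,g}$, Lemma \ref{lem:norminshadow} gives $b \ge \epsilon \norm{\rho_\alpha(g)}_\alpha$. Substituting this yields $\log(a/b) \le \epsilon^{-1}\norm{v-w}_\alpha$. The case $b \ge a$ is symmetric and uses $x' \in B^\epsilon_{\theta, g}$.

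The only point needing care is to apply Lemma \ref{lem:norminshadow} to whichever of the two vectors has the smaller image norm. This is possible because both $x'$ and $y'$ lie in the shadow set $B^\epsilon_{\theta,g}$, and that membership is exactly what makes the constant independent of $g$. Everything else is the elementary estimate $\log t \le t-1$ combined with the triangle inequality, so I do not expect any real obstacle.
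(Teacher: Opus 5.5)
Your proposal is correct and follows essentially the same route as the paper: reduce to $x', y' \in B^{\epsilon}_{\theta,g}$ via the inversion identity, rewrite the cocycles as log-norms using Lemma \ref{lem:Iwasawa_log}, and bound the log difference by $\norm{\rho_\alpha(g)}_\alpha\norm{v-w}_\alpha$ divided by the lower bound $\epsilon\norm{\rho_\alpha(g)}_\alpha$ from Lemma \ref{lem:norminshadow}. The only difference is cosmetic: you use $\log t \le t-1$ with the smaller of the two norms in the denominator, while the paper invokes the mean value theorem for $\log$, and these give the same estimate.
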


\begin{proof}
    By Equation \eqref{eqn:Iwasawa_inv}, it suffices to show that for $x', y' \in B_{\theta, g}^{\epsilon}$ and $\alpha \in \theta$,
    $$
    \abs{\chi_{\alpha} \left( \sigma_{\theta}(g, x') - \sigma_{\theta}(g, y') \right)} \le \frac{1}{\epsilon} \cdot d_{\Pb(V_{\alpha})} \left( \Phi_{\alpha} \left(x' \right), \Phi_{\alpha} \left(y' \right)\right).
    $$
    Fix vectors $v \in \Phi_{\alpha}(x')$ and $w \in \Phi_{\alpha}(y')$ such that $\norm{v}_{\alpha} = \norm{w}_{\alpha} = 1$ and $d_{\Pb(V_{\alpha})} \left( \Phi_{\alpha} \left(x' \right), \Phi_{\alpha} \left(y' \right)\right) = \norm{v - w}_{\alpha}$.

    Now by Lemma \ref{lem:Iwasawa_log},
    $$
    \abs{\chi_{\alpha} \left( \sigma_{\theta}(g, x') - \sigma_{\theta}(g, y') \right)} = \abs{ \log \norm{\rho_{\alpha}(g) v}_{\alpha} - \log \norm{\rho_{\alpha}(g) w}_{\alpha} }.
    $$
    Then by the mean value theorem for $\log : \Rb_{>0} \to \Rb$ and Lemma \ref{lem:norminshadow},
    $$\begin{aligned}
 \abs{ \log \norm{\rho_{\alpha}(g) v}_{\alpha} - \log \norm{\rho_{\alpha}(g) w}_{\alpha} } & \le \frac{\abs{ \norm{\rho_{\alpha}(g) v}_{\alpha} - \norm{\rho_{\alpha}(g) w}_{\alpha} }}{\epsilon \cdot \norm{\rho_{\alpha}(g)}_{\alpha}} \\
        & \le \frac{ \norm{ \rho_{\alpha}(g) ( v - w) }_{\alpha}}{ \epsilon \cdot \norm{\rho_{\alpha}(g)}_{\alpha}} \\
        & \le \frac{1}{\epsilon} \cdot \norm{v - w}_{\alpha} .
    \end{aligned}
    $$
    Combining altogether finishes the proof.
\end{proof}

\section{Dynamics of discrete subgroups} \label{section:Discrete}

In this section, let $\Ga < \Gsf$ be a discrete subgroup and fix a non-empty subset $\theta \subset \Delta$. 
Major objects in this paper are transverse subgroups, or Anosov or relatively Anosov subgroups, which we define in this section. An important observation in this section is a local regularity of Iwasawa cocycles for transverse subgroups, with a certain uniform control (Theorem \ref{thm:localregularIwasawa}).
We continue the notations and terminologies introduced in Section \ref{section:Hor}.

The \emph{limit set} $\La_{\theta}(\Ga) \subset \Fc_{\theta}$ of $\Ga$ is defined as the set of all accumulation points of $\Ga$ in the sense of Equation \eqref{eqn:convergence_G}. With respect to the natural $\Ga$-action on $\Fc_{\theta} = \Gsf / \Psf_{\theta}$ given by the left multiplication, $\La_{\theta}(\Ga)$ is $\Ga$-invariant. When $\Ga < \Gsf$ is Zariski dense, $\La_{\theta}(\Ga)$ is the unique $\Ga$-minimal set and is Zariski dense subset of $\Fc_{\theta}$ by the work of Benoist \cite{Benoist1997proprietes}.

\begin{definition}
    A discrete subgroup $\Ga < \Gsf$ is called \emph{$\Psf_{\theta}$-transverse} if 
    \begin{itemize}
        \item for any infinite sequence $\{ g_n \}_{n \in \Nb} \subset \Ga$ and $\alpha \in \theta$,
        $$
        \alpha(\kappa(g_n)) \to + \infty \quad \text{as } n \to + \infty
        $$
        and
        \item for any two distinct $x, y \in \La_{\theta \cup \opp(\theta)}(\Ga)$,
        $$
        x \text{ and } y \text{ are in general position.}
        $$
    \end{itemize}
    We call a $\Psf_{\theta}$-transverse subgroup \emph{non-elementary} if $\# \La_{\theta \cup \opp(\theta)}(\Ga) \ge 3$.
\end{definition}
Note that points in general position are also said to be transverse, and hence the name.

Since $\opp(\kappa(g)) = \kappa(g^{-1})$ for all $g \in \Gsf$, $\Psf_{\theta}$-transversality of $\Ga$ is equivalent to $\Psf_{\opp(\theta)}$-transversality, and to $\Psf_{\theta \cup \opp(\theta)}$-transversality. For a $\Psf_{\theta}$-transverse subgroup $\Ga < \Gsf$ and for any $\theta' \subset \theta \cup \opp(\theta)$, the canonical projection $\Fc_{\theta \cup \opp(\theta)} \to \Fc_{\theta'}$ is restricted to a $\Ga$-equivariant homeomorphism $\La_{\theta \cup \opp(\theta)}(\Ga) \to \La_{\theta'}(\Ga)$ (cf. \cite[Lemma 9.5]{KOW_PD}).

An important property of transverse subgroups is that their actions on limit sets are convergence actions, which was proved by Kapovich--Leeb--Porti.

\begin{theorem}[{\cite[Theorem 4.16]{KLP_Anosov}}]
    Let $\Ga < \Gsf$ be a $\Psf_{\theta}$-transverse subgroup. Then the $\Ga$-action on $\La_{\theta}(\Ga)$ is a convergence action.
\end{theorem}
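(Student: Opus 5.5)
The plan is to verify the convergence property directly from the Cartan decomposition and the transversality of limit points. Let $\{g_n\}_{n \in \Nb} \subset \Ga$ be an infinite sequence of distinct elements. We must find a subsequence and points $a, b \in \La_{\theta}(\Ga)$ such that $g_n x \to a$ uniformly on compact subsets of $\La_{\theta}(\Ga) \smallsetminus \{b\}$.

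\textbf{Step 1 (attracting and repelling points).} Write $g_n = k_n (\exp \kappa(g_n)) \ell_n$ in the Cartan decomposition. After passing to a subsequence, assume $k_n \to k$ and $\ell_n \to \ell$ in $\Ksf$. By the first transversality condition, applied to both $\theta$ and $\opp(\theta)$, we have $\alpha(\kappa(g_n)) \to +\infty$ for every $\alpha \in \theta \cup \opp(\theta)$. Hence, by Equation \eqref{eqn:convergence_G} and the remark following it:
\begin{itemize}
\item $g_n \to a := k \Psf_{\theta} \in \La_{\theta}(\Ga)$;
\item $g_n^{-1} \to b' := \ell^{-1} w_0 \Psf_{\opp(\theta)} \in \La_{\opp(\theta)}(\Ga)$.
\end{itemize}
Let $b \in \La_{\theta}(\Ga)$ be the image of $b'$ under the $\Ga$-equivariant homeomorphism $\La_{\opp(\theta)}(\Ga) \cong \La_{\theta \cup \opp(\theta)}(\Ga) \cong \La_{\theta}(\Ga)$.

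\textbf{Step 2 (uniform contraction on shadows).} Let $x \in \Fc_{\theta}$ be such that, for every $\alpha \in \theta$, the line $\Phi_{\alpha}(x)$ is $\epsilon$-transverse to the hyperplane $\rho_{\alpha}(\ell^{-1}) V_{\alpha}^{<}$. For large $n$ such $x$ then lie in $B_{\theta, g_n}^{\epsilon/2}$. Use the Tits representation $\rho_{\alpha}$, in which $\rho_{\alpha}(\exp \kappa(g_n))$ expands $V_{\alpha}^+$ by the factor $e^{\chi_{\alpha}(\kappa(g_n))}$ relative to $V_{\alpha}^{<}$, with gap $\alpha(\kappa(g_n)) \to \infty$. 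By Lemma~\ref{lem:norminshadow}, $\rho_{\alpha}(g_n)$ sends such lines to within distance $O(\epsilon^{-1} e^{-\alpha(\kappa(g_n))})$ of $\rho_{\alpha}(k_n) V_{\alpha}^+ = \Phi_{\alpha}(k_n \Psf_{\theta})$. Since $\Phi_{\theta}$ is an embedding, $g_n x \to a$ uniformly on the set of $\epsilon$-transverse $x$.

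\textbf{Step 3 (the main obstacle).} We must show that every compact $F \subset \La_{\theta}(\Ga) \smallsetminus \{b\}$ consists of points that are uniformly transverse to $b'$, in the sense of Step 2 for each $\alpha \in \theta$. This is exactly where the second transversality condition enters.

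Take $y \in F$ and lift it to $\tilde y \in \La_{\theta \cup \opp(\theta)}(\Ga)$. The point $b'$ lifts to $\tilde b \neq \tilde y$, so $\tilde y$ and $\tilde b$ are in general position. Projecting, $y \in \Fc_{\theta}$ and $b' \in \Fc_{\opp(\theta)}$ are in general position. By the Tits representation characterization, this means $\Phi_{\alpha}(y) \oplus \Phi_{\alpha}^*(b') = V_{\alpha}$ for every $\alpha \in \theta$, and $\Phi_{\alpha}^*(b') = \rho_{\alpha}(\ell^{-1}) V_{\alpha}^{<}$.

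Transversality is an open condition, and it holds at every point of the compact set $F$. A continuous positive function on a compact set has a positive minimum, so there is a uniform $\epsilon > 0$ that works for all $y \in F$. Step 2 then gives $g_n \to a$ uniformly on $F$.

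Some care is needed in checking that the lift-and-projection of general position is compatible across $\theta$, $\opp(\theta)$ and $\theta \cup \opp(\theta)$; I expect this bookkeeping to be the delicate part. Once it is done, the convergence property follows.
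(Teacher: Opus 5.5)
Your argument is correct, and it takes a different route from the paper only in that the paper gives no proof of this statement: it is imported from Kapovich--Leeb--Porti \cite{KLP_Anosov}. There it follows from their general theory of regular sequences, namely that a regular sequence has a subsequence converging to an attracting flag uniformly on compact subsets of the open cell of flags antipodal to a repelling flag, combined with antipodality of the limit set. Your proof runs on the same mechanism but stays inside the toolkit of Section~\ref{section:Tits}:
\begin{itemize}
\item the divergence condition, read through the Tits representations $\rho_\alpha$, gives uniform contraction of $g_n$ toward $k\Psf_{\theta}$ on flags uniformly transverse to the hyperplanes $\rho_\alpha(\ell^{-1})V_\alpha^{<}$;
\item the general-position condition on $\La_{\theta\cup\opp(\theta)}(\Ga)$ puts every compact subset of $\La_{\theta}(\Ga)\smallsetminus\{b\}$ uniformly inside that basin, in the same spirit as the proof of Proposition~\ref{prop:shadowcomparetits}.
\end{itemize}
The citation buys brevity and KLP's generality. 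Your route is self-contained and shows which defining condition of transversality does which job.

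Two points should be tightened.

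\textbf{Step 2.} Lemma~\ref{lem:norminshadow} only bounds $\norm{\rho_\alpha(g_n)v}_\alpha$ from below. The contraction estimate also needs the standard weight-gap fact: every restricted weight of $\rho_\alpha$ other than $\chi_\alpha$ has the form $\chi_\alpha-\alpha-\eta$, with $\eta$ a nonnegative integer combination of simple roots. Consequently $\rho_\alpha(\exp\kappa(g_n))$ has norm at most $e^{\chi_\alpha(\kappa(g_n))-\alpha(\kappa(g_n))}$ on $V_\alpha^{<}$, which gives your $O(\epsilon^{-1}e^{-\alpha(\kappa(g_n))})$ bound.

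\textbf{Step 3.} The bookkeeping you flag is immediate.
\begin{itemize}
\item The lift $\tilde b=\ell^{-1}w_0\Psf_{\theta\cup\opp(\theta)}$ is the limit of $g_n^{-1}$ in $\Fc_{\theta\cup\opp(\theta)}$, hence lies in $\La_{\theta\cup\opp(\theta)}(\Ga)$. This is where divergence for all of $\theta\cup\opp(\theta)$ is used.
\item Write $\tilde y=h\Psf_{\theta\cup\opp(\theta)}$ and $\tilde b=hw_0\Psf_{\theta\cup\opp(\theta)}$. For each $\alpha\in\theta$ we have $\Psf_{\theta\cup\opp(\theta)}<\Psf_{\alpha}$ and $\Psf_{\theta\cup\opp(\theta)}<\Psf_{\opp(\alpha)}$, so the projections are $h\Psf_\alpha$ and $hw_0\Psf_{\opp(\alpha)}$.
\item These are in general position by definition, so the Tits-representation criterion applies directly.
\end{itemize}
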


Hence, the notion of being non-elementary coincides with the one for convergence groups. In addition, we employ the notion of \emph{conical limit set} and the classification of elements of a convergence group in our setting.

\subsection{Shadows for transveres subgroups} \label{subsec:shadowstransverse}

In \cite{BCZZ_PS}, Blayac--Canary--Zhu--Zimmer developed Patterson--Sullivan theory for a general convergence group. We apply their framework to a $\Psf_{\theta}$-transverse subgroup $\Ga < \Gsf$ and its action on $\La_{\theta}(\Ga)$, which was discussed in further details in \cite[Section 10]{BCZZ_2}.

For a non-elementary $\Psf_{\theta}$-transverse subgroup $\Ga < \Gsf$, they showed that the compactification $\Ga \sqcup \La_{\theta}(\Ga)$ in the sense of Equation \eqref{eqn:convergence_G} is metrizable by a metric $\dist_{\rm BCZZ}$ on $\Ga \sqcup \La_{\theta}(\Ga)$ so that the natural $\Ga$-action on $\Ga \sqcup \La_{\theta}(\Ga)$ by left multiplications is a convergence action  \cite[Proposition 2.3]{BCZZ_PS} (see also \cite[Section 10]{BCZZ_2}).

Using this metric, shadows were constructed in \cite{BCZZ_PS} as follows: for $g \in \Ga$ and $\epsilon > 0$, let $\Nc_{\epsilon}(g^{-1}) \subset \Ga \sqcup \La_{\theta}(\Ga)$ denote the open ball of radius $\epsilon$ centered at $g^{-1}$ with respect to the metric $\dist_{\rm BCZZ}$. Then the \emph{$\epsilon$-shadow of $g$} is defined as the set
$$
\Sc_{\epsilon}(g) := g \cdot \left( \La_{\theta}(\Ga) \smallsetminus \Nc_{\epsilon}(g^{-1}) \right) \subset \La_{\theta}(\Ga).
$$

The \emph{conical limit set} 
$$\La_{\theta}^{\rm con}(\Ga) \subset \Fc_{\theta}$$
 then can be defined as the set of points $x \in \La_{\theta}(\Ga)$ such that there exist $\epsilon > 0$ and an infinite sequence $\{g_n\}_{n \in \Nb} \subset \Ga$ such that $x \in \Sc_{\epsilon}(g_n)$ for all $n \in \Nb$. This notion of conical limit set coincides with the conical limit set for the convergence action $\Ga \curvearrowright \La_{\theta}(\Ga)$, as shown in \cite[Lemma 5.4]{BCZZ_PS}.

The shadows defined as above are in fact comparable to the shadows defined in terms of Tits representations in Section \ref{section:Tits}. 
 Note that in item (1) below, excluding finitely many elements might be necessary, because for a fixed $g \in \Ga$, $\Sc_{\epsilon}(g)$ can be the whole limit set $\La_{\theta}(\Ga)$ when $\epsilon$ is small enough while $g \cdot B_{\theta, g}^{\epsilon}$ always excludes hyperplanes in $\Pb(V_{\alpha})$'s.

\begin{proposition} \label{prop:shadowcomparetits}
    Let $\Ga < \Gsf$ be a non-elementary $\Psf_{\theta}$-transverse subgroup.
    \begin{enumerate}
        \item For any $\epsilon > 0$, there exists $\epsilon' = \epsilon'(\epsilon) > 0$ such that
        $$
        \Sc_{\epsilon}(g) \subset g \cdot B_{\theta, g}^{\epsilon'} \quad \text{for all but finitely many } g \in \Ga.
        $$
        \item For any $\epsilon > 0$, there exists $\epsilon' = \epsilon'(\epsilon) > 0$ such that 
        $$
        g \cdot B_{\theta, g}^{\epsilon} \cap \La_{\theta}(\Ga) \subset \Sc_{\epsilon'}(g) \quad \text{for all } g \in \Ga.
        $$
    \end{enumerate}
\end{proposition}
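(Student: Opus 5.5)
The plan is to prove both inclusions by contradiction plus compactness. A failure of either inclusion gives a sequence in $\Ga$ whose limit would produce two points of $\La_{\theta \cup \opp(\theta)}(\Ga)$ contradicting transversality (in (1)) or non-transversality of a point with itself (in (2)).

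The first step is a dictionary between the two notions of convergence. For an infinite sequence $g_n \in \Ga$ with $g_n^{-1} \to z$ in $\Ga \sqcup \La_{\theta}(\Ga)$ (metric $\dist_{\rm BCZZ}$), I want to show that, after passing to a subsequence, $g_n^{-1}$ also converges in the sense of Equation \eqref{eqn:convergence_G} in $\Fc_{\theta \cup \opp(\theta)}$. Its limit is the lift $\tilde z \in \La_{\theta \cup \opp(\theta)}(\Ga)$ of $z$. In particular, by the remark after Equation \eqref{eqn:convergence_G} applied to $g_n$ and $g_n^{-1}$, the points $\ell_{g_n}^{-1} w_0 \Psf_{\opp(\theta)}$ converge to the $\opp(\theta)$-component $z'$ of $\tilde z$. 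This uses that the BCZZ compactification is the one defined via Equation \eqref{eqn:convergence_G} and that $\La_{\theta\cup\opp(\theta)}(\Ga) \to \La_{\theta'}(\Ga)$ is a homeomorphism. I expect this identification of limits (BCZZ limit of $g_n^{-1}$ versus the limit of the ``repelling hyperplanes'' $\Phi^*_\alpha(\ell_{g_n}^{-1} w_0 \Psf_{\opp(\alpha)})$) to be the main technical point. I would handle it by using the divergence $\alpha(\kappa(g_n)) \to \infty$ for $\alpha \in \theta \cup \opp(\theta)$ from transversality, together with compactness of $\Ksf$.

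For (1), suppose there are infinitely many distinct $g_n \in \Ga$ and $x_n \in \Sc_{\epsilon}(g_n) \smallsetminus g_n B_{\theta, g_n}^{1/n}$. Set $y_n := g_n^{-1} x_n \in \La_{\theta}(\Ga)$, so that $\dist_{\rm BCZZ}(y_n, g_n^{-1}) \ge \epsilon$. Moreover, for some $\alpha \in \theta$ (fixed after passing to a subsequence), $d_{\Pb(V_\alpha)}(\Phi_\alpha(y_n), \Pb(\rho_\alpha(\ell_{g_n}^{-1}) V_\alpha^<)) < 1/n$. Passing to subsequences, $y_n \to y \in \La_\theta(\Ga)$ and $g_n^{-1} \to z \in \La_\theta(\Ga)$, with $\dist_{\rm BCZZ}(y,z) \ge \epsilon$, so $y \ne z$. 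By the dictionary and continuity of $\Phi_\alpha, \Phi_\alpha^*$, the line $\Phi_\alpha(y)$ lies in the hyperplane $\Phi^*_\alpha(z')$. Hence the lifts of $y$ and $z$ to $\La_{\theta \cup \opp(\theta)}(\Ga)$ are distinct but not in general position, contradicting $\Psf_\theta$-transversality. The finitely many exceptions are exactly those needed so that the sequence $g_n$ can be taken infinite.

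For (2), suppose there are $g_n \in \Ga$ and $x_n \in g_n B^{\epsilon}_{\theta, g_n} \cap \La_\theta(\Ga)$ with $\dist_{\rm BCZZ}(y_n, g_n^{-1}) \to 0$, where $y_n := g_n^{-1}x_n$. If $g_n$ takes some value $g$ infinitely often, this is impossible, because $g^{-1}$ is an isolated point of $\Ga \sqcup \La_\theta(\Ga)$ and $y_n \in \La_\theta(\Ga)$. So we may assume $g_n \to \infty$, and then $y_n$ and $g_n^{-1}$ converge to the same $z \in \La_\theta(\Ga)$. Since $B^\epsilon$ is defined by a closed condition, in the limit $\Phi_\alpha(z)$ is $\epsilon$-far from $\Phi_\alpha^*(z')$ for all $\alpha \in \theta$, where $z'$ is the $\opp(\theta)$-component of the same point $\tilde z$. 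This contradicts the fact that for any $h \in \Gsf$, the line $\rho_\alpha(h)V^+_\alpha$ is contained in the hyperplane attached to $h\Psf_{\opp(\alpha)}$. That containment holds because $\rho_\alpha(w_0)$ sends the highest weight line into the lowest weight space, which lies in $V_\alpha^<$ (here $\dim V_\alpha > 1$ since $\Gsf$ acts nontrivially). Since this is a uniform contradiction over all $g\in\Ga$, (2) holds with some $\epsilon' > 0$ for all $g \in \Ga$.
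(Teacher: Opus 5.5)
Your proposal is correct and follows essentially the same route as the paper: in both parts the paper argues by contradiction and passes to subsequences with $x_n \to x$ and $\ell_n \to \ell$. It then uses transversality to get $g_n^{-1} \to \ell^{-1} w_0 \Psf_{\theta \cup \opp(\theta)}$, which is your ``dictionary'', and derives a contradiction in (1) with general position of distinct limit points and in (2) with the containment $\rho_\alpha(\ell^{-1} w_0) V_\alpha^+ \subset \rho_\alpha(\ell^{-1}) V_\alpha^<$. The only cosmetic difference is the order of extraction: you take the BCZZ limit of $g_n^{-1}$ first and deduce convergence of the repelling hyperplanes, while the paper takes $\ell_n \to \ell$ first and deduces the limit of $g_n^{-1}$.
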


\begin{proof}
We first prove (1). Suppose to the contrary that for some $\epsilon > 0$, there exists an infinite sequence $\{g_n\}_{n \in \Nb} \subset \Ga$ such that
$$
\Sc_{\epsilon}(g_n) \not\subset g_n \cdot B_{\theta, g_n}^{1/n} \quad \text{for all } n \in \Nb.
$$
In other words,
$$
\La_{\theta}(\Ga) \smallsetminus \Nc_{\epsilon}(g_n^{-1}) \not \subset B_{\theta, g_n}^{1/n} \quad \text{for all } n \in \Nb.
$$

Let $\{ \ell_n\}_{n \in \Nb} \subset \Ksf$ be a sequence such that $g_n \in \Ksf \Asf^+ \ell_n$ in Cartan decomposition, for each $n \in \Nb$.
Then there  exists a sequence $\{ x_n \}_{n \in \Nb} \subset \La_{\theta}(\Ga)$ such that for each $n \in \Nb$, 
\begin{enumerate}
    \item[(a)] $\dist_{\rm BCZZ} (x_n, g_n^{-1}) \ge \epsilon$ and 
    \item[(b)] $d_{\Pb(V_{\alpha_n})} \left( \Phi_{\alpha_n}(x_n), \Phi_{\alpha_n}^* \left(\ell_n^{-1} w_0 \Psf_{\opp(\alpha_n)} \right) \right) < \frac{1}{n}$ for some $\alpha_n \in \theta$.
\end{enumerate}
After passing to a subsequence, we may assume that $x_n \to x \in \La_{\theta}(\Ga)$ and $\ell_n \to \ell \in \Ksf$ as $n \to + \infty$, and $\alpha_n = \alpha \in \theta$ for all $n \in \Nb$.

By the $\Psf_{\theta}$-transversality of $\Ga$ and that $\{g_{n}\}_{n \in \Nb}$ is infinite,  $g_n^{-1} \in \Ga$ converges to $\ell^{-1} w_0 \Psf_{\theta} \in \Fc_{\theta}$. Hence, it follows from (a) that $x \neq \ell^{-1} w_0 \Psf_{\theta}$. Recall that we have a $\Ga$-equivariant homeomorphism $\La_{\theta \cup \opp(\theta)}(\Ga) \to \La_{\theta}(\Ga)$ which is a restriction of the projection $\Fc_{\theta \cup \opp(\theta)} \to \Fc_{\alpha}$. Since we also have a convergence $g_n^{-1} \to \ell^{-1} w_0 \Psf_{\theta \cup \opp(\theta)} \in \Fc_{\theta \cup \opp(\theta)}$ as $n \to + \infty$, if $\tilde x \in \La_{\theta \cup \opp(\theta)}(\Ga)$ is the point mapped to $x \in \La_{\theta}(\Ga)$ by the above homeomorphism, then $\tilde x \neq \ell^{-1} w_0 \Psf_{\theta \cup \opp(\theta)}$. Therefore, 
\begin{equation} \label{eqn:shadowcomparetits}
    \tilde x \in \Fc_{\theta \cup \opp(\theta)} \quad \text{and} \quad \ell^{-1} w_0 \Psf_{\theta \cup \opp(\theta)} \in \Fc_{\theta \cup \opp(\theta)} \text{ are in general position}
\end{equation}
 by the $\Psf_{\theta}$-transversality of $\Ga$.

On the other hand, by (b), the line $\Phi_{\alpha}(x) \subset V_{\alpha}$ is contained in the hyperplane $\Phi_{\alpha}^*(\ell^{-1} w_0 \Psf_{\opp(\alpha)}) \subset V_{\alpha}$. This implies that if $x_{\alpha} \in \La_{\alpha}(\Ga)$ is the projection of $x \in \La_{\theta}(\Ga)$, then $x_{\alpha} \in \Fc_{\alpha}$ and $\ell^{-1} w_0 \Psf_{\opp(\alpha)} \in \Fc_{\opp(\alpha)}$ are not in general position. This contradicts Equation \eqref{eqn:shadowcomparetits}, and (1) follows.

\medskip

We now show (2). Suppose to the contrary that for some $\epsilon > 0$, there exists a sequence $\{g_n\}_{n \in \Nb} \subset \Ga$ such that
$$
g_n \cdot B_{\theta, g_n}^{\epsilon} \cap \La_{\theta}(\Ga) \not\subset \Sc_{1/n}(g_n)  \quad \text{for all } n \in \Nb.
$$
In particular,
$$
B_{\theta, g_n}^{\epsilon} \cap \La_{\theta}(\Ga) \not \subset \La_{\theta}(\Ga) \smallsetminus \Nc_{1/n}(g_n^{-1})  \quad \text{for all } n \in \Nb.
$$
If $\{ g_n\}_{n \in \Nb}$ is a finite sequence, then the right hand side above eventually becomes $\La_{\theta}(\Ga)$, yielding a contradiction.

Hence, the sequence $\{ g_n \}_{n \in \Nb}$ is infinite. 
Let $\{ \ell_n\}_{n \in \Nb} \subset \Ksf$ be a sequence such that $g_n \in \Ksf \Asf^+ \ell_n$ in Cartan decomposition, for each $n \in \Nb$.
Then there  exists a sequence $\{ x_n \}_{n \in \Nb} \subset \La_{\theta}(\Ga)$ such that for each $n \in \Nb$,
$$
x_n \in \Nc_{1/n}(g_n^{-1}) \quad \text{and} \quad d_{\Pb(V_{\alpha})} \left( \Phi_{\alpha}(x_n), \Phi_{\alpha}^* \left(\ell_n^{-1} w_0 \Psf_{\opp(\alpha)} \right) \right) \ge \epsilon  \text{ for all } \alpha \in \theta.
$$
After passing to a subsequence, we may assume that $x_n \to x \in \La_{\theta}(\Ga)$ and $\ell_n \to \ell \in \Ksf$ as $n \to + \infty$. Then as in the proof of (1), $g_n^{-1} \in \Ga$ converges to $\ell^{-1} w_0 \Psf_{\theta \cup \opp(\theta)} \in \Fc_{\theta \cup \opp(\theta)}$ as $n \to + \infty$, and if $\tilde x \in \La_{\theta \cup \opp(\theta)}(\Ga)$ projects to $x \in \La_{\theta}(\Ga)$, then 
$$
\tilde x = \ell^{-1} w_0 \Psf_{\theta \cup \opp(\theta)}.
$$
In particular, for each $\alpha \in \theta$, the line $\Phi_{\alpha}(x) = \rho_{\alpha}(\ell^{-1} w_0) V_{\alpha}^+$ is contained in the hyperplane $\rho_{\alpha}(\ell^{-1}) V_{\alpha}^< = \Phi_{\alpha}^* \left( \ell^{-1} w_0 \Psf_{\opp(\alpha)}\right)$.

On the other hand, we have
$$
d_{\Pb(V_{\alpha})} \left( \Phi_{\alpha}(x), \Phi_{\alpha}^* \left(\ell^{-1} w_0 \Psf_{\opp(\alpha)} \right) \right) \ge \epsilon  \text{ for all } \alpha \in \theta,
$$
which is a contradiction.
\end{proof}

\subsection{Iwasawa cocycles for transverse subgroups}

For a $\Psf_{\theta}$-transverse subgroup $\Ga < \Gsf$, we call $\varphi \in \Ga$ \emph{loxodromic} if it is a loxodromic element for a convergence action $\Ga \curvearrowright \La_{\theta}(\Ga)$. For a loxodromic $\varphi \in \Ga$, we denote by $\varphi^+ \in \La_{\theta}(\Ga)$ its attracting fixed point.

\begin{lemma} \label{lem:JordanIwasawafixed}
    Let $\Ga < \Gsf$ be a non-elementary $\Psf_{\theta}$-transverse subgroup. For any loxodromic $\varphi \in \Ga$,
    $$
    \sigma_{\theta}(\varphi, \varphi^+) = \la_{\theta}(\varphi).
    $$
\end{lemma}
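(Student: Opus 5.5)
The plan is to compute $\sigma_{\theta}(\varphi^n, \varphi^+)$ in two ways. By the cocycle relation \eqref{eqn:Iwasawa_cocycle} and $\varphi \varphi^+ = \varphi^+$, we have $\sigma_{\theta}(\varphi^n, \varphi^+) = n \, \sigma_{\theta}(\varphi, \varphi^+)$ for all $n \in \Nb$. On the other hand, $\la_{\theta}(\varphi) = \lim_n p_{\theta}(\kappa(\varphi^n))/n$. So it suffices to show that $\chi_{\alpha}(\sigma_{\theta}(\varphi^n, \varphi^+)) - \chi_{\alpha}(\kappa(\varphi^n))$ stays bounded in $n$ for every $\alpha \in \theta$. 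Since $\{\omega_\alpha\}_{\alpha\in\theta}$, and hence $\{\chi_\alpha\}_{\alpha \in \theta}$, is a basis of $\fa_{\theta}^*$, and $\chi_\alpha(\kappa(g)) = \chi_\alpha(p_\theta(\kappa(g)))$ for $\alpha \in \theta$, this boundedness gives $\sigma_{\theta}(\varphi, \varphi^+) = \lim_n p_\theta(\kappa(\varphi^n))/n = \la_\theta(\varphi)$.

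For the bound, I will use Lemma \ref{lem:cartanIwasawa}. It suffices to find a single $\epsilon > 0$ with $\varphi^+ \in B_{\theta, \varphi^n}^{\epsilon}$ for all large $n$. Then
$$\chi_{\alpha}(\kappa(\varphi^n)) + \log \epsilon \le \chi_{\alpha}(\sigma_{\theta}(\varphi^n, \varphi^+)) \le \chi_{\alpha}(\kappa(\varphi^n)),$$
and dividing by $n$ finishes the argument.

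The key step is to verify $\varphi^+ \in B_{\theta, \varphi^n}^{\epsilon}$ uniformly. I would get this from Proposition \ref{prop:shadowcomparetits}(1) together with the convergence dynamics. Let $\varphi^- \in \La_\theta(\Ga)$ be the repelling fixed point. Since $\varphi$ is loxodromic for the convergence action, $\varphi^{-n} \to \varphi^-$ in $\Ga \sqcup \La_\theta(\Ga)$ with respect to $\dist_{\rm BCZZ}$, and $\varphi^+ \ne \varphi^-$. Take $\epsilon_0 = \dist_{\rm BCZZ}(\varphi^+, \varphi^-)/2$. Then for large $n$ we have $\varphi^+ \notin \Nc_{\epsilon_0}(\varphi^{-n})$, so $\varphi^+ = \varphi^n \varphi^+ \in \Sc_{\epsilon_0}(\varphi^n)$. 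Proposition \ref{prop:shadowcomparetits}(1) then gives $\varphi^+ \in \varphi^n B^{\epsilon'}_{\theta, \varphi^n}$ for all large $n$. Since $(\varphi^{n})^{-1}\varphi^+ = \varphi^+$, this means $\varphi^+ \in B_{\theta,\varphi^n}^{\epsilon'}$, as needed.

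Alternatively, one can argue directly as in the proof of Proposition \ref{prop:shadowcomparetits}. Writing $\varphi^n \in \Ksf \Asf^+ \ell_n$, any limit $\ell$ of the $\ell_n$ satisfies $\ell^{-1} w_0 \Psf_{\theta\cup\opp(\theta)} = \varphi^-$ in $\La_{\theta\cup\opp(\theta)}(\Ga)$, since $\varphi^{-n} \to \varphi^-$. Transversality of $\varphi^+$ and $\varphi^-$ then yields uniform distance from $\Phi_\alpha(\varphi^+)$ to the hyperplanes $\rho_\alpha(\ell_n^{-1})V_\alpha^<$.

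The main obstacle is this uniform transversality, i.e., making sure the Cartan-decomposition data of $\varphi^n$ really converge to the repelling point in the $\theta\cup\opp(\theta)$-flag rather than only in $\Fc_\theta$. Proposition \ref{prop:shadowcomparetits} packages exactly this, so I expect the proof to be short once it is invoked.
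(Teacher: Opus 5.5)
Your proposal is correct and follows essentially the paper's argument. Both use the cocycle identity $\sigma_{\theta}(\varphi^n,\varphi^+) = n\,\sigma_{\theta}(\varphi,\varphi^+)$, invoke Proposition \ref{prop:shadowcomparetits}(1) with Lemma \ref{lem:cartanIwasawa} to keep $\sigma_{\theta}(\varphi^n,\varphi^+)$ within bounded distance of $p_{\theta}(\kappa(\varphi^n))$, and then divide by $n$. You additionally justify, via $\varphi^{-n}\to\varphi^-\neq\varphi^+$, the uniform shadow membership $\varphi^+\in\Sc_{\epsilon}(\varphi^n)$ that the paper simply asserts.
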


\begin{proof}
Fix $\epsilon > 0$ such that $\varphi^+ \in \Sc_{\epsilon}(\varphi^n)$ for all $n \in \Nb$. Then by Proposition \ref{prop:shadowcomparetits} and Lemma \ref{lem:cartanIwasawa}, there exists $C > 0$ such that  
$$
\norm{\sigma_{\theta}(\varphi^n, \varphi^+) - p_{\theta}(\kappa(\varphi^n))} \le C \quad \text{for all large } n \in \Nb.
$$
By Equation \eqref{eqn:Iwasawa_cocycle}, $\sigma_{\theta}(\varphi^n, \varphi^+) = n \cdot \sigma_{\theta}(\varphi, \varphi^+)$ for all $n \in \Nb$. Therefore,
$$
\sigma_{\theta}(\varphi, \varphi^+) = \lim_{n \to + \infty} \frac{ \sigma_{\theta}(\varphi^n, \varphi^+)}{n} = \lim_{n \to + \infty} \frac{ p_{\theta}(\kappa(\varphi^n))}{n} = \la_{\theta}(\varphi)
$$
as desired.
\end{proof}

The following is one of the key observations on transverse subgroups in this paper. It is important that $N$ below does not depend on $g \in \Ga$.

\begin{theorem} \label{thm:localregularIwasawa}
Let $\Ga < \Gsf$ be a non-elementary $\Psf_{\theta}$-transverse subgroup. Let $\varphi \in \Ga$ be loxodromic and $\epsilon > 0$. Then there exists $N = N(\varphi, \epsilon) \in \Nb$ such that for any $g \in \Ga$ and $n \ge N$, if $x, g \varphi g^{-1} x \in \Sc_{\epsilon}(g) \cap g \cdot \Sc_{\epsilon}(\varphi^n)$, then 
$$
\norm{ \sigma_{\theta}( g \varphi g^{-1}, x) - \la_{\theta}(\varphi) } < \epsilon.
$$
\end{theorem}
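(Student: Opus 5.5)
Conjugating by $g$ is the natural first move. Set $y := g^{-1}x$. Applying the cocycle relation \eqref{eqn:Iwasawa_cocycle} to $g\varphi g^{-1} = g \cdot \varphi \cdot g^{-1}$ gives
$$
\sigma_{\theta}(g\varphi g^{-1}, x) = \sigma_{\theta}(g, \varphi y) + \sigma_{\theta}(\varphi, y) - \sigma_{\theta}(g, y),
$$
using $\sigma_\theta(g^{-1},x) = -\sigma_\theta(g,y)$ from \eqref{eqn:Iwasawa_inv}. The hypotheses translate as follows: $y, \varphi y \in g^{-1}\Sc_{\epsilon}(g) = \La_\theta(\Ga)\smallsetminus \Nc_\epsilon(g^{-1})$, and $y, \varphi y \in \Sc_{\epsilon}(\varphi^n)$. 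We must show two things: the difference $\sigma_\theta(g,\varphi y) - \sigma_\theta(g,y)$ is small, and $\sigma_\theta(\varphi,y)$ is close to $\la_\theta(\varphi) = \sigma_\theta(\varphi,\varphi^+)$ (Lemma \ref{lem:JordanIwasawafixed}).

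\emph{Step 1: $y$ and $\varphi y$ are close to $\varphi^+$, uniformly in $g$.} Since $\Sc_\epsilon(\varphi^n) = \varphi^n(\La_\theta(\Ga)\smallsetminus\Nc_\epsilon(\varphi^{-n}))$ and $\varphi^{-n}\to\varphi^-$ in the BCZZ compactification, the set $\La_\theta(\Ga)\smallsetminus\Nc_\epsilon(\varphi^{-n})$ avoids a fixed neighbourhood of $\varphi^-$ for large $n$. Thus north--south dynamics of the loxodromic $\varphi$ on $\La_\theta(\Ga)$ (convergence action, \cite{KLP_Anosov}) shows that $\Sc_\epsilon(\varphi^n)$ shrinks to $\varphi^+$ as $n\to\infty$. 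So for every $\eta>0$ there is $N$, depending only on $\varphi,\epsilon,\eta$, with $d(y,\varphi^+),\, d(\varphi y,\varphi^+) < \eta$ in $\Fc_\theta$. The map $z\mapsto\sigma_\theta(\varphi,z)$ is continuous on $\Fc_\theta$, so $\norm{\sigma_\theta(\varphi,y) - \la_\theta(\varphi)} < \epsilon/2$ once $\eta$ is small. No dependence on $g$ appears here.

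\emph{Step 2: the $g$-terms.} Proposition \ref{prop:shadowcomparetits}(1) gives $\epsilon'=\epsilon'(\epsilon)$ such that $\La_\theta(\Ga)\smallsetminus\Nc_\epsilon(g^{-1}) \subset B^{\epsilon'}_{\theta,g}$ for all but finitely many $g$. For those $g$, both $y,\varphi y\in B^{\epsilon'}_{\theta,g}$. The computation in the proof of Proposition \ref{prop:locLipIwasawa} (stated there for $x',y'\in B^{\epsilon}_{\theta,g}$) then yields, for each $\alpha\in\theta$,
$$
\abs{\chi_\alpha(\sigma_\theta(g,\varphi y) - \sigma_\theta(g,y))} \le \tfrac{1}{\epsilon'}\, d_{\Pb(V_\alpha)}(\Phi_\alpha(\varphi y),\Phi_\alpha(y)) \le \tfrac{2\eta'}{\epsilon'},
$$
where $\eta'$ bounds the projective distance to $\Phi_\alpha(\varphi^+)$ coming from Step 1. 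Because $\{\chi_\alpha:\alpha\in\theta\}$ is a basis of $\fa_\theta^*$, this bounds the norm of the difference by $c\,\eta'/\epsilon'$, with $c$ depending only on $\Gsf,\theta$. Choosing $\eta$ small makes this less than $\epsilon/2$, uniformly in $g$ outside the finite exceptional set.

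\emph{Step 3: finitely many exceptional $g$.} For each of the finitely many $g$ excluded in Step 2, the map $z\mapsto\sigma_\theta(g,z)$ is uniformly continuous on $\Fc_\theta$. So for each such $g$ some $\eta_g$ makes the difference small, and we take the minimum over these $\eta_g$ together with the $\eta$ above, then enlarge $N$ accordingly.

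I expect the main obstacle to be the uniformity in $g$. It comes entirely from Step 2: a $g$-independent Lipschitz bound on shadows. Proposition \ref{prop:locLipIwasawa} supplies this only because the hypothesis $x\in\Sc_\epsilon(g)$ places $y$ inside $B^{\epsilon'}_{\theta,g}$ via Proposition \ref{prop:shadowcomparetits}. Care is needed to make the shrinking of $\Sc_\epsilon(\varphi^n)$ to $\varphi^+$ in Step 1 quantitative in the $\Fc_\theta$-metric rather than the BCZZ metric. This follows because both metrics induce the same topology on the compact set $\La_\theta(\Ga)$.
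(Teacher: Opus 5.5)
Your proof is correct and follows the same route as the paper. It uses the identical cocycle decomposition $\sigma_{\theta}(g\varphi g^{-1},x)=\sigma_{\theta}(g,\varphi y)+\sigma_{\theta}(\varphi,y)-\sigma_{\theta}(g,y)$, Lemma \ref{lem:JordanIwasawafixed} with continuity for the middle term, and Proposition \ref{prop:shadowcomparetits}(1) together with the estimate from Proposition \ref{prop:locLipIwasawa} for the $g$-terms. The only differences are presentational: the paper argues by contradiction along sequences and disposes of a finite sequence $\{g_n\}$ via Lemma \ref{lem:JordanIwasawafixed} applied to $g\varphi g^{-1}$, whereas you argue directly with explicit constants and handle the finitely many exceptional $g$ by uniform continuity.
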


\begin{proof}
Suppose to the contrary that there exist sequences $\{ g_n \}_{n \in \Nb} \subset \Ga$, $\{ x_n \}_{n \in \Nb} \subset \La_{\theta}(\Ga)$, and $\{k_n\}_{n \in \Nb} \subset \Nb$ such that $k_n \to + \infty$ as $n \to + \infty$ and for all large  $n \in \Nb$,
$$
x_n, g_n \varphi g_n^{-1} x_n \in \Sc_{\epsilon}(g_n) \cap g_n \cdot \Sc_{\epsilon}(\varphi^{k_n}) \quad \text{and} \quad \norm{ \sigma_{\theta}(g_n \varphi g_n^{-1}, x_n) - \la_{\theta}(\varphi) } \ge \epsilon.
$$

If the sequence $\{g_n\}_{n \in \Nb}$ is finite, then after passing to a subsequence, $g_n = g \in  \Ga$ for all $n \in \Nb$, and hence $x_n$ converges to $g \varphi^+$. By Lemma \ref{lem:JordanIwasawafixed}, this is a contradiction.

Hence, the sequence $\{ g_n \}_{n \in \Nb}$ is infinite. By Equation \eqref{eqn:Iwasawa_cocycle} and Equation \eqref{eqn:Iwasawa_inv}, we have for each $n \in \Nb$ that 
$$\begin{aligned}
\sigma_{\theta}( g_n \varphi g_n^{-1}, x_n) & = \sigma_{\theta}( g_n , \varphi g_n^{-1} x_n) + \sigma_{\theta} (\varphi g_n^{-1}, x_n) \\
& = \sigma_{\theta}( g_n , \varphi g_n^{-1} x_n) + \sigma_{\theta} (\varphi, g_n^{-1} x_n) + \sigma_{\theta} (g_n^{-1}, x_n) \\
& = \sigma_{\theta}( g_n , \varphi g_n^{-1} x_n) + \sigma_{\theta} (\varphi, g_n^{-1} x_n) - \sigma_{\theta} (g_n, g_n^{-1} x_n).
\end{aligned}
$$
Since $g_n^{-1} x_n \in \Sc_{\epsilon}(\varphi^{k_n})$ for all $n \in \Nb$, we have $g_n^{-1} x_n \to \varphi^+$ as $n \to + \infty$. This implies $\sigma_{\theta} (\varphi, g_n^{-1} x_n) \to \la_{\theta}(\varphi)$ as $n \to + \infty$, by Lemma \ref{lem:JordanIwasawafixed}. Therefore, for all large $n \in \Nb$, we have
\begin{equation} \label{eqn:squeezingreplace}
\norm{\sigma_{\theta}( g_n , \varphi g_n^{-1} x_n) - \sigma_{\theta} (g_n, g_n^{-1} x_n)} > \frac{\epsilon}{2}.
\end{equation}

On the other hand, since $\{g_n\}_{n \in \Nb}$ is an infinite sequence it follows from Proposition \ref{prop:shadowcomparetits} that for some $\epsilon' = \epsilon'(\epsilon)$, we have after passing to a subsequence that
$$
\Sc_{\epsilon}(g_n) \subset g_n \cdot B_{\theta, g_n}^{\epsilon'} \quad \text{for all } n \in \Nb.
$$
Since $x_n, g_n \varphi g_n^{-1} x_n \in \Sc_{\epsilon}(g_n)$, we have for each $\alpha \in \theta$ and all $n \in \Nb$ that
$$\begin{aligned}
\left| \chi_{\alpha} \left( \sigma_{\theta}(g_n^{-1}, x_n) \right.\right. & - \left.\left. \sigma_{\theta}(g_n^{-1}, g_n \varphi g_n^{-1} x_n) \right) \right| \\
&  \le \frac{1}{\epsilon'} \cdot d_{\Pb(V_{\alpha})} \left( \Phi_{\alpha} \left( g_n^{-1} x_n \right), \Phi_{\alpha} \left( \varphi g_n^{-1} x_n \right) \right)
\end{aligned}
$$
by Proposition \ref{prop:locLipIwasawa}.
By Equation \eqref{eqn:Iwasawa_inv}, this is same ase
$$\begin{aligned}
\left| \chi_{\alpha} \left( \sigma_{\theta}(g_n, g_n^{-1} x_n) \right.\right. & - \left.\left. \sigma_{\theta}(g_n, \varphi g_n^{-1} x_n) \right) \right| \\
&  \le \frac{1}{\epsilon'} \cdot d_{\Pb(V_{\alpha})} \left( \Phi_{\alpha} \left( g_n^{-1} x_n \right), \Phi_{\alpha} \left( \varphi g_n^{-1} x_n \right) \right). 
\end{aligned}
$$

Since $\chi_{\alpha} = k_{\alpha} \omega_{\alpha}$ for some $k_{\alpha} \in \Nb$ and $\{ \omega_{\alpha} : \alpha \in \theta \}$ is a basis of $\fa_{\theta}^*$, there exists $C > 0$ such that 
$$
\norm{ \cdot} \le C \sum_{\alpha \in \theta} \abs{\chi_{\alpha}(\cdot)}.
$$
Together with Equation \eqref{eqn:squeezingreplace},
$$
\frac{\epsilon}{2} < \frac{C}{\epsilon'} \sum_{\alpha \in \theta} d_{\Pb(V_{\alpha})} \left( \Phi_{\alpha} \left( g_n^{-1} x_n \right), \Phi_{\alpha} \left( \varphi g_n^{-1} x_n \right) \right)
$$
for all $n \in \Nb$.

On the other hand, since $x_n, g_n \varphi g_n^{-1} x_n \in g_n \cdot \Sc_{\epsilon}(\varphi^{k_n})$ and hence  we have $g_n^{-1} x_n, \varphi g_n^{-1} x_n \in \Sc_{\epsilon}(\varphi^{k_n})$ for all $n \in \Nb$, both $g_n^{-1} x_n$ and $\varphi g_n^{-1} x_n$ converge to $\varphi^+$ as $n \to + \infty$. Therefore,
$$
\frac{\epsilon}{2} < 0,
$$
contradiction.
\end{proof}

\subsection{Patterson--Sullivan measures of transverse subgroups}

Canary--Zhang--Zimmer initiated the study of Patterson--Sullivan theory for transverse subgroups in \cite{CZZ_transverse}. Later, this was extended further to general convergence groups with coarse cocycles by Blayac--Canary--Zhu--Zimmer \cite{BCZZ_PS}. We briefly review some of their results.

Let $\Ga < \Gsf$ be a $\Psf_{\theta}$-transverse subgroup and $\psi \in \fa_{\theta}^*$. The associated Poincar\'e series is defined by
$$
\mathcal{P}_{\Ga, \psi}(s) := \sum_{g \in \Ga} e^{-s \psi(\kappa(g))} \quad \text{for } s \in \Rb,
$$
and the \emph{critical exponent} is its abscissa of convergence:
$$
\delta_{\psi}(\Ga) := \inf \{ s > 0 : \mathcal{P}_{\Ga, \psi}(s) < + \infty \} \in [0, + \infty].
$$
At the dimension of critical exponent, Patterson--Sullivan measure always exists.
The following is a special case of \cite[Proposition 3.2]{CZZ_transverse}.

\begin{theorem}[{\cite[Proposition 3.2]{CZZ_transverse}}] 
    Let $\Ga < \Gsf$ be a non-elementary $\Psf_{\theta}$-transverse subgroup and $\psi \in \fa_{\theta}^*$. If $\delta_{\psi}(\Ga) < + \infty$, then there exists a $\delta_{\psi}(\Ga)$-dimensional $\psi$-Patterson--Sullivan measure of $\Ga$ on $\La_{\theta}(\Ga)$.
\end{theorem}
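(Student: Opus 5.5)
The plan is the classical Patterson construction, transplanted to the convergence-group setting of Blayac--Canary--Zhu--Zimmer. Set $\delta := \delta_{\psi}(\Ga) < +\infty$.

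\textbf{Step 1 (Patterson's trick).} If $\mathcal{P}_{\Ga, \psi}$ converges at $s = \delta$, choose an increasing slowly varying function $h : \Rb_{\ge 0} \to \Rb_{>0}$ such that the modified series $\sum_{g} h(\psi(\kappa(g))) e^{-s \psi(\kappa(g))}$ still has abscissa $\delta$ but diverges at $s = \delta$. Slowly varying means that for every $\eta > 0$ there is $t_0$ with $h(t + t') \le e^{\eta t'} h(t)$ for $t \ge t_0$, $t' \ge 0$. If the series already diverges at $\delta$, take $h \equiv 1$.

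\textbf{Step 2 (approximating measures).} For $s > \delta$, form the probability measure on the compactification $\Ga \sqcup \La_{\theta}(\Ga)$ (metrized by $\dist_{\rm BCZZ}$):
$$\nu_s := \frac{1}{\mathcal{P}'(s)} \sum_{g \in \Ga} h(\psi(\kappa(g))) e^{-s \psi(\kappa(g))} \mathcal{D}_g,$$
where $\mathcal{D}_g$ is the Dirac mass at $g$ and $\mathcal{P}'(s)$ is the normalizing modified series. Take a weak-$*$ limit $\nu$ as $s \searrow \delta$. Because the modified series diverges at $\delta$, the mass of any finite subset of $\Ga$ tends to zero. Hence $\nu$ is supported on $\La_{\theta}(\Ga)$, which is closed in the compactification.

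\textbf{Step 3 (the quasi-invariance).} This is the main step. For fixed $\ga \in \Ga$, the pushforward $\ga_* \nu_s$ has weight at $\ga g$ proportional to $h(\psi(\kappa(g))) e^{-s \psi(\kappa(g))}$. Comparing with the weight of $\nu_s$ at $\ga g$, the density is
$$\exp\bigl(-s[\psi(\kappa(\ga^{-1} \cdot \ga g)) - \psi(\kappa(\ga g))]\bigr)$$
times a ratio of $h$-values. One then needs, for $g' = \ga g \to x \in \La_{\theta}(\Ga)$, the limit
$$\psi(\kappa(\ga^{-1} g')) - \psi(\kappa(g')) \to \psi(\sigma_{\theta}(\ga^{-1}, x)).$$

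This is the step I expect to be the obstacle, since it is only coarse in general. I would handle it via Tits representations. Write $\chi_{\alpha}(\kappa(g')) = \log \norm{\rho_{\alpha}(g')}_{\alpha}$ and $g' = k (\exp \kappa(g')) \ell$. Since $\alpha(\kappa(g')) \to +\infty$ for all $\alpha \in \theta$ (transversality), $\rho_{\alpha}(g') / \norm{\rho_{\alpha}(g')}_{\alpha}$ converges to a rank-one operator with image $\Phi_{\alpha}(x)$. Therefore
$$\log \norm{\rho_{\alpha}(\ga^{-1} g')}_{\alpha} - \log \norm{\rho_{\alpha}(g')}_{\alpha} \to \log \frac{\norm{\rho_{\alpha}(\ga^{-1}) v}_{\alpha}}{\norm{v}_{\alpha}}$$
for $v \in \Phi_{\alpha}(x)$, and this equals $\chi_{\alpha}(\sigma_{\theta}(\ga^{-1}, x))$ by Lemma~\ref{lem:Iwasawa_log}. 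Since the $\chi_{\alpha}$, $\alpha \in \theta$, span $\fa_{\theta}^*$ and $\psi \in \fa_{\theta}^*$ factors through $p_{\theta}$, this gives the exact limit. The convergence is uniform in a neighborhood of $x$, so the densities converge continuously on $\Ga \sqcup \La_{\theta}(\Ga)$ near the limit set.

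\textbf{Step 4 (conclusion).} The $h$-ratio tends to $1$: its argument shifts by at most $\norm{\kappa(\ga)}$ times a constant while $\psi(\kappa(g)) \to \infty$, which holds at least along $g$ escaping (we need $\psi$ positive on $p_{\theta}(\kappa(\Ga))$ asymptotically; this follows from $\delta < \infty$). Passing to the limit, using continuity of the density, yields
$$\frac{d \ga_* \nu}{d\nu}(x) = e^{-\delta \psi(\sigma_{\theta}(\ga^{-1}, x))},$$
which is Equation~\eqref{eqn:PSdefbody}. Hence $\nu$ is a $\delta_{\psi}(\Ga)$-dimensional $\psi$-Patterson--Sullivan measure on $\La_{\theta}(\Ga)$.
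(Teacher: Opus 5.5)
Your proposal is correct, and it is the standard Patterson construction. The paper itself gives no proof and only quotes \cite{CZZ_transverse}, where the result is proved by this same construction. In particular, your Tits-representation argument is the key input there: you show that $\psi(\kappa(\gamma^{-1}g)) - \psi(\kappa(g)) \to \psi(\sigma_{\theta}(\gamma^{-1}, x))$ as $g \to x$, using $\alpha(\kappa(g)) \to +\infty$ for $\alpha \in \theta$ and Lemma~\ref{lem:Iwasawa_log}, and this is what turns the coarse estimate into an exact cocycle identity in the limit.
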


Indeed, existence of Patterson--Sullivan measure is equivalent to finiteness of the critical exponent. Under an extra assumption that $\delta_\psi(\Ga) < + \infty$, the following was proved earlier in \cite{CZZ_transverse}. Later, under $\delta_{\psi}(\Ga) < + \infty$ but for measures not necessarily supported on limit sets, it was shown in \cite{KOW_PD}.

\begin{theorem}[{\cite[Proposition 10.1]{BCZZ_2}}] \label{thm:BCZZ_proper}
Let $\Ga < \Gsf$ be a non-elementary $\Psf_{\theta}$-transverse subgroup and  $\psi \in \fa_{\theta}^*$. If there exists a $\delta$-dimensional $\psi$-Patterson--Sullivan measure of $\Ga$ on $\La_{\theta}(\Ga)$, then 
$$
\delta_{\psi}(\Ga) \le \delta.
$$
In particular, $\delta_{\psi}(\Ga) < + \infty$.
\end{theorem}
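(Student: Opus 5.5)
The plan is a Sullivan shadow-lemma argument: test the quasi-invariance of $\nu$ against shadows of elements of $\Ga$, and compare the resulting lower bounds with the Poincar\'e series. Let $\nu$ be a $\delta$-dimensional $\psi$-Patterson--Sullivan measure supported on $\La_{\theta}(\Ga)$. The goal is to show $\mathcal{P}_{\Ga,\psi}(s) < +\infty$ for every $s > \delta$; this gives $\delta_\psi(\Ga)\le\delta$.

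\emph{Step 1: shadow lemma (lower bound).} Fix $\epsilon>0$ small and take $g\in\Ga$. The defining relation \eqref{eqn:PSdefbody}, applied with $g^{-1}$ in place of $g$, gives
$$\nu(\Sc_\epsilon(g)) = \int_{\La_\theta(\Ga)\smallsetminus \Nc_\epsilon(g^{-1})} e^{-\delta\psi(\sigma_\theta(g,y))}\,d\nu(y).$$
By Proposition \ref{prop:shadowcomparetits}(1), for all but finitely many $g$, the set $\La_\theta(\Ga)\smallsetminus\Nc_\epsilon(g^{-1})$ is contained in $B^{\epsilon'}_{\theta,g}$. On this set, Lemma \ref{lem:cartanIwasawa} bounds $\chi_\alpha(\sigma_\theta(g,y))$ within $\log\epsilon'$ of $\chi_\alpha(\kappa(g))$ for every $\alpha\in\theta$. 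Since $\{\chi_\alpha\}_{\alpha\in\theta}$ is a basis of $\fa_\theta^*$, this yields
$$\norm{\sigma_\theta(g,y)-p_\theta(\kappa(g))}\le C',$$
and hence $\psi(\sigma_\theta(g,y)) \le \psi(\kappa(g)) + C''$. (Note that $\psi(\kappa(g)) = \psi(p_\theta\kappa(g))$.) It remains to bound the mass $\nu(\La_\theta(\Ga)\smallsetminus\Nc_\epsilon(g^{-1}))$ below uniformly in $g$. This holds once $\epsilon$ is small, provided $\nu$ has no atom of full mass. That follows from non-elementarity: $\La_\theta(\Ga)$ is infinite and $\nu$ is quasi-invariant with a minimal action, so $\nu$ is not a Dirac mass. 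Compactness of $\Ga\sqcup\La_\theta(\Ga)$ then gives the uniform bound via the standard argument. We conclude
$$\nu(\Sc_\epsilon(g))\ge c\,e^{-\delta\psi(\kappa(g))}$$
for all but finitely many $g$.

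\emph{Step 2: bounded multiplicity of shadows.} Partition $\Ga$ into annuli $A_T=\{g: T\le \psi(\kappa(g))<T+1\}$. I would show that the shadows $\Sc_\epsilon(g)$ for $g\in A_T$ cover each point at most $M$ times, with $M$ independent of $T$. Suppose instead that $x\in\Sc_\epsilon(g)\cap\Sc_\epsilon(h)$ with $g,h\in A_T$. Then using the cocycle relation \eqref{eqn:Iwasawa_cocycle} and Lemma \ref{lem:cartanIwasawa} at $g^{-1}x$ and $h^{-1}x$, the element $g^{-1}h$ should have bounded $\kappa$. Proving this needs the transverse/convergence structure, namely a coarse triangle-type estimate in the BCZZ compactification. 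I expect this step to be the main obstacle. If it resists, I would switch to the Borel--Cantelli-free route below, which avoids multiplicity: I would prove directly that $\#A_T\le C e^{(\delta+\eta)T}$ for every $\eta>0$.

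\emph{Step 3: conclusion.} From Steps 1 and 2,
$$\#A_T\cdot c\,e^{-\delta(T+1)}\le\sum_{g\in A_T}\nu(\Sc_\epsilon(g))\le M.$$
Hence $\#A_T\le C e^{\delta T}$, and therefore $\sum_g e^{-s\psi(\kappa(g))}\le \sum_T Ce^{(\delta-s)T}<\infty$ for $s>\delta$. We also need $\psi(\kappa(g))\to+\infty$, i.e.\ each $A_T$ is finite. Given that, this yields $\delta_\psi(\Ga)\le\delta<\infty$.

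That properness of $\psi$ itself follows from Step 1: the total mass of $\nu$ is $1$, so only boundedly many $g$ with disjoint-ish shadows can have $\psi(\kappa(g))\le T$. Combined with the transversality property $\alpha(\kappa(g_n))\to\infty$ for $\alpha\in\theta$, this forces $\psi(\kappa(g))\to+\infty$ along $\Ga$.
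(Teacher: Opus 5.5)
There is a genuine gap, and it sits in the part of the statement that goes beyond \cite{CZZ_transverse}, which assumed $\delta_\psi(\Ga)<+\infty$. You need $\{g\in\Ga:\psi(\kappa(g))\le T\}$ to be finite for every $T$, and your argument for this does not work. Transversality gives $\alpha(\kappa(g_n))\to+\infty$ for $\alpha\in\theta$. But $\psi\in\fa_\theta^*$ is arbitrary and may vanish or be negative on parts of $\fa_\theta^+$, so this says nothing about $\psi(\kappa(g_n))$. The mass count also fails. Suppose $g_n\to\infty$ with $\psi(\kappa(g_n))\le T$. After passing to a subsequence, $g_n\to x$ and $g_n^{-1}\to y$, and the shadows $\Sc_\epsilon(g_n)$ all shrink to the single point $x$. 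They are nested, not ``disjoint-ish'' (that phrase already presupposes Step 2). So Step 1 only yields $\nu(\{x\})\ge ce^{-\delta T}$, i.e.\ an atom, with no contradiction to $\nu(\La_\theta(\Ga))=1$. One way to close this: the orbit $\Ga x$ is infinite and consists of atoms, by quasi-invariance and the absence of finite orbits. For an atom $z\neq y$, Proposition \ref{prop:shadowcomparetits}(1) and Lemma \ref{lem:cartanIwasawa} give $\nu(\{g_nz\})=e^{-\delta\psi(\sigma_\theta(g_n,z))}\nu(\{z\})\ge e^{-\delta(T+C_z)}\nu(\{z\})$ for large $n$. Since $g_nz\to x$ for every $z\neq y$, two distinct atoms $z_1,z_2\neq y$ cannot both have $g_nz_i$ eventually constant along a common subsequence (both constants would be $x$). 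This produces infinitely many distinct atoms of mass bounded below, a contradiction. This uses $\delta>0$, which holds because a non-elementary convergence group has no invariant probability measure on its limit set.

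Step 2 is left unproved, and the route you indicate does not reach it. Set $f=g^{-1}h$. The cocycle relation and Lemma \ref{lem:cartanIwasawa} at $g^{-1}x$ and $h^{-1}x$ only give $\psi(\sigma_\theta(f,h^{-1}x))=\psi(\kappa(h))-\psi(\kappa(g))+O(1)$. That controls $\psi$ of one cocycle value, at a point that need not lie in the good region of $f$, and by itself says nothing about $\kappa(f)$. Two ingredients are missing.
\begin{enumerate}
\item[(i)] A coarse additivity lemma: there are $\epsilon'>0$ and a finite $F_0\subset\Ga$ such that if $x\in\Sc_\epsilon(g)\cap\Sc_\epsilon(h)$ and $f\notin F_0$, then $h^{-1}x\notin\Nc_{\epsilon'}(f^{-1})$ or $g^{-1}x\notin\Nc_{\epsilon'}(f)$. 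For $g,h\in A_T$ this yields $\psi(\kappa(f))=O(1)$ or $\psi(\kappa(f^{-1}))=O(1)$, using $\sigma_\theta(f^{-1},g^{-1}x)=-\sigma_\theta(f,h^{-1}x)$. The lemma follows from the convergence action on $\Ga\sqcup\La_\theta(\Ga)$. If both alternatives fail along a sequence with $f_n\to p$ and $f_n^{-1}\to q$, then $g_n^{-1}x_n\to p$, and this forces $g_n^{-1}$ to stay away from $p$. Hence $h_n^{-1}=f_n^{-1}g_n^{-1}\to q$, while also $h_n^{-1}x_n\to q$, contradicting $x_n\in\Sc_\epsilon(h_n)$.
\item[(ii)] The finiteness established in the previous paragraph, which confines $f$ to a finite set independent of $T$ and so gives the multiplicity bound $M$.
\end{enumerate}
Your fallback of proving $\#A_T\le Ce^{(\delta+\eta)T}$ ``directly'' restates the goal rather than giving a method. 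With these two ingredients, Steps 1 and 3 go through as you wrote them. Note also that Step 1 gives $\psi(\kappa(g))\ge -C$ for all but finitely many $g$, so only $T\ge T_0$ contribute to the sum.
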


Whether a Poincar\'e series diverges or not at the critical exponent turns out to be important in the study of related dynamical systems.

\begin{definition} \label{def:divtypegpandmeasure}
    For a $\Psf_{\theta}$-transverse $\Ga < \Gsf$, we say that $\Ga$ is of \emph{$\psi$-divergence type} for $\psi \in \fa_{\theta}^*$ if $\delta_{\psi}(\Ga) < + \infty$ and $\mathcal{P}_{\Ga, \psi}(\delta_{\psi}(\Ga)) = + \infty$. 
    A Patterson--Sullivan measure $\nu$ of $\Ga$ on $\Fc_{\theta}$ is also said to be of \emph{divergence type} if $\nu$ is a $\delta_{\psi}(\Ga)$-dimensional $\psi$-Patterson--Sullivan measure of $\Ga$ such that $\Ga$ is of $\psi$-divergence type.
\end{definition}

As part of their generalization of Hopf--Tsuji--Sullivan dichotomy, Canary--Zhang--Zimmer proved the following for Patterson--Sullivan measures supported on limit sets, which was extended further by Kim--Oh--Wang, removing an assumption on their supports:

\begin{theorem}[{\cite{CZZ_transverse}, \cite{KOW_PD}}] \label{thm:CZZHTS}
    Let $\Ga < \Gsf$ be a non-elementary $\Psf_{\theta}$-transverse subgroup and $\psi \in \fa_{\theta}^*$. Let $\nu$ be a $\delta$-dimensional $\psi$-Patterson--Sullivan measure of $\Ga$ on $\Fc_{\theta}$.

    Then the following are equivalent:
    \begin{enumerate}
    \item $\delta = \delta_{\psi}(\Ga) < + \infty$ and $\Ga$ is of $\psi$-divergence type.
    \item $\nu(\La_{\theta}^{\rm con}(\Ga)) = 1$.
\end{enumerate}
    Moreover, in this case, the $\Ga$-action on $(\La_{\theta}(\Ga), \nu)$ is ergodic and $\nu$ is the unique $\delta$-dimensional $\psi$-Patterson--Sullivan measure on $\La_{\theta}(\Ga)$.
\end{theorem}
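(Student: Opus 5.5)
The plan is the classical Hopf--Tsuji--Sullivan scheme, run through the convergence action $\Ga \curvearrowright \La_{\theta}(\Ga)$ and the shadows $\Sc_{\epsilon}(g)$ of Section \ref{subsec:shadowstransverse}. The first step is a \emph{shadow lemma}. For $\epsilon>0$ small enough there is $c = c(\epsilon) \ge 1$ with
$$
c^{-1} e^{-\delta \psi(\kappa(g))} \le \nu(\Sc_{\epsilon}(g)) \le c\, e^{-\delta \psi(\kappa(g))}
$$
for all $g \in \Ga$.
\begin{itemize}
\item Write $\nu(\Sc_{\epsilon}(g)) = \int_{\La_{\theta}(\Ga) \smallsetminus \Nc_{\epsilon}(g^{-1})} e^{-\delta\psi(\sigma_{\theta}(g^{-1}, g y))}\, d\nu(y)$. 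By Equation \eqref{eqn:Iwasawa_inv} the exponent equals $\delta\psi(\sigma_{\theta}(g,y))$.
\item By Proposition \ref{prop:shadowcomparetits}(1) and Lemma \ref{lem:cartanIwasawa}, $\sigma_{\theta}(g,y) = p_{\theta}(\kappa(g)) + O(1)$ on this domain. This uses the fact that $\{\chi_\alpha\}_{\alpha\in\theta}$ spans $\fa_\theta^*$.
\item The lower bound also needs $\nu(\Nc_\epsilon(g^{-1})) \le 1/2$ uniformly. This holds because $\nu$ has no atoms of full mass, by non-elementarity and minimality.
\end{itemize}
When $\nu$ is not assumed to live on $\La_{\theta}(\Ga)$ (the Kim--Oh--Wang case), I would first reduce to the limit set. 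Under (1) or (2), a divergence or conical-mass argument shows that $\nu$ of the complement of the limit set vanishes. I would use the proper discontinuity of $\Ga$ on the set of points in general position with the limit set, and the fact that a fundamental domain there carries mass which would be summed over $\Ga$.

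For (2)$\Rightarrow$(1), every conical point lies in $\Sc_{\epsilon}(g)$ for infinitely many $g$ at some fixed $\epsilon$. If $\mathcal{P}_{\Ga,\psi}(\delta)<\infty$, the shadow lemma and Borel--Cantelli give $\nu(\La^{\rm con}_\theta(\Ga))=0$, which is a contradiction. Hence the series diverges at $\delta$. Theorem \ref{thm:BCZZ_proper} gives $\delta_\psi(\Ga)\le\delta$, and divergence at $\delta$ forces $\delta \le \delta_\psi(\Ga)$. So $\delta=\delta_\psi(\Ga)$, and $\Ga$ is of divergence type.

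For (1)$\Rightarrow$(2), I would prove a converse Borel--Cantelli (Kochen--Stone) estimate. For annuli $\Ga_T=\{g : \psi(\kappa(g)) \le T\}$ the claim is
$$
\sum_{g,h\in\Ga_T}\nu(\Sc_\epsilon(g)\cap\Sc_\epsilon(h)) \lesssim \Big(\sum_{g\in\Ga_T}\nu(\Sc_\epsilon(g))\Big)^2 .
$$
This is the quasi-independence of shadows. It comes from the convergence-group geometry: if two shadows meet, then after enlarging $\epsilon$ one of $g,h$ essentially factors through the other, i.e. $h \approx g f$ with $\psi(\kappa(h)) \approx \psi(\kappa(g))+\psi(\kappa(f))$. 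This additivity is coarsely controlled again by Proposition \ref{prop:shadowcomparetits} and Lemma \ref{lem:cartanIwasawa}. Kochen--Stone then gives positive $\nu$-measure to the set of points lying in infinitely many shadows, i.e. $\nu(\La^{\rm con}_\theta(\Ga))>0$. Full measure follows from ergodicity, proved next. I expect this quasi-independence step, together with the off-limit-set bookkeeping, to be the main obstacle.

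For ergodicity under (2), take a $\Ga$-invariant $A$ with $\nu(A)>0$.
\begin{itemize}
\item Shadows around conical points form a Vitali-type differentiation basis, by the shadow lemma and the doubling they imply. So $\nu$-a.e. $x\in A$ is a density point: $\nu(A\cap \Sc_\epsilon(g_n))/\nu(\Sc_\epsilon(g_n))\to1$ along $g_n$ with $x\in\Sc_\epsilon(g_n)$.
\item Pull back by $g_n^{-1}$, using invariance of $A$ and the bounded Radon--Nikodym distortion on shadows. This gives $\nu(A^c \smallsetminus \Nc_\epsilon(g_n^{-1})) \to 0$.
\item Choosing $\epsilon$ small then yields $\nu(A^c)=0$.
\end{itemize}
For uniqueness, let $\nu,\nu'$ be two such measures. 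Then $\frac12(\nu+\nu')$ is again a $\delta$-dimensional $\psi$-Patterson--Sullivan measure satisfying (2), hence ergodic. The derivative $d\nu/d(\frac12(\nu+\nu'))$ is $\Ga$-invariant, since all three measures share the same cocycle, so it is a.e. constant. Therefore $\nu=\nu'$.
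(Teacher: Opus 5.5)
The paper does not prove this statement; it quotes \cite{CZZ_transverse} for Patterson--Sullivan measures supported on $\La_\theta(\Ga)$ and \cite{KOW_PD} for dropping that support condition.

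\emph{The limit-set case.} For $\nu$ on $\La_\theta(\Ga)$ your outline is the standard Sullivan--Roblin route and is acceptable as a sketch: shadow lemma, Borel--Cantelli for (2)$\Rightarrow$(1), quasi-independence plus Kochen--Stone for (1)$\Rightarrow$(2), density points in shadows for ergodicity, and the Radon--Nikodym argument for uniqueness. As you say, the nesting and coarse additivity of shadows is the real work. The density argument applied to $A=\La^{\rm con}_\theta(\Ga)$ needs only $\nu(\La^{\rm con}_\theta(\Ga))>0$. This removes the apparent circularity of deducing full measure from ergodicity proved ``under (2)''.

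There are two slips. First, $\nu(gB)=\int_B e^{-\delta\psi(\sigma_\theta(g,y))}\,d\nu(y)=\int_B e^{\delta\psi(\sigma_\theta(g^{-1},gy))}\,d\nu(y)$, so your integrand has the wrong sign; as written it gives $\nu(\Sc_\epsilon(g))\asymp e^{+\delta\psi(\kappa(g))}$. Second, ``no atom of full mass'' yields $\sup_{g\in\Ga}\nu(\Nc_\epsilon(g^{-1}))\le 1-\eta$ for small $\epsilon$, not $\le 1/2$. That weaker bound is all the lower bound needs.

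\emph{The gap: reduction to the limit set under (1).} You rely on proper discontinuity, and on the estimate $\sigma_\theta(g,y)=p_\theta(\kappa(g))+O(1)$ for all but finitely many $g$, which lets you compare $\sum_g\nu(gW)$ with $\mathcal P_{\Ga,\psi}(\delta)$. Both are available on $\Omega=\{x\in\Fc_\theta: x \text{ is in general position with every point of } \La_{\opp(\theta)}(\Ga)\}$. In higher rank, $\Omega$ is in general a proper subset of $\Fc_\theta\smallsetminus\La_\theta(\Ga)$. On the remaining points, which lie off the limit set but are non-transverse to some limit point, Lemma \ref{lem:cartanIwasawa} gives no lower bound on $\chi_\alpha(\sigma_\theta(g,y))$. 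The action there also need not be proper, so nothing in your argument controls $\nu$ on that set.

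In fact (1)$\Rightarrow$(2) fails in the stated generality.
\begin{enumerate}
\item Take $\Gsf=\mathsf{SL}(3,\Rb)$ and $\theta=\{\alpha_1\}$, so $\Fc_\theta=\Pb(\Rb^3)$ and $\fa_\theta$ is one-dimensional. Let $\Ga=\{\mathrm{diag}(A,1):A\in\Ga_0\}$ with $\Ga_0<\mathsf{SL}(2,\Rb)$ non-elementary convex cocompact.
\item Then $\alpha_1(\kappa(\mathrm{diag}(A,1)))=\alpha_2(\kappa(\mathrm{diag}(A,1)))=\log\norm{A}$. The limit flags $(\xi,\xi\oplus\Rb e_3)$, $\xi\in\La(\Ga_0)\subset\Pb(\Rb^2\oplus 0)$, are pairwise in general position. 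So $\Ga$ is non-elementary $\Psf_\theta$-transverse, indeed Anosov.
\item $\Ga$ fixes $p=[e_3]\notin\La_\theta(\Ga)$. Lemma \ref{lem:Iwasawa_log} with the standard representation gives $\sigma_\theta(g,p)=0$ for all $g\in\Ga$. Hence the Dirac mass at $p$ is a $\delta$-dimensional $\psi$-Patterson--Sullivan measure for every $\delta$ and $\psi$.
\item For $\psi=\omega_{\alpha_1}$ and $\delta=\delta_\psi(\Ga)$, condition (1) holds: $\log\norm{A}$ is half the hyperbolic displacement, and convex cocompact Fuchsian groups are of divergence type. Yet this measure gives zero mass to $\La^{\rm con}_\theta(\Ga)$.
\end{enumerate}

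So the off-limit-set case needs an extra input. One option is Zariski density, which makes the set of points non-transverse to all of $\La_{\opp(\theta)}(\Ga)$ empty. That keeps $\nu(B^\epsilon_{\theta,g})$ bounded below for small $\epsilon$ and all but finitely many $g$. You would then still need a Borel--Cantelli-type argument with the Lie-theoretic shadows $g\cdot B^\epsilon_{\theta,g}\subset\Fc_\theta$ to exclude mass off $\La_\theta(\Ga)$. Without such an input, your proof covers only $\nu$ supported on $\La_\theta(\Ga)$. That is the case of \cite{CZZ_transverse}, and it is the case used, through (2)$\Rightarrow$(1), in the proof of Theorem \ref{thm:uniqueRadon}.
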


\subsection{Hypertransverse subgroups and (relatively) Anosov subgroups}

We consider subclasses of transverse subgroups, whose action on limit sets can be realized by nice actions on Gromov hyperbolic spaces. More precisely, a $\Psf_{\theta}$-tranvserse subgroup $\Ga < \Gsf$ is called \emph{$\Psf_{\theta}$-hypertransverse} if there exists a proper geodesic Gromov hyperbolic space $(Z, d_Z)$ such that
\begin{itemize}
    \item $\Ga$ acts on $Z$ properly discontinuously by isometries and
    \item there exists a $\Ga$-equivariant homeomorphism 
    \begin{equation} \label{eqn:boundary_hypertransverse}
        \Psi : \La_Z(\Ga) \to \La_{\theta}(\Ga),
    \end{equation}
    where $\La_Z(\Ga) \subset \partial Z$ is the limit set of $\Ga$.
\end{itemize}
We call $(Z, d_Z)$ a model space and the map in Equation \eqref{eqn:boundary_hypertransverse} boundary map.
The notion of hypertransverse subgroups was introduced in the work of the second author \cite{kim2024conformal} to study ergodicity of horospherical foliations and its relation to rigidity theory.

\begin{remark}
   Canary--Zhang--Zimmer showed that a general transverse subgroup is realized as a projectively visible subgroup of isometries on a properly convex domain equipped with the Hilbert metric, with an equivariant homeomorphisms between limit sets \cite{CZZ_transverse}. It is unknown whether this Hilbert metric can be chosen to be Gromov hyperbolic on the convex hull of the limit set. It is also unknown whether there exists a transverse subgroup which is not hypertransverse.
\end{remark}

One can see that the shadows for $\Ga$-orbits in $(Z, d_Z)$ are comparable to the shadows defined in Section \ref{subsec:shadowstransverse}, by the same argument as in \cite[Lemma 3.17]{CK_pradak}. Hence, the conical limit set of $\Ga$ in $\partial Z$ is identified with the conical limit set of $\Ga$ in $\La_{\theta}(\Ga) \subset \Fc_{\theta}$, via the boundary map $\Psi$ in Equation \eqref{eqn:boundary_hypertransverse}.

\begin{proposition} \label{prop:componentwiseshadow}
    Let $\Ga < \Gsf$ be a non-elementary $\Psf_{\theta}$-hypertransverse subgroup with a model space $(Z, d_Z)$ and boundary map $\Psi : \La_Z(\Ga) \to \La_{\theta}(\Ga)$.
    \begin{enumerate}
        \item For any $R > 0$ and $z \in Z$, there exists $\epsilon = \epsilon (R, z) > 0$ such that
        $$
        \Psi(O_R(z, g z) \cap \La_Z(\Ga)) \subset \Sc_{\epsilon}(g) \quad \text{for all } g \in \Ga.
        $$
        \item For any $\epsilon > 0$ and $z \in Z$, there exists $R = R(\epsilon, z) > 0$ such that
        $$
        \Sc_{\epsilon}(g) \subset \Psi(O_R(z, g z) \cap \La_Z(\Ga))  \quad \text{for all } g \in \Ga.
        $$
    \end{enumerate}
\end{proposition}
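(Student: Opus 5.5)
Both items are compactness arguments by contradiction, in the same spirit as the proof of Proposition \ref{prop:shadowcomparetits}. The key input is that, by properness of the $\Ga$-action on $Z$ and the convergence property, the compactification $\Ga \sqcup \La_Z(\Ga)$ (via orbit maps $g \mapsto gz$) is $\Ga$-equivariantly homeomorphic to $\Ga \sqcup \La_{\theta}(\Ga)$ with the metric $\dist_{\rm BCZZ}$. The homeomorphism is the identity on $\Ga$ and $\Psi$ on the limit set. To see this, note that a sequence $g_n \to \infty$ in $\Ga$ has $g_n z \to \xi \in \La_Z(\Ga)$ if and only if $g_n \to \Psi(\xi)$ in $\Fc_{\theta}$. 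This follows because both compactifications are characterized by the convergence dynamics. If $g_n z \to \xi$ and $g_n^{-1} z \to \eta$, then $g_n \to \xi$ uniformly on compacts of $\La_Z(\Ga) \smallsetminus \{\eta\}$. Transporting by $\Psi$ gives the same on $\La_{\theta}(\Ga)$, and the attracting point of a convergence sequence is the limit in the $\dist_{\rm BCZZ}$ compactification \cite[Proposition 2.3]{BCZZ_PS}. I would verify this identification first, in the style of \cite[Lemma 3.17]{CK_pradak}.

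For (1), rewrite $\Psi(O_R(z, gz) \cap \La_Z(\Ga)) = g \cdot \Psi(g^{-1}O_R(z,gz) \cap \La_Z(\Ga))$. Also $g^{-1}O_R(z,gz) = O_R(g^{-1}z, z)$, which consists of points $w$ such that a geodesic from $g^{-1}z$ to $w$ passes within $R$ of $z$. So it suffices to show that for points $w \in \La_Z(\Ga)$ with $d([g^{-1}z, w], z) < R$, $\Psi(w)$ stays $\epsilon$-far from $g^{-1}$ in $\dist_{\rm BCZZ}$, uniformly in $g$. Suppose instead there are $g_n$ and $w_n$ with $\dist_{\rm BCZZ}(\Psi(w_n), g_n^{-1}) \to 0$. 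If $\{g_n\}$ is finite, then $\Psi(w_n)$ approaches a point of $\Ga$, which is isolated, which is impossible since $\Psi(w_n) \in \La_\theta(\Ga)$. Otherwise pass to a subsequence with $g_n^{-1} z \to \eta$ and $w_n \to w$. Then $g_n^{-1} \to \Psi(\eta)$ in $\dist_{\rm BCZZ}$ by the identification, so $\Psi(w) = \Psi(\eta)$ and hence $w = \eta$. But the geodesics $[g_n^{-1}z, w_n]$ all pass within $R$ of $z$, so the limiting geodesic $[\eta, w]$ passes within $R$ of $z$ (by properness and Arzel\`a--Ascoli). This forces $\eta \ne w$, a contradiction.

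For (2), argue symmetrically. Suppose there are $g_n$ and $x_n = g_n y_n \in \Sc_{\epsilon}(g_n)$ with $\Psi^{-1}(x_n) \notin O_n(z, g_n z)$. Then $\xi_n := \Psi^{-1}(y_n)$ satisfies $d([g_n^{-1}z, \xi_n], z) \ge n$ and $\dist_{\rm BCZZ}(y_n, g_n^{-1}) \ge \epsilon$. If $\{g_n\}$ is bounded, then $g_n^{-1}z$ stays bounded while geodesics from it to $\xi_n$ avoid larger and larger balls around $z$, which is impossible. Otherwise pass to a subsequence with $g_n^{-1}z \to \eta$ and $\xi_n \to \xi$. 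Since geodesics $[g_n^{-1}z, \xi_n]$ escape every ball around $z$, the Gromov products $(g_n^{-1}z \mid \xi_n)_z \to \infty$ (up to a $\delta$-error), so $\xi = \eta$. Then $y_n = \Psi(\xi_n) \to \Psi(\eta)$, and also $g_n^{-1} \to \Psi(\eta)$ in $\dist_{\rm BCZZ}$. Hence $\dist_{\rm BCZZ}(y_n, g_n^{-1}) \to 0$, contradicting the lower bound $\epsilon$.

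The main obstacle is the first step: identifying the two compactifications, and in particular that convergence $g_n z \to \eta$ in $Z \cup \partial Z$ corresponds exactly to $g_n \to \Psi(\eta)$ in $\dist_{\rm BCZZ}$. Everything else is routine hyperbolic geometry plus compactness.
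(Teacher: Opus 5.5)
Your proposal is correct. The paper gives no written proof here; it only points to the argument of \cite[Lemma 3.17]{CK_pradak}. Your contradiction-by-compactness argument, which identifies $\Ga \sqcup \La_Z(\Ga)$ with the $\dist_{\rm BCZZ}$-compactification via $\Psi$, is the natural way to carry that out and runs parallel to the paper's proof of Proposition \ref{prop:shadowcomparetits}. The identification step you flag does go through: attracting points of a convergence sequence acting on a set with at least three points are unique.
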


Together with Proposition \ref{prop:shadowcomparetits}(1), Lemma \ref{lem:cartanIwasawa}, and Theorem \ref{thm:localregularIwasawa}, we have the following estimates on Iwasawa cocycles.

\begin{corollary} \label{cor:shadow_use}
    Let $\Ga < \Gsf$ be a non-elementary $\Psf_{\theta}$-hypertransverse subgroup with a model space $(Z, d_Z)$ and boundary map $\Psi : \La_Z(\Ga) \to \La_{\theta}(\Ga)$. Fix $z \in Z$.
    \begin{enumerate}
        \item For any $R > 0$, there exists $c = c(R, z) > 0$ such that for any $g \in \Ga$ and $x \in O_R(z, g z) \cap \La_Z(\Ga)$,
        $$
        \chi_{\alpha}(\kappa(g)) - c \le \chi_{\alpha}( \sigma_{\theta}(g, \Psi(g^{-1}x))) \le \chi_{\alpha}(\kappa(g)) \quad \text{for all } \alpha \in \theta.
        $$
        \item Let $\varphi \in \Ga$ be loxodromic and $R > 0$. For any $\epsilon > 0$, there exists $N = N(\varphi, R, \epsilon, z) \in \Nb$ such that for any  $g \in \Ga$ and $n \ge N$, if $x, g \varphi g^{-1} x \in O_R(z, g z) \cap O_R(g z, g \varphi^n z) \cap \La_Z(\Ga)$, then
        $$
        \norm{ \sigma_{\theta}( g \varphi g^{-1}, \Psi(x)) - \la_{\theta}(\varphi)} < \epsilon.
        $$
    \end{enumerate}
\end{corollary}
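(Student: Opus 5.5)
Both items follow by translating the $Z$-shadows into the BCZZ shadows $\Sc_{\epsilon}(\cdot)$ through Proposition~\ref{prop:componentwiseshadow}(1). After that translation, item (1) comes from Proposition~\ref{prop:shadowcomparetits}(1) and Lemma~\ref{lem:cartanIwasawa}, and item (2) comes from Theorem~\ref{thm:localregularIwasawa}.

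For (1), fix $R>0$ and $z\in Z$, and let $\epsilon_0=\epsilon(R,z)$ be as in Proposition~\ref{prop:componentwiseshadow}(1). If $x\in O_R(z,gz)\cap\La_Z(\Ga)$, then $\Psi(x)\in\Sc_{\epsilon_0}(g)$. Proposition~\ref{prop:shadowcomparetits}(1) gives $\epsilon_0'>0$ such that $\Sc_{\epsilon_0}(g)\subset g\cdot B^{\epsilon_0'}_{\theta,g}$ for all $g\in\Ga$ outside a finite set $F$. By $\Ga$-equivariance of $\Psi$ we have $\Psi(g^{-1}x)=g^{-1}\Psi(x)\in B^{\epsilon_0'}_{\theta,g}$, so Lemma~\ref{lem:cartanIwasawa} gives the two-sided estimate with constant $-\log\epsilon_0'$. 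For the finitely many $g\in F$ we argue separately. By Lemma~\ref{lem:Iwasawa_log}, $\chi_\alpha(\sigma_\theta(g,y))=\log(\norm{\rho_\alpha(g)v}_\alpha/\norm{v}_\alpha)$, which is at most $\log\norm{\rho_\alpha(g)}_\alpha=\chi_\alpha(\kappa(g))$ for every $y$; this gives the upper bound. The same quantity is at least $-\log\norm{\rho_\alpha(g^{-1})}_\alpha$, which is bounded below uniformly over the finite set $F$, so the lower bound holds after enlarging $c$. Taking $c$ to be the maximum of the two constants finishes (1).

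For (2), fix $\varphi,R,\epsilon,z$ and again put $\epsilon_0=\epsilon(R,z)$. Suppose $x,\,g\varphi g^{-1}x\in O_R(z,gz)\cap O_R(gz,g\varphi^nz)\cap\La_Z(\Ga)$. Then $\Psi(x)$ and $\Psi(g\varphi g^{-1}x)=g\varphi g^{-1}\Psi(x)$ both lie in $\Sc_{\epsilon_0}(g)$. The second shadow is a translate: $O_R(gz,g\varphi^nz)=g\,O_R(z,\varphi^nz)$, so by equivariance of $\Psi$ and Proposition~\ref{prop:componentwiseshadow}(1) applied to $\varphi^n$, both points also lie in $g\cdot\Sc_{\epsilon_0}(\varphi^n)$. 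Since $\Sc_{\epsilon}(h)$ increases as $\epsilon$ decreases, we may replace $\epsilon_0$ by $\epsilon_1:=\min(\epsilon_0,\epsilon)$ so that a single parameter controls both the shadows and the target accuracy. Theorem~\ref{thm:localregularIwasawa} with parameter $\epsilon_1$ then provides $N=N(\varphi,\epsilon_1)$, and for $n\ge N$ it gives $\norm{\sigma_\theta(g\varphi g^{-1},\Psi(x))-\la_\theta(\varphi)}<\epsilon_1\le\epsilon$. This $N$ depends only on $\varphi,R,\epsilon,z$, as required.

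The main obstacle is bookkeeping rather than new mathematics. One must track how the shadow parameters change under translation by $g$ and under equivariance of $\Psi$. In (1), one must also handle the finitely many $g$ excluded by Proposition~\ref{prop:shadowcomparetits}(1); the crude operator-norm bound above covers them.
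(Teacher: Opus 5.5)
Your proposal is correct and takes the same route the paper indicates: you convert $Z$-shadows into the shadows $\Sc_{\epsilon}(\cdot)$ via Proposition~\ref{prop:componentwiseshadow}(1), then apply Proposition~\ref{prop:shadowcomparetits}(1) with Lemma~\ref{lem:cartanIwasawa} for (1), and Theorem~\ref{thm:localregularIwasawa} for (2). Two details the paper leaves implicit are handled correctly in your write-up: the finitely many exceptional $g$, treated by the operator-norm bound, and the single-parameter issue in Theorem~\ref{thm:localregularIwasawa}, treated by taking $\epsilon_1=\min(\epsilon_0,\epsilon)$.
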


Important examples of hypertransverse subgroups are Anosov subgroups and relatively Anosov subgroups. 

\begin{definition} \label{def:Anosovdefbody}
    Let $\Ga < G$ be a non-elementary $\Psf_{\theta}$-transverse subgroup.
    \begin{itemize}
        \item $\Ga$ is \emph{$\Psf_{\theta}$-Anosov} if $\Ga$ is a hyperbolic group and there exists a $\Ga$-equivariant homeomorphism $\partial \Ga \to \La_{\theta}(\Ga)$, where $\partial \Ga$ is the Gromov boundary of $\Ga$.
    \item $\Ga$ is \emph{relatively $\Psf_{\theta}$-Anosov} if $\Ga$ is a relatively hyperbolic group with some choice of a peripheral structure and there exists a $\Ga$-equivariant homeomorphism $\partial_B \Ga \to \La_{\theta}(\Ga)$, where $\partial_B \Ga$ is the Bowditch boundary of $\Ga$ with respect to the chosen peripheral structure.
    \end{itemize}
\end{definition}
Hence, Anosov subgroups and relatively Anosov subgroups are hypertransverse with their Cayley graphs and Gromov models as model spaces, respectively.
Note that Anosov subgroups are also special cases of relatively Anosov subgroups.

Importantly, Canary--Zhang--Zimmer proved that relatively Anosov subgroups are of divergence type. Therefore, in view of Theorem \ref{thm:CZZHTS}, they possess interesting dynamical properties.

\begin{theorem}[{\cite[Theorem 1.1]{CZZ_relative}}]
Let $\Ga < \Gsf$ be a non-elementary relatively $\Psf_{\theta}$-Anosov subgroup and $\psi \in \fa_{\theta}^*$. If $\delta_{\psi}(\Ga) < + \infty$, then $\Ga$ is of $\psi$-divergence type.
\end{theorem}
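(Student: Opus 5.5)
This result is \cite[Theorem 1.1]{CZZ_relative}. To prove it I would use the Hopf--Tsuji--Sullivan criterion, Theorem \ref{thm:CZZHTS}. Since $\delta_{\psi}(\Ga) < +\infty$, there is a $\delta_{\psi}(\Ga)$-dimensional $\psi$-Patterson--Sullivan measure $\nu$ on $\La_{\theta}(\Ga)$ by \cite[Proposition 3.2]{CZZ_transverse}. It then suffices to show $\nu(\La_{\theta}^{\rm con}(\Ga)) = 1$, and Theorem \ref{thm:CZZHTS} gives $\psi$-divergence type. For relatively $\Psf_{\theta}$-Anosov $\Ga$, the limit set decomposes as $\La_{\theta}^{\rm con}(\Ga) \sqcup \La_{\theta}^{\rm p}(\Ga)$, and the parabolic set is countable, being a union of finitely many $\Ga$-orbits of fixed points of peripheral subgroups. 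So the goal reduces to showing that $\nu$ has no atom at any parabolic point $\xi$ with stabilizer $\Psf_\xi = \operatorname{Stab}_\Ga(\xi)$.

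First, I would understand the peripheral subgroups. A peripheral $\Psf_\xi$ is finitely generated and virtually nilpotent; I would take this from the relatively hyperbolic structure together with discreteness in $\Gsf$. Using the Tits representations $\rho_\alpha$, I would show that $\psi(\kappa(p))$ is comparable to $c \log |p|$ up to an additive constant, where $|p|$ is word length. The reason is that $\rho_\alpha(\Psf_\xi)$ consists, up to a compact factor, of unipotent-type elements fixing the line $\Phi_\alpha(\xi)$, whose norms grow polynomially. Since $\Psf_\xi$ has polynomial growth of some degree $D$, its Poincar\'e series behaves like $\sum_p |p|^{-sc}$. Hence $\delta_\psi(\Psf_\xi) = D/c$, and $\Psf_\xi$ is of $\psi$-divergence type.

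Second, I would establish the parabolic gap $\delta_\psi(\Psf_\xi) < \delta_\psi(\Ga)$ by the Dal'bo--Otal--Peign\'e argument. Pick a loxodromic $h \in \Ga$ whose fixed points are away from $\xi$. By ping-pong on $\La_{\theta}(\Ga)$, using the convergence action, the subgroup $\langle \Psf_\xi, h^m\rangle$ for large $m$ contains the free product $\Psf_\xi * \langle h^m \rangle$. For alternating words $g = p_1 h^{m k_1} p_2 \cdots$, transversality together with Lemma \ref{lem:cartanIwasawa} and Proposition \ref{prop:shadowcomparetits} gives the almost-additivity
\[
\psi(\kappa(g)) \ge \sum_i \psi(\kappa(p_i)) + \sum_i \psi(\kappa(h^{mk_i})) - C\cdot(\text{number of letters}).
\]
The Poincar\'e series of the free product therefore dominates a geometric series in $\mathcal{P}_{\Psf_\xi,\psi}(s)\,\mathcal{P}_{\langle h^m\rangle,\psi}(s) e^{-2Cs}$. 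Because $\mathcal{P}_{\Psf_\xi,\psi}$ diverges at $\delta_\psi(\Psf_\xi)$, this series diverges at some $s > \delta_\psi(\Psf_\xi)$, which gives the strict gap.

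Third, I would show $\nu(\{\xi\}) = 0$. The atom $\nu(\{\xi\})$ is invariant under $\Psf_\xi$ up to the factor $e^{-\delta\psi(\sigma_\theta(p^{-1},\xi))}$. A cleaner route is a shadow-lemma count near the cusp. Cover a punctured neighborhood of $\xi$ by the translates $p\cdot K$, $p \in \Psf_\xi$, of a compact fundamental domain $K \subset \La_\theta(\Ga)\smallsetminus\{\xi\}$. Then $\nu(p K) \asymp e^{-\delta\psi(\kappa(p))}$ by Lemma \ref{lem:cartanIwasawa}. Summing over $p \in \Psf_\xi$ outside a finite set shows that $\nu(\mathcal{N}_r(\xi)\smallsetminus\{\xi\}) \to 0$ like a convergent tail, which uses the gap. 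It remains to exclude an atom at $\xi$ itself. If $\nu(\{\xi\})>0$, the equivariance relation at the fixed point forces $\psi(\sigma_\theta(p,\xi)) = 0$ for all $p \in \Psf_\xi$. I would then play this against the $\Ga$-orbit $\Ga\xi$: it is infinite and the total mass is finite, so $\sum_{g \in \Ga/\Psf_\xi} e^{-\delta\psi(\sigma_\theta(g^{-1},g\xi))}\nu(\{\xi\}) \le 1$. Combining this with the estimate $\psi(\sigma_\theta) \approx \psi(\kappa(g)) - \psi(\kappa(\text{cusp part}))$ on coset representatives, the divergence of $\mathcal{P}_{\Psf_\xi,\psi}$ at $\delta_\psi(\Psf_\xi) < \delta$ would produce a contradiction.

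I expect the main obstacle to be the first two steps: the precise logarithmic comparison $\psi(\kappa(p)) \approx c\log|p|$ in every relevant direction for a general $\theta$, and the almost-additivity of $\kappa$ along ping-pong words with a uniform constant. The remaining measure estimates are then standard once the parabolic gap is in hand.
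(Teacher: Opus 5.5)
The paper does not prove this statement; it quotes it from \cite{CZZ_relative}. Your overall architecture is the standard one: peripheral divergence, then the gap $\delta_\psi(\Psf_\xi)<\delta_\psi(\Ga)$, then no parabolic atoms, hence $\nu(\La_\theta^{\rm con}(\Ga))=1$ and Theorem \ref{thm:CZZHTS}. But your third step cannot work as you set it up. You start from an \emph{arbitrary} $\delta$-dimensional $\psi$-Patterson--Sullivan measure, with $\delta=\delta_\psi(\Ga)$, and try to exclude an atom at $\xi$ using only its transformation rule and the gap. Suppose $\nu(\{\xi\})>0$. Take coset representatives $g$ of $\Ga/\Psf_\xi$ closest to the basepoint in the model space. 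Then $g\xi$ lies in a uniform shadow of $g$, so $\sigma_\theta(g,\xi)$ is within bounded distance of $\kappa_\theta(g)$. Since $\psi(\sigma_\theta(\cdot,\xi))$ vanishes on $\Psf_\xi$, finiteness of $\nu(\Ga\xi)$ gives $\sum_{g\Psf_\xi\in\Ga/\Psf_\xi}e^{-\delta\psi(\kappa(g))}<\infty$. Combine this with $\psi(\kappa(gp))\ge\psi(\kappa(g))+\psi(\kappa(p))-C$ and with $\sum_{p\in\Psf_\xi}e^{-\delta\psi(\kappa(p))}<\infty$. That convergence is all the gap gives at the exponent $\delta>\delta_\psi(\Psf_\xi)$; divergence of $\mathcal{P}_{\Psf_\xi,\psi}$ at its own critical exponent plays no role here. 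The result is only $\mathcal{P}_{\Ga,\psi}(\delta)<\infty$, i.e.\ that $\Ga$ is of convergence type. That is the alternative you are trying to exclude, not a contradiction. Conversely, if $\Ga$ were of convergence type, the measure giving mass $e^{-\delta\psi(\sigma_\theta(g,\xi))}$ to each point $g\xi$ would be a finite, purely atomic $\delta$-dimensional $\psi$-Patterson--Sullivan measure. So no argument using only the conformality of $\nu$ can exclude parabolic atoms without in effect already proving divergence. For the same reason, your cusp count applied to $\nu$ itself proves nothing: $\nu(\mathcal{N}_r(\xi)\smallsetminus\{\xi\})\to 0$ holds for every finite measure.

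The missing idea is that you must use how $\nu$ is constructed. In the Dal'bo--Otal--Peign\'e argument, $\nu$ is a weak limit of orbit measures $\mu_s$ on $\Ga\sqcup\La_\theta(\Ga)$ as $s\downarrow\delta$, weighted by Patterson's slowly varying function if $\mathcal{P}_{\Ga,\psi}(\delta)<\infty$. The cusp estimate is run on the $\mu_s$. Given a finite $F\subset\Psf_\xi$, orbit points in a small neighbourhood $V_F$ of $\xi$ can be written $pg$ with $p\in\Psf_\xi\smallsetminus F$ and $\psi(\kappa(pg))\ge\psi(\kappa(p))+\psi(\kappa(g))-C$. This gives $\mu_s(V_F)\le C'\sum_{p\in\Psf_\xi\smallsetminus F}e^{-(\delta-\epsilon)\psi(\kappa(p))}$ uniformly in $s$. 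By the gap this tail is as small as you like, and $\nu(\{\xi\})\le\liminf_{s}\mu_s(V_F)$ then forces $\nu(\{\xi\})=0$.

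Two other steps also need repair.
\begin{itemize}
\item In your second step the inequality points the wrong way for the domination you claim. A lower bound on the Poincar\'e series needs $\psi(\kappa(g))\le\sum_i\psi(\kappa(p_i))+\sum_i\psi(\kappa(h^{mk_i}))+C\cdot\#(\text{letters})$. Since $\psi$ need not be a nonnegative combination of the $\chi_\alpha$, you need two-sided estimates for each $\chi_\alpha$: the upper one is submultiplicativity, the lower one uses transversality. Also, ping-pong only applies after discarding finitely many elements of $\Psf_\xi$. This gives a free semigroup rather than the free product, which suffices.
\item In your first step, polynomial growth of $\norm{\rho_\alpha(p)}_\alpha$ only gives $\alpha(\kappa(p))\asymp\log|p|$ up to multiplicative constants. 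That determines neither $\delta_\psi(\Psf_\xi)$ nor divergence at it: divergence at the critical exponent is sensitive to sub-logarithmic corrections, which is exactly how the convergent examples of Dal'bo--Otal--Peign\'e arise. The bounded-error estimate, or an equivalent counting asymptotic, has to come from the structure of $\Psf_\xi$ as virtually a cocompact lattice in $\Lsf\ltimes\Usf$ \cite[Theorem 4.4]{CZZ_relative}. That structure is also where virtual nilpotence comes from. This is a substantive part of the proof, not a formality.
\end{itemize}
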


They also characterized finiteness of critical exponents in terms of limit cones.

\begin{theorem}[{\cite[Theorem 10.1]{CZZ_relative}}]
    Let $\Ga < \Gsf$ be a non-elementary relatively $\Psf_{\theta}$-Anosov subgroup and $\psi \in \fa_{\theta}^*$. Then $\delta_{\psi}(\Ga) < + \infty$ if and only if $\psi > 0$ on $\L_{\Ga} \smallsetminus \{0\}$.
\end{theorem}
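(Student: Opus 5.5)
The two directions of the equivalence are of very different nature, and I would treat them separately.

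For the direction that $\psi > 0$ on $\L_{\Ga} \smallsetminus \{0\}$ implies $\delta_{\psi}(\Ga) < +\infty$, I would use only discreteness and the definition of the limit cone. Since $\L_{\Ga} \cap \{\norm{u} = 1\}$ is compact and $\psi$ is continuous, there is an open cone $\mathcal{C} \supset \L_{\Ga} \smallsetminus \{0\}$ and $c > 0$ with $\psi(u) \ge c \norm{u}$ for all $u \in \mathcal{C}$. Because $\L_{\Ga}$ is the asymptotic cone of $\kappa(\Ga)$, all but finitely many $g \in \Ga$ have $\kappa(g) \in \mathcal{C}$, so $\psi(\kappa(g)) \ge c \norm{\kappa(g)}$ for those $g$. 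Since $\norm{\kappa(g)} = d(o, g o)$ in $\Gsf / \Ksf$ and $\Ga$ is discrete, the number of $g \in \Ga$ with $\norm{\kappa(g)} \le T$ is at most $C e^{hT}$. Here $h$ is the volume entropy of the symmetric space; this follows from the usual packing argument with disjoint small balls around orbit points. Hence $\mathcal{P}_{\Ga, \psi}(s) < +\infty$ for $s > h/c$, and so $\delta_{\psi}(\Ga) \le h/c$.

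For the converse I argue by contraposition: suppose $\psi(v) \le 0$ for some unit $v \in \L_{\Ga}$, and show $\mathcal{P}_{\Ga,\psi}(s) = +\infty$ for every $s$. The mechanism I would aim for is an infinite family of elements with $\psi \circ \kappa$ bounded above, since then every term of the Poincar\'e series along that family is bounded below. The main tool is coarse additivity of $\kappa$ along aligned geodesics in the model space $Z$ (the Gromov model). By Corollary \ref{cor:shadow_use}(1), together with the extension lemma (Lemma \ref{lem:extension}), for any $g$ and a fixed loxodromic $\varphi$ I can find $a \in \{a_1, a_2, a_3\}$ such that products $w = g a g a \cdots$ have $\chi_{\alpha}(\kappa(w))$ equal to the sum of the $\chi_\alpha(\kappa)$ of the pieces, up to a uniform error $C$ per piece. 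Since $\{\chi_\alpha : \alpha \in \theta\}$ spans $\fa_{\theta}^*$, this controls $\psi(\kappa(w))$.

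If $\psi(v) < 0$, I pick $g \in \Ga$ with $\kappa(g)$ close to the ray through $v$ and $\norm{\kappa(g)}$ large. Then $\psi(\kappa(g)) + \psi(\kappa(a_i)) + C < 0$ for all $i$, so the $k$-fold aligned products $w_k$ are pairwise distinct and satisfy $\psi(\kappa(w_k)) \le 0$. Thus $\mathcal{P}_{\Ga,\psi}(s) \ge \sum_k 1 = +\infty$ for all $s$.

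The boundary case, where $\psi \ge 0$ on $\L_{\Ga}$ but $\psi(v) = 0$ for some $v \ne 0$, is where I expect the real difficulty. Approximating $v$ by Cartan projections only gives $\psi(\kappa(g)) = o(\norm{\kappa(g)})$, not a bounded quantity, and a concavity argument with Quint's growth indicator only yields bounded ratios $\tau/\psi$. So here I would use the relatively hyperbolic structure in an essential way. An exposed ray of $\L_{\Ga}$ should be realized either as $\Rb_{\ge 0}\la_\theta(\varphi)$ for a loxodromic $\varphi$, or as an asymptotic direction of a peripheral subgroup. In the loxodromic case, Lemma \ref{lem:JordanIwasawafixed} with the shadow estimates gives $\psi(\kappa(\varphi^n)) = n \psi(\la_\theta(\varphi)) + O(1)$, which is bounded when $\psi(\la_\theta(\varphi)) = 0$, so the series diverges. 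In the peripheral case, the parabolic subgroup is virtually nilpotent and its Cartan projections grow only logarithmically transverse to the ray. I would try to show that $\psi(\kappa(p)) \le C\log\norm{\kappa(p)}$ on a subset of polynomial growth of degree larger than $s\,C$, which again forces divergence. Establishing this dichotomy for the boundary rays of $\L_\Ga$ is the step I expect to be the main obstacle.
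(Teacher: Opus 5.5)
Your first implication is fine, and it uses only discreteness. The case $\psi(v)<0$ is also fine, although it needs none of the coarse-additivity machinery: infinitely many $g\in\Ga$ already satisfy $\psi(\kappa(g))<0$, and each contributes more than $1$ to $\mathcal{P}_{\Ga,\psi}(s)$ for $s>0$. The paper does not prove this statement; it quotes it from \cite{CZZ_relative}.

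The genuine gap is the boundary case, which is the real content of the theorem. Your plan assumes that a ray of $\L_{\Ga}$ on which $\psi$ vanishes is either $\Rb_{\ge 0}\la_{\theta}(\varphi)$ for a loxodromic $\varphi$ or a peripheral direction. That dichotomy is false, already for Anosov groups, where there are no peripheral subgroups at all. For example, let $\rho_1,\rho_2$ be Fuchsian representations of a closed surface group, with $\rho_2$ obtained from $\rho_1$ by a Thurston stretch along a complete lamination whose measured part is minimal and filling. Let $\Ga$ be the image of $(\rho_1,\rho_2)$ in $\PSL(2,\Rb)\times\PSL(2,\Rb)$, a Zariski dense Borel Anosov subgroup. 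Then $\L_{\Ga}$ is the sector between the slopes $\inf \ell_{\rho_2}/\ell_{\rho_1}$ and $L:=\sup \ell_{\rho_2}/\ell_{\rho_1}$ over closed geodesics. Here $L$ is the best Lipschitz constant, and it is attained only on the stretched lamination, by no closed geodesic. For $\psi(x,y)=Lx-y$ neither of your subcases applies. All you know is $\psi(\kappa(g_n))=o(\norm{\kappa(g_n)})$ along approximating sequences, and nothing in the proposal turns this into divergence.

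What is missing is a way to upgrade qualitative information to a linear bound, instead of searching for explicit elements on the boundary ray.
\begin{itemize}
\item First, $\delta_{\psi}(\Ga)<+\infty$ forces $\psi\circ\kappa$ to be proper on $\Ga$. Indeed, bounded values along an infinite sequence make $\mathcal{P}_{\Ga,\psi}(s)$ diverge for every $s>0$; this also covers your strict case.
\item Then show that properness implies $\psi(\kappa(g))\ge c\norm{\kappa(g)}-C$ for all $g\in\Ga$. This immediately gives $\psi>0$ on $\L_{\Ga}\smallsetminus\{0\}$.
\end{itemize}
The relatively Anosov structure enters in the second step, along the following lines.
\begin{itemize}
\item Along geodesics of the Gromov model $Z$, $p_\theta\circ\kappa$ should be coarsely additive with a uniform defect $C_0$. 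This should follow from Corollary \ref{cor:shadow_use}(1) together with the cocycle identity.
\item By properness, choose $D$ with $\psi(\kappa(h))\ge 2C_0$ whenever $d_Z(z_0,hz_0)\ge D$.
\item Cut $[z_0,gz_0]$ into thick pieces of $Z$-length between $D$ and $2D$, plus horoball excursions. Each piece then contributes at least $C_0$ to $\psi(\kappa(g))$, and thick pieces have uniformly bounded $\norm{\kappa}$.
\item Control the excursions by proving $\psi(\kappa(p))\ge c\norm{\kappa(p)}-C$ directly on each peripheral subgroup. Such a subgroup is virtually a lattice in a compact-by-unipotent group, where $\kappa$ grows logarithmically with polynomial structure, so properness there should already give the linear bound.
\end{itemize}
Subadditivity $\norm{\kappa(gh)}\le\norm{\kappa(g)}+\norm{\kappa(h)}$ then finishes the argument.
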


\subsection{Closed orbits in $\Hor_{\theta}$}

For a relatively $\Psf_{\theta}$-Anosov subgroup $\Ga < \Gsf$, we denote by $\La_{\theta}^{\rm p}(\Ga) \subset \La_{\theta}(\Ga)$ the \emph{parabolic limit set} of $\Ga$, which is defined as the image of parabolic limit points of $\Ga$ on its Bowditch boundary $\partial_B \Ga$ under the equivariant homeomorphism $\partial_B \Ga \to \La_{\theta}(\Ga)$.

Noting that $\La_{\theta}^{\rm p}(\Ga)$ is $\Ga$-invariant and $\La_{\theta}(\Ga) = \La_{\theta}^{\rm con}(\Ga) \sqcup \La_{\theta}^{\rm p}(\Ga)$, we record that $\Ga$-orbits of horospheres based at $\La_{\theta}^{\rm p}(\Ga)$ are closed in $\Hor_{\theta}$.

\begin{proposition} \label{prop:closedorbitsparabolic}
    Let $\Ga < \Gsf$ be a non-elementary relatively $\Psf_{\theta}$-Anosov subgroup. For any $(x, u) \in \La_{\theta}^{\rm p}(\Ga) \times \fa_{\theta} \subset \Hor_{\theta}$, its orbit $\Ga \cdot (x, u)$ is closed in $\Hor_{\theta}$, i.e.,
    $$
    \overline{\Ga \cdot (x, u)} = \Ga \cdot (x, u) \quad \text{in } \Hor_{\theta}.
    $$
\end{proposition}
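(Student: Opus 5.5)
The plan is to show directly that if $g_n \cdot (x,u) \to (y,v)$ in $\Hor_{\theta}$ with $g_n \in \Ga$, then $(y,v) \in \Ga \cdot (x,u)$. Write $\Ga_x < \Ga$ for the stabilizer of $x$; it is the parabolic (peripheral) subgroup, since $x$ is a parabolic point of the convergence action. The convergence $g_n \cdot (x,u) \to (y,v)$ says exactly two things:
$$
g_n x \to y \quad \text{and} \quad u + \sigma_{\theta}(g_n, x) \to v .
$$
In particular $\{\sigma_{\theta}(g_n,x)\}_{n \in \Nb}$ is bounded. The strategy is to prove that boundedness of the cocycle forces the $g_n$ to lie in finitely many cosets $h\Ga_x$. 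We would then conclude using the structure of the $\Ga_x$-orbit of $(x,u)$.

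First, fix a model space $Z$ and a point $z \in Z$. We take $Z$ to be the Gromov model, where $x$ corresponds to the centre of a $\Ga_x$-invariant horoball $H_x$ with $\Ga_x$ acting cocompactly on $\partial H_x$. For each $n$ we replace $g_n$ by $g_n' := g_n p_n$ with $p_n \in \Ga_x$. This does not change $g_n x$. We choose $p_n$ so that $g_n' z$ lies within bounded distance of a nearest-point projection of $z$ to the horoball $g_n H_x = H_{g_n x}$, which is possible by cocompactness of $\Ga_x$ on the horosphere. Hyperbolicity then gives that any geodesic $[z, g_n x]$ passes within uniform distance $R$ of $g_n' z$, i.e. $\Psi^{-1}(g_n x) \in O_R(z, g_n' z)$.

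We then apply Corollary \ref{cor:shadow_use}(1) to the element $g_n'$ and the point $\Psi^{-1}(g_n' x)$, which gives
$$
\chi_{\alpha}(\kappa(g_n')) - c \le \chi_{\alpha}(\sigma_{\theta}(g_n', x)) \le \chi_{\alpha}(\kappa(g_n')) \quad \text{for all } \alpha \in \theta .
$$
By the cocycle relation \eqref{eqn:Iwasawa_cocycle}, $\sigma_{\theta}(g_n',x) = \sigma_{\theta}(g_n,x) + \sigma_{\theta}(p_n,x)$.

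The key auxiliary claim is that $\sigma_{\theta}(\cdot, x)$ vanishes on $\Ga_x$, or at least is bounded there. By \eqref{eqn:Iwasawa_cocycle}, $p \mapsto \sigma_{\theta}(p,x)$ is a homomorphism $\Ga_x \to \fa_{\theta}$, so boundedness is equivalent to vanishing. I expect this to be the main obstacle, since the shadow estimates above do not apply directly: $x$ is not in the shadow of $p^k z$.

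My first attempt would be a limiting argument using Lemma \ref{lem:cartanIwasawa} on a point $x_k \in \La_{\theta}(\Ga)$ in the shadow of $p^{\pm k}$ close to $x$. I would combine it with the local Lipschitz control of Proposition \ref{prop:locLipIwasawa}, aiming to show that $\chi_{\alpha}(\sigma_{\theta}(p^{\pm k},x))$ are both bounded below. That forces $k\,\chi_{\alpha}(\sigma_{\theta}(p,x))$ to be bounded, hence $\sigma_{\theta}(p,x)=0$. A fallback is to use that parabolic elements of a transverse group have vanishing $\la_{\theta}$, together with an argument analogous to Lemma \ref{lem:JordanIwasawafixed} at the unique fixed point.

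Granting the claim, the displayed inequality shows that $\chi_{\alpha}(\kappa(g_n'))$ stays bounded for every $\alpha \in \theta$. Transversality of $\Ga$ forces $\alpha(\kappa(h)) \to +\infty$ along any infinite sequence of distinct elements $h \in \Ga$, and hence also $\chi_{\alpha}(\kappa(h)) \to +\infty$. Therefore $\{g_n'\}$ ranges over a finite set $F \subset \Ga$.

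Passing to a subsequence, we may take $g_n' = h$ fixed. Then
$$
g_n \cdot (x,u) = h p_n^{-1} \cdot (x,u) = h \cdot \bigl(x, u + \sigma_{\theta}(p_n^{-1},x)\bigr) = h \cdot (x,u)
$$
for all $n$, using the vanishing claim again. Hence the limit $(y,v) = h\cdot(x,u)$ lies in $\Ga \cdot (x,u)$, which proves the orbit is closed.
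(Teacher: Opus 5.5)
The reduction you set up is sound, and for the finiteness step it takes a genuinely different route from the paper.

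\begin{itemize}
\item The paper chooses $h_n \in \Ga_x$ using the cocompact action of $\Ga_x$ on $\La_{\opp(\theta)}(\Ga) \smallsetminus \{\iota(x)\}$. It then applies the proper discontinuity of $\Ga$ on $\Omega_{\theta}(\Ga)$ (Theorem \ref{thm:KOWPD}).
\item You choose $p_n \in \Ga_x$ by projecting $z$ to the horoball $g_n H_x$ of the Gromov model. You then conclude with Corollary \ref{cor:shadow_use}(1) and the divergence of $\chi_{\alpha} \circ \kappa$ along $\Ga$.
\end{itemize}

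Your route avoids Theorem \ref{thm:KOWPD} at the cost of cusped-space geometry, and the two endings are identical. Both routes, however, rest on $\sigma_{\theta}(p, x) = 0$ for every $p \in \Ga_x$. This is exactly the step you leave open, so as written the proof has a gap there.

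\textbf{Your first attempt cannot close the gap.} For large $k$, the hyperplanes $\rho_{\alpha}(\ell_{p^k}^{-1}) V_{\alpha}^{<}$ accumulate on the hyperplane $\Phi_{\alpha}^*$ of (the $\opp(\alpha)$-component of) $\iota(x)$. That hyperplane contains the line $\Phi_{\alpha}(x)$, so $x \notin B^{\epsilon}_{\theta, p^k}$ and Proposition \ref{prop:locLipIwasawa} gives no control at $x$. Worse, by Lemma \ref{lem:cartanIwasawa}, on $B^{\epsilon}_{\theta, p^k}$ one has $\chi_{\alpha}(\sigma_{\theta}(p^k, \cdot)) \ge \chi_{\alpha}(\kappa(p^k)) + \log \epsilon \to +\infty$. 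The true value at $x$ is $0$, so any transfer from nearby shadow points to $x$ must absorb an unbounded error.

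\textbf{Your fallback is the right idea, but its input is the whole content.} It should be stated as: every eigenvalue of $\rho_{\alpha}(p)$, for $p \in \Ga_x$ and $\alpha \in \theta$, has modulus one. Knowing only $\la_{\theta}(p) = 0$ is not enough. Via $k\, \chi_{\alpha}(\sigma_{\theta}(p, x)) = \chi_{\alpha}(\sigma_{\theta}(p^k, x)) \le \chi_{\alpha}(\kappa(p^k))$ it yields only $\chi_{\alpha}(\sigma_{\theta}(p, x)) \le 0$, and you must also apply it to $p^{-1}$. Given the modulus-one property, apply Lemma \ref{lem:Iwasawa_log} to the $\rho_{\alpha}(p)$-invariant line $\Phi_{\alpha}(x)$. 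It gives $\chi_{\alpha}(\sigma_{\theta}(p, x)) = \log |\lambda| = 0$ at once, with $\lambda$ the eigenvalue on that line.

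There are two ways to justify the modulus-one property.
\begin{enumerate}
\item The paper gets it from the Canary--Zhang--Zimmer structure theorem. There $\Ga_x$ is a cocompact lattice in $\Hsf$ with $\Hsf^{\circ} = \Lsf^{\circ} \times \Usf^{\circ}$, where $\Lsf$ is compact and $\Usf$ is unipotent. Hence a power of each $p \in \Ga_x$ is a commuting product of an elliptic and a unipotent element.
\item You can also derive it from transversality alone. Take $p \in \Ga_x$ of infinite order, hence parabolic. The limit $\Phi_{\alpha}(x)$ of $p^k$ in the sense of \eqref{eqn:convergence_G} lies in the sum of the generalized eigenspaces of $\rho_{\alpha}(p)$ of maximal modulus. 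The limit of $p^{-k}$ is the same line, and it lies in the sum of those of minimal modulus. So all moduli coincide, and they equal $1$ because $|\det \rho_{\alpha}(p)| = 1$.
\end{enumerate}

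With either justification inserted, your argument is complete.
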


To prove Proposition \ref{prop:closedorbitsparabolic}, we introduce a result of the second author joint with Oh and Wang \cite{KOW_PD}. For a non-elementary $\Psf_{\theta}$-transverse subgroup $\Ga < \Gsf$, set
$$
\La_{\theta}^{(2)}(\Ga) := \{ (x, y) \in \La_{\theta}(\Ga) \times \La_{\opp(\theta)}(\Ga) : x \text{ and } y \text{ are in general position} \},
$$
noting that $x \in \La_{\theta}(\Ga)$ and $y \in \La_{\opp(\theta)}(\Ga)$ are in general position if and only if $y$ is not the image of $x$ under the $\Ga$-equivariant homeomorphism $\La_{\theta}(\Ga) \to \La_{\opp(\theta)}(\Ga)$, as $\Ga$ is $\Psf_{\theta}$-transverse.

Now consider the space
$$
\Omega_{\theta}(\Ga) := \La_{\theta}^{(2)}(\Ga) \times \fa_{\theta}
$$
which can be regarded as a subsspace of $\Gsf / \Ssf_{\theta}$. Then the $\Gsf$-action on $\Gsf / \Ssf_\theta$ given by left multiplication induces the $\Ga$-action on  $\Omega_{\theta}(\Ga)$. The induced $\Ga$-action can be written as follows: for $g \in \Ga$ and $(x, y, u) \in \Omega_{\theta}(\Ga)$,
$$
g \cdot (x, y, u) = (gx, gy, u + \sigma_{\theta}(g, x)).
$$
Note that the action on the first and the third components are the same as the action on $\Hor_{\theta}$.

It is not true in general that the $\Ga$-action on $\Gsf / \Ssf_{\theta}$ is properly discontinuous, due to the non-compactness of $\Ssf_{\theta}$. On the other hand, its restriction on $\Omega_{\theta}(\Ga)$ is indeed properly discontinuous.

\begin{theorem}[{\cite[Theorem 1.7]{KOW_PD}}] \label{thm:KOWPD}
    Let $\Ga < \Gsf$ be a non-elementary $\Psf_{\theta}$-transverse subgroup. Then the $\Ga$-action on $\Omega_{\theta}(\Ga)$ is properly discontinuous.
\end{theorem}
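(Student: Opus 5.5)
We argue by contradiction using the convergence dynamics of $\Ga$ on its limit set together with the shadow estimates of Lemma \ref{lem:cartanIwasawa}. Suppose the action is not properly discontinuous. Then there are a compact set $Q \subset \Omega_{\theta}(\Ga)$ and an infinite sequence of distinct elements $g_n \in \Ga$ with $g_n Q \cap Q \neq \emptyset$. So we can pick $(x_n, y_n, u_n) \in Q$ with
$$
g_n \cdot (x_n, y_n, u_n) = (g_n x_n, g_n y_n, u_n + \sigma_{\theta}(g_n, x_n)) \in Q .
$$
After passing to a subsequence, we arrange the following.
\begin{itemize}
\item $(x_n, y_n, u_n) \to (x, y, u)$ and $g_n\cdot(x_n,y_n,u_n) \to (x', y', u')$, with both limits in $Q$; in particular $(x,y)$ and $(x',y')$ are in general position.
\item $g_n = k_n (\exp \kappa(g_n)) \ell_n$ with $k_n \to k$ and $\ell_n \to \ell$ in $\Ksf$.
\end{itemize}
By transversality, $\alpha(\kappa(g_n)) \to +\infty$ for all $\alpha \in \theta \cup \opp(\theta)$. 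Hence $g_n \to a := k\Psf_{\theta\cup\opp(\theta)}$ and $g_n^{-1} \to b := \ell^{-1} w_0 \Psf_{\theta \cup \opp(\theta)}$, and both points lie in $\La_{\theta\cup\opp(\theta)}(\Ga)$. Via the $\Ga$-equivariant homeomorphisms $\La_{\theta\cup\opp(\theta)}(\Ga) \to \La_{\theta}(\Ga), \La_{\opp(\theta)}(\Ga)$, we regard $a, b$ as points of either limit set. Since $\u_n$ and $u_n + \sigma_{\theta}(g_n,x_n)$ both converge, the vectors $\sigma_{\theta}(g_n, x_n)$ must stay bounded. The rest of the argument shows they are unbounded, splitting into two cases according to whether $x$ equals (the $\theta$-projection of) $b$.

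\textbf{Case 1: $x \neq b$.} Since $\Ga$ is $\Psf_\theta$-transverse, $x$ is in general position with $b$ viewed in $\Fc_{\opp(\theta)}$. Concretely, for each $\alpha \in \theta$ the line $\Phi_\alpha(x)$ is transverse to the hyperplane $\rho_\alpha(\ell^{-1})V_\alpha^<$. By continuity there is $\epsilon>0$ with $x_n \in B^{\epsilon}_{\theta, g_n}$ for all large $n$. Lemma \ref{lem:cartanIwasawa} then gives
$$
\chi_\alpha(\sigma_\theta(g_n,x_n)) \ge \chi_\alpha(\kappa(g_n)) + \log\epsilon \to +\infty \quad \text{for all } \alpha \in \theta,
$$
which contradicts boundedness.

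\textbf{Case 2: $x = b$.} Since $(x,y)$ is in general position, $y$ is not the $\opp(\theta)$-image of $b$. So $y$ avoids the repelling point, and the convergence property gives $g_n y_n \to a$. Because $(x',y')$ is in general position, it follows that $x' \neq a$. Now apply the argument of Case 1 to the inverse sequence $g_n^{-1}$ at the points $g_n x_n \to x'$; the attracting/repelling roles of $a,b$ are swapped. This gives $\chi_\alpha(\sigma_\theta(g_n^{-1}, g_n x_n)) \ge \chi_\alpha(\kappa(g_n^{-1})) + \log\epsilon \to +\infty$. By Equation \eqref{eqn:Iwasawa_inv}, $\sigma_\theta(g_n,x_n) = -\sigma_\theta(g_n^{-1}, g_nx_n)$, so it again diverges, a contradiction.

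The main obstacle is making the transversality-to-shadow step rigorous. We must identify points of $\La_\theta(\Ga)$ with their images in $\La_{\opp(\theta)}(\Ga)$ and $\La_{\theta\cup\opp(\theta)}(\Ga)$ compatibly with the limits $a,b$, exactly as in the proof of Proposition \ref{prop:shadowcomparetits}. We also need that "not equal to $b$" upgrades to uniform transversality, i.e.\ $x_n \in B^\epsilon_{\theta,g_n}$. This is where being in general position on the limit set is used essentially; it is also why the statement is restricted to $\Omega_\theta(\Ga)$ rather than all of $\Gsf/\Ssf_\theta$. The second point, $g_n y_n \to a$ uniformly for $y_n \to y \ne b$, follows from the convergence action (Kapovich--Leeb--Porti) applied on the limit set.
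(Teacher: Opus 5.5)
Your proof is correct. The paper never proves this statement: it is quoted from \cite{KOW_PD} and used only as a black box in the proof of Proposition \ref{prop:closedorbitsparabolic}, so there is no argument in the paper to set yours against.

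What you have is a self-contained proof built from tools the paper already provides.
\begin{itemize}
\item The step from ``$x\neq b$'' to uniform transversality $x_n\in B^{\epsilon}_{\theta,g_n}$ is the same mechanism as in the proof of Proposition \ref{prop:shadowcomparetits}, and Lemma \ref{lem:cartanIwasawa} then gives the divergence.
\item In Case 2 the inverse sequence behaves as you claim. Since $g_n^{-1}=(\ell_n^{-1}w_0)\exp(\opp(\kappa(g_n)))(w_0^{-1}k_n^{-1})$, its repelling hyperplane is $\rho_\alpha(kw_0)V_\alpha^{<}=\Phi_\alpha^*(k\Psf_{\opp(\alpha)})$, using $w_0^2\in\Msf$. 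The line $\Phi_\alpha(x')$ avoids this hyperplane precisely because $x'\neq a$.
\end{itemize}

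Two small points deserve a line each in a write-up.
\begin{itemize}
\item $\chi_\alpha(\kappa(g_n))\to+\infty$ follows from $\alpha(\kappa(g_n))\to+\infty$, because $\omega_\alpha\ge c\,\alpha$ on $\fa^+$ for some $c>0$.
\item For $g_ny_n\to a$ you do not need to match the abstract attracting/repelling pair of the convergence action with $(a,b)$. For each $\alpha\in\opp(\theta)$ you have $\alpha(\kappa(g_n))\to+\infty$, and $\Phi_\alpha(y)$ is transverse to $\rho_\alpha(\ell^{-1})V_\alpha^{<}$ because $y$ is in general position with $\ell^{-1}w_0\Psf_\theta$. Hence $\rho_\alpha(g_n)$ maps a fixed neighborhood of $\Phi_\alpha(y)$ into shrinking neighborhoods of $\rho_\alpha(k)V_\alpha^{+}$.
\end{itemize}

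Compared with a bare citation, your case split shows exactly where each coordinate of $\Omega_\theta(\Ga)$ is needed. Case 1 uses only $x$ and the $\fa_\theta$-coordinate. The $y$-coordinate is indispensable in Case 2. This matches the parabolic situation in the proof of Proposition \ref{prop:closedorbitsparabolic}: there $\sigma_\theta(h,x)=0$ for $h\in\Ga_x$, so the infinite stabilizer $\Ga_x$ fixes $(x,u)\in\Hor_\theta$. No argument that only looks at $(x,u)$ could therefore give properness.
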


Using Theorem \ref{thm:KOWPD}, we prove Proposition \ref{prop:closedorbitsparabolic}.

\begin{proof}[Proof of Proposition \ref{prop:closedorbitsparabolic}]
Let $(x, u) \in \La_{\theta}^{\rm p}(\Ga) \times \fa_{\theta}$. Suppose to the contrary that there exists a sequence  $\{ g_n \}_{n \in \Nb} \subset \Ga$ such that the sequence $g_n (x, u)$ converges in $\Hor_{\theta}$, with the limit in $\Hor_{\theta} \smallsetminus \Ga \cdot (x, u)$.

Let $\Ga_x < \Ga$ denote the stabilizer of $x$. By the relative hyperbolicity of $\Ga$, the $\Ga_x$-action on $\La_{\theta}(\Ga) \smallsetminus \{x\}$ is properly discontinuous and cocompact. Let $\iota : \La_{\theta}(\Ga) \to \La_{\opp(\theta)}(\Ga)$ be the $\Ga$-equivariant homeomorphism. Then the $\Ga_x$-action on $\La_{\opp(\theta)}(\Ga) \smallsetminus \{\iota(x)\}$ is properly discontinuous and cocompact as well. Hence, there exists a compact subset $K \subset \La_{\opp(\theta)}(\Ga) \smallsetminus \{\iota(x)\}$ such that $\Ga_x \cdot K =  \La_{\opp(\theta)}(\Ga) \smallsetminus \{\iota(x)\}$.

Let $y := \lim_{n \to + \infty} g_n \iota(x)$ and let $U \subset \La_{\opp(\theta)}(\Ga)$ be an open neighborhood of $y$ such that $\# (\La_{\opp(\theta)}(\Ga) \smallsetminus U) \ge 2$. Then for each $n \in \Nb$, we have that $(\La_{\opp(\theta)}(\Ga) \smallsetminus \{ \iota(x)\}) \smallsetminus g_n^{-1}U \neq \emptyset$, and hence there exist $w_n \in K$ and $h_n \in \Ga_x$ such that $h_n w_n \notin g_n^{-1}U$. In particular, $g_n h_n w_n \notin U$ for all $n \in \Nb$. After passing to a subsequence, we may assume that the sequence $g_n h_n w_n$ converges in $\La_{\opp(\theta)}(\Ga)$, whose limit is different from $y$.
Hence, the sequence
$$
g_n h_n \cdot (x, w_n)  = (g_n x, g_n h_n w_n) \in \La_{\theta}^{(2)}(\Ga)
$$
converges in $\La_{\theta}^{(2)}(\Ga)$.

We now consider the sequence
\begin{equation} \label{eqn:sequenceinflowspace}
g_n h_n \cdot (x, w_n, u) = (g_n x, g_n h_n w_n, u + \sigma_{\theta}(g_n h_n, x)) \in \Omega_{\theta}(\Ga).
\end{equation}
By Equation \eqref{eqn:Iwasawa_cocycle} and that $h_n \in \Ga_x$, we have for all $n \in \Nb$ that 
$$
\sigma_{\theta}(g_n h_n, x) = \sigma_{\theta}(g_n, x) + \sigma_{\theta}(h_n, x).
$$
Since the sequence $g_n \cdot (x, u) = (g_n x, u + \sigma_{\theta}(g_n, x))$ is convergent in $\Hor_{\theta}$, we also have that $\sigma_{\theta}(g_n, x) \in \fa_{\theta}$ is bounded. Moreover, by \cite[Theorem 4.4, Section 4.4]{CZZ_relative}, we may assume that $\Ga_x$ is a cocompact lattice in a closed Lie subgroup $\Hsf < \Gsf$ with finitely many components such that $\Hsf = \Lsf \ltimes \Usf$ and $\Hsf^\circ = \Lsf^\circ \times \Usf^\circ$, where $\Lsf$ is compact, $\Usf$ is the unipotent radical of $\Hsf$, and the superscript ${}^\circ$ denotes the identity component. Therefore, we have $\sigma_{\theta}(g_n h_n, x) = \sigma_{\theta}(g_n, x)$ for all $n \in \Nb$, and in particular the sequence $\sigma_{\theta}(g_n h_n, x) = \sigma_{\theta}(g_n, x) \in \fa_{\theta}$ is bounded as well.

Combining altogether, we have that the sequence
$$
g_n h_n \cdot (x, w_n, u) \in \Omega_{\theta}(\Ga)
$$
in Equation \eqref{eqn:sequenceinflowspace} converges in $\Omega_{\theta}(\Ga)$, after passing to a subsequence. On the other hand, the sequence $(x, w_n, u) \in \Omega_{\theta}(\Ga)$ converges in $\Omega_{\theta}(\Ga)$ as well after passing to a subsequence, since $w_n \in K$ and $K \subset \La_{\opp(\theta)}(\Ga)$ is a compact subset disjoint from $\iota(x)$.

By the proper discontinuity of the $\Ga$-action on $\Omega_{\theta}(\Ga)$ (Theorem \ref{thm:KOWPD}), this implies that the sequence $\{ g_n h_n \}_{n \in \Nb} \subset \Ga$ is a finite sequence. After passing to a subsequence, we may assume that for some $g \in \Ga$,
$$
g_n h_n = g  \quad \text{for all } n \in \Nb.
$$
Then we have
$$
g_n \cdot (x, u) = g h_n^{-1} \cdot (x, u) = g \cdot (x, u) \quad \text{for all } n \in \Nb.
$$
On the other hand, the sequence $g_n \cdot (x, u) $ is assumed to converge to a point in $\Hor_{\theta} \smallsetminus \Ga \cdot (x, u)$, which is a contradiction.
This finishes the proof.
\end{proof}

\section{Measure classifications} \label{section:MC}

In this section, we prove the main theorem of this paper. For $\Ga < \Gsf$, we set
$$
\Spec_{\theta}(\Ga) := \{ \la_{\theta}(\ga) : \ga \in \Ga \text{ loxodromic} \}.
$$
We call $\Spec_{\theta}(\Ga)$  \emph{non-arithmetic} if it generates a dense additive subgroup of $\fa_{\theta}$. The non-arithmeticity holds when $\Ga$ is Zariski dense by the work of Benoist \cite{Benoist2000proprietes} (Theorem \ref{thm:Benoist_nonarithmetic}).

Recalling the spaces
$$\begin{tikzcd}
\Rc_{\Ga, \theta} \arrow[r, phantom, "\subset"] & \E_{\Ga, \theta} \arrow[r, phantom, "\subset"]& \Hor_{\theta} \\
\La_{\theta}^{\rm con}(\Ga) \times \fa_{\theta} \arrow[u, phantom, sloped, "="] & \La_{\theta}(\Ga) \times \fa_{\theta} \arrow[u, phantom, sloped, "="] & \Fc_{\theta} \times \fa_{\theta} \arrow[u, phantom, sloped, "="]
\end{tikzcd}
$$
we now state our main theorem.

\begin{theorem} \label{thm:uniqueRadon}
Let $\Ga < \Gsf$ be a non-elementary $\Psf_{\theta}$-hypertransverse subgroup with non-arithmetic $\Spec_{\theta}(\Ga)$.
If $\mu$ is a $\Ga$-invariant ergodic Radon measure on $\mathcal{H}_{\theta}$ supported on $\Rc_{\Ga, \theta}$, then 
    $$
    \mu \text{ is a constant multiple of $\mu_{\nu}^{\BR}$}
    $$
    for some divergence-type Patterson--Sullivan measure $\nu$ of $\Ga$. 

\end{theorem}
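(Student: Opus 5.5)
The plan is to show that $\mu$ is quasi-invariant under the right $\Asf_{\theta}$-action on $\Hor_{\theta}$, i.e.\ under translations $(x,u)\mapsto(x,u+v)$ with $v \in \fa_{\theta}$. Once this is done, the standard basic lemma (\cite[0.1 Basic Lemma]{aaronson2002invariant}, \cite[Lemma 1]{Sarig_abelian}, \cite[Proposition 10.25]{LO_invariant}) applies. Since $\mu$ is $\Ga$-ergodic and the translations commute with $\Ga$, the Radon--Nikodym derivatives $d(\mu\cdot a)/d\mu$ are $\Ga$-invariant and hence constant. This gives a character $a \mapsto e^{-\psi(\log a)}$, and $\mu$ disintegrates as $e^{\psi(u)}d\nu(x)\,du$ for a measure $\nu$ on $\Fc_{\theta}$. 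Equivariance then forces $\nu$ to satisfy \eqref{eqn:PSdefbody}, so $\nu$ is a Patterson--Sullivan measure (normalized to be a probability measure, with $\delta\psi$ absorbing the dimension). Since $\nu(\La_{\theta}^{\rm con}(\Ga))=1$ because $\mu$ is supported on $\Rc_{\Ga,\theta}$, Theorem \ref{thm:CZZHTS} shows that $\nu$ is of divergence type.

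The quasi-invariance proceeds in two stages. First I will show quasi-invariance, in fact that $\mu\cdot\exp(\la_{\theta}(\varphi)) \ll \mu$ and conversely, for every loxodromic $\varphi\in\Ga$. Then I will use the non-arithmeticity of $\Spec_{\theta}(\Ga)$. The set of $v$ for which translation preserves the measure class of $\mu$ is a closed subgroup. Closedness follows from a continuity argument on the Radon--Nikodym cocycle, or more robustly by showing the stronger statement $\mu\cdot v = e^{c(v)}\mu$ for a homomorphism $c$ with local bounds. This subgroup contains $\Spec_{\theta}(\Ga)$, so it is all of $\fa_{\theta}$.

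For the loxodromic step, fix $\varphi$ and the model space $Z$ with basepoint $z$. The first task is to show that $\mu$-a.e.\ point has first coordinate in the guided limit set $\Psi(\La^{\varphi,C}(\Ga))$. I would do this by showing the complement has zero mass: it is $\Ga$-invariant by Lemma \ref{lem:squeezedInv}, and a conical point that fails to be guided infinitely often gives a contradiction via the extension lemma (Lemma \ref{lem:extension}) and recurrence, in the style of \cite{CK_ML}.

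Next, for a guided point $x$ and large $n$, there is $h$ with $(z, h[z,\varphi^n z], x)$ $C$-aligned, i.e.\ $x\in U_C(h;\varphi,n)$. The element $h\varphi h^{-1}$ maps this set essentially into $U_{C'}(h;\varphi,n\pm1)$. By Corollary \ref{cor:shadow_use}(2), on that set $\sigma_{\theta}(h\varphi h^{-1},\cdot)$ is $\epsilon$-close to $\la_{\theta}(\varphi)$, uniformly in $h$. So on these sets $h\varphi h^{-1}$ acts on $\Hor_{\theta}$ as translation by $\la_{\theta}(\varphi)$ up to $\epsilon$, composed with the map $x\mapsto h\varphi h^{-1}x$. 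By $\Ga$-invariance of $\mu$, the mass of a box $U\times B$ is therefore comparable to the mass of $h\varphi h^{-1}U \times (B+\la_{\theta}(\varphi)\pm\epsilon)$.

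Finally, using Lemma \ref{lem:nbdBasis}, these sets form a neighborhood basis at guided points. A Vitali-free covering argument is needed, since there is no Besicovitch property here; I will rely instead on the nesting and disjointness structure of aligned shadows given by Lemma \ref{lem:shadowandalignment}, choosing maximal disjoint families. This transfers the local comparisons into $\mu(E+\la_{\theta}(\varphi))=0 \iff \mu(E)=0$.

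The main obstacle is this last step. The sets $h\varphi h^{-1}U_C(h;\varphi,n)$ and $U_C(h;\varphi,n)$ are not the same, and the coarse contraction in $Z$ only gives containments up to shifting $n$ by bounded amounts and enlarging $C$. One has to arrange the covering so that boundary effects are negligible, for instance by comparing $U_C(h;\varphi,n)$ against $U_{C'}(h\varphi^{k};\varphi,n-2k)$ families. The required measure-zero statements then come from ergodicity and the uniform bound in Corollary \ref{cor:shadow_use}, rather than from any effective squeezing estimate. I would first try proving absolute continuity on sets of the form $U\times B$ with $\mu(\partial)=0$, and then extend by regularity of $\mu$.
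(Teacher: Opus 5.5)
Your overall architecture matches the paper's. You concentrate $\mu$ on $\La_\theta^{\varphi,C}(\Ga)\times\fa_\theta$ (Theorem~\ref{thm:radonCharge}). You compare $\mu$ with its $\la_\theta(\varphi)$-translate via the conjugates $h\varphi h^{-1}$ acting on the sets $U_C(h;\varphi,n)$ together with Corollary~\ref{cor:shadow_use}(2) (Theorem~\ref{thm:trbytrlengthqi}). You finish with non-arithmeticity, the basic lemma and Theorem~\ref{thm:CZZHTS}. However, both substantive steps are missing their key idea.

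\textbf{Guided support.} Your sketch (``a conical point that fails to be guided \dots\ gives a contradiction via the extension lemma and recurrence'') reads as a pointwise argument, and that cannot work. Conical points need not be guided: for $\Ga=\langle a,b\rangle$ acting on its tree and $\varphi=a$, the point $b^{\infty}$ is conical but not guided. So guidedness holds only $\mu$-a.e.\ and must be extracted from the measure. The paper does this by transporting mass.
\begin{itemize}
\item On the positive-measure set $\mathcal{H}_{K,R}=\La_K\times[-R,R]^{\#\theta}$, set $F_{n,k}(\xi)=g_\xi a_\xi\varphi^n a_\xi^{-1}g_\xi^{-1}\xi$ for $\xi=(x,u)$. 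Here $g_\xi$ is the \emph{shortest} $g\in\Ga$ with $d_Z(z_0,gz_0)>k$ and $x\in O_K(z_0,gz_0)$, and $a_\xi\in\{a_1,a_2,a_3\}$ comes from Lemma~\ref{lem:extension}.
\item $F_{n,k}$ is at most $M$-to-one with $M$ independent of $k$; this is what the minimality of $g_\xi$ buys. Hence $\Ga$-invariance gives $\mu(F_{n,k}(\mathcal{H}_{K,R}))\ge\mu(\mathcal{H}_{K,R})/M$.
\item The $\fa_\theta$-coordinate moves by an amount bounded independently of $g_\xi$. By Corollary~\ref{cor:shadow_use}(1), in each $\chi_\alpha$ the terms $\sigma_\theta(g_\xi a_\xi,\cdot)$ and $\sigma_\theta(g_\xi^{-1},x)$ are within bounded error of $\chi_\alpha(\kappa(g_\xi))$ and $-\chi_\alpha(\kappa(g_\xi))$, so they cancel.
\item Hence all images lie in one compact set, which has finite mass because $\mu$ is Radon. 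The sets $B_{k;n}$ decrease in $k$, so $B_n=\bigcap_k B_{k;n}$ has positive measure, and ergodicity makes $\bigcap_n\Ga B_n$ conull.
\end{itemize}
Your plan contains neither the uniform multiplicity bound, nor this cocycle cancellation (the genuinely higher-rank input at this step), nor any use of the Radon hypothesis.

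\textbf{Translation step.} You correctly locate the obstacle but leave it open. The remedies you float (families $U_{C'}(h\varphi^{k};\varphi,n-2k)$, boxes with $\mu(\partial)=0$, a two-sided comparison) are not needed. The paper uses a Vitali-type selection whose enlargement is exactly undone by one application of $\varphi$.
\begin{itemize}
\item Enumerate the neighbourhoods $U_j=U_C(g_j;\varphi,n_j)$ from Lemma~\ref{lem:nbdBasis} in increasing order of $d_Z(z_0,g_j\varphi^{n_j}z_0)$. Extract a disjoint subfamily $\{U_{i(l)}\}$ greedily, and let $C_l$ be $U_{i(l)}$ together with all later $U_k$ meeting it.
\item The ordering is exactly what makes every point of $C_l$ project onto the translated axis $g_{i(l)}\gamma$ beyond roughly $n_{i(l)}\tau_\varphi$. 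Hence $C_l\subset U_C(g_{i(l)};\varphi,n_{i(l)}-1)$ as in \eqref{eqn:trqiclaimClinU}, and $g_{i(l)}\varphi g_{i(l)}^{-1}C_l\subset U_{i(l)}$ (after passing to a power so that $\tau_\varphi\gg C$).
\item Corollary~\ref{cor:shadow_use}(2), applied on $U_C(g_{i(l)};\varphi,n_{i(l)}-1)$, shows this map shifts the $\fa_\theta$-coordinate by $\la_\theta(\varphi)$ up to $\epsilon$.
\item The $U_{i(l)}$ are disjoint and the $C_l$ cover $K$. So $\Ga$-invariance gives, after shrinking $O\supset K$ and $\epsilon$, the one-sided bound $\mu(K\times(I+\la_\theta(\varphi)))\ge\mu(K\times I)$.
\end{itemize}
That one-sided bound suffices. $T_\varphi^*\mu$ is again $\Ga$-invariant and ergodic, so $\mu\le T_\varphi^*\mu$ forces $T_\varphi^*\mu=e^{s}\mu$ with $s\ge0$.
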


See also Theorem \ref{thm:trbytrlengthqi} below for normal subgroups.
Together with Proposition \ref{prop:closedorbitsparabolic}, we have the following corollary of Theorem \ref{thm:uniqueRadon} for relatively Anosov subgroups.

\begin{corollary} \label{cor:relAnosovcasemeasureclassification}
Let $\Ga < \Gsf$ be a non-elementary relatively $\Psf_{\theta}$-Anosov subgroup with non-arithmetic $\Spec_{\theta}(\Ga)$. If $\mu$ is a $\Ga$-invariant ergodic Radon measure on $\E_{\Ga, \theta} \subset \Hor_{\theta}$, then either
\begin{enumerate}
    \item $\mu$ is supported on a closed $\Ga$-orbit in $\Hor_{\theta} \smallsetminus \Rc_{\Ga, \theta}$, or 
    \item $\mu$ is a constant multiple of $\mu_{\nu}^{\BR}$ for some divergence-type Patterson--Sullivan measure $\nu$ of $\Ga$. 
\end{enumerate}
\end{corollary}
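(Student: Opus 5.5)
Corollary \ref{cor:relAnosovcasemeasureclassification} will be deduced from Theorem \ref{thm:uniqueRadon} together with Proposition \ref{prop:closedorbitsparabolic}. The idea is to split $\E_{\Ga, \theta}$ into a conical part and a parabolic part, and to use ergodicity to force $\mu$ to live on exactly one of them.

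Since $\Ga$ is relatively $\Psf_{\theta}$-Anosov, we have the $\Ga$-invariant disjoint decomposition $\La_{\theta}(\Ga) = \La_{\theta}^{\rm con}(\Ga) \sqcup \La_{\theta}^{\rm p}(\Ga)$. This gives the $\Ga$-invariant Borel partition
$$
\E_{\Ga, \theta} = \Rc_{\Ga, \theta} \sqcup \left(\La_{\theta}^{\rm p}(\Ga) \times \fa_{\theta}\right),
$$
using the identification $\Rc_{\Ga, \theta} = \La_{\theta}^{\rm con}(\Ga) \times \fa_{\theta}$. The set $\La_{\theta}^{\rm p}(\Ga)$ is countable, being a countable union of $\Ga$-orbits of the finitely many conjugacy classes of peripheral subgroups, so it is Borel. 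Because $\mu$ is ergodic, exactly one of the two pieces has full measure and the other is null.

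If $\Rc_{\Ga, \theta}$ has full measure, then $\mu$ is supported on $\Rc_{\Ga, \theta}$. Relatively Anosov subgroups are hypertransverse, with the Gromov model as model space, and $\Spec_{\theta}(\Ga)$ is non-arithmetic by hypothesis. So Theorem \ref{thm:uniqueRadon} applies and gives alternative (2).

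Otherwise $\mu$ is carried by $\La_{\theta}^{\rm p}(\Ga) \times \fa_{\theta}$. This set is a countable union of $\Ga$-invariant subsets of the form $\Ga \cdot (\{x\} \times \fa_{\theta})$, where $x$ ranges over representatives of the finitely many $\Ga$-orbits in $\La_{\theta}^{\rm p}(\Ga)$. By ergodicity, a single one of them, say for $x$, carries $\mu$. It remains to show that $\mu$ sits on a single $\Ga$-orbit $\Ga \cdot (x, u)$. The orbit space of $\Ga \cdot (\{x\}\times\fa_{\theta})$ is identified with $\Ga_x \backslash \fa_{\theta}$, where $\Ga_x$ acts on $\fa_{\theta}$ by the translations $u \mapsto u + \sigma_{\theta}(h, x)$.

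As in the proof of Proposition \ref{prop:closedorbitsparabolic}, $\sigma_{\theta}(h, x) = 0$ for all $h \in \Ga_x$, by \cite{CZZ_relative}. Hence $\Ga_x$ fixes every point $(x, u)$, and the $\Ga$-orbits in this set are exactly the sets $\Ga\cdot(x,u)$ for $u \in \fa_{\theta}$. The map $\Ga \cdot (x,u) \mapsto u$ is a well-defined Borel parametrization of these orbits. It is well defined because two points of $\{x\} \times \fa_{\theta}$ lie in the same $\Ga$-orbit only if they are related by an element of $\Ga_x$, and such elements act trivially.

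Push $\mu$ forward to $\fa_{\theta}$ under this map. Any Borel set of $u$'s pulls back to a $\Ga$-invariant set, so ergodicity makes the pushforward $0$-$1$ valued, up to scaling on sets of finite measure. A measure on $\fa_{\theta}$ that is $\{0,\infty\}$- or $0$-$1$-valued is a Dirac mass, by a standard argument bisecting the space into countably many small boxes and using that $\mu$ is Radon and nonzero. So $\mu$ is supported on one orbit $\Ga\cdot(x,u)$. This orbit is closed by Proposition \ref{prop:closedorbitsparabolic}, and it lies in $\Hor_{\theta}\smallsetminus\Rc_{\Ga,\theta}$, which gives alternative (1).

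The main obstacle is the measurability and "atomic on one orbit" step. If one prefers to avoid the vanishing of $\sigma_{\theta}$ on $\Ga_x$, an alternative route works directly: since each orbit $\Ga\cdot(x,u)$ is closed and $\Ga$ acts on it with stabilizer $\Ga_x$, the orbit space of the closed invariant set is countably separated. Ergodicity then gives concentration on one orbit.
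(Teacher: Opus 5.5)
Your proposal is correct and follows the route the paper indicates. You split $\E_{\Ga,\theta}$ via $\La_{\theta}(\Ga)=\La_{\theta}^{\rm con}(\Ga)\sqcup\La_{\theta}^{\rm p}(\Ga)$, apply Theorem \ref{thm:uniqueRadon} on the conical part, and use Proposition \ref{prop:closedorbitsparabolic} on the parabolic part. Your explicit Borel parametrization of the parabolic orbits rests on $\sigma_{\theta}(h,x)=0$ for $h\in\Ga_x$, the same fact used in the paper's proof of that proposition; it spells out the ergodicity-to-single-orbit step that the paper leaves implicit.
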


The rest of this section is devoted to the proof of Theorem \ref{thm:uniqueRadon}. We deduce statements in the introduction from Theorem \ref{thm:uniqueRadon} and Corollary \ref{cor:relAnosovcasemeasureclassification} at the end of this section. 
We prove the theorem by establishing a robust relation between invariant Radon measures and guided limit sets. Note that due to ergodic decompositions, Theorem \ref{thm:uniqueRadon} can be regarded as the classification of $\Ga$-invariant Radon measures on $\Rc_{\Ga, \theta} \subset \mathcal{H}_{\theta}$. 

\subsection{Concentration on guided limit sets}
 
In the rest of this section, let $\Ga_0 < \Gsf$ be a non-elementary $\Psf_{\theta}$-hypertransverse subgroup with a model space $(Z, d_{Z})$ and the boundary map $\Psi : \La_Z(\Ga_0) \to \La_{\theta}(\Ga_0)$. Since $\Psi$ is a $\Ga_0$-equivariant homeomorphism, we identify $\La_Z(\Ga_0)$ and $\La_{\theta}(\Ga_0)$ via $\Psi$ for simplicity.  We also fix a basepoint $z_0 \in Z$.

For a loxodromic $\varphi \in \Ga_0$ and $C > 0$, denote by $\La_{Z}^{\varphi, C}(\Ga_0) \subset \La_{Z}(\Ga_0)$ the $(\varphi, C)$-guided limit set of $\Ga_0$. We set

$$
\La_{\theta}^{\varphi, C}(\Ga_0) := \Psi \left( \La_{Z}^{\varphi, C}(\Ga_0) \right)
$$
and call it the \emph{$(\varphi, C)$-guided limit set} of $\Ga_0$ as well.

For the purpose of Theorem \ref{thm:trbytrlengthqi}, we consider a non-elementary normal subgroup $\Gamma \triangleleft \Gamma_0$. Due to the minimality, $\La_Z(\Ga) = \La_Z(\Ga_0)$ and $\La_{\theta}(\Ga) = \La_{\theta}(\Ga_0)$. In particular, $\Ga$ is a non-elementary $\Psf_{\theta}$-hypertransverse subgroup with the same model space and the boundary map as well. Nevertheless, note that $\Gamma$ may have strictly smaller conical limit set and guided limit set than $\Ga_0$. Hence, we have
$$
\E_{\Ga, \theta} = \E_{\Ga_0, \theta} \quad \text{while} \quad \Rc_{\Ga, \theta} \subset \Rc_{\Ga_0, \theta}
$$
which may be a strict inclusion in general. We first show that  invariant ergodic Radon measures are charged on guided limit sets. 

\begin{theorem}\label{thm:radonCharge}

Let $\varphi \in \Ga < \Isom(Z)$ be a loxodromic element, and let $C = C(\varphi) > 0$ be as in Lemma \ref{lem:BGIPHeredi}.
Let $\mu$ be a $\Ga$-invariant ergodic Radon measure on $\mathcal{H}_{\theta}$ supported on $\Rc_{\Ga_0, \theta}$. Then $\mu$ is supported on
$$
 \La_{\theta}^{\varphi, C}(\Ga_0) \times \fa_{\theta} \subset \mathcal{H}_{\theta}.
$$
\end{theorem}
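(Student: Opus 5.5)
We want to show that $\mu$-almost every $(x,u)$ has $x \in \La_{\theta}^{\varphi,C}(\Ga_0)$. The plan is a Borel–Cantelli/recurrence style argument combined with ergodicity and the extension lemma.

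\textbf{Step 1: reduction to a saturated set.} By Lemma \ref{lem:squeezedInv}, $\La_{Z}^{\varphi,C}(\Ga_0) = \La_Z^{\varphi,K}(\Ga_0)$ for every $K > C$, and this set is $\Ga_0$-invariant, hence $\Ga$-invariant. Therefore $\La_{\theta}^{\varphi, C}(\Ga_0) \times \fa_{\theta}$ is a $\Ga$-invariant Borel subset of $\Hor_\theta$. By ergodicity, $\mu$ either gives it full measure or gives it zero measure. So it suffices to show that it has positive $\mu$-measure. Equivalently, assuming the complement $E$ (intersected with $\Rc_{\Ga_0,\theta}$) has full measure, we aim for a contradiction.

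\textbf{Step 2: a recurrent compact set.} Since $\mu$ is a nonzero Radon measure, choose a compact set $B \subset \Hor_\theta$ with $\mu(B) > 0$. Points of $\Rc_{\Ga_0,\theta} = \La^{\rm con}_\theta(\Ga_0) \times \fa_\theta$ are conical for $\Ga_0$. By Proposition \ref{prop:componentwiseshadow}, a conical point $x$ lies in infinitely many shadows $O_R(z_0, g z_0)$ with $g \in \Ga_0$, for a uniform $R$. We may restrict to a positive-measure subset on which $R$ is fixed.

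\textbf{Step 3: using the extension lemma.} Fix $g$ with $x \in O_R(z_0, g z_0)$. By Lemma \ref{lem:extension}, applied to $\varphi$ with endpoints $g^{-1} z_0$ and $g^{-1} x$, there is $a \in \{a_1,a_2,a_3\}$ such that $g a [z_0, \varphi^n z_0]$ is $\beta$-aligned between $z_0$ and the point $g a \varphi^n a^{-1} g^{-1} x$. The element $h = g a \varphi^n a^{-1} g^{-1}$ lies in $\Ga$ only if $\Ga$ is normal and $\varphi \in \Ga$. This is exactly the hypothesis: $\varphi \in \Ga \triangleleft \Ga_0$. The map $x \mapsto h x$ sends the part of $O_R(z_0, g z_0)$ that is aligned with $g$ into the set $U_{K}(ga;\varphi,n)$ of points aligned with $ga[z_0,\varphi^n z_0]$, where $K \approx \beta + R + C$. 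On the $\fa_\theta$-coordinate, $h$ shifts by $\sigma_\theta(h, x)$. By Corollary \ref{cor:shadow_use}(1) and the cocycle identity, this shift is approximately $\sigma_\theta$ of a conjugate of $\varphi^n$, which is bounded by roughly $n\la_\theta(\varphi)$ plus a uniform error. Taking $n$ fixed but large, $\mu$-invariance under $h$ gives
\[
\mu\big(U_K(ga;\varphi,n) \times (\text{bounded window})\big) \ge c\, \mu\big(\text{shadow piece} \times \text{window}\big),
\]
with $c$ uniform. Here one of the three $a_i$ is used, which costs only a factor $1/3$.

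\textbf{Step 4: covering and Borel–Cantelli.} Cover $\mu$-almost all of the recurrent set by such shadow pieces at deeper and deeper scales, using infinitely many $g$ along the conical sequence. This shows that a definite proportion of the mass lies in points aligned at arbitrarily large depths. A density/Lebesgue-point argument then shows that $\mu$-a.e. point lies in $U_K(g;\varphi,n)$ for infinitely many $g$. Applying this for all large $n$, which requires guidance at every scale $n$, yields $(\varphi,K)$-guided points, and Lemma \ref{lem:squeezedInv} finishes the proof.

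\textbf{Main obstacle.} The main obstacle is Step 4. There is no Besicovitch covering lemma, and $\mu$ is only Radon, not a Patterson–Sullivan-type measure. To get around this, I would avoid densities and instead use a Hopf-type/recurrence argument: show that the set of points that are not guided at scale $n$ has measure zero. The idea is to produce, for each non-guided recurrent point, a translate by $\Ga$ that lands in a disjoint region, yielding infinitely many disjoint translates of a positive-measure set inside a compact set. This would contradict the Radon property of $\mu$.
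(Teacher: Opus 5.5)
There is a genuine gap, and it is exactly at Step 4, which you flag but do not resolve. Your Step 3 only concerns one fixed $g$, and there it is trivial: $h = ga\varphi^n a^{-1}g^{-1}$ is a single element of $\Ga$, so $\mu$ of the translated piece equals $\mu$ of the piece (up to the pigeonhole factor $1/3$ over $a_1,a_2,a_3$). The real difficulty is that the translating element must depend on the point $\xi$, through the shadow $O_R(z_0, gz_0)$ that contains it. These shadows overlap for different $g$, and so can their images $h_g(\text{piece}_g)$. Without a bound on how much these images pile up, nothing forces ``a definite proportion of the mass'' to sit at large alignment depth: the union of the images could have far smaller measure than the sum of the measures of the pieces. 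A Lebesgue-density argument is not available for an arbitrary Radon measure here, as you note yourself. Your fallback of ``infinitely many disjoint translates'' does not help either. With fixed group elements, you can only treat a set lying in the corresponding shadows, which shrink to a point. With point-dependent elements, you need precisely the missing overlap control.

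The paper supplies this control through a bounded-multiplicity claim. Let $\La_K$ be the set of points lying in infinitely many shadows $O_K(z_0, gz_0)$ with $g \in \Ga_0$, and set $\Hor_{K,R} := \La_K \times [-R,R]^{\#\theta}$. Fix $n$ and a depth threshold $k$. For $\xi = (x,u) \in \Hor_{K,R}$:
\begin{itemize}
\item let $g_\xi$ be the element of \emph{minimal} $d_Z(z_0, gz_0)$ among those $g \in \Ga_0$ with $d_Z(z_0, gz_0) > k$ and $x \in O_K(z_0, gz_0)$;
\item take $a_\xi$ from Lemma \ref{lem:extension};
\item set $F_{n,k}(\xi) = g_\xi a_\xi \varphi^n a_\xi^{-1} g_\xi^{-1}\xi$, which is an element of $\Ga$ applied to $\xi$, by normality.
\end{itemize}
The key point is that $F_{n,k}$ is at most $M$-to-one, with $M = 3\,\#\{g \in \Ga_0 : d_Z(z_0, gz_0) \le D\}$ independent of $k$. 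Suppose $F_{n,k}(\xi) = F_{n,k}(\xi')$. If the depths of $g_\xi$ and $g_{\xi'}$ differ by more than $D/2$, the alignment geometry places $x'$ in $O_K(z_0, g_\xi z_0)$, where $g_\xi$ is shallower but still of depth greater than $k$; this contradicts the minimality of $g_{\xi'}$. If the depths are comparable, then $g_\xi$ and $g_{\xi'}$ are $D$-close, which leaves finitely many choices.

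Now partition $\Hor_{K,R}$ by the value of $(g_\xi, a_\xi)$ and use $\Ga$-invariance piece by piece; this gives $\mu(F_{n,k}(\Hor_{K,R})) \ge \mu(\Hor_{K,R})/M$. By the cocycle estimates of Corollary \ref{cor:shadow_use}, the image lies in a set $B_{k;n}$ of points that are $C_0$-aligned with some $h[z_0,\varphi^n z_0]$ of depth greater than $k - \sum_j d_Z(z_0, a_j z_0)$, with $\fa_\theta$-coordinate in a fixed box. The set $B_{k;n}$ decreases in $k$ and lies in a compact set, so continuity from above gives $\mu(\bigcap_k B_{k;n}) \ge \mu(\Hor_{K,R})/M > 0$. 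Ergodicity, applied for each $n$ separately, then makes $\bigcap_n \Ga \cdot \bigcap_k B_{k;n}$ conull, and a short constant adjustment shows that its points are guided. This replaces any density-point argument; without the multiplicity claim, your outline does not yield a proof.
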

 
\begin{proof} 
    Applying Lemma \ref{lem:extension} to $\varphi \in \Ga <  \Isom(Z)$, we get $\beta(\varphi) > 0$ and $a_{1}, a_{2}, a_{3} \in \Gamma$ satisfying the conclusion of Lemma \ref{lem:extension} as isometries on  $Z$. Let $ C(\varphi) > 0$ be as in Lemma \ref{lem:BGIPHeredi} for $g=\varphi$. We set $C_0 := 10(\beta(\varphi) +C(\varphi))$. 

For each $K>0$ let 
\[
\La_{K} :=  \left\{ x \in \La_{\theta}(\Ga) : \begin{matrix}
\exists \text{ an infinite sequence } \{g_j\}_{j \in \N} \subset \Ga_0 \text{ s.t.}\\
x \in O_K(z_0, g_j z_0) \text{ for all } j \in \N
\end{matrix}  \right\}.
\]
Then  we have  \[
\Lambda_{\theta}^{\rm con} (\Gamma_0) \times \fa_{\theta} = \bigcup_{K > 0} \La_K \times \fa_{\theta}
\]
by Proposition \ref{prop:componentwiseshadow}.
Since $\mu$ is supported on $\Rc_{\Ga_0, \theta} =  \Lambda_{\theta}^{\rm con} (\Gamma_0) \times \fa_{\theta} $, 
$$\mu  (\La_K \times \fa_{\theta}) > 0 \text{ for all large $K > 0$.}
$$
We fix such $K > 100 C_{0} + 2 \sum_{j = 1}^3 d_Z(z_0, a_j z_0)$.

We identify $\fa_{\theta} = \Rb^{\# \theta}$ via the map $u \mapsto (\chi_{\alpha} (u))_{\alpha \in \theta}$.
For each $R > 0$, we set 
$$\mathcal{H}_{K, R} := \La_K \times [-R, R]^{\# \theta}.$$ 
Since $\La_K \times \fa_{\theta} = \bigcup_{R=1}^{\infty} \mathcal{H}_{K, R}$, $$
\mu(\mathcal{H}_{K, R}) > 0 \quad \text{for all large } R > 0.
$$
We fix such $R>0$. 

Now we fix $n>\frac{1000(C_{0}+K+1)}{\tau_{\varphi}}$ and $k > 0$. We define a map
$$
F = F_{n, k}  : \mathcal{H}_{K, R} \to \mathcal{H}_{\theta}
$$
as follows. For each $\xi = (x, u) \in \mathcal{H}_{K, R}$, there exists $g \in \Gamma_0$ such that 
\begin{equation} \label{eqn:defofgXi}
d_Z(z_0, gz_0) > k \quad \text{and} \quad x \in O_K(z_0, g z_0) .
\end{equation} 
Among many such $g$'s, take the one with minimal  $d_Z(z_0, g z_0)$ and call it $g_{\xi}$.\footnote{There exists a technicality when several candidates tie. An easy rescue is to first enumerate $\Gamma_0 = \{g^{(1)}, g^{(2)}, \ldots\}$, and we choose the earliest whenever there is a tie.} Then the map $\xi \in \mathcal{H}_{K, R} \mapsto g_{\xi}$ is Borel measurable.
By Lemma \ref{lem:extensionHoro}, there exists $a_{\xi} \in \{a_{1}, a_{2}, a_{3}\}$ such that \footnote{Again, when more than one of $\{a_1, a_2, a_3\}$ do the job we choose the earliest.}

\begin{equation} \label{eqn:defofFmap}
        \left(z_0, g_{\xi} \cdot a_{\xi} [z_0, \varphi^{n} z_{0}], g_{\xi} \cdot a_{\xi} \varphi^{n} a_{\xi}^{-1} \cdot g_{\xi}^{-1} x \right)
        \text{ is $C_{0}$-aligned in }Z \cup \partial Z
\end{equation}
where $x \in \La_{\theta}(\Ga)$ is considered as a point $\La_Z(\Ga) \subset \partial Z$ via $\Psi$.
 This map $\xi \mapsto a_{\xi}$ is also Borel measurable. 
We now set 
 \[
F (\xi) := g_{\xi} \cdot a_{\xi} \varphi^{n} a_{\xi}^{-1} \cdot g_{\xi}^{-1} \xi.
\]

Let \[
D := 100\left(C_{0} +  d_Z(z_0, \varphi^n z_0) + \sum_{j=1}^{3} d_Z(z_0, a_j z_0) \right).
\]

\begin{claim*}
We have that 
\begin{equation} \label{eqn:finitetoone}
F \text{ is at most } 3 \cdot \# \{ g \in \Ga_0 : d_Z(z_0, gz_0) \le D\}\text{-to-one}.
\end{equation}
\end{claim*}

To prove this claim, suppose that we have $\xi, \xi' \in \mathcal{H}_{K, R}$ with the same image $F(\xi) = F(\xi') =: (x_0, v_0) \in \La_{\theta}(\Ga) \times \fa_{\theta}$.

Regarding $x_0$ as a point in $\partial Z$, let $\{u_i\}_{i \in \N} \subset Z$ be a sequence converging to $x_0$. Up to a subsequence, 
$$
\left( z_0, g_{\xi} a_{\xi} [z_0, \varphi^{n} z_0], u_i \right) \quad \text{is $C_{0}$-aligned for all $i \in \N$}.
$$
Here, note that $d_Z(z_0, \varphi^n z_0) \ge \tau_{\varphi} n \ge 100C_{0}$. It follows from Lemma \ref{lem:BGIPFellow} that for each $i \in \N$, there exists $P \in [z_0, u_i]$ such that
$d_Z(g_{\xi} a_{\xi} z_0, P) \le 3C_{0}$, and similarly, there exists $Q \in [z_0, u_i]$ such that $d_Z(g_{\xi'} a_{\xi'} z_0, Q) \le 3C_{0}$.
We have three cases. 
\begin{enumerate}
\item If $d_Z(z_0, g_\xi z_0) < d_Z(z_0, g_{\xi'} z_0) - 0.5D$: In this case, along $[z_0, u_i]$, $P$ is closer to $z_0$ than $Q$ is, and $d_Z(P, Q) \ge 0.5D - 6C_{0}$. 
For convenience we write $\eta := g_{\xi} a_{\xi} [z_0, \varphi^{n} z_0]$. Note that 
$$
\begin{aligned}
d_Z(u_i, \eta) & \ge  d_Z(u_i, g_{\xi} a_{\xi} z_0) - d_Z(g_{\xi} a_{\xi}z_0, g_{\xi} a_{\xi}\varphi^n z_0) \\
& \ge d_Z(u_i, P) - 3C_{0} - d_Z(z_0, \varphi^n z_0) \\
& \ge d_Z(u_i, Q) + 0.2 D \\
& > d_Z(u_i, g_{\xi'} a_{\xi'} z_0) + d_Z(a_{\xi'} z_0, z_0) + C_{0}\\
& \ge d_Z(u_i, g_{\xi'} z_0) + C_{0}.
\end{aligned}
$$
This implies that $d_Z(\eta, [u_i, g_{\xi'} z_0]) \ge C_{0}$. Since $\eta$ is $C_{0}$-contracting, $\diam \pi_{\eta} ([u_i, g_{\xi'}z_0]) \le C_{0}$. Since $(z_0, \eta, u_i)$ is $C_{0}$-aligned, we also have that
\begin{equation} \label{eqn:case1}
(z_0, \eta, g_{\xi'} z_0) \quad \text{is $2C_{0}$-aligned.}
\end{equation}

We denote by $x' \in \partial Z$ the first component of $\xi'$. Since $x' \in O_K(z_0, g_{\xi'} z_0)$, there exists  a geodesic ray $[z_0, x'] \subset Z$ that intersects $\Nc_K(g_{\xi'} z_0)$. Let $\{u_i'\}_{i \in \N} \subset [z_0, x']$ be a sequence converging to $x'$. We claim that
\begin{equation} \label{eqn:case1-2}
(\eta, u_i') \quad \text{is $(K + 30C_{0})$-aligned for all large $i \in \N$.}
\end{equation}
Suppose to the contrary that, passing to a subsequence, $(\eta, u'_i)$ is never $(K+30C_{0})$-aligned. We choose \[
p \in \pi_{\eta}(z_{0}), \quad q \in \pi_{\eta}(u_{i}'), \quad \text{and} \quad r \in \pi_{\eta}(g_{\xi'} z_{0}).
\]
By Equation \eqref{eqn:case1} and the assumption that $(\eta, u_i')$ is not $(K + 30C_{0})$-aligned, $r$ is closer to $g_{\xi}a_{\xi} \varphi^{n}z_0$ than $p$ and $q$ are. Moreover, we have $d_Z(p, r), d_Z(q, r) \ge K + 25C_{0}$. Now, it follows from Lemma \ref{lem:BGIPFellow}  that 
 \[\begin{aligned}
d_Z(z_0, u'_i) & \le d_Z(z_0, \eta) + d_Z(p, q) + d_Z(\eta, u'_i), \\
d_Z(g_{\xi'} z_0, u'_i) & =_{8C_{0}} d_Z(g_{\xi'}z_0, \eta) + d_Z(r, q) + d_Z(\eta, u'_i), \quad \text{and} \\
d_Z(z_0, g_{\xi'} z_0) & =_{8C_{0}} d_Z(z_0, \eta) + d_Z(p, r) + d_Z(\eta, g_{\xi'} z_0).
\end{aligned}
\]
Then we have
$$\begin{aligned}
d_Z(z_0, g_{\xi'} z_0) & + d_Z(g_{\xi'} z_0, u'_i) - d_Z(z_0, u'_i) \\
& \ge 2 d_Z(g_{\xi'} z_0, \eta) + 2 \min ( d_Z(p, r), d_Z(q, r)) - 16 C_{0} \\
& \ge 2K + 34 C_{0}
\end{aligned}
$$
In particular, 
$d_Z(z_0, u'_i) - d_Z(g_{\xi'} z_0, u'_i) \le d_Z(z_0, g_{\xi'} z_0) - 2K - C_{0}$. This contradicts that $[z_0, u_i']$ intersects $\Nc_K(g_{\xi'} z_0)$ for all large $i \in \Nb$,  and therefore Equation \eqref{eqn:case1-2} follows.

\medskip

By Lemma \ref{lem:BGIPFellow}, it follows from Equation \eqref{eqn:case1} and Equation \eqref{eqn:case1-2} that $[z_0, u_{i}']$ passes through the $4C_{0}$-neighborhood of $g_{\xi} a_{\xi} z_0$, for all large $i \in \N$. This implies
$$
x' \in O_{4 C_0 + d_Z(z_0, a_{\xi} z_0)}(z_0, g_{\xi} z_0) \subset O_K(z_0, g_{\xi} z_0).
$$
Meanwhile, we also have $k < d_Z(z_0, g_{\xi} z_0) < d_Z(z_0, g_{\xi'}z_0) - 0.5 D$. This contradicts  the definition of $g_{\xi'}$ that $d_Z(z_0, g_{\xi'} z_0)$ is minimal among the elements of $\Ga$ satisfying Equation \eqref{eqn:defofgXi}.

\item  If $d_Z(z_0, g_{\xi'} z_0) < d_Z(z_0, g_{\xi} z_0) - 0.5D$: In this case, one can obtain a similar contradiction as in (1).

\item If $d_Z(z_0, g_{\xi}z_0) =_{0.5D} d_Z(z_0, g_{\xi'} z_0)$: Recall that for each fixed $i \in \N$, we have $P, Q \in [z_0, u_i]$ such that $d_Z(g_{\xi}a_{\xi} z_0, P), d_Z(g_{\xi'}a_{\xi'} z_0, Q)\le 3C_{0}$. Hence, we have
$$
d_Z(g_{\xi} z_0, P), d_Z(g_{\xi'} z_0, Q) \le 0.1 D.
$$
Since both $P$ and $Q$ belong to the geodesic $[z_0, u_i]$, this, together with $d_Z(z_0, g_{\xi} z_0) =_{0.5D} d_Z(z_0, g_{\xi'} z_0)$, implies 
$$
d_Z(g_{\xi} z_0, g_{\xi'} z_0) \le D.
$$

Now when $\xi$ is given (and hence $a_{\xi}, g_{\xi}$ are given as well),
\[
\xi' = g_{\xi'} a_{\xi'} \varphi^{-n} a_{\xi'}^{-1} (g_{\xi'}^{-1} g_{\xi}) a_{\xi} \varphi^{n} a_{\xi}^{-1} g_{\xi}^{-1} \xi
\]
is determined by $g_{\xi'}^{-1} g_{\xi}$ and $a_{\xi'} \in \{a_1, a_2, a_3\}$. The number of these choices is at most $3 \cdot \# \{ g \in \Ga_0 : d_Z(z_0, gz_0) \le D\}$. 

\end{enumerate}
Therefore,  Equation \eqref{eqn:finitetoone} follows.

\medskip

We simply write $M := 3 \cdot \# \{ g \in \Ga_0 : d_Z(z_0, g z_0) \le D\}$, which is finite. Then we have
\[\begin{aligned}
\mu(F(\mathcal{H}_{K, R})) &= \mu \left( \bigcup_{g \in \Ga_0}, a \in \{a_1, a_2, a_3\} F \left( \{\xi \in \mathcal{H}_{K, R} : g_{\xi} = g, a_{\xi} = a\} \right) \right) \\
&\ge \frac{1}{M}\sum_{g \in \Ga_0}, a \in \{a_1, a_2, a_3\} \mu \left( F \left( \{\xi \in \mathcal{H}_{K, R} : g_{\xi} = g, a_{\xi} = a\} \right) \right) \\
&=  \frac{1}{M}\sum_{g \in \Ga_0, a \in \{a_1, a_2, a_3\}} \mu \left( ga\varphi^n a^{-1} g^{-1} \{\xi \in \mathcal{H}_{K, R} : g_{\xi} = g, a_{\xi} = a\} \right) \\
&= \frac{1}{M}\sum_{g \in \Ga_0, a \in \{a_1, a_2, a_3\}} \mu \left( \{\xi \in \mathcal{H}_{K, R} : g_{\xi} = g, a_{\xi} = a\} \right) \\
&= \frac{1}{M}\mu(\mathcal{H}_{K, R}).
\end{aligned}
\]

Now to see the image of $F$, let $\xi = (x, u) \in \mathcal{H}_{K, R}$. For simplicity, write $g := g_{\xi}$ and $a := a_{\xi}$. Then
$$
F(\xi) = ( ga \varphi^n a^{-1} g^{-1} x, u + \sigma_{\theta}( g a \varphi^n a^{-1}  g^{-1}, x)).
$$
We first estimate the $\fa_{\theta}$-component. By Equation \eqref{eqn:Iwasawa_cocycle}, 
$$
\begin{aligned}
\sigma_{\theta}( g a \varphi^n a^{-1} g^{-1}, x) & = \sigma_{\theta}(g a \varphi^n, a^{-1} g^{-1} x) + \sigma_{\theta}( a^{-1} g^{-1}, x) \\
& = \sigma_{\theta}( g a, \varphi^n a^{-1} g^{-1} x) + \sigma_{\theta}(\varphi^n, a^{-1} g^{-1} x) \\
& \quad + \sigma_{\theta}(a^{-1}, g^{-1} x) + \sigma_{\theta}(g^{-1}, x).
\end{aligned}
$$

Recall from Equation \eqref{eqn:defofFmap} that $(z_0, g a [ z_0, \varphi^n z_0], g a \varphi^n a^{-1} g^{-1} x)$ is $C_0$-aligned.
By Lemma \ref{lem:shadowandalignment}(2), there exists $r > 0$ depending on $C_0$ such that
$$
g a \varphi^n a^{-1} g^{-1} x \in O_r( z_0, g a z_0) \cap O_r( g a z_0, g a \varphi^n z_0).
$$
By Corollary \ref{cor:shadow_use}, there exists $c > 0$ depending on $r$ so that for each $\alpha \in \theta$,
$$
\begin{aligned}
\chi_{\alpha}(\kappa(ga)) - c & \le \chi_{\alpha} ( \sigma_{\theta} (g a, \varphi^n a^{-1} g^{-1} x))  \le \chi_{\alpha}(\kappa(ga))  \\
\chi_{\alpha}(\kappa(\varphi^n)) - c & \le \chi_{\alpha}(\sigma_{\theta}(\varphi^n, a^{-1} g^{-1} x)) \le \chi_{\alpha}(\kappa(\varphi^n)).
\end{aligned}
$$

Note also that by Lemma \ref{lem:Iwasawa_log},
$$
- \chi_{\alpha}(\kappa(a)) \le \chi_{\alpha}( \sigma_{\theta}(a^{-1}, g^{-1} x)) \le \chi_{\alpha}(\kappa(a^{-1})).
$$
Hence,
$$\begin{aligned}
\chi_{\alpha}(\sigma_{\theta} & (ga \varphi^n a^{-1} g^{-1}, x)) \\
&   =_{2c + \chi_{\alpha}(\kappa(a)) + \chi_{\alpha}(\kappa(a^{-1})) } \chi_{\alpha}(\kappa(ga)) + \chi_{\alpha}(\kappa(\varphi^n)) + \chi_{\alpha}(\sigma_{\theta}(g^{-1}, x)) \\
& =_{ \chi_{\alpha}(\kappa(a)) + \chi_{\alpha}(\kappa(a^{-1})) }\chi_{\alpha}(\kappa(g)) + \chi_{\alpha}(\kappa(\varphi^n)) - \chi_{\alpha}(\sigma_{\theta}(g, g^{-1}x)) 
\end{aligned}
$$
where the second equality is due to 
$$
\chi_{\alpha}(\kappa(g)) - \chi_{\alpha}(\kappa(a^{-1})) \le \chi_{\alpha}(\kappa(ga)) \le \chi_{\alpha}(\kappa(g)) + \chi_{\alpha}(\kappa(a)).
$$
Moreover, since $x \in O_K(z_0, g z_0)$, it follows from Corollary \ref{cor:shadow_use}(1) that there exists $c' > 0$ depending on $K$ such that
$$
\abs{ \chi_{\alpha}(\kappa(g)) - \chi_{\alpha}(\sigma_{\theta}(g, g^{-1} x))} < c'.
$$
Hence, setting
$$
\widehat{D} := 2c + 2 \sum_{j = 1}^3 \sum_{\alpha \in \theta} \chi_{\alpha}(\kappa(a_j)) + \chi_{\alpha}(\kappa(a_j^{-1})) + \chi_{\alpha}(\kappa(\varphi^n)) + c',
$$
we have that
$$
u + \sigma_{\theta}( g a \varphi^n a^{-1} g^{-1}, x) \in \left[ -R - \widehat{D}, R + \widehat{D} \right]^{\# \theta}.
$$
We note that $\widehat{D}$ is independent of $g$.

In addition, by Equation \eqref{eqn:defofgXi}, we have $d_Z(z_0, ga z_0) > k - \sum_{j = 1}^3 d_Z(z_0, a_j z_0)$ and that $(z_0, ga [z_0, \varphi^n z_0], g a \varphi^n a^{-1} g^{-1}x)$ is $C_{0}$-aligned by Equation \eqref{eqn:defofFmap}.

This implies that $F(\mathcal{H}_{K, R})$ is contained in 
 \[
B_{k;n} := \left\{ (y, v) \in \mathcal{H}_{\theta} : 
\begin{matrix}
v \in \left[-R - \widehat{D}, R+ \widehat{D} \right]^{\# \theta} \text{ and } \exists h \in \Ga_0 \text{ such that}\\
d_Z(z_0, h z_0) > k - \sum_{j = 1}^3 d_Z(z_0, a_j z_0) \text{ and} \\ 
 (z_0, h [z_0, \varphi^n z_0], y) \text{ is $C_{0}$-aligned}
\end{matrix}\right\}.
\]
Hence, we have
$$
\mu (B_{k;n}) \ge \mu(\mathcal{H}_{K, R})/M > 0.
$$
Note that the set $B_{k;n}$ is decreasing in $k$. Since $\mu$ is a Radon measure and $B_{k;n} \subset \partial Z \times \left[-R - \widehat{D}, R + \widehat{D}\right]^{\# \theta}$ which is \emph{compact}, we have $\mu(B_{k;n}) < + \infty$. Therefore, setting
\begin{equation} \label{eqn:defofBn}
B_n := \bigcap_{k > 0} B_{k;n},
\end{equation}
we have
\begin{equation} \label{eqn:positiveBn}
\mu(B_n) = \lim_{k \to + \infty} \mu(B_{k;n}) \ge \mu(\mathcal{H}_{K, R})/M > 0,
\end{equation}
noting that $M$ does not depend on $k$.

Now, $\Gamma \cdot B_{n}$ is a $\Gamma$-invariant set of positive $\mu$-measure. Hence, by the $\Ga$-ergodicity of $\mu$, we have that $\Ga \cdot B_n$ is $\mu$-conull, and therefore
$$
\bigcap_{n} \Ga \cdot B_n \quad \text{is $\mu$-conull}.
$$
We then show that for each $(y, v) \in \bigcap_{n} \Ga \cdot B_n $, we have $y \in \La_{\theta}^{\varphi, 2C_{0}}(\Ga_0)$. This finishes the proof by Lemma \ref{lem:squeezedInv}.

Let $(y, v) \in \bigcap_{n} \Ga \cdot B_n$. Then for each large enough $n \in \N$, there exists  $h_0 \in \Ga$ so that
$(z_0, h [z_0, \varphi^n z_0], h_0^{-1}y)$ is $C_{0}$-aligned for infinitely many $h \in \Ga_0$.
In other words,
$$\begin{matrix}
    ( h_0 z_0, h_0 h [z_0, \varphi^n z_0], y)\\
\text{is $C_{0}$-aligned for infinitely many } h \in \Ga.
\end{matrix}
$$
Among infinitely many such $h \in \Ga_0$, we can choose one such that
$$d_Z(h_0 z_0, h_0 h [z_0, \varphi^n z_0]) > d_Z(z_0, h_0 z_0) + C_0$$
and hence 
$$
d_Z([h_0 z_0, z_0], h_0 h [z_0, \varphi^n z_0]) > C_0.
$$
Since $h_0 h [z_0, \varphi^n z_0]$ is $C_0$-contracting, $\pi_{h_0 h [z_0, \varphi^n z_0 ]}([h_0 z_0, z_0 ])$ has diameter at most $C_0$. Therefore,
$$
( z_0, h_0 h [z_0, \varphi^n z_0], y) \quad \text{is $2C_0$-aligned.}
$$
Since this holds for all large $n \in \N$, we conclude $y \in \La_{\theta}^{\varphi, 2C_0}(\Ga_0)$.
\end{proof}

\subsection{Quasi-invariance under translations} \label{subsec:trqi}

We rephrase the $\Asf_{\theta}$-action on $\Hor_{\theta}$ in terms of the $\fa_{\theta}$-action as follows: for $u \in \fa_{\theta}$, consider a map $T_u : \mathcal{H}_{\theta} \to \mathcal{H}_{\theta}$ given by 
$
(x, v) \mapsto (x, v + u)
$.
For a Radon measure $\mu$ on $\mathcal{H}_{\theta}$, we consider its pullback  measure $T_u^*\mu$, which is given by 
$$T_u^* \mu (E) := \mu(T_u E)$$
 for each Borel subset $E \subset \mathcal{H}_{\theta}$.
For a loxodromic  $\varphi \in \Ga$, we simply write $T_{\varphi} := T_{\la_{\theta}(\varphi)}$.
We show that $\Ga$-invariant ergodic measures on $\mathcal{H}_{\theta}$ are quasi-invariant under this translation.

\begin{theorem} \label{thm:trbytrlengthqi}
    Let $\Ga_0 < \Gsf$ be a non-elementary $\Psf_{\theta}$-hypertransverse subgroup and let $\Ga \triangleleft \Ga_0$ be a non-elementary normal subgroup. Let $\mu$ be a $\Ga$-invariant ergodic Radon measure on $\mathcal{H}_{\theta}$ supported on $\Rc_{\Ga_0, \theta}$. Then for a loxodromic $\varphi \in \Ga$, there exists $s \ge 0$ such that
    $$
    \frac{ d T_{\varphi}^* \mu}{d \mu} = e^{s} \quad \text{a.e.}
    $$
    In particular, if $\Spec_{\theta}(\Ga)$ is non-arithmetic (e.g. $\Ga$ is Zariski dense), then $\mu$~is quasi-invariant under the $\fa_{\theta}$-action.
\end{theorem}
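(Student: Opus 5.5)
Since $T_\varphi$ commutes with the $\Ga$-action, $T_\varphi^*\mu$ is again a $\Ga$-invariant ergodic Radon measure supported on $\Rc_{\Ga_0,\theta}$. The first goal is to show $T_\varphi^*\mu \ll \mu$; the constant density then follows from ergodicity. Indeed, once $T_\varphi^*\mu\ll\mu$, the Radon--Nikodym derivative $d T_\varphi^*\mu/d\mu$ is $\Ga$-invariant, so it is a.e.\ constant, say $e^{s}$ with $s\in\Rb\cup\{-\infty\}$. Nonvanishing comes from running the same argument with $\varphi^{-1}$, which is loxodromic with $T_{\varphi^{-1}}=T_\varphi^{-1}$. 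The sign convention $s\ge 0$ can then be arranged, or else read as allowing either $s$ or its negative. For the ``in particular'' part, the set of $u$ for which $\mu$ is $T_u$-quasi-invariant with constant derivative is a group. It contains $\Spec_\theta(\Ga)$, so it is dense. Continuity of $u\mapsto \int f\circ T_u\,d\mu$ for compactly supported $f$ then gives quasi-invariance for all $u$ via a limiting argument on the multiplier, using that $\mu$ is Radon.

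The key step is to show that for a Borel set $E$ with $\mu(E)=0$, also $\mu(T_\varphi E)=0$. By Theorem~\ref{thm:radonCharge}, we may take $E\subset \La_\theta^{\varphi,C}(\Ga_0)\times\fa_\theta$ with bounded $\fa_\theta$-part. For $\xi=(x,u)$ in such a set and each large $n$, the guided property gives $h\in\Ga_0$ with $(z_0,h[z_0,\varphi^n z_0],x)$ $C$-aligned. I would then use the conjugates $\gamma_h := h\varphi h^{-1}$, which lie in $\Ga$ because $\Ga\triangleleft\Ga_0$. By Lemma~\ref{lem:BGIPHeredi}, both $x$ and $\gamma_h x$ lie in $O_r(z_0,hz_0)\cap O_r(hz_0,h\varphi^{n-1}z_0)$ for a uniform $r$: the point $\gamma_h$ shifts along the axis by $\tau_\varphi$, and alignment with a segment of length $n\tau_\varphi$ survives a shift of one unit. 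Corollary~\ref{cor:shadow_use}(2) then yields
\[
\norm{\sigma_\theta(\gamma_h,x)-\la_\theta(\varphi)}<\epsilon \quad\text{once } n\ge N(\varphi,r,\epsilon,z_0).
\]
Hence $\gamma_h\xi$ lies within $\epsilon$ of $T_\varphi(\gamma_h x,u)$. In other words, on the piece $E_h$ of $E$ where $h$ is the chosen element (selected measurably, for instance as the element minimizing $d_Z(z_0,hz_0)$ as in Theorem~\ref{thm:radonCharge}), $T_\varphi$ agrees up to $\epsilon$ in the $\fa_\theta$-coordinate with the $\mu$-preserving map $\gamma_h$. More precisely,
\[
T_\varphi E_h \subset \gamma_h E_h + B_\epsilon,
\]
where the $\epsilon$-thickening is taken only in the $\fa_\theta$-direction, since the $\Fc_\theta$-coordinates match exactly after applying $\gamma_h$.

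To conclude, I would compare measures. Suppose $\mu(T_\varphi E)>0$ while $\mu(E)=0$, and choose a compact $E'\subset T_\varphi E$ of positive measure. Summing over $h$ requires a bounded-multiplicity statement as in the Claim in the proof of Theorem~\ref{thm:radonCharge}. I expect to get a multiplicity bound $M$ depending only on $\varphi,n$, by the same three-case analysis applied to the segments $h[z_0,\varphi^nz_0]$ on $[z_0,x]$. This yields
\[
\mu(T_\varphi E)\le M\,\mu(E^{(\epsilon)}),
\]
where $E^{(\epsilon)}$ is the $\epsilon$-thickening of $E$ in the $\fa_\theta$-direction. The difficulty is that a null set can have a thickening of positive measure. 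I would therefore apply the argument not to $E$ itself but to open/compact approximations, proving instead $\mu(T_\varphi U)\le M\mu(U^{(\epsilon)})$ for compact $U$ and then letting $\epsilon\to0$. This gives $T_\varphi^*\mu\le M\mu$ on compact sets, hence absolute continuity. The main obstacle is keeping $M$ independent of $\epsilon$: $n$ must grow as $\epsilon\to 0$. To handle this, I would fix $n$ first and apply the same argument to $\varphi^{m}$ with $m$ fixed and $n$ large, or else absorb the $n$-dependence using outer regularity at each fixed $\epsilon$ before letting $\epsilon\to 0$. Making this uniformity work is where the real care lies.
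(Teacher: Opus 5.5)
Your cocycle estimate is correct, and it is the one the paper uses. Normality gives $h\varphi h^{-1}\in\Ga$, both $x$ and $h\varphi h^{-1}x$ lie in $O_r(z_0,hz_0)\cap O_r(hz_0,h\varphi^{n-1}z_0)$, and Corollary~\ref{cor:shadow_use}(2) gives $\norm{\sigma_\theta(h\varphi h^{-1},x)-\la_\theta(\varphi)}<\epsilon$. The gap is in the measure comparison, and it has two parts.

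First, the inclusion $T_\varphi E_h\subset\gamma_h E_h+B_\epsilon$ is false. $T_\varphi$ keeps the $\Fc_\theta$-coordinate $x$, whereas $\gamma_h$ moves it to $\gamma_h x\neq x$. What you actually showed is that $\gamma_h(x,u)$ is $\epsilon$-close to $(\gamma_h x,u+\la_\theta(\varphi))$. So on $E_h$ the $\mu$-preserving map $\gamma_h$ equals $T_\varphi$ composed with $(x,u)\mapsto(\gamma_h x,u)$, and that second map does not preserve $\mu$. Any comparison must therefore also thicken in the $\Fc_\theta$-direction, which forces you to work with compact sets and outer regularity rather than with null sets.

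Second, the multiplicity constant from the three-case analysis of Theorem~\ref{thm:radonCharge} depends on $d_Z(z_0,\varphi^n z_0)$. That was harmless there, where only $\mu(B_n)>0$ was needed. Here $n$ must go to infinity as $\epsilon\to0$, so you never reach $T_\varphi^*\mu\le M\mu$ with a fixed $M$. You flag this but leave it open. Controlling overlaps in your direction is essentially a Besicovitch-type covering property, which is not available in this generality.

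The missing idea is to prove the \emph{reverse} inequality $T_\varphi^*\mu\ge\mu$, with constant exactly $1$, by a Vitali-type disjointification that needs no overlap bound. The steps are as follows.

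Take $E=K\times I$ with $K\subset\La_\theta^{\varphi,C}(\Ga_0)$ compact and $I$ a box. Fix an open $O\supset K$ and $\epsilon>0$, and cover $K$ by sets $U_C(g;\varphi,n)\subset O$ with $n$ large (Lemma~\ref{lem:nbdBasis}). Enumerate these by $d_Z(z_0,g\varphi^n z_0)$ and greedily extract a pairwise disjoint subfamily $U_{i(l)}$. Let $C_l$ be $U_{i(l)}$ together with all later members meeting it; the $C_l$ then cover $K$.

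The geometric input is $C_l\subset U_C(g_{i(l)};\varphi,n_{i(l)}-1)$. From this, $g_{i(l)}\varphi g_{i(l)}^{-1}$ maps $C_l\times I$ into $U_{i(l)}\times I_\varphi^\epsilon$, where $I_\varphi^\epsilon$ is the $\epsilon$-neighborhood of $I+\la_\theta(\varphi)$. The contraction toward $g_{i(l)}\varphi^+$ pushes the enlarged set back into the selected disjoint one. Since the $C_l$ cover $K$ and the targets are disjoint subsets of $O\times I_\varphi^\epsilon$, you get $\mu(K\times I)\le\sum_l\mu(C_l\times I)\le\mu(O\times I_\varphi^\epsilon)$. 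Letting $O\downarrow K$ and $\epsilon\to0$, using that $\mu$ is Radon, gives $\mu(K\times(I+\la_\theta(\varphi)))\ge\mu(K\times I)$.

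Ergodicity then gives $T_\varphi^*\mu=e^s\mu$, and it is this inequality that yields $s\ge0$. The sign is part of the conclusion, not a convention to be arranged.

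Finally, $T_{\varphi^{-1}}\neq T_\varphi^{-1}$: since $\la(\varphi^{-1})=\opp(\la(\varphi))\in\fa^+$, $\la_\theta(\varphi^{-1})$ is not $-\la_\theta(\varphi)$. This slip is harmless only because positivity of the constant is automatic, as $T_\varphi^*\mu\neq0$.
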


\begin{proof}
Let $\varphi \in \Ga$ be a loxodromic element and let $C= C(\varphi) > 0$ be the constant satisfying Lemma \ref{lem:BGIPHeredi} regarding $\varphi$ as an isometry on $Z$, with the choice of axis $\ga : \R \to X$. 
As in \cite[Proof of Theorem 7.10]{CK_ML}, we may assume that 
$$
\tau_{\varphi} > 100 ( \delta + C + c)
$$
where $\delta > 0$ is the Gromov hyperbolicity constant of $Z$ and $c > 0$ is the constant in Equation \eqref{eqn:translationaxis}. 

It is a standard ergodicity argument that it suffices to show 
\begin{equation} \label{eqn:trabscont}
(T_{\varphi}^{*} \mu)(E) \ge \mu(E)
\end{equation}
for each Borel subset $E \subset \mathcal{H}_{\theta}$ (see \cite[Proof of Theorem 7.10]{CK_ML}, for instance). 

By Theorem \ref{thm:radonCharge}, $\mu$ is supported on $\La_{\theta}^{\varphi, C}(\Ga_0) \times \fa_{\theta}$. Hence, it suffices to consider the case
$$E = K \times I$$
for a compact subset $K \subset \La_{\theta}^{\varphi, C}(\Ga_0)$ and a compact box $I \subset \fa_{\theta}$, where $\fa_{\theta}$ is identified with $\Rb^{\# \theta}$ via $u \mapsto (\chi_{\alpha}(u))_{\alpha \in \theta}$. One can refer to \cite[Proof of Theorem 4.6]{CK_pradak} or \cite[Proof of Theorem 7.10]{CK_ML} for detailed deductions.

As before, we identify two limit sets $\La_{Z}(\Ga_0) \subset \partial Z$ and $\La_{\theta}(\Ga_0)$ via the $\Ga$-equivariant homeomorphism $\Psi : \La_{Z}(\Ga_0) \to \La_{\theta}(\Ga_0)$.
We fix some open subset $O \subset \La_Z(\Ga_0)$ such that $K \subset O$ and $\epsilon > 0$.
Let $N = N(\varphi, 4C, \epsilon, z_0) \in \Nb$ be given in Corollary \ref{cor:shadow_use}(2).
By Lemma \ref{lem:nbdBasis}, for each $x \in K$, there exist  $g(x) \in \Gamma_0$ and $n(x) > 100 \left(\frac{C}{\tau_{\varphi}} + N\right)$ 
such that 
  \[
x \in U_{C} \left(g(x); \varphi, n(x) \right) \subset O.
\]
Let $\mathcal{U} := \left\{ U_C \left(g(x); \varphi, n(x)\right) : x \in K \right\}$, which is a countable collection of sets. For convenience, let us enumerate $\mathcal{U}$ based on their $d_Z$-distances from $z_0$, i.e, let \[
\mathcal{U} = \{U_{1}, U_{2}, \ldots\}
\]
where 
$
U_j := U_C (g_j; \varphi, n_j)
$
for each $j \in \N$
so that 
 \[
d_Z(z_0, g_1 \varphi^{n_1} z_0) \le 
d_Z(z_0, g_2 \varphi^{n_2} z_0) \le \cdots.
\]

We will now define a subcollection 
$$\mathcal{V} := \{U_{i(1)}, U_{i(2)}, \ldots\} \subset \mathcal{U}$$ by inductively defining $i(1), i(2), \ldots$ as follows: let $i(1) = 1$. Having defined $i(1), \ldots, i(k)$, define $i(k+1)$ as the smallest $j \in \N$ such that $U_{j}$ is disjoint from $U_{i(1)} \cup \cdots \cup U_{i(k)}$.

For each $l \in \N$, we set 
\begin{equation} \label{eqn:defofCl}
C_{l} := U_{i(l)} \cup \bigcup \left\{ U_{k} : k \ge i(l), U_{k} \cap U_{i(l)} \neq \emptyset \right\}.
\end{equation}
Then $\{C_{l} : l \in \N \}$ is a covering of $K$ contained in $O$.

A very similar argument as in \cite[First claim in the proof of Theorem 7.10]{CK_ML} gives that 
for each $l \in \N$, 
\begin{equation} \label{eqn:trqiclaimClinU}
    C_{l} \subset U_C \left(g_{i(l)}; \varphi, n_{i(l)} - 1\right).
\end{equation}
Since some modifications are required, we present its proof as follows, for the sake of completeness.

Let $k \ge i(l)$ be such that $U_{k} = U_C (g_k; \varphi, n_k)$ and $U_{i(l)}$ have a common element $x$. As $\La_{\theta}$ and $\La_{Z}(\Ga)$ are identified,  $x$ is also regarded as a point in $\partial Z$. Then for every $z \in Z$ close enough to $x$ in $Z \cup \partial Z$, 
\begin{equation} \label{eqn:trqiclaim}
\left( z_0, g_{i(l)} [z_0, \varphi^{n_{i(l)}} z_0], z\right) \quad \text{is $C$-aligned.}
\end{equation}
 By Lemma \ref{lem:BGIPFellow}, there exists $q \in [z_0, z]$ that is $3C$-close to $g_{i(l)} \varphi^{n_{i(l)}}z_0$. Similarly, by the condition $x \in U_{k}$, $[z_0, z]$ contains a point $p$ that is $3C$-close to $g_{k} \varphi^{n_k} z_0$. Since $k \ge i(l)$,  our enumerating convention  tells us that \[
d(z_0, p) \ge d(z_0, g_k \varphi^{n_k} z_0) - 3C \ge d(z_0, g_{i(l)} \varphi^{n_{i(l)}} z_0 ) - 3C \ge d(z_0, q) - 6C.
\]
In other words, 
$$d(z, p) \le d(z, q) + 6C.$$

If $d(z, p) \le 10C$, then we have $d(z, g_{k} \varphi^{n_k} z_0) \le 13C$. By the $(1, 4C)$-Lipschitzness of $\pi_{g_{i(l)} [z_0, \varphi^n_{i(l)} z_0 ]}(\cdot)$ in Corollar \ref{cor:BGIPFellow}(1), we have\[
\diam  \pi_{g_{i(l)}[z_0, \varphi^{n_{i(l)}} z_0]} (\{z, g_{k} \varphi^{n_{k}} z_0\}) \le 17C.
 \]

 If $d(z, p) > 10C$, then we take a point $p^{\dagger} \in [z, p]$ such that $d(p, p^{\dagger}) = 10C$. Then $\diam [p^{\dagger}, z] = d(p,z) - 10C$, and hence
 $$\begin{aligned}
 d\left( g_{i(l)}[z_0, \varphi^{n_{i(l)}} z_0], [p^{\dagger}, z] \right) & \ge d\left(g_{i(l)}[z_0, \varphi^{n_{i(l)}}z_0], z\right) - \diam  [p^{\dagger}, z]  \\
 & \ge d(z, q) - 3C - d(z, p) + 10C \\
 & \ge C.
 \end{aligned}
$$
 By the $C$-contracting property of $g_{i(l)}[z_0, \varphi^{n_{i(l)}}z_0]$, we have \[
\diam \pi_{g_{i(l)}[z_0, \varphi^{n_{i(l)}} z_0]} (\{z, p^{\dagger}\})  \le C.
 \]
 Since $d(p, g_k \varphi^{n_k} z_0) \le 3C$, we have $d(p^{\dagger}, g_k \varphi^{n_k} z_0) \le 13C$, and hence
  \[
\diam \pi_{g_{i(l)}[z_0, \varphi^{n_{i(l)}} z_0]} (\{z, g_{k} \varphi^{n_{k}} z_0\}) \le 18C
 \]
 by Corollary \ref{cor:BGIPFellow}(1).

 Hence, in any case, together with Equation \eqref{eqn:trqiclaim}, we have that 
 \begin{equation} \label{eqn:secondineq19C}
(z_0, g_{i(l)}[z_0, \varphi^{n_{i(l)}} z_0], g_k \varphi^{n_k} z_0) \quad \text{is $19C$-aligned.}
 \end{equation}

Now let $y \in U_k$ be arbitrary. Then for every $z'\in Z$ close to $y$ in $Z \cup \partial Z$,
$$
(z_0, g_k [z_0, \varphi^{n_k} z_0], z') \quad \text{is $C$-aligned}
$$
and hence 
$[z_0, z']$ passes through the $3C$-neighborhood of $g_k \varphi^{n_k} z_0$ as before. Hence, we have 
\[\begin{aligned}
\diam   \pi_{g_{i(l)}[z_0, \varphi^{n_{i(l)}} z_0]}  ([z_{0}, z'])  
& \ge d(g_{i(l)}z_0, g_{i(l)}\varphi^{n_{i(l)}} z_0 ) \\
& \quad - \diam \left( \pi_{g_{i(l)}[z_0, \varphi^{n_{i(l)}} z_0]} (z_{0}) \cup \{ g_{i(l)} z_{0}  \} \right)\\
&\quad - \left( d([z_0, z'], g_k \varphi^{n_k} z_0 ) + 4C \right)\\
& \quad - \diam \left( \pi_{g_{i(l)}[z_0, \varphi^{n_{i(l)}} z_0]} (g_k \varphi^{n_k} z_0) \cup \{ g_{i(l)} \varphi^{n_{i(l)}} z_{0} \} \right) \\
&\ge 100C - C - ( 3C+ 4C) - 19C \\
& \ge 70C
\end{aligned}
\]
where we applied Corollary \ref{cor:BGIPFellow}(1) in the first inequality, and Equation \eqref{eqn:trqiclaim}, Equation \eqref{eqn:secondineq19C}, and that $d([z_0, z'], g_k \varphi^{n_k} z_0) \le 3C$ in the second.

We then apply Lemma \ref{lem:BGIPFellow} and obtain two points $u, v \in [z_0, z']$ such that 
\begin{enumerate}
\item $[u, v]$ and $\pi_{g_{i(l)}[z_0, \varphi^{n_{i(l)}} z_0]}([z_0, z'])$ are  within Hausdorff distance $4C$,
\item  $\diam \left( \pi_{g_{i(l)}[z_0, \varphi^{n_{i(l)}} z_0]} ([z_0, u]) \cup \{u\} \right) \le 2C$, 
\item $\diam \left( \pi_{g_{i(l)}[z_0, \varphi^{n_{i(l)}} z_0]} ([v, z']) \cup \{v\} \right) \le 2C$, and
\item for each $u' \in \pi_{g_{i(l)}[z_0, \varphi^{n_{i(l)}} z_0]}(z_0)$ and $v' \in \pi_{g_{i(l)}[z_0, \varphi^{n_{i(l)}} z_0]}(z')$, $[u', v']$ and $[u, v]$ are within Hausdorff distance $10C$.
\end{enumerate}
Since $d([z_0, z'], g_k \varphi^{n_k} z_0) \le 3C$, it follows from Equation \eqref{eqn:secondineq19C} and Corollary \ref{cor:BGIPFellow}(1) that there exists $w \in [z_0, z']$ such that 
\begin{equation} \label{eqn:usingnewitem}
\diam \left( \pi_{g_{i(l)}[z_0, \varphi^{n_{i(l)}} z_0]}(w) \cup \{g_{i(l)} \varphi^{n_{i(l)}} z_0\} \right) \le 26 C.
\end{equation}
By the condition (2) above, Equation \eqref{eqn:secondineq19C}, and $d(z_0, \varphi^{n_i(l)}z_0) > 100 C$, we must have  $w \notin [z_0, u]$, and hence  $w \in [v, z']$ or $w \in [u, v]$.

\begin{itemize}
\item If $w \in [v, z']$, then it follows from (3) above that 
$$\diam \left( \pi_{g_{i(l)}[z_0, \varphi^{n_{i(l)}} z_0]}(z') \cup \{g_{i(l)} \varphi^{n_{i(l)}} z_0\} \right) \le 28 C.
$$

\item If $w \in [u, v]$, then it follows from (4) above that for each pair of two points $u' \in \pi_{g_{i(l)}[z_0, \varphi^{n_{i(l)}} z_0]}(z_0)$ and $v' \in \pi_{g_{i(l)}[z_0, \varphi^{n_{i(l)}} z_0]}(z')$, there exists $w' \in [u', v']$ such that $d(w, w') \le 10C$. We then have
$$
\diam \left( \pi_{g_{i(l)}[z_0, \varphi^{n_{i(l)}} z_0]}(w) \cup \{w'\} \right) \le 20 C.
$$
Together with Equation \eqref{eqn:usingnewitem}, $\diam \{w', g_{i(l)} \varphi^{n_{i(l)}} z_0\} \le 46 C$. Since $v'$ is inbetween $w'$ and $g_{i(l)} \varphi^{n_{i(l)}} z_0$, and $v'  \in \pi_{g_{i(l)}[z_0, \varphi^{n_{i(l)}} z_0]}(z')$ is arbitrary, we have
$$
\diam \left( \pi_{g_{i(l)}[z_0, \varphi^{n_{i(l)}} z_0]}(z') \cup \{ g_{i(l)} \varphi^{n_{i(l)}} z_0 \} \right) \le 46 C.
$$
\end{itemize}
Therefore, $(g_{i(l)}[z_0, \varphi^{n_{i(l)}} z_0], z')$ is $46C$-aligned.

We now apply Lemma \ref{lem:BGIPHeredi}(4) to  $( \varphi^{-n_{i(l)}} g_{i(l)}^{-1} z', [z_0, \varphi^{- n_{i(l)}} z_0])$ and have 
$$
\pi_{ g_{i(l)} \varphi^{n_{i(l)}} \ga}(z') \subset g_{i(l)} \varphi^{n_{i(l)}} \ga \left([ - 47 C, + \infty) \right).
$$
As in the proof of Lemma \ref{lem:BGIPHeredi}(4), this implies
\begin{equation} \label{eqn:Clclaimcontradiction}
\begin{aligned}
\pi_{ g_{i(l)}  \ga}(z') & \subset g_{i(l)}  \ga \left([ n_{i(l)} \tau_{\varphi} - 6 \delta - 4 C - c - 47 C, + \infty) \right) \\
& \subset g_{i(l)}  \ga \left([ (n_{i(l)} - 1) \tau_{\varphi} + 6 \delta + 4 C + c + 1, + \infty) \right)
\end{aligned}
\end{equation}
Hence, $\left(g_{i(l)}[z_0, \varphi^{n_{i(l)}-1} z_0], z' \right)$ is $C$-aligned. To see this, suppose to the contrary that $\left( \varphi^{-(n_{i(l)} - 1)} g_{i(l)}^{-1} z', [z_0, \varphi^{-(n_{i(l)} - 1)} z_0] \right)$ is not $C$-aligned. Then it follows from Lemma \ref{lem:BGIPHeredi}(3) and the similar argument as above that
$$
\pi_{g_{i(l)} \ga}(z') \subset g_{i(l)} \ga \left( ( - \infty, (n_{i(l)} - 1) \tau_{\varphi} + 6 \delta + 4 C + c] \right)
$$
which is a contradiction.

Therefore, $\left(g_{i(l)}[z_0, \varphi^{n_{i(l)}-1} z_0], z' \right)$ is $C$-aligned.
Since this holds for every $z'$ close to $y \in U_k$, we have 
$$y \in U_{C} \left(g_{i(l)}; \varphi, n_{i(l)} - 1\right).$$ 
Equation \eqref{eqn:trqiclaimClinU} is  now established.

\medskip

Now for each  $l \in \N$, we  define a map $F_{l} : C_{l} \times I \rightarrow  \mathcal{H}_{\theta}$ as follows: for $g= g_{i(l)}$, we set 
\begin{equation} \label{eqn:defofFl}
F_{l} : \xi \mapsto g \varphi g^{-1} \xi.
\end{equation}
Then we have $\mu \left( F_{l}(C_{l} \times I) \right) = \mu(C_l \times I)$ as $\mu$ is $\Gamma$-invariant. 

\begin{claim*}
    We have 
    \begin{equation} \label{eqn:claimforFl}
    F_{l} (C_{l}\times I) \subset  U_{i(l)} \times \text{$(\epsilon$-neighborhood of $I + \la_{\theta}(\varphi))$}.
    \end{equation}
\end{claim*}
To see this, we simply write $g = g_{i(l)}$ and $n = n_{i(l)} - 1$. We then fix $\xi = (x, u) \in C_l \times I$. Note that
$$
F_l (\xi) = (g \varphi g^{-1} x, u + \sigma_{\theta}( g \varphi g^{-1}, x)).
$$
The inclusion for the first component can be shown by a very similar argument to  \cite[Second claim in the proof of Theorem 7.10]{CK_ML}. Again, as this requires  modifications of involved constants in our setting, we present the proof. 

Let $z  \in Z$ be a close point to $x$ in $Z \cup \partial Z$. It suffices to show that 
$$
(z_0, g[z_0, \varphi^{n+1} z_0], g \varphi g^{-1} z) \quad \text{is $C$-aligned for all large $i \in \N$,}
$$
Since $U_{i(l)} \neq \emptyset$, we already have that $(z_0, g[z_0, \varphi^{n+1} z_0] )$ is $C$-aligned. Now suppose to the contrary that $(g[z_0, \varphi^{n+1} z_0], g \varphi g^{-1} z)$ is not $C$-aligned.  In other words,
$( \varphi^{-n} g^{-1} z, [ z_0, \varphi^{-(n+1)} z_0] ) $ is not $C$-aligned. Then by Lemma \ref{lem:BGIPHeredi}(3) and as in the proof of Lemma \ref{lem:BGIPHeredi}(4), we have 
$$
\pi_{g \ga}(z) \subset g \ga ((-\infty, n \tau_{\varphi} + 6 \delta  + 4 C + c]).
$$
This contradicts Equation \eqref{eqn:Clclaimcontradiction}. Therefore, $g \varphi g^{-1} x \in U_{i(l)}$.

We now show the inclusion for the second component.
Since $U_{i(l)} \subset C_l$, it follows from the inclusion for the first component that  $g \varphi g^{-1} x \in C_l$. Hence, we have
$$
x, g \varphi g^{-1} x \in C_l \subset  U_C \left(g; \varphi, n\right)
$$
where the last inclusion is due to Equation \eqref{eqn:trqiclaimClinU}. By Lemma \ref{lem:shadowandalignment}(2), 
$$
x, g \varphi g^{-1} x \in O_{4C} (z_0, g z_0) \cap O_{4C}(g z_0, g \varphi^n z_0).$$
Now by Corollary \ref{cor:shadow_use},
$$
\norm{ \sigma_{\theta}(g \varphi g^{-1}, x) - \la_{\theta}(\varphi)} < \epsilon.
$$
Therefore, the inclusion for the second component follows.

\medskip

Now by the above claim and disjointness of $U_{i(l)}$'s,  we have
 \[\begin{aligned}
\mu( O \times (\textrm{$\epsilon$-neighborhood of $I+\la_{\theta}(\varphi)$})) &\ge \mu\left( \bigcup_{l} F_l (C_{l} \times I) \right) \\
&= \sum_{l} \mu \left( F_l (C_l \times I)\right) \\
&= \sum_{l} \mu (C_{l} \times I) \\
& \ge \mu(K \times I).
\end{aligned}
\]
Note that $\mu( O \times (\textrm{$\epsilon$-neighborhood of $I+\la_{\theta}(\varphi)$})) < + \infty$ since $\mu$ is Radon.
Since $\epsilon > 0$ and an open set $O \supset K$ are arbitrary, we have 
$$
(T_{\varphi}^* \mu)(K \times I) = \mu(K \times (I + \la_{\theta}({\varphi}))) \ge \mu(K \times I). 
$$
This finishes the proof of the main statement.

For the last assertion, note that we have shown 
$$
\Spec_{\theta}(\Ga) \subset \left\{ u \in \fa_{\theta} : T_u^* \mu \text{ and } \mu \text{ are equivalent} \right\}.
$$
Since the right hand side is a closed additive subgroup of $\fa_{\theta}$ \cite[0.1 Basic Lemma]{aaronson2002invariant}, if $\Spec_{\theta}(\Ga)$ is non-arithmetic, then $\mu$ is quasi-invariant under the $\fa_{\theta}$-action.
\end{proof}

\subsection{Proof of the rigidity}

Theorem \ref{thm:uniqueRadon} is now deduced from Theorem \ref{thm:trbytrlengthqi} and Theorem \ref{thm:CZZHTS}. This deduction is originally due to (\cite[0.1 Basic Lemma]{aaronson2002invariant}, \cite[Lemma 1]{Sarig_abelian}).

To elaborate more, if $\mu$ is a $\Ga$-invariant ergodic Radon measure on $\Hor_{\theta}$ supported on $\Rc_{\Ga, \theta}$, then  $\mu$ is quasi-invariant under the $\fa_{\theta}$-action, applied to the case $\Ga = \Ga_0$. 
By (\cite[0.1 Basic Lemma]{aaronson2002invariant}, \cite[Lemma 1]{Sarig_abelian}), this implies that there exists $\psi \in \fa_{\theta}^*$, $\delta > 0$, and $c > 0$ so that
$$
d \mu (x, u) = c \cdot e^{\delta \psi(u)} d \nu(x) du
$$
for a $\delta$-dimensional $\psi$-Patterson--Sullivan measure $\nu$ of $\Ga$, where $du$ is the Lebesgue measure on $\fa_{\theta}$. Since $\mu$ is supported on $\Rc_{\Ga, \theta} = \La_{\theta}^{\rm con}(\Ga) \times \fa_{\theta}$, it follows from  Theorem \ref{thm:CZZHTS} that $\delta = \delta_{\psi}(\Ga)$ and $\nu$ is of divergence type. This finishes the proof.
\qed

\subsection{Strengthening the Hopf--Tsuji--Sullivan dichotomy} \label{subsection:fullGuided}

One important portion of the Hopf--Tsuji--Sullivan dichotomy is that divergence-type Patterson--Sullivan measures are supported on the conical limit sets, as in Theorem \ref{thm:CZZHTS} (\cite{CZZ_transverse}, \cite{KOW_PD}).

As a corollary of Theorem \ref{thm:radonCharge}, we present a strenghtned version of this, by showing that such Patterson--Sullivan measures are in fact supported on guided limit sets. We also refer to (\cite{BLLO}, \cite{sambarino2022report}, \cite{KOW_SF}) for different type of the strengthening, in terms of so-called directional limit sets.

\begin{corollary}
    \label{cor:pattersonSqueeze}
    
    Let $\Ga < \Gsf$ be a non-elementary $\Psf_{\theta}$-hypertransverse subgroup and $\nu$ a divergence-type Patterson--Sullivan measure of $\Ga$.  Let $\varphi \in \Ga$ be loxodromic  and let $C = C(\varphi) > 0$ be as in Lemma \ref{lem:BGIPHeredi}. Then
    $$
    \nu ( \La_{\theta}^{\varphi, C}(\Ga)) = 1.
    $$
\end{corollary}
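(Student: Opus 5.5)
Since $\nu$ is a divergence-type Patterson--Sullivan measure, it is a $\delta_\psi(\Ga)$-dimensional $\psi$-Patterson--Sullivan measure for some $\psi \in \fa_{\theta}^*$. We pass to the associated Burger--Roblin measure
$$
d\mu_{\nu}^{\BR}(x,u) = e^{\delta_{\psi}(\Ga)\psi(u)}\, d\nu(x)\, du
$$
on $\Hor_{\theta}$ and apply Theorem \ref{thm:radonCharge} with $\Ga_0 = \Ga$. That theorem then says that $\mu_\nu^{\BR}$ is supported on $\La_\theta^{\varphi,C}(\Ga)\times\fa_\theta$. Since $\mu_\nu^{\BR}$ is a product of $\nu$ with a measure on $\fa_\theta$ that is equivalent to Lebesgue measure, it follows that $\nu$ gives full mass to $\La_\theta^{\varphi,C}(\Ga)$. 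To make this work, three hypotheses of Theorem \ref{thm:radonCharge} need to be checked.

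First, $\mu_\nu^{\BR}$ is a $\Ga$-invariant Radon measure on $\Hor_\theta$, as recorded when it was defined. Second, by Theorem \ref{thm:CZZHTS}, divergence type gives $\nu(\La_\theta^{\rm con}(\Ga))=1$. Hence $\mu_\nu^{\BR}$ is supported on $\La_\theta^{\rm con}(\Ga)\times\fa_\theta = \Rc_{\Ga,\theta}$. Third, we need $\Ga$-ergodicity of $\mu_\nu^{\BR}$, and this is the only real issue. The cited result of \cite{kim2024conformal} states it for Zariski dense hypertransverse subgroups, but here we have no Zariski density or non-arithmeticity assumption.

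To avoid the ergodicity input altogether, I would rerun the proof of Theorem \ref{thm:radonCharge} and use ergodicity only at its single point of use. That proof first produces sets $B_n$ with $\mu(B_n)>0$, uniformly in $n$. It then uses ergodicity to conclude that $\Ga B_n$ is conull, and intersects over $n$. Instead, one can apply the same construction directly to an arbitrary Borel set $\mathcal H' \subset \Rc_{\Ga,\theta}$ of positive measure in place of the whole space. Suppose the $\Ga$-invariant set $\mathcal G := (\La_\theta(\Ga)\smallsetminus \La_\theta^{\varphi,C}(\Ga))\times\fa_\theta$ had positive measure. Running the argument with $\mathcal{H}_{K,R}$ chosen inside $\mathcal G$, the map $F$ lands in $\Ga\mathcal G=\mathcal G$, so $\mu(B_n\cap\mathcal G)>0$ for each $n$. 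We still need one $\xi$ lying in $\Ga B_n$ for all large $n$ simultaneously. For this I would use the quantitative bound $\mu(B_n\cap\mathcal G)\ge \mu(\mathcal H_{K,R})/M$. Here $M$ depends on $n$ through $D$, so instead I would take a single $n$ and use the fact, contained in Lemma \ref{lem:squeezedInv}, that guidedness propagates. Alternatively, one can sidestep this entirely: by Theorem \ref{thm:CZZHTS}, the $\Ga$-action on $(\La_\theta(\Ga),\nu)$ is ergodic, so it suffices to restrict to $\nu$-measurable $\Ga$-invariant sets. Concretely, if $\nu(\La_\theta^{\varphi,C}(\Ga))<1$, then ergodicity of $\nu$ and $\Ga$-invariance of the guided set (Lemma \ref{lem:squeezedInv}) force $\nu(\La_\theta^{\varphi,C}(\Ga))=0$.

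So the cleanest route is the following. By Lemma \ref{lem:squeezedInv} and ergodicity of $\nu$ from Theorem \ref{thm:CZZHTS}, $\nu(\La_\theta^{\varphi,C}(\Ga))\in\{0,1\}$. The first half of the proof of Theorem \ref{thm:radonCharge}, which never uses ergodicity, gives for each $n$ that $\mu_\nu^{\BR}(B_n)>0$ and hence $\nu(\pi_1 B_n)>0$. The set $\Ga\pi_1(B_n)$ is $\Ga$-invariant with positive $\nu$-measure, so ergodicity of $\nu$ makes it $\nu$-conull. Intersecting over $n$ and repeating the final paragraph of that proof shows that $\nu$-a.e.\ point is $(\varphi,2C_0)$-guided, hence $(\varphi,C)$-guided by Lemma \ref{lem:squeezedInv}. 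The main point to verify is this replacement of ergodicity of $\mu$ by ergodicity of $\nu$. This works because the sets $B_n$ and the guided property concern only the $\Fc_\theta$-coordinate, and $\mu_\nu^{\BR}$-positivity of a product-type set implies $\nu$-positivity of its projection.
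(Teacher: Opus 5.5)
Your final route is correct, but it is not the paper's. The paper's proof is a three-line black-box application of Theorem~\ref{thm:radonCharge}:
\begin{itemize}
\item it quotes $\Ga$-ergodicity of $\mu_\nu^{\BR}$ from \cite{kim2024conformal};
\item it takes support on $\La_\theta^{\rm con}(\Ga)\times\fa_\theta$ from Theorem~\ref{thm:CZZHTS};
\item it then concludes directly.
\end{itemize}
You instead reopen the proof of Theorem~\ref{thm:radonCharge}. Everything up to $\mu(B_n)>0$ uses only $\Ga$-invariance, the Radon property and the support condition. Moreover $B_n = Y_n\times[-R-\widehat D,R+\widehat D]^{\#\theta}$ is a product, so Fubini gives $\nu(Y_n)>0$. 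Ergodicity of $\Ga$ on $(\La_\theta(\Ga),\nu)$ holds for any non-elementary transverse subgroup of divergence type, by Theorem~\ref{thm:CZZHTS}. It makes $\Ga Y_n$ conull, and the last paragraph of that proof then runs verbatim on the $\Fc_\theta$-coordinate.

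What your route buys is exactly the issue you flagged. The corollary assumes neither Zariski density nor non-arithmeticity of $\Spec_\theta(\Ga)$, and without them $\mu_\nu^{\BR}$ need not be $\Ga$-ergodic at all. For example, take a diagonally embedded convex cocompact surface group in $\PSL(2,\Rb)\times\PSL(2,\Rb)$ with $\theta=\Delta$. There $u_1-u_2$ is a non-constant $\Ga$-invariant function on the support of $\mu_\nu^{\BR}$. So the paper's one-liner implicitly needs those extra hypotheses, or an additional ergodic-decomposition step into Radon ergodic components. Your argument needs neither. The price is that Theorem~\ref{thm:radonCharge} can no longer be used as a black box.

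A few small remarks:
\begin{itemize}
\item The bound $\mu(B_n)\ge \mu(\mathcal{H}_{K,R})/M$ is not uniform in $n$, since $M$ depends on $n$ through $D$, contrary to your opening description. Only positivity for each fixed $n$ is needed, which you use correctly at the end.
\item The intermediate idea of fixing a single $n$ and propagating guidedness via Lemma~\ref{lem:squeezedInv} would not work. That lemma only changes the alignment constant, not the range of $n$ for which alignment holds. Your final route rightly avoids it.
\item The preliminary $\{0,1\}$ dichotomy for $\nu(\La_\theta^{\varphi,C}(\Ga))$ is harmless but unnecessary.
\end{itemize}
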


\begin{proof}

    By \cite{kim2024conformal}, the associated Burger--Roblin measure $\mu_{\nu}^{\BR}$ on $\Hor_{\theta}$ is $\Ga$-ergodic. Moreover, by the Hopf--Tsuji--Sullivan dichotomy for transverse subgroups (Theorem \ref{thm:CZZHTS}), $\mu_{\nu}^{\BR}$ is supported on $\La_{\theta}^{\rm con}(\Ga) \times \fa_{\theta}$. Then applying Theorem \ref{thm:radonCharge} finishes the proof.
\end{proof}

One can also deduce the normal subgroup version of Corollary \ref{cor:pattersonSqueeze}, as \cite{kim2024conformal} and Theorem \ref{thm:radonCharge} work for normal subgroups.

\subsection{Ergodic decomposition}

When $\theta = \Delta$, the $\Ga$-action on $\Hor_{\theta}$ is dual to the $\Nsf \Msf$-action on $\Ga \ba \Gsf$ (cf. \cite[Proposition 10.25]{LO_invariant}). 
For a Zariski dense Borel Anosov subgroup $\Ga < \Gsf$, the $\Nsf$-ergodic decomposition of a Burger--Roblin measure of $\Ga$ on $\Ga \ba \Gsf$ was proved Lee--Oh in \cite{LO_ergodic}, which was extended to $\Psf_{\Delta}$-hypertransverse case by the second author in \cite{kim2024conformal}.
Abusing notations, for a Patterson--Sullivan measure $\nu$ of $\Ga$, we write $\mu_{\nu}^{\BR}$ the associated Burger--Roblin measure on $\Ga \ba \Gsf$, defined in Equation \eqref{def:BRonhomogeneous}.

Recall that
$$
\Dc_{\Ga}
$$
is the finite collection of $\Psf^{\circ}$-minimal sets of $\Ga \ba \Gsf$. Then for a divergence-type Patterson--Sullivan measure $\nu$ of $\Ga$,
\begin{equation} \label{eqn:Nergodicdecomp}
\mu_{\nu}^{\BR} = \sum_{\E \in \Dc_{\Ga}} \mu_{\nu}^{\BR}|_{\E}
\end{equation}
is the $\Nsf$-ergodic decomposition proved in (\cite{LO_ergodic}, \cite{kim2024conformal}).

Using this, we deduce the $\Nsf$-ergodic measure classification from $\Nsf \Msf$-ergodic measure classification we obtained (Theorem \ref{thm:uniqueRadon}). We set
$$
\Rc_{\Ga} := \{ [g] \in \Ga \ba \Gsf : g \Psf \in \La_{\Delta}^{\rm con}(\Ga) \},
$$
which corresponds to $\Rc_{\Ga, \Delta} \subset \Hor_{\Delta}$.

\begin{theorem} \label{thm:hypertransverseNergodic}
Let $\Ga < \Gsf$ be a Zariski dense $\Psf_{\Delta}$-hypertransverse subgroup. If $\mu$ is an $\Nsf$-invariant ergodic Radon measure on $\Ga \ba \Gsf$ supported on $\Rc_{\Ga}$, then
$$
\mu \text{ is a constant multiple of } \mu_{\nu}^{\BR}|_{\E}
$$
for some divergence-type Patterson--Sullivan measure $\nu$ of $\Ga$ and $\E \in \Dc_{\Ga}$.
\end{theorem}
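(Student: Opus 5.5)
The plan is to reduce to the $\Nsf\Msf$-classification (Theorem \ref{thm:uniqueRadon} with $\theta=\Delta$) by averaging over the compact group $\Msf$, and then to use the $\Nsf$-ergodic decomposition \eqref{eqn:Nergodicdecomp} to identify $\mu$ as one of the pieces $\mu_{\nu}^{\BR}|_{\E}$.

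\emph{Averaging.} Let $\mu$ be $\Nsf$-invariant, ergodic, Radon and supported on $\Rc_\Ga$. Since $\Msf$ normalizes $\Nsf$, each push-forward $m_*\mu$ ($m \in \Msf$) is again $\Nsf$-invariant and ergodic, and it is still supported on $\Rc_\Ga$ because $\Rc_\Ga$ is right $\Msf$-invariant. Set $\bar\mu := \int_{\Msf} m_*\mu \, dm$ with $dm$ the Haar probability measure. This is an $\Nsf\Msf$-invariant Radon measure on $\Ga\ba\Gsf$; local finiteness holds because $\Msf$ is compact, so for compact $B$ we have $\bar\mu(B)\le \mu(B\Msf)<\infty$. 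Moreover $\bar\mu$ is $\Nsf\Msf$-ergodic. Indeed, if $A$ is $\Nsf\Msf$-invariant, then $\mu(A)=\mu(Am)=m_*\mu(A)$ for every $m$, and $A$ is $\Nsf$-invariant, so $\mu$-ergodicity forces $A$ to be $\mu$-null or $\mu$-conull. The same alternative then holds for every $m_*\mu$, hence for $\bar\mu$. Translating to $\Hor_\Delta$ via the duality \cite[Proposition 10.25]{LO_invariant}, $\bar\mu$ becomes a $\Ga$-invariant ergodic Radon measure supported on $\Rc_{\Ga,\Delta}$. Zariski density gives non-arithmeticity of $\Spec_\Delta(\Ga)$ by Theorem \ref{thm:Benoist_nonarithmetic}, so Theorem \ref{thm:uniqueRadon} applies and yields $\bar\mu = c\,\mu_{\nu}^{\BR}$ for some divergence-type Patterson--Sullivan measure $\nu$.

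\emph{Identification.} Since $\mu \ll \bar\mu$ in the sense that $\bar\mu(A)=0$ implies $\mu(A m)=0$ for a.e.\ $m$, and then, by continuity of translation and an approximation argument on Borel sets, $\mu(A)=0$, we get $\mu \ll \mu_\nu^{\BR}$. It is cleaner to argue as follows. By \eqref{eqn:Nergodicdecomp}, $\mu_\nu^{\BR}=\sum_{\E}\mu_\nu^{\BR}|_\E$, where each term is $\Nsf$-ergodic and the sets $\E\in\Dc_\Ga$ are disjoint and $\Nsf$-invariant. Since $\mu$ is $\Nsf$-ergodic and absolutely continuous with respect to $\mu_\nu^{\BR}$, it is concentrated on a single $\E$, and then $\mu \ll \mu_\nu^{\BR}|_\E$. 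The density $d\mu/d\mu_\nu^{\BR}|_\E$ is an $\Nsf$-invariant function, because both measures are $\Nsf$-invariant. By ergodicity of $\mu_\nu^{\BR}|_\E$ the density is a.e.\ constant, so $\mu$ is a constant multiple of $\mu_\nu^{\BR}|_\E$.

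\emph{Main obstacle.} The delicate step is the absolute continuity $\mu\ll\bar\mu$. A measure need not be absolutely continuous with respect to its own $\Msf$-average a priori: we only know that $\bar\mu(A)=0$ gives $\mu(Am^{-1})=0$ for Haar-a.e.\ $m$, not necessarily for $m=e$. My first attempt avoids this issue. The components $m_*\mu$ are $\Nsf$-ergodic and average to $c\sum_\E \mu_\nu^{\BR}|_\E$. By uniqueness of the ergodic decomposition into $\Nsf$-ergodic Radon measures (valid for $\sigma$-finite measures after normalizing on a fixed compact set), almost every $m_*\mu$ must be proportional to some $\mu_\nu^{\BR}|_\E$. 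Hence $m_*\mu$ has this form for some $m$. Pushing back by $m^{-1}$, and using that $\Msf$ permutes the $\Psf^\circ$-minimal sets while preserving $\mu_\nu^{\BR}$, gives $\mu=c'\mu_\nu^{\BR}|_{\E m^{-1}}$ with $\E m^{-1}\in\Dc_\Ga$.
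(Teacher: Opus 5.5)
Your final argument (the one in your last paragraph) is correct and is essentially the paper's proof. You average over $\Msf$, apply Theorem~\ref{thm:uniqueRadon} to $\bar\mu$, then use uniqueness of the $\Nsf$-ergodic decomposition \eqref{eqn:Nergodicdecomp} to get some $m_*\mu$ proportional to some $\mu_{\nu}^{\BR}|_{\E}$, and translate back using the $\Msf$-invariance of $\mu_{\nu}^{\BR}$ and the fact that $\Msf$ permutes $\Dc_{\Ga}$.

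You should delete the ``Identification'' paragraph. Your justification of $\mu\ll\bar\mu$ (continuity of translation plus approximation) never uses invariance, so it would apply to any measure, and in that generality it is false: a point mass is singular to its $\Msf$-average when $\Msf$ is infinite. Your ergodic-decomposition route makes that step unnecessary.
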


\begin{proof}
Consider the right $\Msf$-action on $\Ga \ba \Gsf$ and the measure
$$
\tilde \mu := \int_{\Msf} m_* \mu \ dm
$$
where $dm$ is the Haar probability measure on $\Msf$, which is an $\Nsf \Msf$-invariant Radon measure on $\Rc_{\Ga}$. For an $\Nsf \Msf$-invariant $E \subset \Ga \ba \Gsf$, we have
$$
\tilde \mu(E) = \int_{\Msf} \mu(E m^{-1}) dm = \mu(E).
$$
Hence, by the $\Nsf$-ergodicity and the $\Nsf$-invariance of $E$, we have that $E$ is either $\tilde \mu$-null or $\tilde \mu$-conull. Therefore, $\tilde \mu$ is $\Nsf \Msf$-ergodic.

Note that $\Spec_{\Delta}(\Ga)$ is non-arithmetic when $\Ga$ is Zariski dense, by the result of Benoist \cite{Benoist2000proprietes} (Theorem \ref{thm:Benoist_nonarithmetic}). It then follows from Theorem \ref{thm:uniqueRadon} that
$$\tilde \mu = c \cdot \mu_{\nu}^{\BR}$$
for some divergence-type Patterson--Sullivan measure $\nu$ of $\Ga$ and $c > 0$. 

Since $\Msf$ normalizes $\Nsf$, each $m^* \mu$ is $\Nsf$-invariant and $\Nsf$-ergodic as well. Hence, by the $\Nsf$-ergodic decomposition in Equation \eqref{eqn:Nergodicdecomp}, we have
$$
m^* \mu = c \cdot \mu_{\nu}^{\BR}|_{\E}
$$
for some $m \in \Msf$ and $\E \in \Dc_{\Ga}$.
In other words, $m^* \mu$ an $\Nsf$-ergodic component of $\mu_{\nu}^{\BR}$. Since $\mu_{\nu}^{\BR}$ is $\Msf$-invariant, $\frac{1}{c} \mu = m^{-1}_* \mu_{\nu}^{\BR}|_{\E}$ is also an $\Nsf$-ergodic component of $\mu_{\nu}^{\BR}$. This completes the proof.
\end{proof}

\subsection{Theorems in the introduction}

We now deduce theorems stated in the introduction from Theorem \ref{thm:uniqueRadon}. First, note that $\Spec_{\theta}(\Ga)$ is non-arithmetic when $\Ga$ is Zariski dense, by the result of Benoist \cite{Benoist2000proprietes} (Theorem \ref{thm:Benoist_nonarithmetic}). Then Theorem \ref{thm:uniqueRadon} implies the inclusion $(2) \subset (1)$ in  Theorem \ref{thm:maintransverse}, and the inclusion $(1) \subset (2)$ there is due to \cite{kim2024conformal} (see \cite{LO_invariant} for Borel Anosov cases). Corollary \ref{cor:mainrelAnosov} is Corollary \ref{cor:relAnosovcasemeasureclassification}.
Theorem \ref{thm:mainrelAnosov} is a dual statement of Corollary \ref{cor:relAnosovcasemeasureclassification}, and Theorem \ref{thm:mainAnosov} is a particular case of Theorem~\ref{thm:mainrelAnosov}.
Theorem \ref{thm:mainAnosovN} and Theorem \ref{thm:mainrelAnosovN} are special cases of Theorem \ref{thm:hypertransverseNergodic}.

\bibliographystyle{alpha} 
\bibliography{ML}

@article{Canary_ICM,
  title={Kleinian viewpoints on higher rank worlds},
  author={Canary, Richard D.},
  journal={arXiv preprint arXiv:2510.21334},
  year={2025}
}

@book{benoist2016random,
	author = {Benoist, Yves and Quint, Jean-Fran\c{c}ois},
	isbn = {978-3-319-47719-0; 978-3-319-47721-3},
	mrclass = {60B15 (22E30 22E46 37C30 37H15 60B20 60F05 60G50)},
	mrnumber = {3560700},
	mrreviewer = {Radhakrishnan Nair},
	pages = {xi+323},
	publisher = {Springer, Cham},
	series = {Ergebnisse der Mathematik und ihrer Grenzgebiete. 3. Folge. A Series of Modern Surveys in Mathematics [Results in Mathematics and Related Areas. 3rd Series. A Series of Modern Surveys in Mathematics]},
	title = {Random walks on reductive groups},
	url = {https://mathscinet.ams.org/mathscinet-getitem?mr=3560700},
	volume = {62},
	year = {2016}}

@article{CZZ_cusped,
	author = {Canary, Richard and Zhang, Tengren and Zimmer, Andrew},
	doi = {10.1016/j.aim.2022.108439},
	fjournal = {Advances in Mathematics},
	issn = {0001-8708,1090-2082},
	journal = {Adv. Math.},
	mrclass = {22E40 (20H10 30F35 37F32 57M60)},
	mrnumber = {4418884},
	mrreviewer = {Herbert\ Abels},
	pages = {Paper No. 108439, 67},
	title = {Cusped {H}itchin representations and {A}nosov representations of geometrically finite {F}uchsian groups},
	url = {https://doi.org/10.1016/j.aim.2022.108439},
	volume = {404},
	year = {2022}}

@article{Tits_representations,
	author = {Tits, J.},
	doi = {10.1515/crll.1971.247.196},
	fjournal = {Journal f\"{u}r die Reine und Angewandte Mathematik. [Crelle's Journal]},
	issn = {0075-4102,1435-5345},
	journal = {J. Reine Angew. Math.},
	mrclass = {14.50},
	mrnumber = {277536},
	mrreviewer = {J.\ Dieudonn\'{e}},
	pages = {196--220},
	title = {Repr\'{e}sentations lin\'{e}aires irr\'{e}ductibles d'un groupe r\'{e}ductif sur un corps quelconque},
	url = {https://doi.org/10.1515/crll.1971.247.196},
	volume = {247},
	year = {1971}}

@article{oh2025dynamics,
	author = {Oh, Hee},
	journal = {arXiv preprint arXiv:2510.10771},
	title = {Dynamics and {R}igidity through the {L}ens of {C}ircles},
	year = {2025}}

@article{KLP_Anosov,
	author = {Kapovich, Michael and Leeb, Bernhard and Porti, Joan},
	doi = {10.1007/s40879-017-0192-y},
	fjournal = {European Journal of Mathematics},
	issn = {2199-675X,2199-6768},
	journal = {Eur. J. Math.},
	mrclass = {22E40 (20F65 53C35)},
	mrnumber = {3736790},
	mrreviewer = {Herbert\ Abels},
	number = {4},
	pages = {808--898},
	title = {Anosov subgroups: dynamical and geometric characterizations},
	url = {https://doi.org/10.1007/s40879-017-0192-y},
	volume = {3},
	year = {2017}}

@incollection{gromov1987hyperbolic,
	author = {Gromov, M.},
	booktitle = {Essays in group theory},
	doi = {10.1007/978-1-4613-9586-7\_3},
	mrclass = {20F32 (20F06 20F10 22E40 53C20 57R75 58F17)},
	mrnumber = {919829},
	mrreviewer = {Christopher W. Stark},
	pages = {75--263},
	publisher = {Springer, New York},
	series = {Math. Sci. Res. Inst. Publ.},
	title = {Hyperbolic groups},
	url = {https://mathscinet.ams.org/mathscinet-getitem?mr=919829},
	volume = {8},
	year = {1987}}

@book{bowen2008equilibrium,
	author = {Bowen, Rufus},
	edition = {revised},
	isbn = {978-3-540-77605-5},
	mrclass = {37C40 (28D05 37A25 37D20)},
	mrnumber = {2423393},
	note = {With a preface by David Ruelle, Edited by Jean-Ren\'{e} Chazottes},
	pages = {viii+75},
	publisher = {Springer-Verlag, Berlin},
	series = {Lecture Notes in Mathematics},
	title = {Equilibrium states and the ergodic theory of {A}nosov diffeomorphisms},
	url = {https://mathscinet.ams.org/mathscinet-getitem?mr=2423393},
	volume = {470},
	year = {2008}}

@incollection{ghys1990bord,
	author = {Ghys, \'{E}tienne and de la Harpe, Pierre},
	booktitle = {Sur les groupes hyperboliques d'apr{\`e}s {M}ikhael {G}romov ({B}ern, 1988)},
	doi = {10.1007/978-1-4684-9167-8\_7},
	mrclass = {53C23 (53C22)},
	mrnumber = {1086655},
	pages = {117--134},
	publisher = {Birkh\"{a}user Boston, Boston, MA},
	series = {Progr. Math.},
	title = {Le bord d'un espace hyperbolique},
	url = {https://mathscinet.ams.org/mathscinet-getitem?mr=1086655},
	volume = {83},
	year = {1990}}

@article{BCZZ_2,
	author = {Blayac, Pierre-Louis and Canary, Richard and Zhu, Feng and Zimmer, Andrew},
	journal = {arXiv preprint arXiv:2404.09718},
	title = {Counting, mixing and equidistribution for {GPS} systems with applications to relatively Anosov groups},
	year = {2024}}

@article{CK_ML,
	author = {Choi, Inhyeok and Kim, Dongryul M.},
	journal = {arXiv preprint arXiv:2510.23256},
	title = {Invariant measures on the space of measured laminations for subgroups of mapping class group},
	year = {2025}}

@article{CK_pradak,
	author = {Choi, Inhyeok and Kim, Dongryul M.},
	journal = {arXiv preprint arXiv:2510.23365},
	title = {Classification of horospherical invariant measures in higher rank: Teaser},
	year = {2025}}

@book{coornaert1990geometrie,
	author = {Coornaert, M. and Delzant, T. and Papadopoulos, A.},
	isbn = {3-540-52977-2},
	mrclass = {57M07 (20F32)},
	mrnumber = {1075994},
	mrreviewer = {John Meier},
	note = {Les groupes hyperboliques de Gromov. [Gromov hyperbolic groups], With an English summary},
	pages = {x+165},
	publisher = {Springer-Verlag, Berlin},
	series = {Lecture Notes in Mathematics},
	title = {G\'{e}om\'{e}trie et th\'{e}orie des groupes},
	url = {https://mathscinet.ams.org/mathscinet-getitem?mr=1075994},
	volume = {1441},
	year = {1990}}

@article{BCZZ_PS,
	author = {Blayac, Pierre-Louis and Canary, Richard and Zhu, Feng and Zimmer, Andrew},
	journal = {arXiv preprint arXiv:2404.09713},
	title = {Patterson-{S}ullivan theory for coarse cocycles},
	year = {2024}}

@article{LLLO_Horospherical,
	author = {Landesberg, Or and Lee, Minju and Lindenstrauss, Elon and Oh, Hee},
	doi = {10.3934/jmd.2023009},
	fjournal = {Journal of Modern Dynamics},
	issn = {1930-5311,1930-532X},
	journal = {J. Mod. Dyn.},
	mrclass = {37A40 (11F30 22E40 37A17)},
	mrnumber = {4588420},
	pages = {331--362},
	title = {Horospherical invariant measures and a rank dichotomy for {A}nosov groups},
	url = {https://doi.org/10.3934/jmd.2023009},
	volume = {19},
	year = {2023}}

@article{LL_Radon,
	author = {Landesberg, Or and Lindenstrauss, Elon},
	doi = {10.1093/imrn/rnab024},
	fjournal = {International Mathematics Research Notices. IMRN},
	issn = {1073-7928,1687-0247},
	journal = {Int. Math. Res. Not. IMRN},
	mrclass = {22E40 (37C85 37D40)},
	mrnumber = {4458560},
	mrreviewer = {Boris\ Hasselblatt},
	number = {15},
	pages = {11602--11641},
	title = {On {R}adon measures invariant under horospherical flows on geometrically infinite quotients},
	url = {https://doi.org/10.1093/imrn/rnab024},
	year = {2022}}

@article{minsky1996quasi-projections,
	author = {Minsky, Yair N.},
	doi = {10.1515/crll.1995.473.121},
	fjournal = {Journal f\"{u}r die Reine und Angewandte Mathematik. [Crelle's Journal]},
	issn = {0075-4102},
	journal = {J. Reine Angew. Math.},
	mrclass = {32G15},
	mrnumber = {1390685},
	mrreviewer = {Pablo Ar\'{e}s Gastesi},
	pages = {121--136},
	title = {Quasi-projections in {T}eichm\"{u}ller space},
	url = {https://mathscinet.ams.org/mathscinet-getitem?mr=1390685},
	volume = {473},
	year = {1996}}

@article{Winter_mixing,
	author = {Winter, Dale},
	doi = {10.1007/s11856-015-1258-5},
	fjournal = {Israel Journal of Mathematics},
	issn = {0021-2172,1565-8511},
	journal = {Israel J. Math.},
	mrclass = {37A25 (22E40 37A17)},
	mrnumber = {3430281},
	mrreviewer = {Adrien\ Boyer},
	number = {1},
	pages = {467--507},
	title = {Mixing of frame flow for rank one locally symmetric spaces and measure classification},
	url = {https://doi.org/10.1007/s11856-015-1258-5},
	volume = {210},
	year = {2015}}

@article{sisto2018contracting,
	author = {Sisto, Alessandro},
	bdsk-color = {7},
	doi = {10.1515/crelle-2015-0093},
	fjournal = {Journal f\"{u}r die Reine und Angewandte Mathematik. [Crelle's Journal]},
	issn = {0075-4102},
	journal = {J. Reine Angew. Math.},
	mrclass = {20F65 (20F67 60G50)},
	mrnumber = {3849623},
	mrreviewer = {Enric Ventura Capell},
	pages = {79--114},
	title = {Contracting elements and random walks},
	url = {https://mathscinet.ams.org/mathscinet-getitem?mr=3849623},
	volume = {742},
	year = {2018}}

@article{yang2014growth,
	author = {Yang, Wen-yuan},
	doi = {10.1017/S0305004114000322},
	fjournal = {Mathematical Proceedings of the Cambridge Philosophical Society},
	issn = {0305-0041},
	journal = {Math. Proc. Cambridge Philos. Soc.},
	mrclass = {20F65},
	mrnumber = {3254594},
	mrreviewer = {Xiangdong Xie},
	number = {2},
	pages = {297--319},
	title = {Growth tightness for groups with contracting elements},
	url = {https://mathscinet.ams.org/mathscinet-getitem?mr=3254594},
	volume = {157},
	year = {2014}}

@article{sisto2013projections,
	author = {Sisto, Alessandro},
	doi = {10.4171/LEM/59-1-6},
	fjournal = {L'Enseignement Math\'{e}matique. Revue Internationale. 2e S\'{e}rie},
	issn = {0013-8584},
	journal = {Enseign. Math. (2)},
	mrclass = {20F67},
	mrnumber = {3113603},
	mrreviewer = {David Hume},
	number = {1-2},
	pages = {165--181},
	title = {Projections and relative hyperbolicity},
	url = {https://mathscinet.ams.org/mathscinet-getitem?mr=3113603},
	volume = {59},
	year = {2013}}

@article{arzhantseva2015growth,
	author = {Arzhantseva, Goulnara N. and Cashen, Christopher H. and Tao, Jing},
	doi = {10.2140/pjm.2015.278.1},
	fjournal = {Pacific Journal of Mathematics},
	issn = {0030-8730},
	journal = {Pacific J. Math.},
	mrclass = {20F65 (20E06 20F67 37C35 57M07)},
	mrnumber = {3404665},
	mrreviewer = {\L ukasz Garncarek},
	number = {1},
	pages = {1--49},
	title = {Growth tight actions},
	url = {https://mathscinet.ams.org/mathscinet-getitem?mr=3404665},
	volume = {278},
	year = {2015}}

@article{yang2019statistically,
	author = {Yang, Wen-yuan},
	doi = {10.1093/imrn/rny001},
	fjournal = {International Mathematics Research Notices. IMRN},
	issn = {1073-7928},
	journal = {Int. Math. Res. Not. IMRN},
	mrclass = {20F65 (22E40 37C85 57M07)},
	mrnumber = {4039013},
	mrreviewer = {Camille Horbez},
	number = {23},
	pages = {7259--7323},
	title = {Statistically convex-cocompact actions of groups with contracting elements},
	url = {https://mathscinet.ams.org/mathscinet-getitem?mr=4039013},
	year = {2019}}

@article{chawla2023genericity,
	author = {Chawla, Kunal and Choi, Inhyeok and Tiozzo, Giulio},
	journal = {Groups, Geometry, and Dynamics},
	title = {Genericity of contracting geodesics in groups},
	year = {2025}}

@article{aaronson2002invariant,
	author = {Aaronson, Jon and Nakada, Hitoshi and Sarig, Omri and Solomyak, Rita},
	doi = {10.1007/BF02785420},
	fjournal = {Israel Journal of Mathematics},
	issn = {0021-2172},
	journal = {Israel J. Math.},
	mrclass = {37A40 (28D05 37A15)},
	mrnumber = {1910377},
	mrreviewer = {G. R. Goodson},
	pages = {93--134},
	title = {Invariant measures and asymptotics for some skew products},
	url = {https://doi.org/10.1007/BF02785420},
	volume = {128},
	year = {2002}}

@article{yang2024conformal,
	author = {Yang, Wen-yuan},
	journal = {arXiv preprint arXiv:2208.04861},
	title = {Conformal dynamics at infinity for groups with contracting elements},
	year = {2024}}

@article{dani1981invariant,
	author = {Dani, S. G.},
	doi = {10.1007/BF01389173},
	fjournal = {Inventiones Mathematicae},
	issn = {0020-9910},
	journal = {Invent. Math.},
	mrclass = {22D40 (22E40 28D10 57S20 58F17)},
	mrnumber = {629475},
	mrreviewer = {Caroline Series},
	number = {2},
	pages = {357--385},
	title = {Invariant measures and minimal sets of horospherical flows},
	url = {https://doi.org/10.1007/BF01389173},
	volume = {64},
	year = {1981}}

@article{coulon2024patterson-sullivan,
	author = {Coulon, R\'{e}mi},
	doi = {10.1017/etds.2024.10},
	fjournal = {Ergodic Theory and Dynamical Systems},
	issn = {0143-3857},
	journal = {Ergodic Theory Dynam. Systems},
	mrclass = {20F65 (37A15 37A35 37D40 57S30)},
	mrnumber = {4803663},
	number = {11},
	pages = {3216--3271},
	title = {Patterson-{S}ullivan theory for groups with a strongly contracting element},
	url = {https://doi.org/10.1017/etds.2024.10},
	volume = {44},
	year = {2024}}

@article{veech1977unique,
	author = {Veech, William A.},
	doi = {10.2307/2373868},
	fjournal = {American Journal of Mathematics},
	issn = {0002-9327},
	journal = {Amer. J. Math.},
	mrclass = {22E40 (28A65 58F99)},
	mrnumber = {447476},
	mrreviewer = {S. G. Dani},
	number = {4},
	pages = {827--859},
	title = {Unique ergodicity of horospherical flows},
	url = {https://doi.org/10.2307/2373868},
	volume = {99},
	year = {1977}}

@inproceedings{furstenberg1973the-unique,
	author = {Furstenberg, Harry},
	booktitle = {Recent advances in topological dynamics ({P}roc. {C}onf. {T}opological {D}ynamics, {Y}ale {U}niv., {N}ew {H}aven, {C}onn., 1972; in honor of {G}ustav {A}rnold {H}edlund)},
	mrclass = {22D40 (58F15)},
	mrnumber = {393339},
	mrreviewer = {Leonard F. Richardson},
	pages = {95--115},
	publisher = {Springer, Berlin-New York},
	series = {Lecture Notes in Math., Vol. 318},
	title = {The unique ergodicity of the horocycle flow},
	url = {https://mathscinet.ams.org/mathscinet-getitem?mr=393339},
	year = {1973}}

@article{dani1978invariant,
	author = {Dani, S. G.},
	doi = {10.1007/BF01578067},
	fjournal = {Inventiones Mathematicae},
	issn = {0020-9910},
	journal = {Invent. Math.},
	mrclass = {22D40 (58F15)},
	mrnumber = {578655},
	mrreviewer = {William Perrizo},
	number = {2},
	pages = {101--138},
	title = {Invariant measures of horospherical flows on noncompact homogeneous spaces},
	url = {https://doi.org/10.1007/BF01578067},
	volume = {47},
	year = {1978}}

@book{Bridson1999metric,
	author = {Bridson, Martin R. and Haefliger, Andr\'e},
	doi = {10.1007/978-3-662-12494-9},
	isbn = {3-540-64324-9},
	mrclass = {53C23 (20F65 53C70 57M07)},
	mrnumber = {1744486},
	mrreviewer = {Athanase\ Papadopoulos},
	pages = {xxii+643},
	publisher = {Springer-Verlag, Berlin},
	series = {Grundlehren der mathematischen Wissenschaften [Fundamental Principles of Mathematical Sciences]},
	title = {Metric spaces of non-positive curvature},
	url = {https://doi.org/10.1007/978-3-662-12494-9},
	volume = {319},
	year = {1999}}

@article{kim2024conformal,
	author = {Kim, Dongryul M.},
	journal = {arXiv preprint arXiv:2404.13727},
	title = {Conformal measure rigidity and ergodicity of horospherical foliations},
	year = {2024}}

@article{Burger_horoc,
	author = {Burger, Marc},
	doi = {10.1215/S0012-7094-90-06129-0},
	fjournal = {Duke Mathematical Journal},
	issn = {0012-7094,1547-7398},
	journal = {Duke Math. J.},
	mrclass = {58F17},
	mrnumber = {1084459},
	mrreviewer = {Robert\ Brooks},
	number = {3},
	pages = {779--803},
	title = {Horocycle flow on geometrically finite surfaces},
	url = {https://doi.org/10.1215/S0012-7094-90-06129-0},
	volume = {61},
	year = {1990}}

@article{KOW_SF,
	author = {Kim, Dongryul M. and Oh, Hee and Wang, Yahui},
	doi = {10.1090/cams/43},
	fjournal = {Communications of the American Mathematical Society},
	issn = {2692-3688},
	journal = {Commun. Am. Math. Soc.},
	mrclass = {37A17 (22E40 37A40)},
	mrnumber = {4867107},
	mrreviewer = {Cheng\ Zheng},
	pages = {1--47},
	title = {Ergodic dichotomy for subspace flows in higher rank},
	url = {https://doi.org/10.1090/cams/43},
	volume = {5},
	year = {2025}}

@article{CZZ_relative,
	author = {Canary, Richard and Zhang, Tengren and Zimmer, Andrew},
	doi = {10.1007/s00208-025-03137-2},
	fjournal = {Mathematische Annalen},
	issn = {0025-5831,1432-1807},
	journal = {Math. Ann.},
	mrclass = {22F10 (22E40)},
	mrnumber = {4906322},
	mrreviewer = {Boris\ Hasselblatt},
	number = {2},
	pages = {2309--2363},
	title = {Patterson-{S}ullivan measures for relatively {A}nosov groups},
	url = {https://doi.org/10.1007/s00208-025-03137-2},
	volume = {392},
	year = {2025}}

@article{KOW_PD,
	author = {Kim, Dongryul M. and Oh, Hee and Wang, Yahui},
	doi = {10.1007/s00208-025-03292-6},
	fjournal = {Mathematische Annalen},
	issn = {0025-5831,1432-1807},
	journal = {Math. Ann.},
	mrclass = {22E40 (14L15 20H20 37A25 37C85)},
	mrnumber = {4984347},
	number = {2},
	pages = {2391--2450},
	title = {Properly discontinuous actions, growth indicators, and conformal measures for transverse subgroups},
	url = {https://doi.org/10.1007/s00208-025-03292-6},
	volume = {393},
	year = {2025}}

@article{CZZ_transverse,
	author = {Canary, Richard and Zhang, Tengren and Zimmer, Andrew},
	doi = {10.3934/jmd.2024009},
	fjournal = {Journal of Modern Dynamics},
	issn = {1930-5311,1930-532X},
	journal = {J. Mod. Dyn.},
	mrclass = {22E40 (20H10 37A05)},
	mrnumber = {4799467},
	mrreviewer = {Cheng\ Zheng},
	pages = {319--377},
	title = {Patterson-{S}ullivan measures for transverse subgroups},
	url = {https://doi.org/10.3934/jmd.2024009},
	volume = {20},
	year = {2024}}

@article{sambarino2022report,
	author = {Sambarino, A.},
	doi = {10.1017/etds.2023.13},
	fjournal = {Ergodic Theory and Dynamical Systems},
	issn = {0143-3857,1469-4417},
	journal = {Ergodic Theory Dynam. Systems},
	mrclass = {22E40 (37Axx 37Dxx)},
	mrnumber = {4676211},
	number = {1},
	pages = {236--289},
	title = {A report on an ergodic dichotomy},
	url = {https://doi.org/10.1017/etds.2023.13},
	volume = {44},
	year = {2024}}

@article{GGKW2017,
	author = {Gu\'eritaud, Fran\c{c}ois and Guichard, Olivier and Kassel, Fanny and Wienhard, Anna},
	doi = {10.2140/gt.2017.21.485},
	fjournal = {Geometry \& Topology},
	issn = {1465-3060,1364-0380},
	journal = {Geom. Topol.},
	mrclass = {37D40 (20F67 22E40 57S30)},
	mrnumber = {3608719},
	number = {1},
	pages = {485--584},
	title = {Anosov representations and proper actions},
	url = {https://doi-org.ezproxy.library.wisc.edu/10.2140/gt.2017.21.485},
	volume = {21},
	year = {2017}}

@article{Ratner_measure,
	author = {Ratner, Marina},
	doi = {10.2307/2944357},
	fjournal = {Annals of Mathematics. Second Series},
	issn = {0003-486X,1939-8980},
	journal = {Ann. of Math. (2)},
	mrclass = {22E40 (58F11 58F17)},
	mrnumber = {1135878},
	mrreviewer = {S.\ G.\ Dani},
	number = {3},
	pages = {545--607},
	title = {On {R}aghunathan's measure conjecture},
	url = {https://doi.org/10.2307/2944357},
	volume = {134},
	year = {1991}}

@article{Landesberg_horospherically,
	author = {Landesberg, Or},
	doi = {10.3934/jmd.2021012},
	fjournal = {Journal of Modern Dynamics},
	issn = {1930-5311,1930-532X},
	journal = {J. Mod. Dyn.},
	mrclass = {37D40 (30F40 37A17 37A40 57K32)},
	mrnumber = {4312138},
	mrreviewer = {Jack\ O.\ Button},
	pages = {337--352},
	title = {Horospherically invariant measures and finitely generated {K}leinian groups},
	url = {https://doi.org/10.3934/jmd.2021012},
	volume = {17},
	year = {2021}}

@article{OP_local,
	author = {Oh, Hee and Pan, Wenyu},
	doi = {10.1093/imrn/rnx292},
	fjournal = {International Mathematics Research Notices. IMRN},
	issn = {1073-7928,1687-0247},
	journal = {Int. Math. Res. Not. IMRN},
	mrclass = {37D40 (22E40 37A25)},
	mrnumber = {4016891},
	mrreviewer = {Jonas\ Der\'e},
	number = {19},
	pages = {6036--6088},
	title = {Local mixing and invariant measures for horospherical subgroups on abelian covers},
	url = {https://doi.org/10.1093/imrn/rnx292},
	year = {2019}}

@article{LS_periodic,
	author = {Ledrappier, Fran\c{c}ois and Sarig, Omri},
	doi = {10.1007/s11856-007-0064-0},
	fjournal = {Israel Journal of Mathematics},
	issn = {0021-2172,1565-8511},
	journal = {Israel J. Math.},
	mrclass = {37D40 (37A25 37C40)},
	mrnumber = {2342499},
	mrreviewer = {Bachir\ Bekka},
	pages = {281--315},
	title = {Invariant measures for the horocycle flow on periodic hyperbolic surfaces},
	url = {https://doi.org/10.1007/s11856-007-0064-0},
	volume = {160},
	year = {2007}}

@article{Ledrappier_invariant,
	author = {Ledrappier, Fran\c{c}ois},
	doi = {10.5802/aif.2345},
	fjournal = {Universit\'e{} de Grenoble. Annales de l'Institut Fourier},
	issn = {0373-0956,1777-5310},
	journal = {Ann. Inst. Fourier (Grenoble)},
	mrclass = {37D40 (37A40 53C12 53D25)},
	mrnumber = {2401217},
	mrreviewer = {Athanase\ Papadopoulos},
	number = {1},
	pages = {85--105},
	title = {Invariant measures for the stable foliation on negatively curved periodic manifolds},
	url = {https://doi.org/10.5802/aif.2345},
	volume = {58},
	year = {2008}}

@article{Sarig_genus,
	author = {Sarig, Omri},
	doi = {10.1007/s00039-010-0048-9},
	fjournal = {Geometric and Functional Analysis},
	issn = {1016-443X,1420-8970},
	journal = {Geom. Funct. Anal.},
	mrclass = {37D40 (31C12 37A17 37A40)},
	mrnumber = {2594621},
	mrreviewer = {S.\ G.\ Dani},
	number = {6},
	pages = {1757--1812},
	title = {The horocyclic flow and the {L}aplacian on hyperbolic surfaces of infinite genus},
	url = {https://doi.org/10.1007/s00039-010-0048-9},
	volume = {19},
	year = {2010}}

@article{BLLO,
	author = {Burger, Marc and Landesberg, Or and Lee, Minju and Oh, Hee},
	doi = {10.3934/jmd.2023008},
	fjournal = {Journal of Modern Dynamics},
	issn = {1930-5311,1930-532X},
	journal = {J. Mod. Dyn.},
	mrclass = {37A40 (22E40 37A17)},
	mrnumber = {4588419},
	pages = {301--330},
	title = {The {H}opf-{T}suji-{S}ullivan dichotomy in higher rank and applications to {A}nosov subgroups},
	url = {https://doi.org/10.3934/jmd.2023008},
	volume = {19},
	year = {2023}}

@article{LO_invariant,
	author = {Lee, Minju and Oh, Hee},
	doi = {10.1093/imrn/rnac262},
	fjournal = {International Mathematics Research Notices. IMRN},
	issn = {1073-7928,1687-0247},
	journal = {Int. Math. Res. Not. IMRN},
	mrclass = {37D40 (37A99)},
	mrnumber = {4651889},
	mrreviewer = {S.\ G.\ Dani},
	number = {19},
	pages = {16226--16295},
	title = {Invariant measures for horospherical actions and {A}nosov groups},
	url = {https://doi.org/10.1093/imrn/rnac262},
	year = {2023}}

@article{LO_ergodic,
	author = {Lee, Minju and Oh, Hee},
	doi = {10.1007/s11856-023-2560-2},
	fjournal = {Israel Journal of Mathematics},
	issn = {0021-2172,1565-8511},
	journal = {Israel J. Math.},
	mrclass = {37A17 (22E40 22F30 37A25)},
	mrnumber = {4756999},
	mrreviewer = {Boris\ Hasselblatt},
	number = {1},
	pages = {195--234},
	title = {Ergodic decompositions of geometric measures on {A}nosov homogeneous spaces},
	url = {https://doi.org/10.1007/s11856-023-2560-2},
	volume = {260},
	year = {2024}}

@article{Edwards2020anosov,
	author = {Edwards, Sam and Lee, Minju and Oh, Hee},
	doi = {10.2140/gt.2023.27.513},
	fjournal = {Geometry \& Topology},
	issn = {1465-3060,1364-0380},
	journal = {Geom. Topol.},
	mrclass = {22E40 (37A17 37A25 37A44)},
	mrnumber = {4589560},
	number = {2},
	pages = {513--573},
	title = {Anosov groups: local mixing, counting and equidistribution},
	url = {https://doi.org/10.2140/gt.2023.27.513},
	volume = {27},
	year = {2023}}

@article{Labourie2006anosov,
	author = {Labourie, Fran\c{c}ois},
	doi = {10.1007/s00222-005-0487-3},
	fjournal = {Inventiones Mathematicae},
	issn = {0020-9910,1432-1297},
	journal = {Invent. Math.},
	mrclass = {20F65 (37D20 37F30)},
	mrnumber = {2221137},
	mrreviewer = {Richard\ Kenyon},
	number = {1},
	pages = {51--114},
	title = {Anosov flows, surface groups and curves in projective space},
	url = {https://doi.org/10.1007/s00222-005-0487-3},
	volume = {165},
	year = {2006}}

@article{Guichard2012anosov,
	author = {Guichard, Olivier and Wienhard, Anna},
	doi = {10.1007/s00222-012-0382-7},
	fjournal = {Inventiones Mathematicae},
	issn = {0020-9910,1432-1297},
	journal = {Invent. Math.},
	mrclass = {22F30 (32G15 53C30 53D25)},
	mrnumber = {2981818},
	mrreviewer = {Pablo\ Su\'arez-Serrato},
	number = {2},
	pages = {357--438},
	title = {Anosov representations: domains of discontinuity and applications},
	url = {https://doi.org/10.1007/s00222-012-0382-7},
	volume = {190},
	year = {2012}}

@incollection{Benoist2000proprietes,
	author = {Benoist, Yves},
	booktitle = {Analysis on homogeneous spaces and representation theory of {L}ie groups, {O}kayama--{K}yoto (1997)},
	doi = {10.2969/aspm/02610033},
	isbn = {4-314-10138-5},
	mrclass = {22E10 (22E40)},
	mrnumber = {1770716},
	mrreviewer = {F.\ E. A. Johnson},
	pages = {33--48},
	publisher = {Math. Soc. Japan, Tokyo},
	series = {Adv. Stud. Pure Math.},
	title = {Propri\'et\'es asymptotiques des groupes lin\'eaires. {II}},
	url = {https://doi.org/10.2969/aspm/02610033},
	volume = {26},
	year = {2000}}

@article{Sarig_abelian,
	author = {Sarig, Omri},
	doi = {10.1007/s00222-004-0357-4},
	fjournal = {Inventiones Mathematicae},
	issn = {0020-9910,1432-1297},
	journal = {Invent. Math.},
	mrclass = {37D40 (37A05 37A20)},
	mrnumber = {2092768},
	mrreviewer = {Boris\ Hasselblatt},
	number = {3},
	pages = {519--551},
	title = {Invariant {R}adon measures for horocycle flows on abelian covers},
	url = {https://doi.org/10.1007/s00222-004-0357-4},
	volume = {157},
	year = {2004}}

@incollection{Babillot_nilpotent,
	author = {Babillot, Martine},
	booktitle = {Random walks and geometry},
	isbn = {3-11-017237-2},
	mrclass = {37D40 (37A05)},
	mrnumber = {2087786},
	mrreviewer = {Boris\ Hasselblatt},
	pages = {319--335},
	publisher = {Walter de Gruyter, Berlin},
	title = {On the classification of invariant measures for horosphere foliations on nilpotent covers of negatively curved manifolds},
	year = {2004}}

@incollection{BL_covers,
	author = {Babillot, Martine and Ledrappier, Fran\c{c}ois},
	booktitle = {Lie groups and ergodic theory ({M}umbai, 1996)},
	isbn = {81-7319-235-9},
	mrclass = {37D40 (37C40)},
	mrnumber = {1699356},
	mrreviewer = {Gerhard\ Knieper},
	pages = {1--32},
	publisher = {Tata Inst. Fund. Res., Bombay},
	series = {Tata Inst. Fund. Res. Stud. Math.},
	title = {Geodesic paths and horocycle flow on abelian covers},
	volume = {14},
	year = {1998}}

@article{Quint2002divergence,
	author = {Quint, J.-F.},
	doi = {10.1007/s00014-002-8352-0},
	fjournal = {Commentarii Mathematici Helvetici},
	issn = {0010-2571},
	journal = {Comment. Math. Helv.},
	mrclass = {22E40},
	mrnumber = {1933790},
	mrreviewer = {Herbert Abels},
	number = {3},
	pages = {563--608},
	title = {Divergence exponentielle des sous-groupes discrets en rang sup\'{e}rieur},
	url = {https://doi.org/10.1007/s00014-002-8352-0},
	volume = {77},
	year = {2002}}

@article{Roblin2003ergodicite,
	author = {Roblin, Thomas},
	doi = {10.24033/msmf.408},
	fjournal = {M\'emoires de la Soci\'et\'e{} Math\'ematique de France. Nouvelle S\'erie},
	issn = {0249-633X,2275-3230},
	journal = {M\'em. Soc. Math. Fr. (N.S.)},
	mrclass = {37D40 (28D10 30F40 37A25 37C40 53C22 53D25 57R30)},
	mrnumber = {2057305},
	mrreviewer = {Marc\ Peign\'e},
	number = {95},
	pages = {vi+96},
	title = {Ergodicit\'e{} et \'equidistribution en courbure n\'egative},
	url = {https://doi.org/10.24033/msmf.408},
	year = {2003}}

@article{Quint2002Mesures,
	author = {Quint, J.-F.},
	doi = {10.1007/s00039-002-8266-4},
	fjournal = {Geometric and Functional Analysis},
	issn = {1016-443X},
	journal = {Geom. Funct. Anal.},
	mrclass = {22E40 (37C35)},
	mrnumber = {1935549},
	mrreviewer = {Michel Coornaert},
	number = {4},
	pages = {776--809},
	title = {Mesures de {P}atterson-{S}ullivan en rang sup\'{e}rieur},
	url = {https://doi.org/10.1007/s00039-002-8266-4},
	volume = {12},
	year = {2002}}

@article{Benoist1997proprietes,
	author = {Benoist, Y.},
	doi = {10.1007/PL00001613},
	fjournal = {Geometric and Functional Analysis},
	issn = {1016-443X},
	journal = {Geom. Funct. Anal.},
	mrclass = {22E15},
	mrnumber = {1437472},
	mrreviewer = {Scot Adams},
	number = {1},
	pages = {1--47},
	title = {Propri\'{e}t\'{e}s asymptotiques des groupes lin\'{e}aires},
	url = {https://doi.org/10.1007/PL00001613},
	volume = {7},
	year = {1997}}

\end{document}